\documentclass[11pt,reqno]{amsart}%%%%%%%%%%
\usepackage {amssymb}
\usepackage {amsmath}
\usepackage {bbm}
\usepackage{amsthm}
\usepackage{mathtools}
\usepackage{graphicx}
\usepackage {amscd}
\usepackage {epic}
\usepackage[mathscr]{euscript}

\usepackage[dvipsnames]{xcolor}
\usepackage[colorlinks,citecolor=OliveGreen,linkcolor=Mahogany,urlcolor=Plum,pagebackref]{hyperref}
\usepackage[alphabetic]{amsrefs}

\usepackage{enumerate}
\usepackage{tikz}
\usepackage{tikz-cd}
\usepackage{verbatim,color,geometry}
\usepackage[all]{xy}
\usepackage{enumitem}
\usepackage{bm}
\geometry{centering,vcentering,marginratio=1:1,vscale=0.75,hscale=0.71}
%%%%%%%%%%%%%%%%%w%%%%%%%%%%%%%%%%%%%%%%%%%%%%%%%%%%%%%%%%%%%%

%\mathbb
\newcommand{\A}{\mathbb{A}}
\newcommand{\G}{\mathbb{G}}

\newcommand{\N}{\mathbb{N}}

\newcommand{\Q}{\mathbb{Q}}
\newcommand{\R}{\mathbb{R}}
\newcommand{\T}{\mathbb{T}}
\newcommand{\Z}{\mathbb{Z}}

%\mathbcal
\newcommand{\cL}{{\mathcal{L}}}

\newcommand{\cF}{{\mathcal{F}}}
\newcommand{\cI}{{\mathcal{I}}}

\newcommand{\cX}{{\mathcal{X}}}

%\mathbcal

%\mathfrak

%\mathbf

%%%  JK commands

\newcommand{\Aut}[0]{\operatorname{Aut}}

\newcommand{\DMR}{{\mathcal{DMR}}}

\newcommand{\Supp}{{\rm Supp}}
\newcommand{\Hom}{{\rm Hom}}

\newcommand{\ord}{{\rm ord}}
\newcommand{\gr}{{\rm gr}}

\numberwithin{equation}{section}
\newtheorem{thm}{Theorem}[section]

\newtheorem{lem}[thm]{Lemma}
\newtheorem{cor}[thm]{Corollary}

\newtheorem{prop}[thm]{Proposition}

 %!!!!!!!!!!!!!!!!!!!!!!
%!!!!!!!!!!!!!!!!!!!!!!
%!!!!!!!!!!!!!!!!!!!!!!

\theoremstyle{definition}
\newtheorem{defn}[thm]{Definition}

\newtheorem{example}[thm]{Example}

   %!!!!!!!!!!!!!!!!
    %!!!!!!!!!!!!!!!!!!!!
\newtheorem{que}[thm]{Question}    %!!!!!!!!!!!!!!!!!!!!

\newtheorem{rem}[thm]{Remark}

\newtheorem{defn-thm}[thm]{Definition--Theorem}  %!!!!!!!!!!!!!!!!!!!!!!!!
\newtheorem{defn-prop}[thm]{Definition--Proposition}  %!!!!!!!!!!!!!!!!!!!!!!!!
\newtheorem{defn-lem}[thm]{Definition--Lemma}  %!!!!!!!!!!!!!!!!!!!!!!!!
  %!!!!!!!!!!!!!!!!!!!!!!!!!!!

\theoremstyle{remark}

%\newtheorem{principle}{Principle}

%%% Begin CX's commands
\newcommand{\Fut}{{\rm Fut}}

\newcommand{\Val}{{\rm Val}}

\newcommand{\vol}{{\rm vol}}

\newcommand{\cG}{{\mathcal{G}}}
\newcommand{\cO}{{\mathcal{O}}}

\newcommand{\fa}{{\mathfrak{a}}}
\newcommand{\lct}{{\rm lct}}
\newcommand{\im}{{\rm im}}
\newcommand{\bQ}{{\mathbb{Q}}}
\newcommand{\fb}{{\mathfrak{b}}}
%%% End CX's commands

%%% Begin Blum's commands
\newcommand{\Spec}{{\rm Spec}}
\newcommand{\Dh}{{\rm DH}}
\newcommand{\RLM}{{\rm RLM}}

\newcommand{\la}{\lambda}

\newcommand{\bnu}{\bm{\nu}}

\newcommand{\Int}{{\rm Int}}

%%% End Blum's commands

%%% Commenting
\newcommand{\HB}[1]{{\textcolor{red}{[Harold: #1]}}}
\newcommand{\CX}[1]{{\textcolor{red}{[Chenyang: #1]}}}
\newcommand{\YL}[1]{{\textcolor{blue}{[Yuchen: #1]}}}

%%% End Blum's commands

\newcommand{\bZ}{\mathbb{Z}}
\newcommand{\bR}{\mathbb{R}}
\newcommand{\bC}{\mathbb{C}}
\newcommand{\bT}{\mathbb{T}}
\newcommand{\QM}{\mathrm{QM}}

\newcommand{\bA}{\mathbb{A}}
\newcommand{\bN}{\mathbb{N}}

\newcommand{\bG}{\mathbb{G}}
\newcommand{\bk}{\mathbbm{k}}

\newcommand{\tX}{\widetilde{X}}

\newcommand{\tS}{\widetilde{S}}
\newcommand{\tD}{\widetilde{D}}

\newcommand{\rd}{\mathrm{d}}

\newcommand{\wt}{\mathrm{wt}}

\newcommand{\Ex}{\mathrm{Ex}}
\newcommand{\triv}{\mathrm{triv}}

\newcommand{\tbeta}{\tilde{\beta}}

\newcommand{\bfV}{\mathbf{V}}

\newcommand{\Valc}{{\rm Val}_X^{\circ}}
\newcommand{\ValT}{{\rm Val}_X^{\bT,\circ}}

\newcommand{\HNA}{\mathbf{H}^{\mathrm{NA}}}
\newcommand{\LNA}{\mathbf{L}^{\mathrm{NA}}}
\newcommand{\ENA}{\mathbf{E}^{\mathrm{NA}}}
\newcommand{\DNA}{\mathbf{D}^{\mathrm{NA}}}
\newcommand{\JNA}{\mathbf{J}^{\mathrm{NA}}}

\title{The existence of the K\"ahler-Ricci soliton degeneration}
\date{\today}

\author{Harold Blum}
\address{Department of Mathematics, University of Utah, Salt Lake City, UT 84112, USA}
\email{blum@math.utah.edu}

\author{Yuchen Liu}
\address{Department of Mathematics, Northwestern University, Evanston, IL 60208, USA}
\email{yuchenl@northwestern.edu}

\author{Chenyang Xu}
\address{Department of Mathematics, Princeton University, Princeton, NJ 08544, USA}
\email     {chenyang@princeton.edu}
\address   {Beijing International Center for Mathematical Research,       Beijing 100871, China}
\email     {cyxu@math.pku.edu.cn}

\author{Ziquan Zhuang}
\address{Department of Mathematics, Princeton University, Princeton, NJ 08544, USA}
\email{zzhuang@princeton.edu}
\address{Department of Mathematics, Johns Hopkins University, Baltimore, MD 21218, USA}
\email{zzhuang@jhu.edu}

%\subjclass[2000]{Primary: 32H50, Secondary: 14C21}

%\subjclass[2000]{Primary: 32H50, Secondary: 14C21}

\begin{document}

\pagenumbering{arabic}

\maketitle

\begin{abstract}
    We prove an algebraic version of the Hamilton-Tian Conjecture for all log Fano pairs. More precisely, we show that  any log Fano pair  admits a canonical  two-step degeneration to a reduced uniformly Ding stable triple, which admits a K\"ahler-Ricci soliton when the ground field $\bk=\bC$.
\end{abstract}

\setcounter{tocdepth}{1}
\tableofcontents

Throughout this paper, we work over an algebraically closed field $\bk$ of characteristic $0$.

\section{Introduction}

A K\"ahler-Einstein metric is arguably `the most canonical' metric that one can find on a Fano variety. However,  not every Fano variety admits a K\"ahler-Einstein metric. So it is natural to ask what kind of structure one should look for on a general Fano variety. In fact, there are several candidates. In this note, we will study one structure, namely the {\it K\"ahler-Ricci soliton}. This kind of metric has been investigated in many previous works. %, see e.g. \cites{TZZZ13, He16, TZ16, Bamler, CW20} for an incomplete list.
While not every Fano variety itself admits a K\"ahler-Ricci soliton, it is expected that any Fano variety has a unique degeneration to one with a K\"ahler-Ricci soliton (see e.g. \cite{Tia97}*{Section 9}). 

For a smooth Fano manifold $X$, the approach of using K\"ahler-Ricci flow to study K\"ahler-Ricci solitons has been intensively studied in complex geometry literature
and lead to the solution of the Hamilton-Tian Conjecture (see \cites{TZ16, Bamler, CW20}), which says that the Gromov-Hausdorff limit $X_{\infty}$ of $X$ under the K\"ahler-Ricci flow admits a K\"ahler-Ricci soliton. What is more relevant to us is that in \cite{CSW18}  it is shown that $X_{\infty}$ can be realized as a (two-step) degeneration of $X$,  and in \cite{DS20} that the first degeneration minimizes the $H$-functional among all possible $\R$-degenerations (note that our sign on $H$ is opposite to the one in \cite{DS20}). 

In this paper, we will pursue a purely algebraic study of the above degeneration process, by studying the geometry of the minimizer of the non-Archimedean type functional $\HNA$, which in particular can be applied to a general (possibly singular) log Fano pair $(X,\Delta)$.
Such  algebraic study, including developing the non-Archimedean theory of the $\HNA$-functional, was initiated in \cite{HL20-uniqueness}. There it is shown the {\it uniqueness} of the above degeneration process. Our first main theorem of this paper is the {\it existence} of such a degeneration for general log Fano pairs. It can be considered to be an algebraic version of the Hamilton-Tian Conjecture, though there is no metric involved.  More precisely, we have the following result. 
\begin{thm}\label{t-optimaldeg}
Let $(X,\Delta)$ be a log Fano pair. Then it admits a two-step degeneration to a K-polystable triple $(Y,\Delta_Y,\xi_Y)$, which is indeed reduced uniformly Ding stable. In particular, it admits a K\"ahler-Ricci soliton if $\bk=\bC$.
\end{thm}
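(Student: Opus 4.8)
The plan is to mimic, in the global $\HNA$-setting, the strategy used to establish the Stable Degeneration Conjecture for klt singularities. The first step of the degeneration is produced by minimizing $\HNA$ over the space of $\bR$-test configurations of $(X,\Delta)$, equivalently over a suitable space of filtrations of the anticanonical ring $R=\bigoplus_{m\ge 0}H^0(X,-m(K_X+\Delta))$ (with the usual rounding convention). Using the variational formalism of \cite{HL20-uniqueness} together with coercivity/properness estimates for $\HNA$, I would first show that the infimum is attained, and that the associated minimizer is a quasi-monomial valuation $v_0$. Its value semigroup grades $\gr_{v_0}R$, hence equips the resulting degeneration with a torus $\bT_0$-action, and $v_0$ itself corresponds to a vector $\xi_0$ in the Lie algebra of $\bT_0$; this $\xi_0$ is the candidate soliton vector field.

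The crucial --- and by far the hardest --- step is the finite generation of $\gr_{v_0}R$. As in the local klt situation, I would establish it by combining three ingredients: a characterization of the minimizer $v_0$ via the vanishing of a weighted $\beta$-type invariant (so that $v_0$ is an optimal destabilization in the appropriate weighted sense); boundedness of the family of log Fano pairs carrying a controlled weighted invariant; and the construction of special, Koll\'ar-type degenerations performed $\bT_0$-equivariantly, i.e. compatibly with the grading generated by $\xi_0$. Granting finite generation, the triple $(X_0,\Delta_0,\xi_0)$ with $X_0=\Proj\gr_{v_0}R$ is the first-step degeneration, and the minimizing property of $v_0$ translates into the statement that $(X_0,\Delta_0,\xi_0)$ is a weighted Ding semistable triple with soliton vector field $\xi_0$.

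For the second step, I would run an equivariant optimal-destabilization argument inside $(X_0,\Delta_0)$: the torus $\bT_0$, or the identity component of $\Aut(X_0,\Delta_0)$ containing it, acts, and one further degenerates $(X_0,\Delta_0,\xi_0)$ along a one-parameter subgroup to a triple $(Y,\Delta_Y,\xi_Y)$ that is reduced uniformly Ding stable, equivalently a K-polystable triple. This is the weighted analogue of the facts that a K-semistable log Fano pair admits a unique K-polystable degeneration and that K-polystability coincides with reduced uniform K-stability; it relies on an $S$-completeness/Luna-slice type analysis of weighted Ding semistable triples, with convexity of the weighted functionals along the torus used to pin down $\xi_Y$.

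Finally, when $\bk=\bC$, the existence of a K\"ahler-Ricci soliton on $(Y,\Delta_Y,\xi_Y)$ follows from the weighted Yau-Tian-Donaldson correspondence in the complex-geometric literature, together with the uniqueness input of \cite{HL20-uniqueness}. I expect the finite-generation step to be the principal obstacle; the existence of the minimizer and the second degeneration, although technically involved, should follow fairly formally from the variational and equivariant K-stability machinery once the weighted $\HNA$-formalism is set up.
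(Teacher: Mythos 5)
Your overall two-step architecture coincides with the paper's: minimize $\HNA$ (equivalently $\tbeta_{X,\Delta}$), prove finite generation of $\gr_{v_0}R$ to obtain the first-step K-semistable triple (Theorem \ref{t-finitegen}, via Corollary \ref{c:hmin compute delta} and Theorem \ref{t:f.g. for delta_g minimizer}), quote the K-polystable degeneration of a K-semistable triple from \cite{HL20-uniqueness}, and finish with the soliton version of YTD plus \cite{HL20-YTD}. The genuine gap is in your last stage: you treat the equivalence ``K-polystable $\Leftrightarrow$ reduced uniformly Ding stable'' for triples as something that follows ``fairly formally'' from an S-completeness/Luna-slice type analysis. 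It does not. Luna-slice/S-completeness arguments (and \cite{LWX-tangentcone}, \cite{HL20-uniqueness}) give the existence and uniqueness of the K-polystable degeneration of a K-semistable triple, but they do not show that the resulting K-polystable triple is reduced uniformly Ding stable --- and that is precisely what the statement asserts and what is needed to invoke \cite{HL20-YTD} and produce the soliton. Even in the unweighted case, ``K-polystable implies reduced uniformly K-stable'' is the main theorem of \cite{LXZ-HRFG} and is not formal. In the paper this is Theorem \ref{t-YTD}, and its proof requires a second, independent run of the finite-generation machinery: if $(X,\Delta,\xi)$ is $g$-Ding semistable but not reduced uniformly $g$-Ding stable, one produces a $\bT$-invariant quasi-monomial valuation $v$, not of the form $\wt_\eta$, computing $\delta_{g,\bT}(X,\Delta,\xi)=1$ (Proposition \ref{p:red uks minimizer}, following the appendix of \cite{XZ20cm}); one then proves $\gr_vR$ is finitely generated (Corollary \ref{c-deltafg}), perturbs $v$ inside $\QM(Y,E)$ to a nearby divisorial valuation still computing $\delta_g$, and obtains a non-product $\bT$-equivariant test configuration with $\DNA_g\le 0$, contradicting polystability (Corollary \ref{c-YTD}). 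Without this second finite-generation argument your second-stage output is only known to be K-polystable, and neither the reduced uniform Ding stability nor the K\"ahler-Ricci soliton conclusion follows.

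A secondary, smaller inaccuracy: your sketch of the finite generation of $\gr_{v_0}R$ lists ``boundedness of log Fano pairs with controlled weighted invariant'' as an ingredient. Boundedness of Fano pairs enters the paper only in the moduli statement (Theorem \ref{t-moduli}); the actual mechanism is the valuative characterization of the $\tbeta$-minimizer (it computes $\delta_g(X,\Delta,v_0)=1$ for $g=e^{-x}$, Theorem \ref{t:Hmin induce K-ss triple}), the construction of $g$-weighted basis type divisors compatible simultaneously with $v_0$, $v$ and an auxiliary divisor $G$, which together with Lemmas \ref{l:complement} and \ref{l:D>=cG} exhibits $v_0$ as a monomial lc place of a special $\bQ$-complement, and then the finite generation theorem of \cite{LXZ-HRFG}. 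Your ``Koll\'ar-type equivariant degenerations'' gesture at the right circle of ideas, but the complement-theoretic reduction is the step that actually makes the appeal to \cite{LXZ-HRFG} possible, and it is also the input used for the openness/constructibility of $h$ in the moduli part.
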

In the above theorem, $\xi_Y=0$ if and only if $(X,\Delta)$ is K-semistable. The proof of Theorem \ref{t-optimaldeg} will be separated into two parts, contained respectively in Theorems \ref{t-finitegen} and \ref{t-YTD}.

By \cite{HL20-uniqueness}*{Theorem 1.3}, it is already known that any K-semistable triple $(X_0,\Delta_0,\xi)$ admits a unique K-polystable degeneration $(Y,\Delta_Y,\xi_Y)$ (whose proof is based on \cite{LWX-tangentcone}).
Therefore it suffices to establish the first-step degeneration which degenerates $(X,\Delta)$ to a K-semistable triple $(X_0,\Delta_0,\xi)$.
To construct such a degeneration, we follow \cite{HL20-uniqueness} and study the valuation which computes $h(X,\Delta)$ (i.e. the minimizer of the $\tbeta$). As a result we prove the following statement, which establishes the first half of Theorem \ref{t-optimaldeg} and gives an affirmative answer to \cite{HL20-uniqueness}*{Conjecture 4.10}.

\begin{thm}\label{t-finitegen}
Let $(X,\Delta)$ be a log Fano pair. Let $r$ be a positive integer such that $r(K_X+\Delta)$ is Cartier. 
Then $(X,\Delta)$ has a unique valuation $v$ computing $h(X,\Delta)$. 
%\footnote{\CX{I change all places to computing $h(X,\Delta)$ or minimizing $\tbeta_{X,\Delta}$. $\HNA(v)$ will be different with $\HNA(\cF_v)$ which I find a little confusing. But if more people prefer, we can also use $\HNA(v)$.}}

Moreover, the associated graded ring ${\rm gr}_v R$ for $R=\bigoplus_{m\in \bN} H^0(-mr(K_X+\Delta))$ is finitely generated, whose ${\rm Proj}$ together with the degeneration of $\Delta$ and the induced vector $\xi_v$ yields the first step degeneration to a K-semistable triple $(X_0,\Delta_0,\xi_v)$.
\end{thm}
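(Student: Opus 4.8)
The plan is to follow the strategy pioneered in the Fano (non-soliton) case, where finite generation of the minimizing valuation and the existence of the K-semistable degeneration were established via the theory of $\delta$-invariants, optimal destabilization, and the Higher Rank Finite Generation theorem. Here the functional $h(X,\Delta)$ plays the role of $\delta$, and $\tbeta$ plays the role of the normalized volume or the $\beta$-invariant. First I would recall from the earlier work (\cite{HL20-uniqueness}) that $h(X,\Delta)$ is computed by a quasi-monomial valuation $v$, that $\tbeta$ is strictly convex along suitable families, and that any minimizer has a well-defined associated graded ring; the uniqueness of $v$ should then follow from strict convexity of $\tbeta$ together with a geodesic-type interpolation between two hypothetical minimizers, reducing to the Fano-type argument that two minimizers with the same value must coincide.

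The heart of the matter is finite generation of ${\rm gr}_v R$. Here I would argue by a degeneration-and-induction scheme: first reduce to the case where $v$ is a quasi-monomial valuation of rational rank one, i.e.\ a divisorial valuation $c\cdot{\rm ord}_E$ up to scaling, using that the minimizer of $\tbeta$ lies in a rational polytope of valuations (a "Abban--Zhuang"-style dual complex on a fixed birational model) on which $\tbeta$ is analytic and strictly convex, so the minimum is attained at a single quasi-monomial valuation whose support can be analyzed by passing to the associated graded ring of the weight filtration. The key input would be an analog of the Higher Rank Finite Generation theorem adapted to the soliton setting: the $\tbeta$-minimizer, being an lc place of a klt-type $\bQ$-complement (this is where one must produce the right complement $\Delta + D$ such that $A_{X,\Delta}(v) = v(D)$ and $(X,\Delta+D)$ is log canonical), has finitely generated associated graded ring. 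Producing that complement is the crucial step, and it relies on the fact that the minimizer has $\tbeta(v)$ achieving a special value forcing it to be an lc place of some complement, analogous to how $\delta$-minimizers are lc places of $\bQ$-complements when $\delta \le 1$.

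Once finite generation is in hand, I would set $X_0 = {\rm Proj}\,{\rm gr}_v R$, carry the divisor $\Delta$ along to get $\Delta_0$, and extract the vector $\xi_v$ from the grading (the rational rank of $v$ gives the dimension of the torus $\bT$ acting on $X_0$, and $\xi_v$ is the element of its Lie algebra induced by the weight of $v$). The degeneration is then the usual Rees-construction test configuration $\cX \to \bA^1$ with central fiber $X_0$. It remains to check that $(X_0,\Delta_0,\xi_v)$ is K-semistable as a triple: this should follow from the minimality of $v$ for $\tbeta$ together with the basic compatibility that $\tbeta$ for $(X,\Delta)$ restricted to $\bT$-invariant valuations on $X_0$ dominating $\xi_v$ reproduces the $\tbeta$ (equivalently $h$) functional for the triple $(X_0,\Delta_0,\xi_v)$; a minimizer of value $\ge$ the infimum on $X_0$ combined with lower semicontinuity of $h$ under degeneration forces K-semistability, i.e.\ $h(X_0,\Delta_0,\xi_v) \ge 1$ (or $=0$ in the appropriate normalization), with equality transported from $X$.

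I expect the main obstacle to be the finite generation step, specifically producing the log canonical complement realizing the minimizing valuation as an lc place: in the soliton setting the relevant weighted/twisted volume and log-discrepancy quantities involve the vector $\xi_v$ itself, so the "bounded complements" machinery (which in the Fano case is a theorem of Birkar applied to $\delta \le 1$) must be upgraded to a statement about $g$-weighted invariants, and one must show the minimizer, a priori only quasi-monomial, actually satisfies the numerical criterion to be an lc place. A secondary difficulty is the descent of strict convexity and uniqueness through the two-step structure — ensuring the $\xi_v$ produced here is exactly the one making the second-step K-polystable degeneration of \cite{HL20-uniqueness} apply — but this should be largely formal given the earlier results cited in the excerpt.
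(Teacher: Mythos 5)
Your overall architecture does match the paper's: uniqueness via strict convexity along geodesics of filtrations, finite generation via realizing the minimizer as an lc place of a suitable $\bQ$-complement and invoking the higher rank finite generation theorem of \cite{LXZ-HRFG}, and K-semistability of the central fiber via the Han--Li theory of special $\R$-test configurations (the paper simply quotes \cite{HL20-uniqueness}*{Theorem 1.6} for that last step). However, there are two genuine problems in your finite generation step. First, the proposed reduction to a rational rank one (divisorial) valuation is both false and unnecessary: the $\tbeta$-minimizer is in general an honest quasi-monomial valuation of higher rational rank (already for toric pairs it is $\wt_\xi$ with $\xi$ irrational), and no interpolation or polytope argument reduces it to $c\cdot\ord_E$. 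The paper never attempts such a reduction; it works directly with the quasi-monomial $v_0$, shows it is a \emph{monomial} lc place of a special $\bQ$-complement, and applies \cite{LXZ-HRFG}*{Theorem 4.2}, which is designed exactly for the higher rank case.

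Second, the step you correctly flag as the main obstacle --- producing the complement --- is where the paper's genuinely new input lies, and your sketch does not contain it. The mechanism is a valuative criterion for $h$-minimizers (Theorem \ref{t:Hmin induce K-ss triple} and Corollary \ref{c:hmin compute delta}): $v_0$ computes $h(X,\Delta)$ if and only if $A_{X,\Delta}(v)\ge \tS(v_0;v)$ for every $v\in\Val_X^\circ$ and $A_{X,\Delta}(v_0)=\tS(v_0;v_0)$; equivalently, $v_0$ computes the self-weighted stability threshold $\delta_g(X,\Delta,v_0)=1$ with weight $g(x)=e^{-x}$ taken with respect to $v_0$ itself. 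This is proved by differentiating, at $t=0$, the convex function obtained from the geodesic between $\cF_{v_0}$ and $\cF_v$ and the compatible DH measure on $\R^2$, together with $\LNA(\cF_v)\le A_{X,\Delta}(v)$ --- i.e.\ by the same convexity machinery used for uniqueness, not by any abstract ``special value'' principle. Once this is in place, the complement is produced not by upgrading Birkar-type bounded complements to the weighted setting (as you suggest), but by taking $g$-weighted $(m,v_0)$-basis type divisors $D_m$ compatible with $v_0$ and an auxiliary ample divisor $G$ avoiding the center, using the uniform bound $D_m\ge cG$ (Lemma \ref{l:D>=cG}), the convergence $\delta_{g,m}\to\delta_g=1$ (Lemma \ref{l:delta=lim delta_m}), and the approximation Lemma \ref{l:complement} (which rests on \cite{BCHM} and \cite{LXZ-HRFG}*{Lemmas 3.1--3.2}) to conclude that $v_0$ is an lc place of a special $\bQ$-complement. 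Without the weighted-$\delta$ formalism and the valuative criterion, your outline cannot be completed as stated.
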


It was shown in \cite{HL20-uniqueness} that the uniqueness statement in the above theorem follows from the finite generation of the $\gr_v R$. In this note, we first establish stronger convexity results for various non-archimedean functionals (see Theorem \ref{t:DHconvex}). Then we will obtain uniqueness as a consequence, without using  finite generation.

\medskip

The second step to proving Theorem \ref{t-optimaldeg} is to establish the following Yau-Tian-Donaldson (YTD) Conjecture for K\"ahler-Ricci solitons.
\begin{thm}[{YTD Conjecture for K\"ahler-Ricci Solitons}]\label{t-YTD}
A  triple $(X,\Delta,\xi)$ is K-polystable if and only if it is reduced uniformly Ding stable. In particular, when $\bk=\bC$, $(X,\Delta,\xi)$ admits a K\"ahler-Ricci soliton if and only if it is K-polystable.
\end{thm}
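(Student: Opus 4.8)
\emph{Proof strategy.} The statement consists of the algebraic equivalence that $(X,\Delta,\xi)$ is K-polystable if and only if it is reduced uniformly Ding stable, together with, over $\bk=\bC$, the equivalence of K-polystability with the existence of a K\"ahler--Ricci soliton. The latter I would deduce from the former by the analytic Yau--Tian--Donaldson correspondence for solitons: reduced uniform Ding stability yields a minimizer of the weighted Ding functional, hence a soliton via the regularity theory for the weighted complex Monge--Amp\`ere equation, while conversely a soliton makes the weighted Ding energy coercive modulo $\Aut^0$, hence K-polystable; for this I would cite the analytic literature. The remaining work, and the new content, is the algebraic equivalence.

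Since each side implies K-semistability, I would assume $(X,\Delta,\xi)$ is K-semistable throughout. The direction ``reduced uniformly Ding stable $\Rightarrow$ K-polystable'' is formal: for a $\bT$-equivariant special test configuration $\cX$ of $(X,\Delta)$, the generalized Futaki invariant equals the non-Archimedean Ding invariant $\DNA(\cX)$ because the central fibre is Fano, and reduced uniform Ding stability bounds $\DNA(\cX)$ below by a positive multiple of the reduced non-Archimedean $J$-functional, which is nonnegative and vanishes exactly on product test configurations coming from $\bT$. Hence $\Fut(\cX)\ge 0$, with equality only for such products --- which is precisely K-polystability.

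For ``K-polystable $\Rightarrow$ reduced uniformly Ding stable'' I would argue by contradiction, running the argument behind Theorem \ref{t-finitegen} in the weighted, $\bT$-equivariant setting. K-semistability gives $\delta_{\bT}^{\xi}(X,\Delta)\ge 1$ (equivalently, the pertinent reduced minimum of $\tbeta$ is at its critical value); and by a weighted, reduced analogue of the estimate ``$\delta>1\Rightarrow$ uniform Ding stability'' of Fujita--Li and Blum--Jonsson, failure of reduced uniform Ding stability would force $\delta_{\bT}^{\xi}(X,\Delta)=1$, with the minimizing problem not uniform. Applying the finite-generation machinery $\bT$-equivariantly and with the soliton weight then produces a $\bT$-invariant quasi-monomial valuation $v$ computing $\delta_{\bT}^{\xi}=1$ whose associated graded ring is finitely generated and whose $\Proj$ is a special test configuration degenerating $(X,\Delta)$ to a K-semistable triple with vanishing generalized Futaki invariant. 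Using that $v$ computes the \emph{reduced} threshold, together with the strict convexity of the relevant non-Archimedean functionals (Theorem \ref{t:DHconvex}), this test configuration is not a product; this contradicts K-polystability. Therefore $\delta_{\bT}^{\xi}(X,\Delta)>1$, and the weighted estimate upgrades it to reduced uniform Ding stability.

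I expect the main obstacle to be precisely this last direction, and within it the transfer of finite generation to the needed form: constructing a finitely generated minimizer of the reduced weighted stability threshold and certifying that it is transverse to $\bT$. This requires setting up and comparing the weighted non-Archimedean functionals $\DNA$, $\HNA$ and $\JNA$ for triples, so that the failure of reduced uniform Ding stability genuinely pins $\delta_{\bT}^{\xi}$ at $1$; running the equivariant finite-generation argument with the soliton weight; and exploiting the convexity along geodesics of Theorem \ref{t:DHconvex} to exclude the possibility that the minimizing valuation merely reparametrizes the torus action. Once such a non-product special degeneration with vanishing Futaki invariant is produced, the contradiction with K-polystability --- and the theorem --- follows at once.
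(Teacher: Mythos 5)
Your overall architecture for the hard direction matches the paper's: assume K-polystability (i.e.\ $g$-Ding polystability for $g=e^{-x}$) but not reduced uniform Ding stability, use the weighted valuative criterion (Corollary \ref{c:ss=delta>=1}, Lemma \ref{l:wstc}) and the existence of a minimizer when reduced uniformity fails (Proposition \ref{p:red uks minimizer}), then invoke the $\bT$-equivariant finite generation with the soliton weight (Corollary \ref{c-deltafg}) to manufacture a destabilizing non-product degeneration. But there is a genuine gap at the decisive step where you certify that the degeneration is not a product. Strict convexity in Theorem \ref{t:DHconvex} is not the relevant tool: it says $\HNA$ is strictly convex along geodesics unless the two filtrations agree up to a shift, and in the paper it is used only for uniqueness of the $h$-minimizer; it does not detect whether a $\delta_g$-minimizer is of the form $\wt_\eta$. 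In the paper the non-product property is built into Proposition \ref{p:red uks minimizer} itself (following the appendix of \cite{XZ20cm}, the minimizer is a carefully rescaled limit of divisorial lc places not of the form $\wt_\eta$, arranged so that the limit $v\neq\wt_\eta$). Moreover, you pass directly from finite generation of $\gr_v R$ to ``a special test configuration with vanishing Futaki invariant''; when $v$ has rational rank $>1$ this only produces an $\R$-test configuration, which does not immediately contradict K-polystability as defined via honest test configurations. The paper's Corollary \ref{c-YTD} instead perturbs $v$ inside the simplicial cone $\QM(Y,E)$ to a nearby $\bT$-invariant \emph{divisorial} valuation $w$ — using that $S_g(v_0;\cdot)$ is linear near $v$ and $\gr_w R\cong\gr_v R$ there, as in \cite{Xu20-HRFG} — so that $w$ still computes $\delta_g=1$, is still not any $\wt_\eta$, and yields a non-product $\bT$-equivariant test configuration with $\DNA_g\le 0$ via Corollary \ref{c:LNA<=A}. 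Some such perturbation (or an extension of the polystability test to $\R$-test configurations) is needed and is missing from your outline.

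For the converse direction your ``formal'' argument glosses over a torus mismatch that the paper explicitly flags in Remark \ref{rem-equiv}: reduced uniform Ding stability is an inequality quantified over test configurations equivariant under a \emph{maximal} torus $\T^{\max}<\Aut(X,\Delta,\xi)$, whereas K-polystability of the triple is tested against $\bT$-equivariant (weakly) special test configurations, where $\bT$ — the torus carried by the data with $\xi\in N_\bR$ — may be strictly smaller; such a test configuration need not be $\T^{\max}$-equivariant, so the stability inequality does not directly apply to it. The paper sidesteps this by deducing this implication analytically over $\bC$ from Theorem \ref{t-HL-YTD-soliton} together with \cite{BWN14}, and notes that the algebraic route requires first obtaining polystability with respect to $\T^{\max}$ (via \cite{HL20-YTD}) and then descending to $\bT$ by the argument of \cite{LWX-tangentcone}. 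You should either quote the analytic results as the paper does, or supply this descent argument.
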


In fact, in \cite{HL20-YTD}, it is proven that the reduced uniform Ding stability of $(X,\Delta,\xi)$ is equivalent to the existence of a  K\"ahler-Ricci soliton, by using variational methods. Here we verify that reduced uniform Ding stability is equivalent to K-polystability. When $X$ is smooth and $\Delta=0$, the second part of Theorem \ref{t-YTD} is proved in \cites{DS16, CSW18}.

\begin{rem}
As we already mentioned, when $X$ is smooth, the Cheeger-Colding-Tian theory can be used to establish Theorem \ref{t-optimaldeg}, Theorem \ref{t-finitegen} and Theorem \ref{t-YTD}. In fact one can obtain the optimal degeneration from the study of the Hamilton-Tian conjecture on the long time behavior of K\"ahler-Ricci flows on $X$. See \cites{DS16, TZ16, Bamler, CW20, CSW18}. However, it seems to us it is hard to extend these types of arguments to the more general (possibly singular) case.

Recent work of Han and Li builds an algebraic framework for studying the two-step degeneration process. Specifically, in \cite{HL20-uniqueness}, they developed the non-Archimedean theory for the $\HNA$-functional (based on the $H$-function(al) defined in \cite{TZZZ13, He16}), and interpret the existence of the optimal degeneration, which is essentially equivalent to Theorem \ref{t-finitegen}, in terms of geometric properties of the minimizer of the $\tbeta_{X,\Delta}$-function, which is a variant of the $\HNA$-functional but defined on $\Valc\cup\{v_{\triv}\}$.

From the algebro-geometric viewpoint, the study of $\tbeta_{X,\Delta}$ in \cite{HL20-uniqueness} is entirely parallel to the study of the stability threshold of a log Fano pair or in the local setting the normalized volume function of a klt singularity. Therefore, it is natural to apply the arguments in the former problems to the current case. 
Indeed, \cite{HL20-uniqueness} has made significant progress in carrying out this study, and the main remaining step is the finite generation of the graded ring induced by minimizers of various functions, e.g. $\tbeta_{X,\Delta}$ and $\delta_g(X,\Delta,\xi)$.

In \cite{LXZ-HRFG}, the finite generation of the associated graded ring for the valuation computing the stability threshold is solved. In this note, we  solve the finite generation in Theorem \ref{t-finitegen} and Theorem \ref{t-YTD} by a similar method.  We also give a more straightforward argument of the uniqueness without using the finite generation (which is needed in \cite{HL20-uniqueness}), but by establishing convexity of various functionals based on the arguments in \cite{XZ20nvol}.

\end{rem}

We will also investigate a moduli approach to study general log Fano pairs with fixed $h$-invariant.
%Corresponding to the two step degeneration, we also have two morphisms.
\begin{thm}[{=Theorem \ref{t-moduli}}]\label{t-stack}
For a fixed dimension $n$, volume $V$, a positive integer $N$ and a constant $h_0$, families of $n$-dimensional log Fano pairs $(X,\Delta)$ with $(-K_X-\Delta)^n=V$, $N\Delta$ integral and $h(X,\Delta)\ge h_0$ are parametrized by an Artin stack 
$\mathcal{M}^{\rm Fano}_{n,V,N,h^+_0}$ of finite type.  
\end{thm}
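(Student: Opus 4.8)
The plan is to split the statement into (i) an a priori boundedness statement for the relevant log Fano pairs and (ii) the construction of the moduli stack, step (i) being the substantial one: $n$-dimensional log Fano pairs of fixed anticanonical volume are not a bounded family, so the hypothesis $h(X,\Delta)\ge h_0$ must be genuinely exploited.

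First I would prove that, for fixed $n,V,N,h_0$, the log Fano pairs $(X,\Delta)$ with $\dim X=n$, $(-K_X-\Delta)^n=V$, $N\Delta$ integral and $h(X,\Delta)\ge h_0$ form a bounded family. The approach is to bound the stability threshold from below: I expect that there is $\delta_0=\delta_0(n,V,h_0)>0$ with $h(X,\Delta)\ge h_0\Rightarrow\delta(X,\Delta)\ge\delta_0$. One way to see this is directly from the description $h(X,\Delta)=\inf_v\tbeta_{X,\Delta}(v)$: if $\delta(X,\Delta)$ were very small, pick a divisorial valuation $v$ with $A_{X,\Delta}(v)/S_{X,\Delta}(v)$ close to $\delta(X,\Delta)$ and normalized by $A_{X,\Delta}(v)=1$; since $\tbeta_{X,\Delta}$ is built from the Duistermaat--Heckman measure of $\cF_v$ weighted by $e^{-x}$, and this measure has barycenter $S_{X,\Delta}(v)\approx 1/\delta(X,\Delta)$, large compared to $A_{X,\Delta}(v)=1$, a suitable rescaling $tv$ drives $\tbeta_{X,\Delta}(tv)$ very negative, contradicting $h(X,\Delta)\ge h_0$. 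Alternatively, one can argue through Theorem~\ref{t-finitegen}: $(X,\Delta)$ degenerates to a K-semistable triple $(X_0,\Delta_0,\xi_v)$ of volume $V$ with $\xi_v$ bounded in terms of $h_0$, and boundedness of such weighted K-semistable triples, together with the fact that the degeneration is induced by a valuation of bounded complexity, bounds $(X,\Delta)$ itself. Either way, boundedness then follows from the known boundedness of log Fano pairs with fixed volume, $N\Delta$ integral, and stability threshold bounded away from $0$ (Jiang-type boundedness of K-semistable Fanos combined with Birkar's theorems).

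Granting boundedness, the moduli stack is obtained in the usual way. Choose $m\gg 0$, depending only on the bounded family, so that $\cO_X(-mr(K_X+\Delta))$ is very ample with no higher cohomology and a fixed Hilbert polynomial for every pair in question; this embeds them uniformly into a fixed $\PP^M$, and they are then parametrized by a finite-type locally closed subscheme $Z$ of an appropriate Hilbert scheme of pairs, obtained by imposing that the subscheme is a normal variety $X$ with $(X,\Delta)$ a klt log Fano pair, $-K_X-\Delta$ $\bQ$-Cartier in the prescribed numerical class, $N\Delta$ integral, and $(-K_X-\Delta)^n=V$; the group $G=\PGL_{M+1}$ acts on $Z$. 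Finally one restricts to the locus where $h\ge h_0$: here I would use that $h$ is lower semicontinuous and constructible in families of log Fano pairs, the constructibility coming from the relative form of Theorem~\ref{t-finitegen}, which forces the minimizing valuation to be divisorial of bounded complexity so that $h$ is computed by a bounded family of divisorial valuations. Then $Z_{h\ge h_0}\subseteq Z$ is open and $G$-invariant, and
\[
\mathcal{M}^{\rm Fano}_{n,V,N,h^+_0}\ :=\ \bigl[\,Z_{h\ge h_0}/G\,\bigr]
\]
is an Artin stack of finite type representing the indicated moduli functor, independent of the choices of $m$ and of the embedding.

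The hard part, I expect, is the boundedness, and inside it the quantitative comparison between $h$ and $\delta$ (equivalently, the control of $\xi_v$ in the degeneration of Theorem~\ref{t-finitegen}); the remaining passage from boundedness to a finite-type stack is routine, its only non-formal ingredient --- the good behaviour of $h$ in families --- being itself a consequence of the finite generation in Theorem~\ref{t-finitegen}.
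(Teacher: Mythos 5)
Your overall skeleton (boundedness, plus good behaviour of $h$ in families, plus the standard Hilbert-scheme/quotient-stack construction) is the same as the paper's, but the crux --- deducing from $h(X,\Delta)\ge h_0$ a positive lower bound on a stability-type invariant --- is not actually proved in your proposal. Your first argument uses only that the barycenter $S(v)$ of $\Dh_{\cF_v}$ is large compared with $A_{X,\Delta}(v)=1$, and that is not enough: for a general probability measure with large barycenter, e.g. $(1-\epsilon)\delta_0+\epsilon\delta_M$ with $\epsilon$ tiny and $\epsilon M$ huge, one has $t-\tS(t\,\cdot)\ge-\epsilon$ for every rescaling parameter $t\ge 0$, so no rescaling drives the functional very negative. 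What makes the implication true is the special shape of Duistermaat--Heckman measures, namely $\Dh_{\cF_v}=f(\lambda)^{n-1}\,d\lambda$ with $f$ concave (by \cite{LM09}, \cite{ELMNP}); the paper's Proposition \ref{p-bounded} exploits exactly this, comparing $f$ with the linear function through $(b,f(b))$ and $(T,0)$ and integrating by parts, to show that $\tbeta_{X,\Delta}(v)\ge h_0$ for all divisorial $v$ forces $T(v)\le M(h_0,n)$ whenever $A_{X,\Delta}(v)=1$, i.e. a lower bound on Tian's alpha invariant (no rescaling is needed), after which boundedness follows from \cite{Che18}, \cite{Jia20}, \cite{XZ20nvol}. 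Your claimed implication $h\ge h_0\Rightarrow\delta\ge\delta_0$ is in fact true (since $\alpha\le\delta$), but your justification of it is the step that fails; and your alternative route through Theorem \ref{t-finitegen} is not available either, since boundedness of K-semistable triples with bounded $\xi$ is not established here (it is announced as forthcoming work), and boundedness of a degeneration does not formally yield boundedness of the original pair.

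On the behaviour of $h$ in families, your mechanism is also not quite the right one: the minimizer of $\tbeta_{X,\Delta}$ is in general only quasi-monomial, not divisorial (it cannot be divisorial when the induced $\xi_v$ is irrational), so constructibility cannot be reduced to a bounded family of divisorial valuations. The paper's Theorem \ref{t-openness} instead uses that the minimizer is an lc place of a $\bQ$-complement (via Lemma \ref{l:complement} and the proof of Theorem \ref{t:f.g. for delta_g minimizer}), hence of an $N$-complement with $N$ uniform by \cite{BLX19}; one then stratifies the bounded family of such complements, takes fiberwise log resolutions, and uses invariance of plurigenera to see that $\tbeta$ is constant on the dual complexes along each stratum. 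Lower semicontinuity, which you simply assert, also needs a separate argument: extending filtrations from the generic to the special fiber over a DVR and using $\LNA=\mu$ together with lower semicontinuity of log canonical thresholds, plus the equality of the two DH measures. Once boundedness and these two statements are in place, your passage to the finite-type Artin stack is indeed routine and agrees with the paper.
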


In the upcoming work, we aim to show that there is a finite type Artin stack $\mathcal{M}^{\rm Kss}_{n,V,N,h_0}$ which parametrizes families of $n$-dimensional K-semistable triples $(X,\Delta,\xi)$ with $(-K_X-\Delta)^n=V$, $N\Delta$ integral and $h(\xi)=h_0$. Moreover, $\mathcal{M}^{\rm Kss}_{n,V,N,h_0}$ admits a proper good moduli space ${M}^{\rm Kps}_{n,V,N,h_0}$. Then we will study the two-step degeneration from a  moduli theoretic viewpoint. 
%$$\mathcal{M}^{\rm Fano}_{n,V,N,h_0}\to \mathcal{M}^{\rm Kss}_{n,V,N,h_0}\to {M}^{\rm Kps}_{n,V,N,h_0},$$
%which captures the two-step degeneration process. 

\subsection{Outline of the proof}
In recent years, there have been two functions on the space of valuations which have been intensively studied in algebraic geometry. The first one is the function $\frac{A_{X,\Delta}(\cdot)}{S(\cdot)}$ of a log Fano pair $(X,\Delta)$; and the second one is the normalized volume function on a klt singularity $x\in (X,\Delta)$. Many of their fundamental properties were proved in a sequence of works. 
The general framework for the proofs of the theorems in this paper is largely parallel to the previous works on the study of these two functions, especially the first one. 

Step 1: The first step is to show the strict convexity of the $\HNA$-functional. For any pair of filtrations $\cF_0$ and $\cF_1$, there is a natural family $(\cF_t)_{t\in [0,1]}$  of filtrations connecting them called the {\it geodesic} (see Section \ref{ss:geodesic}). To study it, we define a measure ${\rm DH}_{\cF_0,\cF_1}$ over $\mathbb{R}^2$ (called {\it the compatible Duistermaat-Heckman measure}) that encodes $\Dh_{\cF_t}$ for $t\in [0,1]$ (see Section \ref{s:measureR2}). Then convexity results for various functionals, e.g $\ENA$ and $\tS$ along geodesics can be proved by doing integration over this measure. And for $\LNA$, the convexity is proved by interpreting it as the log canonical slope $\mu$ and then applying results from \cite{XZ20nvol} to compare log canonical thresholds. 
As a result, this yields the convexity of $\DNA$ and the strict convexity of $\HNA$ along geodesics (analogous results in the archimedean setting were proved in \cite{Ber15}). The latter will imply the uniqueness of the minimizer. 

Step 2: To prove Theorem \ref{t-YTD}, we will take a similar strategy to the solution of the usual YTD Conjecture for log Fano pairs.
First, we will extend the usual definition of the $S$-invariant function to the weighted setting with respect to a quasi-monomial valuation $v_0$ (see Section \ref{ss-weighteddelta}) and then we can define the corresponding $\delta(X,\Delta,v_0)$. 

We are first interested in the special case when $v_0$ is a valuation coming from a vector field $\xi$ induced by a torus action. In this case, as seen in \cite{HL20-uniqueness}, many criteria for testing the the K-semistability or (reduced) uniformly K-stability of a pair $(X,\Delta)$ can be extended in to the setting of triples $(X,\Delta,\xi)$. In particular, we extend results from \cite{Li19} and \cite{XZ20cm}*{Appendix} to this setting in Sections \ref{ss-reduceduniformK} and \ref{ss-reducedminimizer}.

 In a similar but slightly different setting, we consider the minimizer $v_0$ of the non-homogeneous function $\tbeta_{X,\Delta}$, and we show it computes $\delta(X,\Delta,v_0)$ for the weight function $g=e^{-x}$. See Section \ref{ss-HNAval}.

Step 3: In the last step, we will show in the above two cases $v_0$ is a monomial lc place of a special $\mathbb{Q}$-complement (in the sense of \cite[Definition 3.3]{LXZ-HRFG}) constructed from a weighted basis type divisor. Then we can apply the finite generation result in \cite{LXZ-HRFG} to show the associated graded ring of $v_0$ is finitely generated (see Corollary \ref{c-hfg} and \ref{c-deltafg}). This completes the proof of Theorem \ref{t-finitegen} and \ref{t-YTD}.
%and this completes the proof of Theorem \ref{t-finitegen}. Then we can apply the work in \cite{LXZ-HRFG} again to establish a finite generation theorem (see Corollary ) again to conclude that $(X,\Delta;\xi)$ is not K-polystable.

\medskip 

 Finally, to prove Theorem \ref{t-stack}, i.e. to verify $\mathcal{M}^{\rm Fano}_{n,V,N,h^{+}_0}$ is an Artin stack of finite type, we first need to show that the set of all $n$-dimensional log Fano pairs $(X,\Delta)$ with $(-K_X-\Delta)^n=V$, $N\Delta$ integral and $h(X,\Delta)\ge h_0$ is bounded; and then we conclude by showing for any $\mathbb{Q}$-Gorenstein family of $(X,\Delta)\to B$ over a finite type base $B$, the function $B\ni t \mapsto h(X_t, \Delta_t)$ is constructible and lower semicontinuous.

\bigskip

\noindent {\bf Acknowledgement}: We would like to thank Mattias Jonsson, Valentino Tosatti, and Xiaowei Wang for helpful conversations. We are also grateful to the anonymous referee for the valuable comments. 
HB is partially supported by NSF DMS-1803102 and DMS-2200690.
YL is partially supported by NSF DMS-2148266 (formerly DMS-2001317) and an Alfred P. Sloan
research fellowship. 
CX is partially supported by NSF DMS-2153115 (formerly DMS-1901849), DMS-2139613 (formerly DMS-1952531), DMS-2201349, and a Simons Investigator award.
ZZ is partially supported by NSF DMS-2240926 (formerly DMS-2055531) and a Clay research fellowship.

\section{Preliminaries}

\noindent {\bf Notation and Conventions:}  We follow the standard notation as in \cite{KM98, Kol13, Laz04}. 

A variety is a separated integral scheme of finite type over $\bk$. A pair $(X,\Delta)$ consists of a normal variety $X$ and an effective $\bQ$-divisor $\Delta$ on $X$ such that $K_X+\Delta$ is $\bQ$-Cartier. 
A pair $(X,\Delta)$ is called {\it log Fano} if $X$ is projective, $(X,\Delta)$ is klt, and $-K_X-\Delta$ is ample.
A log smooth model $(Y,E)$ of a pair $(X,\Delta)$ consists of a projective birational morphism $\pi\colon Y\to X$ and a reduced divisor $E$ on $Y$ such that $(Y,\Supp(E+\Ex(\pi)+\pi_*^{-1}\Delta))$ has simple normal crossing support.

Let $X$ be a normal variety.  We denote by $\Val_X$ the space of real valuations $K(X)^\times \to \R $ centered on $X$ whose restriction over the ground field $\bk$ is trivial. We endow $\Val_X$ with the weak topology.  We denote the trivial valuation on $X$ by $v_{\triv}$.

For the definitions of divisorial valuations, quasi-monomial valuations, and log discrepancy of valuations, see e.g. \cites{JM12, LLX20, Xu20}.

\begin{defn}
Let $(X,\Delta)$ be a pair. We denote by 
\[
\Val_X^\circ:=\{v\in \Val_X\mid A_{X,\Delta}(v) <+\infty\textrm{ and }v\neq v_{\triv}\}.
\] If $(X, \Delta)$ admits a torus $\bT$-action, then we denote by $\Val_X^{\bT}$  the subset of $\Val_X$ consisting of all $\bT$-invariant valuations, and let $\ValT:=\Val_X^\circ\cap \Val_X^{\bT}$. %We note that $v_{\triv}$ is not contained in $\Val_X$ by our convention. 
\end{defn}

\subsection{Filtrations}
Let $(X,\Delta)$ be an $n$-dimensional log Fano pair.
 Fix $r>0$ so that  $L:=-r(K_X+\Delta)$ is an ample Cartier divisor.
We write 
\[
R(X,L):=R
: = 
\bigoplus_{m \in \N} R_m
: =
\bigoplus_{m \in \N} H^0(X,\cO_X(mL))
\]
for the section ring of $L$ and $N_m := \dim R_m$.

\begin{defn}
A \emph{filtration} $\cF$ of $R$ is a collection of vector subspaces $\cF^\la R_m \subset R_m$ for each $\la \in \R$ and $m \in \N$ satisfying 
\begin{itemize}
\item[(F1)] $\cF^\la R_m \subset \cF^{\la'} R_m$ for $\la\geq \la'$;
\item[(F2)] $\cF^\la R_m = \bigcap_{\la' <\la} \cF^{\la'} R_m$;
\item[(F3)] $\cF^\la R_m = R_m$ for $\la \ll 0$ and $\cF^\la R_m=0$ for $\la\gg0$;
\item[(F4)] $\cF^\la R_m \cdot \cF^{\la'} R_{m'} \subset \cF^{\la+\la'} R_{m+m'}$.
\end{itemize}
\end{defn}
A filtration $\cF$ is a \emph{$\Z$-filtration} if $\cF^{\lceil \la \rceil} R_m= \cF^\la R_m$ for all $\la \in \R$ and $m \in \N$. A filtration is \emph{finitely generated} if the associated graded $\bk$-algebra
$\gr_\cF R:=\bigoplus_{(m,\la) \in \N\times \R} \gr_{\cF}^\la R_m $, where $\gr_{\cF}^\la R_m = \cF^\la R_m / (\cup_{\mu > \la} \cF^\mu R_m)$, is finitely generated.

The {\it translation} of $\cF$ by $c\in \mathbb{R}$ is the filtration defined by $\cG^\la R_m : = \cF^{\la-mrc} R_m$. 
The {\it scaling} by $a\in \bR_{>0}$ is the filtration defined by $\mathcal{H}^{\la} R_m := \cF^{\la/a}R_m$.

A filtration $\cF$ is \emph{linearly bounded} if there exists $C>0$ so that $\cF^{mC} R_m =0$ for all $m>0$. Note that there always exists $C>0$ so that $\cF^{-mC}R_m=R_m$ for all $m>0$ by  the finite generation of $R$ combined with (F3) and (F4). 

For an element $s\in R_m\setminus\{0\}$, we set
 $\ord_{\cF}(s): = \max \{ \la \in \R \, \vert \, s\in  \cF^\la R_m \} $. We set $\ord_{\cF}(0)=+\infty$ by convention.
A basis $(s_1,\ldots, s_{N_m})$ of $R_m$ is said to be \emph{compatible} with $\cF$ if  $\cF^\la R_m = {\rm span} \langle s_j \, \vert \, \ord_{\cF}(s_j) \geq \la \rangle$ for each $\la \in \R$.

\begin{example} 
The following  filtrations play an important role in this paper. 

\begin{enumerate}
\item
Given $v\in \Val_X$, there is an induced filtration $\cF_v$ of $R$  defined by 
\[\cF_v^\la R_m : = {\{ s \in R_m \, \vert \, v(s) \geq \la \}}.\]
When $A_{X,\Delta}(v)<+\infty$, $\cF_v$ is linearly bounded \cite[Lemma 3.1]{BJ20}. 
\item Similarly, any effective $\bQ$-divisor $G$ on $X$ induces a filtration $\cF_G$ of $R$ by setting
\[
\cF_G^\lambda R_m := \{s\in R_m \, \vert \, \{s=0\}\geq \lambda G\}.
\]
\item A \emph{test configuration}  $(\cX,\cL)$ of $(X,L)$ induces a finitely generated $\bZ$-filtration of $R$. See \cite[Section 2.5]{BHJ17} for details. 
\end{enumerate}
\end{example}

\subsubsection{Successive minima}
Given a basis $(s_1,\ldots, s_{N_m})$ of $R_m$ compatible with $\cF$, the numbers
\[
\la_{j}^{(m)} := \ord_{\cF}(s_j) \quad \text{ for } 1 \leq j \leq N_m.
\]
are called the \emph{successive minima} of $\cF$ along $R$. 
Since
$\frac{d}{d\la} \dim \cF^\la R_m = -\sum_{j} \delta_{\la_j^{(m)}}$
 the values $(\la_j^{(m)})_j$ are independent of the choice of compatible basis up to reordering.
We write $\la_{\max}^{(m)}= \max \{ \la \in \R \, \vert \,  \cF^\la R_m \neq 0 \}$ and set
$\la_{\max} : = \sup_{m\geq 1}  \frac{\la_{\max}^{(m)} }{mr}$.

%\subsubsection{Scaling and translation} 
%Given a filtration $F$ of $R$, the translate of $F$ by  $c\in \R_{>0}$ and scaling by $d\in \R$
%are the  filtrations $G$ and $H$ of $R$ defined by
%\[
%G^\la R_m : = F^{\la/c} R_m 
%\quad \text{ and } \quad
%H^\la R_m : = F^{\la-mrd} R_m 
%.\]
%The jumping numbers satisfy the relation
%\[
%\la_{j}^{(m)}(G) =c \la_{j}^{(m)}(F) 
%\quad \text{ and  } \quad 
%\la_{j}^{(m)}(H) = \la_j^{(m)}(F) -mr d
%.\]

\subsubsection{Graded linear series}\label{sss-gradedlinearseries}
A graded linear series  $V_\bullet= (V_m)_{m \in \N}$ of $L$ is a collection of vector subspaces $V_m \subset R_m$ satisfying $V_{m} \cdot V_{m'} \subset V_{m+m'}$.
The \emph{volume} of $V_\bullet$ is the value 
\[
\vol(V_\bullet): = \limsup_{m\to \infty} \frac{ \dim V_m}{m^n/n!}. 
\]
%We say $V_\bullet$ \emph{contains an ample series} \cite[Definition 1.1]{BC11}  \cite[Remark 2.10]{LM09}
% if \begin{enumerate}
%\item[(i)] $V_m\neq 0$ for $m\gg0$ and 
%\item[(ii)] there exists 
%$\Q$-divisors $A$ and $E$, ample and effective respectively, such that $L=A+E$ and ${H^0(X,\cO_{X}(mA)) \subset  V_m \subset H^0(X, \cO_{X}(mL))}$
%for all sufficiently divisible $m>0$. 
%\end{enumerate}
%When this condition is satisfied, $\vol(V_\bullet)$ is a limit and $>0$ by \cite[Theorem 2.13]{LM09}.

Given a filtration $\cF $ of $R$ and $s\in \R$, 
we define a graded linear series $V_\bullet^{(s)} $ by setting $V_{m}^{(s)} := \cF^{mr s } R_m$. 
When the choice of filtration is not clear from context, we will denote it by  $V_\bullet^{\cF,(s)}$. 
%The function $s\mapsto \vol(V_\bullet^{(s)})$ is non-decreasing   and   zero for  $s> \la_{\max}$. 

The following result is a consequence of \cite{BC11}. See  \cite[Theorem 5.2]{BHJ17}.

\begin{prop}\label{p:BC11}
Let $\cF$ be a linearly bounded filtration of $R$. 
\begin{enumerate}
\item[(i)] For each $s< \la_{\max}$, $\vol(V_\bullet^{(s)})$ is a limit.
\item[(ii)] The function $s\mapsto \vol(V_\bullet ^{(s)})^{1/n}$ is concave on $(-\infty, \la_{\max})$ and vanishes on $(\la_{\max}, \infty)$ (hence, it is continuous away from $s=\la_{\max}$).
\end{enumerate}
\end{prop}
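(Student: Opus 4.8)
The plan is to deduce Proposition~\ref{p:BC11} from the theory of Okounkov bodies of filtered linear series of \cite{BC11}, of which it is essentially a special case (see also \cite[Theorem 5.2]{BHJ17}).

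First I would set up the Okounkov-body framework: after replacing $X$ by a resolution if necessary --- which changes neither $R$, nor $\cF$, nor any volume involved --- fix an admissible flag $Y_\bullet\colon X=Y_0\supset Y_1\supset\cdots\supset Y_n=\{\mathrm{pt}\}$ of smooth subvarieties, with associated rank-$n$ valuation $\nu\colon R\setminus\{0\}\to\Z^n$ having one-dimensional leaves. For $t\in\R$ let $R^{(t)}_\bullet$ be the graded linear series with $R^{(t)}_m:=\cF^{mt}R_m$; by (F1)--(F4) this is a graded linear series, and linear boundedness (in both directions) gives $R^{(t)}_\bullet=R$ for $t\ll0$ and $R^{(t)}_m=0$ for all $m\ge1$ once $t>r\la_{\max}$. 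Since $R=R(X,L)$ is big ($L$ ample), \cite{BC11} produces the \emph{concave transform} $G=G_\cF\colon\Omega:=\Omega(R)\to\R$, characterized up to boundary by $\{G\ge t\}=\Omega(R^{(t)}_\bullet)$, such that for every $t$ the $\limsup$ defining $\vol(R^{(t)}_\bullet)$ is a genuine limit and
\[
\vol\big(R^{(t)}_\bullet\big)=n!\cdot\mathrm{vol}_{\mathrm{Leb}}\big(\{\a\in\Omega: G(\a)\ge t\}\big).
\]
Concavity of $G$ is the heart of the matter and is where (F4) enters: if $\nu(s_i)/m_i$ approximates $\a_i$ with $s_i\in\cF^{m_it_i}R_{m_i}$, then $s_1s_2\in\cF^{m_1t_1+m_2t_2}R_{m_1+m_2}$ and $\nu(s_1s_2)=\nu(s_1)+\nu(s_2)$, so $G$ at the relevant convex combination of $\a_1,\a_2$ dominates that of $t_1,t_2$; a density argument upgrades this to concavity on all of $\Omega$.

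Granting this, part (i) is immediate: since $V^{(s)}_m=\cF^{mrs}R_m=R^{(rs)}_m$ by definition, $\vol(V^{(s)}_\bullet)=\vol(R^{(rs)}_\bullet)$ is a limit for every $s$, in particular for $s<\la_{\max}$. For (ii), if $s>\la_{\max}$ then $mrs>mr\la_{\max}\ge\la_{\max}^{(m)}$ for all $m\ge1$, so $\cF^{mrs}R_m=0$ and $\vol(V^{(s)}_\bullet)=0$; thus the function vanishes on $(\la_{\max},\infty)$. On $(-\infty,\la_{\max})$ we have $\vol(V^{(s)}_\bullet)^{1/n}=(n!)^{1/n}\,\mathrm{vol}_{\mathrm{Leb}}(\{G\ge rs\})^{1/n}$; since $G$ is concave on the convex body $\Omega$, for $s_0,s_1<\la_{\max}$, $\theta\in[0,1]$ and $s_\theta=(1-\theta)s_0+\theta s_1$ the (bounded, convex) superlevel sets satisfy $(1-\theta)\{G\ge rs_0\}+\theta\{G\ge rs_1\}\subseteq\{G\ge rs_\theta\}$, so the Brunn--Minkowski inequality yields
\[
\mathrm{vol}_{\mathrm{Leb}}\big(\{G\ge rs_\theta\}\big)^{1/n}\ge(1-\theta)\,\mathrm{vol}_{\mathrm{Leb}}\big(\{G\ge rs_0\}\big)^{1/n}+\theta\,\mathrm{vol}_{\mathrm{Leb}}\big(\{G\ge rs_1\}\big)^{1/n},
\]
which is the claimed concavity. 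Continuity on $(-\infty,\la_{\max})$ follows from concavity and on $(\la_{\max},\infty)$ from the vanishing, so $s\mapsto\vol(V^{(s)}_\bullet)^{1/n}$ is continuous away from $s=\la_{\max}$.

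The main obstacle --- the only genuinely non-formal ingredient --- is the input from \cite{BC11}: constructing $G_\cF$, proving it is concave, and showing the filtered volumes $\vol(R^{(t)}_\bullet)$ are true limits equal to Lebesgue volumes of superlevel sets (at bottom a Fujita-type approximation statement for filtered linear series). Everything downstream --- the Brunn--Minkowski step, the vanishing above $\la_{\max}$, and the continuity --- is formal, so in the write-up one would simply invoke \cite{BC11} (resp.\ \cite[Theorem 5.2]{BHJ17}) for this step.
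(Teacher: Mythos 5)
Your proposal is correct and follows the same route as the paper, which offers no independent argument but simply cites \cite{BC11} and \cite[Theorem 5.2]{BHJ17}; your Okounkov-body/concave-transform sketch is precisely the content of those references. The only point worth making explicit in a full write-up is that for $s<\la_{\max}$ the graded linear series $V_\bullet^{(s)}$ contains an ample series (using linear boundedness below and a nonzero section in some $\cF^{m r s'}R_m$ with $s'>s$), which is the hypothesis needed to apply the Boucksom--Chen machinery.
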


\subsubsection{Base ideals}
Given a linearly bounded filtration  $\cF $ 
of $R$, we set 
\[
I_{m,\la} : = \im (  \cF^\la R_m  \otimes_k \cO_{X}(-mL) \to \cO_X),\]
which is the base ideal of the linear series $\cF^\la R_m $.
Note that $I_{m,\la} \cdot I_{m',\la'} \subset I_{m+m',\la+\la'}$.

For each $\la\in \R$, we write $I_{\bullet}^{(\la)}$ % (or $I_{\cF,\bullet}^{(\la)}$ when the choice of the filtration is not clear from the context) 
for the graded sequence of ideals on $X$ defined by setting $I_{m}^{(\la)}: = I_{m,\la mr}$.
For each $m \in \Z_{>0}$, we set 
\[
\cI_{m} :  = \sum_{i\in \Z}  I_{m,i} t^{-i} \subset K(X)(t) \simeq  K(X\times \A^1) .
\]
Note that $ t^{  \lceil \la_{\max}^{(m)} \rceil } \cdot \cI_{m} \subset \cO_{X_{ \A^1}}$, since $I_{m,i}=0$ for $i>   \la_{\max}^{(m)}$. Therefore, $\cI_m$ is a fractional ideal on $X_{ \A^1} := X\times \A^1$.

%\begin{lem} Let $\cF$ be a linearly bounded filtration of $R$. 
%\begin{enumerate}
%\item[(i)] The function $s\mapsto v(I_\bullet^{(s)})$ is non-decreasing  and convex for $s< \la_{\max}$
%\item[(ii)] The function $s\mapsto \lct(X,\Delta;I_\bullet^{(s)})^{-1} $ is convex for $s<\la_{\max}$.
%\end{enumerate}
%\end{lem}

%Note that when $s<\la_{\max}$,
%$I_{m}^{(s)} \neq 0$ for $m\gg0$ by Proposition \ref{p:BC11}(i). Hence, $v(I_\bullet^{(s)})< +\infty$.

%\begin{proof}
%For (i), the function is clearly non-decreasing, since $I_{\bullet}^{(s)} \supset I_\bullet^{(s')}$ when $s<s'$. To see the convexity, fix $s<s' < \la_{\max}$.
%Observe that for positive integers $a$ and $b$, 
%\[
%v(I_{\bullet}^{(s)}) + bv(I_{\bullet}^{(s')}) \geq (a+b)v(I_{\bullet}^{((sa+s'b)/(a+b))}),\]
%since$I_{m,ms}^a \cdot I_{m,ms'}^b \subset I_{m(a+b),m(sa+sb)}$.
%Therefore, 
%\[
%(1-t) v(I_{\bullet}^{(s)}) + tv(I_{\bullet}^{(s')}) \geq v(I_{\bullet}^{(s(1-t)a+s' t)})\]
 %for any $t\in [0,1]\cap\Q$. Using (i), the previous equality further holds for all $t\in [0,1]$. 

%To verify (ii), note that $\lct(X,\Delta;I_\bullet^{(s)})^{-1}  = \sup_{v\in \DivVal_X} \frac{v(I_\bullet^{(s)})}{A_{X,\Delta}(v)}$. Since the supremum of a collection  of convex functions is convex, the result now follows from (i).
%\end{proof}

\subsubsection{Duistermaat-Heckman Measure}
Given a linearly bounded filtration $\cF$ of $R$ and an integer $m>0$, we consider the probability measure  on $\R$ defined by
\[
\nu_m^{\cF} : = \frac{1}{N_m} \sum _{i=1}^{N_m} \delta_{(mr)^{-1} \la_{i}^{(m)}} = - \frac{ d}{d \la} \frac{ \dim \cF^{mr\la} R_m}{N_m} . 
\]
By \cite{BC11} (see  \cite[\S 5.1]{BHJ17}), $\nu_m^\cF$ converges weakly as $m\to \infty$ to the probability measure 
\[
\Dh_\cF
:=
- \frac{d}{d \la}   \frac{ \vol( V_\bullet^{ (\la)}) }{L^n} .
\]
The measure satisfies ${\rm supp}(\Dh_{\cF}) = [{\la}_{\min}, \la_{\max}]$, where 
$${\la}_{\min} := \inf \{ \la \in \R \, \vert \, \vol (V_\bullet^{(\la)} ) < (L^n)  \}$$
and $\la_{\max}$ is as defined previously.

The following statement is an extension of \cite[Lemma 5.13]{BHJ17} from divisorial valuations to valuations with finite log discrepancy.
\begin{lem}\label{l:lminFv}
If  $v\in \Val_{X}$ and $A_{X,\Delta}(v)< +\infty$, then ${\la}_{\min}(\cF_v)=0$.

\end{lem}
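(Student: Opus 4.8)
The plan is to show that $\lambda_{\min}(\cF_v) = 0$ by squeezing $\vol(V_\bullet^{(s)})$ between $L^n$ for $s < 0$ and something strictly smaller for $s > 0$, using the finiteness of $A_{X,\Delta}(v)$. Recall $\lambda_{\min}(\cF_v) = \inf\{\lambda \mid \vol(V_\bullet^{(\lambda)}) < L^n\}$, so I need two things: (a) $\vol(V_\bullet^{(s)}) = L^n$ for all $s < 0$, and (b) $\vol(V_\bullet^{(s)}) < L^n$ for all $s > 0$. Part (a) is essentially formal: $V_m^{(s)} = \cF_v^{mrs}R_m \supseteq \cF_v^0 R_m$, and since $v$ is centered on $X$ the subspace $\cF_v^0 R_m = \{s \in R_m \mid v(s) \geq 0\}$ has codimension at most the number of sections with $v(s) < 0$; more to the point, for $s < 0$ the ideal $I_{m}^{(s)} = I_{m, srm}$ contains the base ideal of a very large chunk of $R_m$, and one checks $\dim \cF_v^{srm} R_m / N_m \to 1$ as $m \to \infty$ because $v$ has a positive-measure portion of the Duistermaat--Heckman measure supported in $(s, \infty)$ — equivalently $\vol(V_\bullet^{(s)})/L^n = \Dh_{\cF_v}((s,\infty))$-type quantity, which is $1$ for $s$ below the support. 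The real content is showing $\lambda_{\min} \geq 0$, i.e. part (b).

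For part (b), the key is that $A_{X,\Delta}(v) < +\infty$ forces $v$ to be "nontrivial" on a definite fraction of sections, so that the graded linear series $V_\bullet^{(s)} = \cF_v^{\geq srm} R_m$ for $s > 0$ has strictly smaller volume than $L^n$. Concretely, I would argue by contradiction: if $\vol(V_\bullet^{(s)}) = L^n$ for some $s > 0$, then by Proposition \ref{p:BC11}(ii) the concave function $t \mapsto \vol(V_\bullet^{(t)})^{1/n}$ is constant equal to $(L^n)^{1/n}$ on $(-\infty, s]$, hence (being concave and bounded by $(L^n)^{1/n}$ everywhere, vanishing past $\lambda_{\max}$) it would have to stay constant until $\lambda_{\max}$ and then drop — but a concave function can't jump, so in fact $\vol(V_\bullet^{(t)}) = L^n$ for all $t < \lambda_{\max}$. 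This means the filtration $\cF_v$ is, volume-theoretically, indistinguishable from the trivial filtration on all of $(-\infty, \lambda_{\max})$. Now I invoke the finiteness of the log discrepancy: a standard computation (e.g. via the valuative criterion for $A_{X,\Delta}$, or by comparing $\cF_v$ with the filtration $\cF_G$ of a divisor and using that $\sum_m \dim \cF_v^{>0} R_m$ must be "large enough") shows that if $v$ has finite log discrepancy and is not trivial, a positive proportion of any basis of $R_m$ must have $v$-value bounded below by a positive constant times $m$, which contradicts $\vol(V_\bullet^{(t)}) = L^n$ near $t = \lambda_{\max}$. Alternatively — and this is probably cleaner — I would directly use that $A_{X,\Delta}(v) < \infty$ implies $S(\cF_v) = \int \lambda \, d\Dh_{\cF_v} > 0$ is finite and the barycenter computation forces $\Dh_{\cF_v}$ to have mass on both sides of $0$; but that the mass on $(0, \infty)$ is positive is exactly what's being claimed, so I must appeal to the underlying geometry rather than just the measure.

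The cleanest route to part (b) is likely this: for $v \in \Val_X$ with finite log discrepancy, the associated graded ring and the ideals $I_m^{(\lambda)}$ relate to the valuation ideals $\fa_m(v) = \{f \mid v(f) \geq m\}$, and finiteness of $A_{X,\Delta}(v)$ gives (by Izumi-type estimates / the fact that $\lct(X,\Delta; \fa_\bullet(v)) = A_{X,\Delta}(v) < \infty$) that $\vol$ of the graded sequence $\fa_{\bullet}$ is positive, which translates into $\Dh_{\cF_v}((0,\infty)) > 0$, i.e. $\vol(V_\bullet^{(s)}) < L^n$ for $s > 0$. I expect this — pinning down the precise Izumi/log-discrepancy estimate that rules out $\vol(V_\bullet^{(s)}) = L^n$ for positive $s$ — to be the main obstacle, since parts (a) and the concavity bookkeeping are routine. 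The cited reference \cite[Lemma 5.13]{BHJ17} handles the divisorial case where one has an explicit birational model; the extension to finite-log-discrepancy valuations should follow by approximating $v$ by quasi-monomial (hence, by a further limiting argument, reducing to finitely many divisorial valuations on a common model) and passing the estimate $\lambda_{\min} = 0$ through the limit, using continuity of $\vol$ and that finite log discrepancy is what guarantees the approximation behaves well. I would set up that approximation and then cite lower semicontinuity / continuity of the relevant volumes to conclude.
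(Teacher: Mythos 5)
There is a genuine gap: the entire content of the lemma is the inequality $\la_{\min}(\cF_v)\le 0$, i.e.\ that for every $s>0$ a definite positive proportion of sections of $R_m$ has $v$-value $< smr$, and your proposal never actually produces this. The paper gets it in one step by bounding $v$ \emph{from above}: on a log resolution $Y\to X$ with $\xi=c_Y(v)$, the Izumi-type inequality \cite[Proposition 5.10]{JM12} gives $v\le c\cdot\ord_\xi$ with $c=A_{Y,0}(v)<+\infty$ (this is where the hypothesis $A_{X,\Delta}(v)<+\infty$ enters), hence $\cF_v^\la R_m\subset\cF_{c\,\ord_\xi}^\la R_m$ and $\la_{\min}(\cF_v)\le\la_{\min}(\cF_{c\,\ord_\xi})=0$ by the divisorial case \cite[Lemma 5.13]{BHJ17}. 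Your concrete steps do not substitute for this. First, the concavity argument is invalid: a concave function on $(-\infty,\la_{\max})$ that equals $(L^n)^{1/n}$ on $(-\infty,s]$ may perfectly well decrease continuously on $(s,\la_{\max})$, so nothing forces $\vol(V_\bullet^{(t)})=L^n$ up to $\la_{\max}$, and no contradiction is reached. Second, the proposed ``translation'' is the wrong statement: $\vol(V_\bullet^{(s)})<L^n$ for all $s>0$ means $\Dh_{\cF_v}$ has mass \emph{near} $0$ (i.e.\ in $[0,s)$ for every $s>0$), not that $\Dh_{\cF_v}((0,\infty))>0$; similarly your earlier claim that ``a positive proportion of a basis has $v$-value bounded \emph{below} by a positive constant times $m$'' would not contradict $\vol(V_\bullet^{(t)})=L^n$ — you need an upper bound on the values of many sections. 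Also $\lct(X,\Delta;\fa_\bullet(v))=A_{X,\Delta}(v)$ is not an identity in general (only an inequality), so that route is not available as stated.

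Finally, the concluding plan — approximate $v$ by quasi-monomial/divisorial valuations and pass $\la_{\min}=0$ through the limit by ``continuity of volumes'' — goes in the wrong direction. The natural approximants (retractions to dual complexes) satisfy $v_k\le v$, hence $\cF_{v_k}\subset\cF_v$ and $\la_{\min}(\cF_{v_k})\le\la_{\min}(\cF_v)$; knowing $\la_{\min}(\cF_{v_k})=0$ then only recovers the trivial inequality $\la_{\min}(\cF_v)\ge 0$. What is needed is a divisorial valuation dominating $v$ from above up to a finite constant, and that is precisely the Izumi step your write-up gestures at but leaves unproved; once it is in place the lemma is immediate, with no limiting argument required.
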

\begin{proof}
It is clear that $\la_{\min}(\cF_v)\geq0$, since $\cF_v^\la R =R$ for $\la \leq 0$. 
For the reverse inequality, 
fix  a log resolution $Y\to X$ of $(X,\Delta)$ and let $\xi :=c_{Y}(v)$ be the center of $v$ on $Y$. 
Note that  $ A_{Y,0}(v)$ is finite,
since $A_{X,\Delta}(v)<+\infty$ by assumption. 
An Izumi type inequality \cite[Proposition 5.10]{JM12}
implies
\[
v(f) \leq c \cdot  \ord_{\xi} (f) 
 \quad\quad \text{ for all  } f\in \cO_{Y, \xi}
 ,
 \]
 where $c:= A_{Y,0}(v)>0$, and, hence, $ \cF_v^\la \subset \cF_{c \cdot \ord_\xi}^\la R $ for all $\la\in \R$.
Therefore, %\footnote{\ZZ{It seems to me that we need the inequality and containment in the opposite direction here? Something like $v(f)\le c\cdot \ord_\xi$.\HB{Thanks for pointing this out. I had an error in the Izumi inequality and it propagated to the next line. I think it is fixed.}}},
 $\la_{\min}(\cF_v) \leq  \la_{\min}(\cF_{c \cdot  \ord_\xi}) = 0$,
 where the equality holds by \cite[Lemma 5.13]{BHJ17},  since $c\cdot \ord_\xi$ is a divisorial valuation.
\end{proof}

%\subsubsection{Relative Limit Measure} \footnote{\ZZ{It seems to me that the notations in this subsection has a lot of overlap with those in Section \ref{ss:geodesic}. Shall we combine these two small sections into one?} \CX{Yes, I agree.}\HB{OK, I consolidated it into  Section 3.1}}

\subsection{Non-Archimedean functionals}

\subsubsection{Energy functional}
Following \cite{BHJ17}, the Monge-Amp\`{e}re Energy of $\cF$ is given by 
\[
\ENA(\cF)  : =  \int_{\R} \la  \, \Dh_{\cF}( d \la) %=  \lim_{m\to \infty} \frac{1}{N_m}  \sum_{i=1}^{N_m}(mr)^{-1} \la_{i}^{(m)}, 
\]
which is the barycenter of $\Dh_{\cF}$.
When $\cF=\cF_v$ for a valuation $v\in \Val_X$ with $A_{X,\Delta}(v)<\infty$, $\ENA(\cF_v)=S(v)$ and $\la_{\max}(\cF_v) = T(v)$ where $S(\cdot)$ and $T(\cdot)$ are the expected and maximal vanishing order appearing in \cite{BJ20}. 

% For $p \in [1, \infty)$, the $L^p$-norm of $\cF$, is given by 
% \[
% \lVert \cF \rVert_p  : = \Big( \int_{\R} |\la -\overline{\la} |^p \,  \Dh_{\cF}( d\la)\Big)^{1/p}
% % =\lim_{m\to \infty} 
% % \left(
% %\frac{1}{N_m}\sum_{i=1}^{N_m} |(mr)^{-1} \la_{i}^{(m)} - \overline{\la} |^p \right)^{1/p}
% ,
% \]
% where $\overline{\la} : = \int_{\R} \la  \, \Dh_{\cF}( d \la)  = \ENA(\cF)$.
% We say a filtration is \emph{almost trivial} if $\lVert \cF \rVert_p =0$ for some (or equivalently all) $1\leq p <\infty$.

\subsubsection{Ding-functional}\label{ss:Ding}
The \emph{Ding invariant} of a linearly bounded filtration $\cF$ is defined by 
\[
\DNA(\cF) = \LNA(\cF) - \ENA(\cF),
\]
where $\LNA(\cF) = \lim_{m\to \infty} \lct(X_{\A^1},\Delta_{\A^1}, \cI_{m}^{\frac{1}{mr}} ; (t) )-1$ and 
\[ \lct(X_{\A^1},\Delta_{\A^1}, \cI_{m}^{\frac{1}{mr}} ; (t))
 := \sup \{ c \in \R \, \vert \, (X_{\A^1},\Delta_{\A^1}, \cI_{m}^{\frac{1}{mr}}\cdot (t)^c) \text{ is sub-lc} \}.\]
This  invariant was introduced in \cite{Ber16} for test configurations 
and \cites{BHJ17, Fuj-valuative} for general filtrations.

\subsubsection{$\HNA$-functional}\label{ss:H}

Following \cite{HL20-uniqueness}, for a linearly bounded filtration $\cF$ we set 
\[
\HNA(\cF) = \LNA(\cF) - \tS(\cF)
,\]
where $\LNA(\cF)$ is defined above and 
$\tS(\cF): =  - \log \int_{\R}  e^{- \la} \, \Dh_{\cF}(d \la)$.
This invariant was introduced in \cite{TZZZ13} for holomorphic vector fields
and then extended to $\R$-test configurations in \cite{DS20} and linearly bounded filtrations in \cite{HL20-uniqueness}. We set 
\[
h(X,\Delta):=\inf_\cF \HNA(\cF) 
,\]
where the infimum runs through all linearly bounded filtrations of $\cF$. By \cite[Corollary 4.7]{HL20-uniqueness}, $h(X,\Delta)\leq0$ and equality holds iff $(X,\Delta)$ is K-semistable.
%\footnote{ \HB{added. Also, maybe there is a typo in the statement of \cite[Corollary 4.7]{HL20} ``We always have $h(X)\leq 0$, with equality holds true if and only if $h(X) = 0$'' and it should be equality holds iff $X$ is K-ss }}

For a valuation $v\in \Val_X^\circ \cup \{ v_\triv \}$, we define 
\[
\tbeta_{X,\Delta}(v):= A_{X,\Delta}(v)-\tS(v),
\]
where $\tS(v)=\tS(\cF_v)$. Note that $\tbeta_{X,\Delta}(v_{\triv})=0$. By \cite[Theorem 1.5]{HL20-uniqueness}, 
\[
h(X,\Delta) = \inf_{v \in \Val_X^\circ \cup \{ v_\triv \}} \tilde{\beta}(v). 
\]
We say that $v\in \Valc\cup\{v_{\triv}\}$  computes $h(X,\Delta)$ if it achieves the above infimum.
By \cite[Theorem 4.9]{HL20-uniqueness}, there always exists a quasi-monomial valuation computing $h(X,\Delta)$.

\section{Convexity and uniqueness}\label{s:convexDing}

In this section, we will obtain the uniqueness of the valuation computing $h(X,\Delta)$. In \cite{HL20-uniqueness}, this was proved to follow from the finite generation i.e. Theorem \ref{t-finitegen}. In this section, instead of using the finite generation, we will take the approach of establishing more general convexity results. In fact, for two filtrations $\cF_0$ and $\cF_1$,
 we consider a segment in the space of filtrations $(\cF_t)_{t\in [0,1]}$, which we call the \emph{geodesic} between  the two filtrations. 
 We then introduce a probability measure on $\R^2$ that encodes 
 $\Dh_{\cF_t}$ for $t\in [0,1]$. This will allow us to deduce the convexity of a number of functionals which take the form of integrating over the Duistermaat-Heckman measure. For $\LNA$, the proof of its convexity uses the ideas from \cite{XZ20nvol} in the local setting.

Throughout, $(X,\Delta)$ is a log Fano pair, 
 $r>0$ a rational number so that $L:=-r(K_X+\Delta)$ is a Cartier divisor, and $R:=R(X,L)$.

\subsection{Geodesics and DH measures}
Fix two linearly bounded filtrations $\cF_0$ and $\cF_1$ of $R$.
For each integer $m>0$, choose a basis $(s_1,\ldots, s_{N_m})$ of $R_m$ that is compatible with both $\cF_0$ and $\cF_1$ simultaneously; see  \cite[Lemma 3.1]{AZ20} or \cite[Proposition 1.14]{BE18} for the existence of such a basis. For $1\leq i \leq N_m$, we set
\[
\la_{i}^{0,(m)} : = \ord_{\cF_0}(s_{i})  
\quad \text{ and } \quad 
\la^{1,(m)}_{i} = \ord_{\cF_1}(s_{i}).\]
The pairs $(\la_{i}^{0,(m)}, \la_{i}^{1,(m)})$  are unique up to reordering. For example, this follows from the observation that 
 $-\frac{\partial^2 }{\partial x \partial y} \dim ( \cF_0^{x}R_m \cap \cF_1^y R_m) = \sum_{i} \delta_{ \la_i^{0,(m)}, \la_{i}^{1,(m)}}$. The above basis and notation will be used in the constructions below.

 \subsubsection{Relative limit measure}
For each integer $m>0$, we define a probability measure on $\R$  by 
\[
\nu_m^{\cF_0,\cF_1} : = \frac{1}{N_m} \sum_{i=1}^{N_m} \delta_{(mr)^{-1} ( \la^{0,(m)}_{i} - \la^{1,(m)}_{i})}
.\]
It was proven in \cite{CM15} that $\nu_m^{\cF_0,\cF_1} $ converges weakly as $m\to \infty$ to a compactly supported probability measure that we denote by ${\rm RLM}_{\cF_0,\cF_1}$. See \cite[Theorem 3.3]{BJ18} for the statement and proof written in our setting.

The \emph{$L^1$-distance} \cite[Section 3.4]{BJ18} between $\cF_0$ and $\cF_1$ is defined by
\[
d_1(\cF_0,\cF_1) : =   \int_\R |\la| \RLM_{\cF_0,\cF_1}( d \la) 
%=    \lim_{m \to \infty}  \Big( \frac{1}{ N_m}  \sum_{i=1}^{N_{m}} \left| (mr)^{-1} (\la_{i,m}-\la'_{i,m}) \right|^p \Big)^{1/p}
.\]
%When $p=2$, this coincides with a notion from \cite{Cod19}.
We say $\cF_0$ and $\cF_1$ are \emph{equivalent} if $d_1(\cF_0,\cF_1)=0$.

\begin{prop}\cite[Corollary 3.13]{BJ18}\label{p:equivDh}
 If $\cF_0$ and $\cF_1$ are equivalent, then $\Dh_{\cF_0}= \Dh_{\cF_1}$. 
 \end{prop}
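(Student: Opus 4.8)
The statement to prove is Proposition~\ref{p:equivDh}: if $\cF_0$ and $\cF_1$ are equivalent, i.e. $d_1(\cF_0,\cF_1)=0$, then $\Dh_{\cF_0}=\Dh_{\cF_1}$. The natural plan is to recover each Duistermaat--Heckman measure as a marginal of a two-dimensional joint measure and then use that vanishing $L^1$-distance forces the joint measure to be supported on the diagonal. Concretely, for each $m>0$ fix a basis $(s_1,\dots,s_{N_m})$ of $R_m$ compatible with both filtrations simultaneously (which exists by \cite[Lemma 3.1]{AZ20}), and form the probability measure on $\R^2$
\[
\mu_m := \frac{1}{N_m}\sum_{i=1}^{N_m}\delta_{\big((mr)^{-1}\la_i^{0,(m)},\,(mr)^{-1}\la_i^{1,(m)}\big)}.
\]
These measures have uniformly bounded support (by linear boundedness of $\cF_0$ and $\cF_1$), so after passing to a subsequence they converge weakly to some compactly supported probability measure $\mu$ on $\R^2$. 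The first marginal of $\mu_m$ is $\nu_m^{\cF_0}$, whose limit is $\Dh_{\cF_0}$, and likewise the second marginal limits to $\Dh_{\cF_1}$; the pushforward of $\mu_m$ under $(x,y)\mapsto x-y$ is exactly $\nu_m^{\cF_0,\cF_1}$, whose limit is $\RLM_{\cF_0,\cF_1}$. Since weak limits commute with pushforward along continuous maps (here linear maps, which is all we need because supports stay in a fixed compact set), we get that the two marginals of $\mu$ are $\Dh_{\cF_0}$ and $\Dh_{\cF_1}$, and $(x-y)_*\mu=\RLM_{\cF_0,\cF_1}$.

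Now the hypothesis $d_1(\cF_0,\cF_1)=\int_\R|\la|\,\RLM_{\cF_0,\cF_1}(d\la)=0$ means $\RLM_{\cF_0,\cF_1}=\delta_0$, i.e. $\int_{\R^2}|x-y|\,\mu(d x\,d y)=0$. As $|x-y|\ge 0$, this forces $\mu$ to be supported on the diagonal $\{x=y\}$. But a probability measure supported on the diagonal has equal first and second marginals: for any continuous bounded $\f$, $\int \f(x)\,\mu = \int \f(x)\,\mu = \int \f(y)\,\mu$ since $x=y$ $\mu$-a.e. Therefore $\Dh_{\cF_0}=\Dh_{\cF_1}$. (Strictly speaking I passed to a subsequence to extract $\mu$, but since the conclusion $\Dh_{\cF_0}=\Dh_{\cF_1}$ does not reference $\mu$, and any subsequence has a further convergent sub-subsequence yielding the same conclusion, this is harmless.)

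The only point requiring a little care — and the main potential obstacle — is justifying that the marginals of the weak limit $\mu$ are the claimed DH measures, and that the pushforward under $x-y$ of the limit equals $\RLM_{\cF_0,\cF_1}$. Both follow from the elementary fact that if $\mu_m\to\mu$ weakly with all measures supported in a fixed compact $K\subset\R^2$, and $p:\R^2\to\R$ is continuous, then $p_*\mu_m\to p_*\mu$ weakly; combined with the already-cited convergence results $\nu_m^{\cF_0}\to\Dh_{\cF_0}$, $\nu_m^{\cF_1}\to\Dh_{\cF_1}$ (from \cite{BC11}, see \cite[\S5.1]{BHJ17}) and $\nu_m^{\cF_0,\cF_1}\to\RLM_{\cF_0,\cF_1}$ (from \cite{CM15}, see \cite[Theorem 3.2]{BJ18}), and the uniqueness of weak limits, this pins down the marginals of $\mu$ uniquely — no subsequence ambiguity in the marginals. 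An alternative route avoiding the $\R^2$ construction entirely would be to invoke \cite[Theorem 3.9]{BJ18} directly, since that is exactly the cited source; but since the proof via the joint measure is short and self-contained, I would present it as above. This two-dimensional measure is moreover precisely the kind of object (the ``compatible Duistermaat--Heckman measure'' $\Dh_{\cF_0,\cF_1}$) that the paper goes on to construct in Section~\ref{s:measureR2}, so setting it up here is natural.
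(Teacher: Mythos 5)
Your proof is correct. Note first that the paper offers no internal proof of this proposition at all: it is quoted directly from \cite[Theorem 3.9]{BJ18}. So your argument is a genuinely self-contained alternative, and in fact it anticipates the paper's own later machinery: your joint measure $\mu_m$ is exactly the measure $\bnu_m$ of Section \ref{s:measureR2}, and your three identifications (the two marginals are $\nu_m^{\cF_0}$, $\nu_m^{\cF_1}$, and the pushforward under $(x,y)\mapsto x-y$ is $\nu_m^{\cF_0,\cF_1}$) are the discrete versions of Proposition \ref{p:bnuproject}. The economical twist is that you do not need the full convergence statement of Theorem \ref{t:bnuconverges}: since all $\mu_m$ are supported in a fixed compact set, Prokhorov gives a subsequential weak limit $\mu$, and its three pushforwards are pinned down by the known full-sequence limits $\Dh_{\cF_0}$, $\Dh_{\cF_1}$, $\RLM_{\cF_0,\cF_1}$ together with uniqueness of weak limits; the hypothesis $d_1(\cF_0,\cF_1)=0$ then gives $\int_{\R^2}|x-y|\,\rd\mu=0$, so $\mu$ is concentrated on the diagonal and its marginals coincide. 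This keeps the proof independent of the later sections (there would be no circularity in invoking Theorem \ref{t:bnuconverges} and Proposition \ref{p:bnuproject} instead, since they do not use Proposition \ref{p:equivDh}, but your route avoids even the appearance of it). Two trivial blemishes: the chain $\int \f(x)\,\rd\mu=\int \f(x)\,\rd\mu=\int\f(y)\,\rd\mu$ repeats the first integral and should simply read $\int\f(x)\,\rd\mu=\int\f(y)\,\rd\mu$; and the equality $d_1(\cF_0,\cF_1)=\int_{\R^2}|x-y|\,\rd\mu$ deserves the one-line justification that $\int_\R|\la|\,\rd\bigl((x-y)_*\mu\bigr)=\int_{\R^2}|x-y|\,\rd\mu$ by the change-of-variables formula for pushforwards (the intermediate observation $\RLM_{\cF_0,\cF_1}=\delta_0$ is then not needed).
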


\subsubsection{Geodesics}\label{ss:geodesic}
Let $\cF_0  $ and $\cF_1 $ be linearly bounded filtrations of $R$.
For $t\in (0,1)$, we define a filtration $\cF_t$ of $R$ by setting
\begin{equation} \label{e:F_t defn}
\cF^{\lambda}_t R_m 
: =
 \sum_{\mu(1-t)+\nu t \ge \lambda} 
 \cF_0^{\mu} \cF_m \cap \cF_1^\nu R_m
.
\end{equation}
It is straightforward to check that $\cF_t $ is a filtration of $R$ and is linearly bounded. 
We will call $(\cF_{t})_{t\in [0,1]}$ the \emph{geodesic} connecting $\cF_0$ and $\cF_1$.

An alternative way to describe $\cF_t$  is in terms of the basis
 $(s_{1},\ldots, s_{N_m})$ of $R_m$ fixed earlier.
Indeed, since
$\cF_0^\mu R_m \cap \cF_1^\nu R_m =  {\rm span}  \langle s_{i} \, \vert \, \la^{0,(m)}_{i} \geq \mu  \text{ and } \la^{1,(m)}_{i} \geq \nu \rangle$, 
it follows that 
\[
\cF_t^\la R_m={\rm span} \langle s_{i} \, \vert \, \la^{0,(m)}_{i}(1-t) + \la^{1,(m)}_{i} t \geq \la \rangle.
\]
Therefore, the basis    $(s_{1},\ldots, s_{N_m})$  is compatible with $\cF_t$ and 
$\ord_{\cF_t}(s_{i}) = (1-t) {\la}_{i}^{0,(m)} + t {\la}_{i}^{1,(m)}$.
As a consequence of this observation, 
\begin{equation}\label{e:nuF_t}
\nu_{m}^{\cF_t} =  \frac{1}{N_m} \sum_{i=1} \delta_{(mr)^{-1} ( \la^{0,(m)}_{i}(1-t) + \la^{1,(m)}_{i} t)}   .
\end{equation}
In the following section, will analyze  a measure on $\R^2$ that encodes \eqref{e:nuF_t} for each $t\in [0,1]$. 

\begin{rem}
 In the language of graded norms,  the  definition of $(\cF_t)_{t\in [0,1]}$ appears in the work of Reboulet in a more general setting and plays a key role in his theory of geodesics in the space of non-Archimedean metrics on a line bundle \cite{Reb20}. 
\end{rem}

%Given a polarized variety $(X,L)$ over a non-Archiemdean field, 
%Reboulet recently developed a theory of geodesics in the space of non-Archimean metrics on $L$ modeled on the corresponding theory in complex geometry \cite{Reb20}. 
%In his work, the above construction appears in a more general framework and using the language of graded norms, rather than filtrations, and plays an important role in constructing geodesics between non-Archimedean metrics. 

\subsubsection{Duistermaat–Heckman measures}\label{s:measureR2}
For each $m>0$,  we consider the probability measure   on $\R^2$ defined by 
\[
\bnu_m: =  \frac{1}{N_m} \sum_{i=1}^{N_m}\delta_{ ( (mr)^{-1}\la^{0,(m)}_{i} , (mr)^{-1}\la^{1,(m)}_{i} )} = 
\frac{ \partial^2}{ \partial x \partial y }  \frac{ \dim (\cF_0^{mrx} R_m \cap \cF_1^{mry} R_m)}{N_m}
\]
Since $\cF_0$ and $\cF_1$ are assumed to be linearly bounded, we may fix $C>0$ so that $\cF_i^{Cmr}R_m = 0$ and $\cF_i^{-Cmr} R_m = R_m$ for both $i=0,1$.
% \[
% -C
% \leq 
% \inf_{m} \frac{\la_{\min}^{0,(m)}}{mr}  \leq \sup_{m} \frac{\la_{\max}^{0,(m)}}{mr} \leq C
% \quad \text{ and } \quad
% -C
% \leq 
% \inf_{m} \frac{ \la^{1,(m)}_{\min} }{mr} \leq \sup_m \frac{ \la^{1,(m)}_{\max}}{mr} \leq C
% .\]
Hence, ${\rm supp}(\bnu_m)$ is contained in the bounded set  $[-C,C]\times [-C,C]$.

%Therefore, Prokhorov's Theorem guarantees that some subsequence of $\bnu_m$ converges weakly to a probability measure on $\R^2$. 
%While the latter  is sufficient to prove the main results in this paper, we will show the following  stronger  result that is likely of independent interest.

\begin{thm}\label{t:bnuconverges}
The sequence $\bnu_m$ converges weakly as $m\to \infty$ to the compactly supported probability measure
\[
{\rm DH}_{\cF_0,\cF_1}:= \frac{ \partial^2}{ \partial x \partial y} \frac{ \vol (W_\bullet^{(x,y)})}{L^n},
\]
where $W_\bullet^{(x,y)}$ is the graded linear series defined by $W_{m}^{(x,y)} = \cF_0^{mrx} R_m \cap \cF_1^{mry} R_m$.
\end{thm}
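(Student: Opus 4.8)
The plan is to reduce the statement to the convergence of Okounkov-body style volume functions, following the Newton–Okounkov approach of \cite{BC11} as implemented in \cite[\S5]{BHJ17} for a single filtration and in \cite{CM15}, \cite{BJ18} for a pair. First I would fix a valuation $w$ on $K(X)$ of Abhyankar rank $n$ with one-dimensional leaves (e.g. coming from a flag of subvarieties) so that the associated Okounkov semigroup and body $\Delta(L)\subset\R^n$ compute $\vol$. The filtrations $\cF_0$ and $\cF_1$ are linearly bounded, so by the Rees-algebra/Okounkov-body machinery the subspaces $\cF_0^{mrx}R_m\cap\cF_1^{mry}R_m$ correspond, after taking images of valuation vectors, to the lattice points of a convex region $\Delta(L)^{(x,y)}\subseteq\Delta(L)$ cut out by two (roughly linear, concave) constraints coming from the ``concave transforms'' $G_0, G_1\colon\Delta(L)\to\R$ of $\cF_0,\cF_1$; concretely $\Delta(L)^{(x,y)}=\{\alpha\in\Delta(L)\mid G_0(\alpha)\ge x,\ G_1(\alpha)\ge y\}$. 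This is exactly the two-parameter analogue of \cite[Theorem 5.2]{BHJ17}; the existence and basic properties of the transforms $G_i$ are in \cite{BC11}, \cite{BJ18}.

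The key steps, in order: (1) Show $\dim\big(\cF_0^{mrx}R_m\cap\cF_1^{mry}R_m\big)/(m^n/n!)\to \vol(W_\bullet^{(x,y)})$ is a genuine limit (not just $\limsup$) for all $(x,y)$ with $(x,y)$ not on the boundary where $\vol$ drops to $0$, and that $\vol(W_\bullet^{(x,y)}) = n!\,\mathrm{Leb}\big(\Delta(L)^{(x,y)}\big)$; this follows from the Okounkov-semigroup argument of Lazarsfeld–Mustață / \cite{BC11} applied to the graded linear series $W_\bullet^{(x,y)}$, which has ``bounded support'' in the Okounkov body because of linear boundedness. (2) Deduce that $(x,y)\mapsto \vol(W_\bullet^{(x,y)})$ is a continuous (indeed the region $\Delta(L)^{(x,y)}$ varies nicely since $G_0,G_1$ are concave and bounded) function, compactly supported in $[-C,C]^2$, and that it is nonincreasing in each variable, so that the mixed second distributional derivative $-\partial_x\partial_y \vol(W_\bullet^{(x,y)})/L^n$ is a well-defined positive measure of total mass $\vol(W_\bullet^{(-C,-C)})/L^n = 1$. (3) For each fixed $m$, the finite-dimensional identity $\bnu_m = -\partial_x\partial_y\big(\dim(\cF_0^{mrx}R_m\cap\cF_1^{mry}R_m)/N_m\big)$ is immediate from compatibility of the chosen basis, as already recorded in the excerpt. (4) Conclude weak convergence $\bnu_m\rightharpoonup \Dh_{\cF_0,\cF_1}$: pointwise convergence of the (monotone, uniformly bounded) cumulative distribution functions $\dim(\cF_0^{mrx}R_m\cap\cF_1^{mry}R_m)/N_m \to \vol(W_\bullet^{(x,y)})/L^n$ at continuity points, together with $N_m/(m^n r^n/n!)\to L^n$, upgrades to weak convergence of the measures by a standard Helly/Portmanteau argument in two variables (all measures supported in the fixed compact box $[-C,C]^2$).

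I expect the main obstacle to be step (1): verifying that the two simultaneous filtration constraints still carve out, in the limit, a convex body whose lattice-point count has the right asymptotics — i.e. that the graded linear series $W_\bullet^{(x,y)} = \cF_0^{mrx}R_m\cap\cF_1^{mry}R_m$ satisfies the hypotheses of the Okounkov-body volume theorem uniformly enough that one gets a limit and not merely a limsup, and that the two concave transforms interact correctly (the region is $\{G_0\ge x\}\cap\{G_1\ge y\}$ rather than something more complicated). This is where one genuinely needs the results of \cite{BC11} and the two-filtration refinements in \cite{CM15} and \cite{BJ18}; in fact, since $\bnu_m$ is (up to the harmless normalization $N_m$ vs.\ $\vol$ and the rescaling by $mr$) exactly the joint successive-minima measure studied in \cite[\S3]{BJ18} en route to defining $\RLM_{\cF_0,\cF_1}$ and $d_1$, the cleanest route is to cite \cite[Theorem 3.2]{BJ18} (or its proof) for the joint convergence and then identify the limit with $-\partial_x\partial_y\vol(W_\bullet^{(x,y)})/L^n$ via the Okounkov-body description; the pushforward of $\Dh_{\cF_0,\cF_1}$ under $(x,y)\mapsto x-y$ then recovers $\RLM_{\cF_0,\cF_1}$, and under $(x,y)\mapsto (1-t)x+ty$ recovers $\Dh_{\cF_t}$ by \eqref{e:nuF_t}, which is the whole point of the construction.
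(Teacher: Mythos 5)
Your reduction breaks at your own step (1): for two arbitrary linearly bounded filtrations it is \emph{not} true that $\vol(W_\bullet^{(x,y)})=n!\,\mathrm{Leb}\bigl(\{G_0\ge x\}\cap\{G_1\ge y\}\bigr)$, nor that the limit of $\bnu_m$ is the pushforward of normalized Lebesgue measure on the Okounkov body under $(G_0,G_1)$. The problem is that taking valuation vectors does not commute with intersections of subspaces, $w(V\cap W)\subsetneq w(V)\cap w(W)$, and the discrepancy survives asymptotically. Concretely, take $X=\PP^1$, $\cF_0=\cF_{\ord_p}$, $\cF_1=\cF_{\ord_q}$ with $p\neq q$, and the flag valuation $w=\ord_z$ at a third point $z$ (normalize jumping numbers by $m$ rather than $mr$). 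Then $G_0(\alpha)=G_1(\alpha)=1-\alpha$ on $\Delta(L)=[0,1]$, so your region $\{G_0\ge x,\ G_1\ge y\}$ has measure $\max(0,1-\max(x,y))$ and the pushforward of Lebesgue under $(G_0,G_1)$ is supported on the diagonal $\{x=y\}$; but $W_m^{(x,y)}$ consists of sections vanishing to order $\ge mx$ at $p$ and $\ge my$ at $q$, so $\vol(W_\bullet^{(x,y)})=\max(0,1-x-y)$ and $\Dh_{\cF_0,\cF_1}$ is the uniform measure on the antidiagonal $\{x+y=1\}$ (as one also sees directly from the compatible basis $s_p^is_q^{m-i}$). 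So the identity you rely on both for the existence of the limit and for its identification is false for a general flag. It only has a chance when the flag is adapted to one of the two filtrations, as in Lemma \ref{l:goodval} (this is exactly how Section 4 handles the pair $(\cF_{v_0},\cF)$), but a general linearly bounded filtration --- the generality in which the theorem is stated and later used, e.g. in Theorem \ref{t:DHconvex} --- is not induced by any valuation, so no such adaptation exists. Relatedly, \cite[Theorem 3.2]{BJ18} (after \cite{CM15}) only gives the one-dimensional relative measure $\RLM_{\cF_0,\cF_1}$, i.e. the law of the differences $\la_i^{0,(m)}-\la_i^{1,(m)}$; it does not contain the two-dimensional statement, which is precisely what must be proved here, so it cannot simply be cited and then ``identified'' via the (incorrect) Okounkov description.

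For comparison, the paper avoids Okounkov bodies altogether: it sets $f_m(x,y)=\dim W_m^{(x,y)}/N_m$, shows the support region $P$ is convex with $\Int(P)=\cup_m\Int(P_m)$ using only multiplicativity of the filtrations (Proposition \ref{p:Pconvex}), and for $(a,b)\in\Int(P)$ reduces to the one-filtration theory by introducing the single diagonal filtration $\cG^\la R_m=\cF_0^{\la+mra}R_m\cap\cF_1^{\la+mrb}R_m$ and applying Proposition \ref{p:BC11}; monotonicity in each variable then upgrades continuity along the diagonal to continuity at $(a,b)$ (Proposition \ref{p:fconverge}). Since $\partial P$ is Lebesgue-null, $f_m\to f$ a.e., dominated convergence gives $L^1_{\mathrm{loc}}$ and hence distributional convergence of $-\partial_x\partial_y f_m$, and positivity converts this into weak convergence of measures. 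Your step (4) is consistent with this in spirit, but note also that your step (2) overclaims: $(x,y)\mapsto\vol(W_\bullet^{(x,y)})$ need not be continuous across $\partial P$ (just as $\vol(V_\bullet^{(t)})$ may jump at $\la_{\max}$); this is harmless, but the genuine gap is step (1), where some substitute for the convexity-plus-diagonal argument is needed.
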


We will call ${\rm DH}_{\cF_0,\cF_1}$ the {\it compatible DH measure} of the two filtrations.
The use of the measure is that it encodes $\Dh_{\cF_t}$ for $t\in [0,1]$, as well as $\RLM_{\cF_0,\cF_1}$ (see Proposition \ref{p:bnuproject}).

To prove Theorem \ref{t:bnuconverges}, we  analyze the   following functions $\R^2\to [0,1]$  that are non-increasing in both variables:
 \[
 f_m(x,y) =  \frac{\dim ( W_{m}^{(x,y)} )} {N_m}
 \quad \text{ and } \quad
 f(x,y) :=\limsup_{m\to \infty} f_m(x,y)= \frac{\vol(W_\bullet^{(x,y)})}{(L^n)},
 \]
 as well as the locus
 $P = \overline{\bigcup_{m\geq 1} P_m}$ where $P_{m} = \Supp(f_m)$.

\begin{prop}\label{p:Pconvex} The set $P$ is convex and $\Int(P) = \cup_m \Int(P_m)$
\end{prop}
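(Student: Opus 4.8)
The plan is to prove the two assertions together, using the submultiplicativity of the graded linear series $W_\bullet^{(x,y)}$ and a standard argument relating $\Supp$ of a volume function to the (closed) convex hull of its ``finite-level'' supports. First I would record the key submultiplicativity: since $\cF_0^{mrx}R_m\cdot\cF_0^{m'rx}R_{m'}\subset \cF_0^{(m+m')rx}R_{m+m'}$ (and similarly for $\cF_1$), one has $W_m^{(x,y)}\cdot W_{m'}^{(x,y)}\subset W_{m+m'}^{(x,y)}$, so $W_\bullet^{(x,y)}$ really is a graded linear series, and moreover $W_m^{(x,y)}\cdot W_{m'}^{(x',y')}\subset W_{m+m'}^{(\,(mx+m'x')/(m+m'),\,(my+m'y')/(m+m'))}$. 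This last inclusion is the engine: it says that if $P_m$ contains $(x,y)$ and $P_{m'}$ contains $(x',y')$ with $mx+m'x',\ my+m'y'$ suitably interpreted, then the weighted barycenter lies in $P_{m+m'}$.

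Next I would establish convexity of $P$. It suffices to show that for $(x_0,y_0)\in\Int(P_{m_0})$ and $(x_1,y_1)\in\Int(P_{m_1})$, every rational convex combination with rational weight lies in $\bigcup_m\Int(P_m)$; density and the fact that $P$ is closed then give convexity of $P$ and the inclusion $\bigcup_m\Int(P_m)\subset\Int(P)$. Concretely, given a rational $t=b/(a+b)\in(0,1)$, choose $m=\ell a m_1$ and $m'=\ell b m_0$ for $\ell\gg0$; then using the mixed submultiplicativity above together with the fact that $W_{m_0}^{(x_0,y_0)}$ and $W_{m_1}^{(x_1,y_1)}$ contain ``many'' sections (they have positive-dimensional, in fact full-dimensional, image, since the points are interior) one shows that $W_{m+m'}^{(\,(1-t)x_0+tx_1,\,(1-t)y_0+ty_1)}$ is nonzero for $\ell$ large, and in fact its base locus argument gives an \emph{open} neighborhood of $((1-t)x_0+tx_1,(1-t)y_0+ty_1)$ inside some $P_k$. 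This is the analogue of the classical fact that the Okounkov-type body of a big linear series is convex, and the ``interior'' bookkeeping is exactly what upgrades membership in $P$ to membership in some $\Int(P_m)$.

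For the reverse inclusion $\Int(P)\subset\bigcup_m\Int(P_m)$, I would argue that if $(x,y)\in\Int(P)$ then we can find finitely many points $(x_j,y_j)\in\bigcup_m P_{m_j}$ whose convex hull is a neighborhood of $(x,y)$ (by definition of $P$ as the closure of the union and the fact that $(x,y)$ is interior), and then apply the convexity construction from the previous step to a simplex of such points to produce a single level $k$ with $(x,y)\in\Int(P_k)$. Combined, this yields $\Int(P)=\bigcup_m\Int(P_m)$.

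The main obstacle I anticipate is the ``interior'' bookkeeping in the convexity step: merely knowing $W_m^{(x,y)}\neq 0$ at rational points does not immediately give an open set of $(x,y)$ in some $P_k$, so one needs to use that interior points of $P_m$ correspond to linear series whose base ideals behave well under small perturbation of $(x,y)$ — equivalently, that $x\mapsto\dim\cF_i^{mrx}R_m$ is locally constant off a discrete set and one may slightly decrease $x,y$ without killing sections. Handling this cleanly (perhaps by passing to a fixed large $m$ where the successive minima of $\cF_0$ and $\cF_1$ on $R_m$ are well-separated, or by invoking Proposition~\ref{p:BC11}-type continuity of the volume) is where the real work lies; the rest is the formal barycenter computation from submultiplicativity.
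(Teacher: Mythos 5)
Your convexity step is essentially the paper's argument: multiplicativity of the filtrations gives the scaled Minkowski-type inclusion (weighted barycenters of points of $P_m$ and $P_q$ with weights $m/(m+q)$ and $q/(m+q)$ lie in $P_{m+q}$), hence $\cup_m P_m$ is stable under rational convex combinations and its closure $P$ is convex; also $\cup_m\Int(P_m)\subset\Int(P)$ is immediate since each $\Int(P_m)$ is an open subset of $P$. That half is fine.

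The genuine gap is in your proof of $\Int(P)\subset\cup_m\Int(P_m)$. Your simplex plan only produces, for each \emph{rational} choice of weights, membership of the corresponding barycenter in $P_k$ for a single level $k$ that depends on the weights; an arbitrary interior point $(a,b)$ need not have rational barycentric coordinates with respect to your chosen points, and even when it does you only get $(a,b)\in P_k$ rather than an open neighborhood inside one $P_k$, since nearby combinations land at different levels. You flag exactly this as ``where the real work lies,'' but the remedies you suggest do not address it: continuity of the volume (Proposition \ref{p:BC11}) concerns the limit $f$, not the finite-level functions $f_m$, and ``well-separated successive minima'' plays no role here. The missing observation --- which you mention in passing (``one may slightly decrease $x,y$ without killing sections'') without noticing that it alone finishes the proof --- is that each $f_m$ is non-increasing in both variables, so $P_m$ is downward-closed: if $(x,y)\in P_m$ then the open quadrant $\{(x',y')\,:\,x'<x,\ y'<y\}$ lies in $P_m$, hence in $\Int(P_m)$. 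Given $(a,b)\in\Int(P)$, choose $\epsilon>0$ with $(a+\epsilon,b+\epsilon)\in\Int(P)$; since $P=\overline{\cup_m P_m}$, some $(x,y)\in P_m$ satisfies $x>a$ and $y>b$, and downward-closedness gives $(a,b)\in\Int(P_m)$ directly. This is how the paper argues, and your route does not close without importing this monotonicity anyway, at which point the simplex and barycenter bookkeeping become unnecessary.
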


\begin{proof}
Using property (F4) of a filtration, it follows that
\begin{equation}\label{eq:P_minclusion}
 c m P_{m}+ d q P_{q} \subset (cm+dq) P_{mc+qd} \quad \text{ for all } c,d,m,q \in \Z_{>0}
.\end{equation}
 Indeed, if $(x,y) \in     c m P_{m}$ and $(x',y') \in d q P_{q}$,
    then there exist non-zero sections 
    \[
    s \in \cF_{0}^{rx/c} R_{m} \cap \cF_1^{ry/c} R_m \quad \text{ and } \quad
    s' \in
    \cF_{0}^{rx'/d} R_{q} \cap \cF_1^{ry'/d} R_q.
    \]
    Hence, 
    \[
    s^c s'^d \in \cF_{0}^{r(x+x')}R_{mc+md} \cap \cF_1^{r(y+y')}R_{mc+md}
    \]
    which implies $(x+x',y+y') \in ( mc+qd) P_{mc+qd}$ as desired.
Now, \eqref{eq:P_minclusion} implies: if  $p,q \in \cup_m P_m$ and $t\in [0,1]\cap \Q$, then $p(1-t)+tq \in \cup_m P_m$. Therefore, the closure of $\cup_{m} P_m$  is convex.  

To show $\Int(P) = \cup_m \Int(P_m)$, first note that the inclusion $\supset$ clearly holds.  
To see $\subset$ holds, fix $(a,b) \in \Int(P)$. 
Since $\Int(P)$ is open, we may choose $\epsilon>0$ so that $(a+\epsilon, b+\epsilon) \in \Int(P)$. 
Since $P$ is the closure of $\cup_m P_m$ and $(a+\epsilon,b+\epsilon) \in P$, there exists $(x,y) \in \cup_m P_m$ so that $ a<x$ and $b<y$.  
Using that each $f_m$ is $\geq 0$ and non-increasing in both variables, the latter implies $(a,b) \in \cup_m \Int(P_m)$ as desired. 
 \end{proof}

\begin{prop}\label{p:fconverge}
On the locus $\R^2 \setminus \partial P$, 
  $f=\lim_{m\to \infty} f_m$   and  $f$ is continuous. 
\end{prop}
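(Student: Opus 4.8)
The plan is to show that $f$ and $f_m$ agree on $\R^2 \setminus \partial P$ by treating the two regions $\R^2 \setminus P$ and $\Int(P)$ separately, and in each case reducing the convergence statement to the one-dimensional Fujita-style result already recorded as Proposition \ref{p:BC11}. On the complement $\R^2 \setminus P$, every $f_m$ vanishes identically (since $P \supset P_m = \Supp(f_m)$), hence $f$ vanishes there as well and the convergence and continuity are trivial.

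The substantive region is $\Int(P)$. By Proposition \ref{p:Pconvex} we have $\Int(P) = \bigcup_m \Int(P_m)$, so it suffices to fix a point $(a,b)$ lying in some $\Int(P_{m_0})$ and work near it. The key idea is that restricting the bifiltration to a fixed direction reduces to a filtration of the type analyzed in Proposition \ref{p:BC11}. More precisely, for a fixed $(a,b) \in \Int(P)$ I would slice along one of the two coordinate axes: consider the graded linear series obtained by fixing, say, the $\cF_1$-level at a value slightly above $b$ and viewing $W_m^{(x,b')} = \cF_0^{mrx}R_m \cap \cF_1^{mrb'}R_m$ as a filtration of the graded linear series $V_\bullet^{\cF_1,(b')}$ induced by $\cF_0$. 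Since $(a,b) \in \Int(P)$, for $b'$ slightly larger than $b$ the value $\vol(V_\bullet^{\cF_1,(b')})$ is still positive and $a$ lies strictly below the corresponding $\la_{\max}$, so Proposition \ref{p:BC11}(i) (applied in the relative setting, cf. the cited \cite{BC11}, \cite{BHJ17}) gives that $\vol(W_\bullet^{(a,b')})$ is a genuine limit, not just a limsup; monotonicity in both variables then squeezes $f$ between the slices $f(\cdot, b\pm\epsilon)$ to conclude $f(a,b) = \lim_m f_m(a,b)$. Continuity of $f$ on $\Int(P)$ follows from Proposition \ref{p:BC11}(ii): the function $s \mapsto \vol(W_\bullet^{(s,\cdot)})^{1/n}$ is concave where positive, hence continuous on the interior, and a two-variable concavity/continuity argument (each slice concave, joint monotonicity) upgrades this to joint continuity on $\Int(P)$.

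The main obstacle I anticipate is making rigorous the passage from the single-filtration statement of Proposition \ref{p:BC11} to the ``relative'' two-parameter statement — i.e., confirming that $W_\bullet^{(x,y)}$, for $y$ fixed, really is a graded linear series to which the Fujita/Boucksom–Chen machinery applies, with the relevant $\vol$ being a limit and the concavity holding jointly rather than just in each variable separately. This is essentially the content of \cite[Theorem 3.2, Theorem 3.9]{BJ18} and \cite{CM15} adapted to the bifiltration, and I would cite those; the genuinely new bookkeeping is checking that the ``interior'' hypothesis $(a,b) \in \Int(P)$ translates correctly into the hypotheses ``$s < \la_{\max}$'' and ``$\vol > 0$'' needed to invoke Proposition \ref{p:BC11} along each slice. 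Once that translation is in place, the continuity of $f$ and the coincidence $f = \lim f_m$ off $\partial P$ are routine consequences of concavity and monotonicity, and Theorem \ref{t:bnuconverges} will follow by the standard argument that weak convergence of the mixed second derivatives holds because the potentials $f_m$ converge pointwise to the continuous limit $f$ off a set of measure zero (namely $\partial P$, which has Lebesgue measure zero since $P$ is convex).
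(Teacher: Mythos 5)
Your overall structure (trivial off $P$, work at an interior point, reduce to the one-variable Boucksom--Chen statement, then use monotonicity to get joint continuity) is the right one, but the step you yourself flag as the ``main obstacle'' is a genuine gap rather than routine bookkeeping. Proposition \ref{p:BC11} is a statement about linearly bounded filtrations of the full section ring $R$; your coordinate-axis slice views $x\mapsto W_m^{(x,b')}$ as a filtration of the graded linear series $V_\bullet^{\cF_1,(b')}$, which is \emph{not} the section ring of an ample line bundle, and for an abstract linearly bounded filtration there is no reason this slice contains an ample series or satisfies the hypotheses under which the Boucksom--Chen/Okounkov machinery gives that its volumes are limits and its volume function is concave and continuous. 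So the ``relative BC11'' you would need is exactly what remains to be proved, and citing \cite{CM15} or \cite{BJ18} ``adapted to the bifiltration'' does not discharge it; your squeeze between the slices $y=b\pm\epsilon$ and your separate-continuity-plus-monotonicity upgrade are both fine, but they all rest on this unestablished relative statement.

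The paper closes this gap with a different, more elementary slicing: instead of coordinate axes, slice along the diagonal. For $(a,b)\in\Int(P)$ define $\cG^{\la}R_m:=\cF_0^{\la+mra}R_m\cap\cF_1^{\la+mrb}R_m$; because the shifts are linear in $m$, multiplicativity of $\cF_0$ and $\cF_1$ makes $\cG$ an honest linearly bounded filtration of $R$ itself, so Proposition \ref{p:BC11} applies verbatim, with $g_m(t)=f_m(a+t,b+t)$ and $g(t)=f(a+t,b+t)$. The only thing to check is $\la_{\max}(\cG)>0$, which follows from $\Int(P)=\bigcup_m\Int(P_m)$ (Proposition \ref{p:Pconvex}): $(a,b)\in\Int(P_{m'})$ for some $m'$ gives $\la_{\max}^{(m')}(\cG)>0$. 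This yields both $\lim_m f_m(a,b)=f(a,b)$ and continuity of $t\mapsto f(a+t,b+t)$ at $t=0$, and then monotonicity of $f$ in each variable upgrades diagonal continuity to joint continuity at $(a,b)$ --- the same monotonicity trick you invoke, but needing continuity only along the single diagonal line. If you want to salvage your coordinate-slice route you would have to develop the filtered-graded-linear-series version of Proposition \ref{p:BC11} from scratch; the diagonal filtration makes that unnecessary.
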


\begin{proof}
The statement clearly holds on  $\R^2 \setminus P$, since $f_m$ and $f$ are both zero on that locus.
It remains to verify the statement on $\Int(P)$. 

Fix $(a,b) \in \Int(P)$. 
Let  $\cG$ denote the filtration of $R$ defined by 
$$\cG^\la R_m : = \cF_0^{\la+mra}R_m \cap \cF_1^{\la+mrb} R_m,$$ 
which is linearly bounded since both $\cF_0$ and $\cF_1$ are linearly bounded. 
Let $ V_m^{\cG,(t)}$ and  $V_\bullet^{\cG,(t)}$ be defined as in Section \ref{sss-gradedlinearseries}. If we set 
\[g_m(t) =\frac{ \dim V_m^{\cG,(t)}}{ N_m}
\quad \text{ and } \quad
g(t)= \limsup_{m\to \infty }\frac{ \vol( V_\bullet^{\cG,(t)})}{(L^n)}  ,\]
then $g_m(t) = f_m(a+t,b+t)$ and $g(t) = f(a+t,b+t)$,
since 
$V_{m}^{\cG,(t)}= W_m^{(a+t,b+t)}$. 
Note that, 
for $t< \la_{\max}(\cG)$,  $g(t)=\lim_{m\to\infty} g_m(t)$ exists and $g$ is  continuous at $t$  by Proposition \ref{p:BC11}.

We claim that $\la_{\max}(\cG)>0$. 
Indeed, using that $g_m(t) = f_m(a+t,b+t)$,   we see
\[
(mr)^{-1}\la_{\max}^{(m)}(\cG)= \sup \{ t\in \R\,   \vert \,(a+t,b+t) \in P_m \}\]
Since $(a,b) \in \Int(P)$, Proposition \ref{p:Pconvex} implies there exists $m'>0$ so that $(a,b) \in \Int(P_{m'})$. Therefore, $\la_{\max}^{(m')}(\cG)>0$ and, hence, $\la_{\max}(\cG)>0$ as desired. 

Using the above claim, we see that $\lim_{m\to\infty} f_m(a,b)=f(a,b)$, and  $f(a+t, b+t)$ is continuous at $t=0$. Since $f$ is non-increasing in both variables, the latter implies that $f$ is continuous at $(a,b)$. 
\end{proof}

Theorem  \ref{t:bnuconverges} is now an easy consequence of the previous propositions.

\begin{proof}[Proof of Theorem \ref{t:bnuconverges}]
As $m\to \infty$, $f_m$ converge pointwise to $f$ away from a set of measure zero 
by Propositions \ref{p:Pconvex} and \ref{p:fconverge}.
Since $0\leq f_m \leq 1$, the  dominated converges theorem implies $f_m \to f$ in $L_{\rm loc}^{1}(\R^2)$. 
Therefore, $f_m\to f$ as distributions and, hence, $\bnu_m=\frac{\partial^2}{\partial x \partial y} f_m \to 
\frac{\partial^2}{ \partial x \, \partial y} f$ as distributions, as well. Since each distribution  $\bnu_m$ is  a measure, \cite[Theorem 2.1.9]{Hor03}  implies 
$\Dh_{\cF_0,\cF_1}:=\frac{\partial^2}{ \partial x \, \partial y} f$ is a measure
and $\bnu_m \overset{\rm weak}{\longrightarrow}\Dh_{\cF_0,\cF_1}$ as measures.  Furthermore, the measure $\Dh_{\cF_0,\cF_1}$ is a compactly supported  probability measure, since it is a weak  limit of probability measures with uniformly bounded support.
\end{proof}

\begin{prop}\label{p:bnuproject}
Fix $t\in [0,1]$, $c\in \R_{>0}$, and $d\in \R$. Consider the maps $p,q:\R^2 \to \R$ defined by $p(x,y)= (1-t)x+ty$ and  $q(x,y) = x-c(y+d)$. 
The following hold:
\begin{enumerate}
\item $p_*(\Dh_{\cF_0,\cF_1}) = \Dh_{\cF_t}$, and
\item $q_*(\Dh_{\cF_0,\cF_1}) = \RLM_{\cF_0,\cG}$, where $\cG$ is filtration given by $\cG^\la R_m:=\cF_1^{(\la-dmr)/c} R_m$.
\end{enumerate}
\end{prop}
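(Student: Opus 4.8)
The plan is to prove both statements by the same mechanism: push forward the limit identity $\bnu_m \to \mathrm{DH}_{\cF_0,\cF_1}$ (Theorem \ref{t:bnuconverges}) through the linear maps $p$ and $q$, and identify each resulting limit with a measure already constructed in the paper. Concretely, for a continuous linear map $\ell\colon\R^2\to\R$, the pushforward $\ell_*$ is weakly continuous on compactly supported probability measures, so $\ell_*(\bnu_m)\to\ell_*(\mathrm{DH}_{\cF_0,\cF_1})$ weakly; it then remains to recognize $\ell_*(\bnu_m)$ (or its weak limit) on the nose.

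For (1): with $p(x,y)=(1-t)x+ty$, the definition of $\bnu_m$ as $\frac1{N_m}\sum_i \delta_{((mr)^{-1}\la_i^{0,(m)},(mr)^{-1}\la_i^{1,(m)})}$ gives immediately
\[
p_*(\bnu_m)=\frac{1}{N_m}\sum_{i=1}^{N_m}\delta_{(mr)^{-1}((1-t)\la_i^{0,(m)}+t\la_i^{1,(m)})}=\nu_m^{\cF_t},
\]
where the second equality is exactly the computation \eqref{e:nuF_t} (recorded in Section \ref{ss:geodesic}, using that the fixed basis $(s_1,\dots,s_{N_m})$ is compatible with $\cF_t$ with $\ord_{\cF_t}(s_i)=(1-t)\la_i^{0,(m)}+t\la_i^{1,(m)}$). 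Since $\nu_m^{\cF_t}\to\Dh_{\cF_t}$ by the basic Duistermaat--Heckman convergence (\cite{BC11}, \cite[\S5.1]{BHJ17}) and also $p_*(\bnu_m)\to p_*(\mathrm{DH}_{\cF_0,\cF_1})$ by weak continuity of $p_*$, uniqueness of weak limits forces $p_*(\mathrm{DH}_{\cF_0,\cF_1})=\Dh_{\cF_t}$.

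For (2): first set $\cG^\la R_m:=\cF_1^{(\la-dmr)/c}R_m$ and check this is a linearly bounded filtration — this is a routine rescaling-and-shift of $\cF_1$, and one should note $\ord_\cG(s_i)=c\,\la_i^{1,(m)}+dmr$, so the same fixed basis is compatible with both $\cF_0$ and $\cG$. Then
\[
q_*(\bnu_m)=\frac{1}{N_m}\sum_{i=1}^{N_m}\delta_{(mr)^{-1}(\la_i^{0,(m)}-c\la_i^{1,(m)}-cdmr)}=\frac{1}{N_m}\sum_{i=1}^{N_m}\delta_{(mr)^{-1}(\la_i^{0,(m)}-\ord_\cG(s_i))}=\nu_m^{\cF_0,\cG},
\]
which converges weakly to $\RLM_{\cF_0,\cG}$ by \cite{CM15} (see \cite[Theorem 3.2]{BJ18}). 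Again comparing with $q_*(\bnu_m)\to q_*(\mathrm{DH}_{\cF_0,\cF_1})$ yields the claim. The only mild subtlety, and the one place requiring care rather than routine bookkeeping, is checking that $q$ really is a \emph{proper} enough map for weak continuity of pushforward to apply without fuss — but since all the measures $\bnu_m$ (and $\mathrm{DH}_{\cF_0,\cF_1}$) are supported in a fixed compact box $[-C,C]^2$, $q$ restricted to that box is a continuous map to a compact interval, so weak convergence is preserved trivially; there is no real obstacle. I do not anticipate a hard step here — the proposition is essentially a formal consequence of Theorem \ref{t:bnuconverges} combined with the two already-established one-dimensional convergence theorems, and the whole proof is a matter of matching up the atomic measures correctly.
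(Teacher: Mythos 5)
Your approach is exactly the paper's: observe $p_*(\bnu_m)=\nu_m^{\cF_t}$ (via \eqref{e:nuF_t}) and identify $q_*(\bnu_m)$ with the relative measure for the pair $(\cF_0,\cG)$, then compare the two weak limits of the same sequence of atomic measures --- Theorem \ref{t:bnuconverges} together with continuity of $p,q$ on the common compact support on one side, and the one-dimensional convergence results ($\nu_m^{\cF_t}\to\Dh_{\cF_t}$ and $\nu_m^{\cF_0,\cG}\to\RLM_{\cF_0,\cG}$) on the other --- and conclude by uniqueness of weak limits. Your write-up simply spells out what the paper records as a two-line observation.

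There is, however, one bookkeeping slip in part (2). With $\cG^\la R_m:=\cF_1^{(\la-dmr)/c}R_m$ as in the statement, your own (correct) formula gives $\ord_\cG(s_i)=c\,\la_i^{1,(m)}+dmr$, so $\nu_m^{\cF_0,\cG}$ has atoms at $(mr)^{-1}\bigl(\la_i^{0,(m)}-c\la_i^{1,(m)}\bigr)-d$, whereas $q_*(\bnu_m)$ has atoms at $(mr)^{-1}\bigl(\la_i^{0,(m)}-c\la_i^{1,(m)}\bigr)-cd$. The middle equality in your display (which silently replaces $cdmr$ by $dmr$) therefore fails unless $c=1$ or $d=0$. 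This is really a normalization issue in the statement itself rather than in your method: for general $c$ one should either define $\cG^\la R_m:=\cF_1^{\la/c-dmr}R_m$ (equivalently $\cF_1^{(\la-cdmr)/c}R_m$), or keep the stated $\cG$ and replace $q$ by $q(x,y)=x-cy-d$; with either correction your computation closes up verbatim. Note that in every place the proposition is invoked (Theorem \ref{t:DHconvex}, Proposition \ref{p:Sconvex}, Theorem \ref{t:Hunique}) one has $c=1$, so nothing downstream is affected, but as written your chain of equalities asserts a false identity for general $c,d$ and should be repaired as above.
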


\begin{proof}
Observe that
$p_*(\bnu_m) = \nu_m^{\cF_t} $
and 
$q_*(\bnu_m) = \bnu_m^{\cF_0,\cG}$.
Therefore,
 $p_*(\bnu_m) \overset{ {\rm weak}}{ \longrightarrow} \Dh_{\cF_t}$ and $q_*(\bnu_m)\overset{ {\rm weak}}{ \longrightarrow} \RLM_{\cF_0,\cG}$.
By Theorem \ref{t:bnuconverges} and the continuity of $p$ and $q$, we also have
 $p_*(\bnu_m) \overset{ {\rm weak}}{ \longrightarrow} p_*(\Dh_{\cF_0,\cF_1})$ and $q_*(\bnu_m)\overset{ {\rm weak}}{ \longrightarrow} q_*(\Dh_{\cF_0,\cF_1})$. Since weak limits of measure on $\R^2$ are unique, the result follows. 
\end{proof}

\subsection{Convexity}\label{ss:convex}

In this section, we prove the following result on the convexity of the non-Archimedean Ding and $H$-functionals.

\begin{thm}\label{t:DHconvex}
Let $\cF_0$ and $\cF_1$ be linearly bounded filtrations of $R$ and $(\cF_t)_{t\in [0,1]}$ be the geodesic connecting them. 
For $t\in (0,1)$, the following hold:
\begin{enumerate}[label=(\roman*)]
    \item $\DNA(\cF_t)\leq (1-t) \DNA(\cF_0) +t \DNA(\cF_1)$;
    \item $\HNA(\cF_t) \leq (1-t) \HNA(\cF_0) + t \HNA(\cF_1)$.
\end{enumerate}
Furthermore, the inequality in (ii)   is strict unless there exists $d\in \R$ so that $d_{1}(\cF_0,\cG)=0$, where $\cG$ is the filtration defined by $\cG^{\la} R_m := \cF_1^{\la-dmr}R_m$.
\end{thm}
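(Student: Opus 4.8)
The plan is to exploit the decompositions $\DNA = \LNA - \ENA$ and $\HNA = \LNA - \tS$, and to show separately that $\ENA$ is affine along the geodesic $(\cF_t)_{t\in[0,1]}$, that $\tS$ is concave along it, and that $\LNA$ is convex along it. Assertions (i) and (ii) then follow by adding the corresponding inequalities, and the strictness in (ii) will be extracted from the equality case of the concavity of $\tS$.

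The first two of these building blocks are essentially formal consequences of the compatible DH measure $\Dh_{\cF_0,\cF_1}$ on $\R^2$ (Theorem \ref{t:bnuconverges}) together with the projection formula of Proposition \ref{p:bnuproject}. By Proposition \ref{p:bnuproject}(1), $\Dh_{\cF_t} = (p_t)_* \Dh_{\cF_0,\cF_1}$ for $p_t(x,y) = (1-t)x + ty$, and $\Dh_{\cF_0},\Dh_{\cF_1}$ are the pushforwards under the two coordinate projections. Since $\ENA$ of a filtration is the barycenter of its DH measure, integrating the affine function $p_t$ against $\Dh_{\cF_0,\cF_1}$ gives $\ENA(\cF_t) = (1-t)\ENA(\cF_0) + t\ENA(\cF_1)$. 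For $\tS$, I would write $e^{-\tS(\cF_t)} = \int_{\R^2} (e^{-x})^{1-t}(e^{-y})^{t}\,\Dh_{\cF_0,\cF_1}(dx\,dy)$ and apply H\"older's inequality with exponents $\tfrac{1}{1-t}$ and $\tfrac{1}{t}$ to bound this by $(\int e^{-x})^{1-t}(\int e^{-y})^{t}$; taking $-\log$ yields $\tS(\cF_t) \ge (1-t)\tS(\cF_0) + t\tS(\cF_1)$. For any fixed $t\in(0,1)$, equality forces $e^{-x}$ and $e^{-y}$ to be proportional $\Dh_{\cF_0,\cF_1}$-a.e., equivalently $x-y$ to be $\Dh_{\cF_0,\cF_1}$-a.e. equal to some constant $d\in\R$. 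Applying Proposition \ref{p:bnuproject}(2) with $c=1$ and this $d$ identifies the pushforward of $\Dh_{\cF_0,\cF_1}$ under $(x,y)\mapsto x-y-d$ with $\RLM_{\cF_0,\cG}$ for $\cG^\la R_m := \cF_1^{\la - dmr}R_m$; by construction that pushforward is the Dirac mass at $0$, so $d_1(\cF_0,\cG) = 0$.

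The heart of the matter is the convexity $\LNA(\cF_t) \le (1-t)\LNA(\cF_0) + t\LNA(\cF_1)$. Here I would first reformulate $\LNA(\cF)$ as a log canonical slope, i.e. in terms of the log canonical thresholds of the graded sequences of base ideals $I_\bullet^{(s)}$ attached to $\cF$ (and of the ideals $\cI_m$ on $X_{\A^1}$), following \cite{BHJ17}, \cite{Fuj-valuative}, \cite{Li19}. The description of the geodesic in \eqref{e:F_t defn}, read off in a simultaneously compatible basis, pins down the base ideals of $\cF_t$ relative to those of $\cF_0$ and $\cF_1$, and the convexity then reduces to a comparison between the relevant log canonical thresholds of these graded sequences of ideals. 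This comparison is the global counterpart of the one carried out in \cite{XZ20nvol} in the local (normalized volume) setting; adapting that argument is where essentially all the work lies, and this is the step I expect to be the main obstacle.

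Finally, granting the three building blocks, (i) follows from $\DNA(\cF_t) = \LNA(\cF_t) - \ENA(\cF_t) \le \bigl[(1-t)\LNA(\cF_0)+t\LNA(\cF_1)\bigr] - \bigl[(1-t)\ENA(\cF_0)+t\ENA(\cF_1)\bigr]$, and (ii) follows from the analogous computation with $\tS$ in place of $\ENA$, using that $-\tS$ is convex along the geodesic. For the strictness: if the inequality in (ii) failed to be strict for some $t\in(0,1)$, then, since that inequality is obtained by adding the convexity of $\LNA$ and the convexity of $-\tS$, both would have to be equalities; in particular $\tS(\cF_t) = (1-t)\tS(\cF_0)+t\tS(\cF_1)$, and the equality analysis in the second paragraph produces the required $d\in\R$ with $d_1(\cF_0,\cG)=0$ for $\cG^\la R_m = \cF_1^{\la-dmr}R_m$.
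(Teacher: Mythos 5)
Your overall architecture is the same as the paper's: split $\DNA=\LNA-\ENA$ and $\HNA=\LNA-\tS$, prove that $\ENA$ is affine and $\tS$ is concave along the geodesic via the compatible measure $\Dh_{\cF_0,\cF_1}$ and Proposition \ref{p:bnuproject} (your H\"older argument and the equality analysis leading to $d_1(\cF_0,\cG)=0$ are exactly Propositions \ref{p:Eaffine} and \ref{p:Sconvex}), and prove that $\LNA$ is convex. The first two blocks are correct as you state them. The problem is the third block: you do not prove the convexity of $\LNA$, you only announce a plan (``reformulate $\LNA$ as a log canonical slope, pin down the base ideals of $\cF_t$ in a compatible basis, and adapt \cite{XZ20nvol}'') and yourself flag it as the main obstacle. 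That step is precisely where the substance of Theorem \ref{t:DHconvex} lies, so as written the proposal has a genuine gap.

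Moreover, the route you sketch for that step would not go through as stated. Comparing log canonical thresholds of the graded sequences of base ideals of $\cF_t$ against those of $\cF_0,\cF_1$ directly on $X$ is not what makes the argument work: the lct-superadditivity input from \cite{XZ20nvol} (Theorem 3.11 there) concerns graded sequences of \emph{valuation} ideals on a klt (cone) singularity and their box-sum $\fa_\bullet((1-t)w_0)\boxplus\fa_\bullet(tw_1)$, not arbitrary base-ideal sequences of filtrations. The paper's Proposition \ref{p:Lconvex} therefore first uses the second half of Lemma \ref{l:L^NA=lc slope} to produce, after translating each filtration, valuations $v_0,v_1$ with $\LNA(\cF_i)=\mu(\cF_i)=A_{X,\Delta}(v_i)$ and $\cF_i^\la R\subseteq\cF_{v_i}^\la R$ for all $\la$; it then passes to the affine cone $Y=\Spec R$ with the $\bG_m$-invariant valuations $w_i=r\cdot\ord_o+v_i$, checks the inclusion $\cF_t^{k+2}R_m\subseteq\fb_{t,mr+k}$ for the box-sum sequence, applies \cite[Theorem 3.11]{XZ20nvol}, and descends back to $X$ via \cite[Lemma 3.1(5)]{Kol13} to bound $\mu(\cF_t)$. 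The replacement of arbitrary filtrations by valuation filtrations (which requires the existence statement \eqref{e:v(I^(s))>=s up to shift} in Lemma \ref{l:L^NA=lc slope}, not merely the identity $\LNA=\mu$ quoted from \cite{BHJ17, Fuj-valuative}) and the passage to the cone are the essential reductions missing from your outline; without them the ``comparison of lct's of base ideals'' has no theorem to appeal to.
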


The result is an algebraic analogue of a theorem of 
Berndtsson on the convexity of the Ding-functional along geodesics in the space of K\"ahler potentials \cite{Ber15}.
In forthcoming work, Reboulet shows that Theorem \ref{t:DHconvex} can in fact be deduced from Berndtsson's convexity result when $X$ is smooth \cite{Reb21}.
The proof below is self-contained and purely algebraic.

To prove Theorem \ref{t:DHconvex}, we first show the convexity of $\LNA$ along geodesics. 
For this, we first compare $\LNA$ with the log canonical slope of a filtration $\cF$, defined as (c.f. \cite[Definition 1.3 and Lemma 4.13]{XZ20cm})
\[
\mu(\cF) := \mu_{X,\Delta}(\cF) := \sup \left\{s\in\bR\,|\,\lct(X,\Delta;I^{(s)}_\bullet)\ge \frac{1}{r}\right\} = \sup \left\{s\in\bR\,|\,\lct(X,\Delta;I^{(s)}_\bullet)> \frac{1}{r}\right\}.
\]

\begin{lem} \label{l:L^NA=lc slope}
For any linearly bounded filtration $\cF$ of $R$ we have $\mu(\cF)=\LNA(\cF)$, and there exists some valuation $v\in\Valc\cup\{v_{\triv}\}$ such that 
\begin{equation} \label{e:v(I^(s))>=s up to shift}
    r^{-1}v(I_\bullet^{(\lambda)})\ge \lambda+A_{X,\Delta}(v)-\LNA(\cF)
\end{equation}
for all $\lambda\in\bR$. Moreover, if $\cF$ is a finitely generated $\bZ$-filtration and $\LNA(\cF)<\lambda_{\max}(\cF)$, then $v$ can be chosen to be a divisorial lc place of some $\bQ$-complement.
\end{lem}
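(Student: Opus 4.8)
The plan is to prove the three assertions via a common valuative formula for $\LNA(\cF)$, obtained by computing the log canonical thresholds on $X_{\A^1}$ with $\bG_m$-invariant valuations.

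\emph{Step 1 (the formula and $\mu=\LNA$).} The thresholds $\lct(X_{\A^1},\Delta_{\A^1},\cI_m^{1/(mr)};(t))$ entering the definition of $\LNA(\cF)$ are computed by $\bG_m$-invariant valuations, since each summand $I_{m,i}t^{-i}$ of $\cI_m$ is a weight space for the scaling action on $\A^1$ and $(t)$ is invariant. Those centered on $(t=0)$ are exactly the quasi-monomial valuations determined by their restriction $v\in\Val_X$ to $K(X)$ and a positive weight on $X\times\{0\}$, the degenerate case being $\ord_{X\times 0}$, which we identify with $v_\triv$. Normalizing $w(t)=1$, one computes $A_{X_{\A^1},\Delta_{\A^1}}(w)=A_{X,\Delta}(v)+1$ and $w(\cI_m)=\min_i(v(I_{m,i})-i)$, whence
\[
\lct(X_{\A^1},\Delta_{\A^1},\cI_m^{1/(mr)};(t))=1+\inf_{v\in\Valc\cup\{v_\triv\}}\Big(A_{X,\Delta}(v)+\max_{i}\tfrac{i-v(I_{m,i})}{mr}\Big),
\]
and letting $m\to\infty$ (sub-additivity of $m\mapsto v(I_m^{(\lambda)})$, uniform estimates on the sublevel sets $\{A_{X,\Delta}\le C\}$, and Proposition \ref{p:BC11} to pass from $\max_i$ to $\sup_\lambda$) gives $\LNA(\cF)=\inf_{v}\big(A_{X,\Delta}(v)+\sup_{\lambda}(\lambda-r^{-1}v(I_\bullet^{(\lambda)}))\big)$. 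Now $\lambda\mapsto r^{-1}v(I_\bullet^{(\lambda)})$ is convex (multiplicativity of $\cF$ gives $v(I_{2m}^{(\lambda)})\le v(I_m^{(\lambda_1)})+v(I_m^{(\lambda_2)})$ when $2\lambda=\lambda_1+\lambda_2$), non-decreasing, and vanishes for $\lambda\ll 0$; since the infimum may be taken over all rescalings $cv$ of each $v$, a convex optimization in $c$ identifies $\inf_{c>0}\big(A_{X,\Delta}(cv)+\sup_\lambda(\lambda-r^{-1}cv(I_\bullet^{(\lambda)}))\big)$ with $\sup\{s\,|\,rA_{X,\Delta}(v)\ge v(I_\bullet^{(s)})\}$. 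Taking the infimum over $v$ turns the right-hand side into $\sup\{s\,|\,\lct(X,\Delta;I_\bullet^{(s)})\ge r^{-1}\}=\mu(\cF)$, so $\mu(\cF)=\LNA(\cF)$.

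\emph{Step 2 (existence of $v$).} Rearranging, \eqref{e:v(I^(s))>=s up to shift} holds for a given $v$ and all $\lambda$ if and only if $A_{X,\Delta}(v)+\sup_\lambda(\lambda-r^{-1}v(I_\bullet^{(\lambda)}))\le\LNA(\cF)$, i.e. $v$ attains the infimum of Step 1. If $\LNA(\cF)=\lambda_{\max}(\cF)$, then $v_\triv$ attains it. In general the functional $v\mapsto A_{X,\Delta}(v)+\sup_\lambda(\lambda-r^{-1}v(I_\bullet^{(\lambda)}))$ is lower semicontinuous on $\Valc$ (lower semicontinuity of $A_{X,\Delta}$ and upper semicontinuity of $v\mapsto v(I_\bullet^{(\lambda)})$), a minimizing sequence has uniformly bounded log discrepancy, and the compactness and quasi-monomial reduction of \cite{Xu20} (cf. \cite{XZ20nvol}) produce a minimizer $v\in\Valc$, which then satisfies \eqref{e:v(I^(s))>=s up to shift}. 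Producing this minimizer is the main obstacle of the proof.

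\emph{Step 3 (finitely generated case).} Assume $\cF$ is a finitely generated $\bZ$-filtration with $\LNA(\cF)<\lambda_{\max}(\cF)$. After replacing $r$ by a multiple (harmless for the statement), $\cI_\bullet$ is a finitely generated graded sequence of fractional ideals on $X_{\A^1}$, so $\lct(X_{\A^1},\Delta_{\A^1},\cI_m^{1/(mr)};(t))=1+\LNA(\cF)$ exactly for $m$ sufficiently divisible. A log canonical threshold is computed by a divisorial valuation which, by $\bG_m$-equivariance, may be chosen $\bG_m$-invariant; as $\LNA(\cF)<\lambda_{\max}(\cF)$ this valuation is not $\ord_{X\times 0}$, so by Step 1 it restricts to a divisorial $v=\ord_E\in\Valc$ attaining the infimum, hence satisfying \eqref{e:v(I^(s))>=s up to shift}. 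Attaining the infimum forces $s_v:=\sup\{s\,|\,rA_{X,\Delta}(v)\ge v(I_\bullet^{(s)})\}=\mu:=\mu(\cF)$, so $r^{-1}v(I_\bullet^{(\mu)})=A_{X,\Delta}(v)$ and $\lct(X,\Delta;I_\bullet^{(\mu)})\ge r^{-1}$. Taking $D$ a general effective $\bQ$-divisor in $\tfrac{1}{m_0r}|I_{m_0}^{(\mu)}\otimes\cO_X(m_0L)|$ for $m_0$ sufficiently divisible, one has $K_X+\Delta+D\sim_\bQ 0$, $\ord_E(D)=A_{X,\Delta}(v)$ (a general member has $\ord_E$ equal to that of the base ideal), and $(X,\Delta+D)$ log canonical (because $\lct(X,\Delta;I_{m_0}^{(\mu)})\ge\tfrac{1}{m_0r}$); thus $\Delta+D$ is a $\bQ$-complement of $(X,\Delta)$ with $E$ an lc place. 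This is the simplest instance of the ``complement from a basis-type divisor'' mechanism used more generally in \cite{LXZ-HRFG}, which together with \cite{Xu20,XZ20nvol} supplies the technical heart; the limit exchange in Step 1 is a secondary technical point resolved by the uniform estimates on $\{A_{X,\Delta}\le C\}$.
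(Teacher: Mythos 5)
Your Steps 1 and 3 run broadly parallel to the paper's argument (compute the lct on $X_{\A^1}$ through $\bG_m$-invariant Gauss-extension valuations, deduce \eqref{e:v(I^(s))>=s up to shift}, and in the finitely generated case build the $\bQ$-complement from a general member of $\cF^{\mu mr}R_m$, excluding $v_{\triv}$ via $\LNA(\cF)<\lambda_{\max}(\cF)$); your convex-duality/rescaling reformulation of $\mu(\cF)$ is a reasonable substitute for the paper's citation of \cite{XZ20cm} for the inequality $\mu(\cF)\ge\LNA(\cF)$. The problem is Step 2, and with it the hard direction of the limit exchange in Step 1. Your inequality $\sup_\lambda\bigl(\lambda-r^{-1}v(I_\bullet^{(\lambda)})\bigr)\ge\max_i\frac{i-v(I_{m,i})}{mr}$ only gives $\inf_v\bigl(A_{X,\Delta}(v)+\sup_\lambda(\cdot)\bigr)\ge\LNA(\cF)$; the reverse inequality, and the existence of a $v\in\Valc\cup\{v_{\triv}\}$ actually attaining the infimum (which is exactly what \eqref{e:v(I^(s))>=s up to shift} requires — a near-minimizer does not suffice), are both equivalent to producing a valuation computing the lct of the graded sequence $\cI_\bullet$ relative to $(t)$. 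You defer this to ``lower semicontinuity plus compactness and quasi-monomial reduction of \cite{Xu20} (cf.\ \cite{XZ20nvol})'', but neither reference contains such a statement: \cite{Xu20} proves quasi-monomiality of normalized volume minimizers whose existence is established elsewhere, and \cite{XZ20nvol} proves uniqueness. Moreover the sublevel sets $\{A_{X,\Delta}\le C\}\subset\Val_X$ are not compact in the weak topology, and $v\mapsto\sup_\lambda(\lambda-r^{-1}v(I_\bullet^{(\lambda)}))$ is a supremum of infinitely many functionals, so the lsc-plus-compactness scheme does not close without a genuine generic-limit or log-resolution retraction argument, none of which you sketch.

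The available correct tool is precisely what the paper uses: \cite[Theorem 7.3]{JM12} (in its $\bG_m$-equivariant form) applied on $X_{\A^1}$ to the graded sequence $\cI_\bullet^{1/r}$ and the ideal $(t)$, producing a $\bG_m$-invariant valuation $w$ computing $\LNA(\cF)+1$, which by \cite[Lemma 4.2]{BHJ17} is, up to scaling, the Gauss extension of some $v\in\Valc\cup\{v_{\triv}\}$; the inequality \eqref{e:v(I^(s))>=s up to shift} then falls out of $A_{X,\Delta}(v)-\LNA(\cF)=w(\cI_\bullet^{1/r})\le\frac{v(I_{m,i})-i}{mr}$, and $\mu(\cF)\le\LNA(\cF)$ follows. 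Note that your functional on $\Val_X$ cannot be rewritten as the lct of a single graded sequence on $X$ itself (the $\sup_\lambda$ is exactly what the passage to $X_{\A^1}$ linearizes), so there is no shortcut that avoids the cone-over-$\A^1$ construction. Once you replace your Step 2 by this citation, the rest of your argument (including the finitely generated case, where $\cI_{pm}=\cI_m^p$ for sufficiently divisible $m$ lets you take $w$, hence $v$, divisorial) goes through and essentially reproduces the paper's proof.
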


Recall that a valuation $v$ is said to be an lc place of some $\bQ$-complement if there exists some effective $\bQ$-divisor $\Gamma\sim_\bQ -(K_X+\Delta)$ such that $(X,\Delta+\Gamma)$ is lc and $A_{X,\Delta+\Gamma}(v)=0$.

\begin{proof}
By \cite[Theorem 4.3]{XZ20cm}, we already have $\mu(\cF)\ge \LNA(\cF)$, thus it suffices to show $\mu(\cF)\le \LNA(\cF)$. By \cite[Theorem 7.3]{JM12}, $\lct(X_{\bA^1},\Delta_{\bA^1}+\cI_\bullet^{\frac{1}{r}} ; (t))=\LNA(\cF)+1$ is computed by some $\bG_m$-invariant valuation $w\in \Val_{X\times\bA^1}^\circ$ (the $\bG_m$-equivariant version is not proved in \cite{JM12}, but is not hard to achieve from the proof). By \cite{BHJ17}*{Lemma 4.2}, up to rescaling $w$ is the Gauss extension of a valuation $v\in \Valc\cup\{v_{\triv}\}$, i.e. $w(ft^i)=v(f)+i$ for any $0\neq f\in K(X)$ and  $i\in\bZ$. Since $w$ computes the lct and $w(t)=1$, we have
\[
\LNA(\cF)+1=A_{X_{\bA^1},\Delta_{\bA^1}}(w)-w(\cI_\bullet^{\frac{1}{r}})=A_{X,\Delta}(v)+1-w(\cI_\bullet^{\frac{1}{r}}).
\]
Thus, $A_{X,\Delta}(v)-\LNA(\cF)=w(\cI_\bullet^{\frac{1}{r}})\le w(\cI_m^{\frac{1}{mr}})\le\frac{v(I_{m,i})-i}{mr}$ for all $m\in\bN$ and $i\in\bZ$. It follows that 
\[
r^{-1}v(I_\bullet^{(\lambda)})\ge \lambda+A_{X,\Delta}(v)-\LNA(\cF)
\]
for all $\lambda\in\bR$. In particular, $\lct(X,\Delta;I_\bullet^{(\lambda)})<r^{-1}$ for any $\lambda>\LNA(\cF)$. By the definition of log canonical slope, this implies $\mu(\cF)\le \LNA(\cF)$ and proves the first part of the lemma.

If $\cF$ is finitely generated, then $\cI_{pm}=\cI_m^p$ for any sufficiently divisible $m,p\in\bN$ and thus $w$ can be chosen as a divisorial valuation. Since $\LNA(\cF)<\lambda_{\max}(\cF)$, the valuation $v$ cannot be the trivial one, otherwise $\eqref{e:v(I^(s))>=s up to shift}$ becomes $\lambda\le \LNA(\cF)$ for all $\lambda< \lambda_{\max}(\cF)$ and therefore $\lambda_{\max}(\cF)\le \LNA(\cF)$, a contradiction.
%\footnote{
%\HB{Minor comment: Would it be more precise/detailed to write ``$ \lambda\to \lambda_{\max}(\cF)^-$'', since taking the limit from the right doesn't yield a contradiction}\ZZ{Fixed.}}
By \cite[Lemma 4.1]{BHJ17}, we know that $v$ is divisorial. Let $\fa_m=I_{m,\mu mr}$. Again $\fa_{pm}=\fa_m^p$ for any sufficiently divisible $m,p\in\bN$ as $\cF$ is finitely generated. By \eqref{e:v(I^(s))>=s up to shift}, we have $r^{-1}v(\fa_\bullet)\ge A_{X,\Delta}(v)$, thus from the definition of log canonical slope, we see that $v$ necessarily computes 
\[
\lct(\fa_\bullet)=m\cdot \lct(\fa_m)=m\cdot \lct(X,\Delta;\{s=0\})
\]
for sufficiently divisible $m$ and general $s\in \cF^{\mu mr} R_m$. As $K_X+\Delta+\frac{1}{mr}\{s=0\}\sim_\bQ 0$, this easily implies that $v$ is an lc place of some $\bQ$-complement.
\end{proof}

\begin{rem} \label{r:T-inv}
From the above proof it is clear that if the filtration $\cF$ is $\bT$-equivariant for some torus $\bT<\Aut(X,\Delta)$, then the valuation $v$ can be chosen to be $\bT$-invariant as well.
\end{rem}

\begin{rem}
Lemma \ref{l:L^NA=lc slope} immediately implies that $\beta(\cF)=\DNA(\cF)$ (see \cite{XZ20cm}*{Definition 4.1} for the definition $\beta(\cF)$) for any linearly bounded multiplicative filtration $\cF$. 
\end{rem}

\begin{cor} \label{c:LNA<=A}
For any $v\in\Valc$ we have $\LNA(\cF_v)\le A_{X,\Delta}(v)$ and $\HNA(\cF_v)\le \tbeta_{X,\Delta}(v)$.
\end{cor}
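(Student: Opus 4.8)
The plan is to derive both inequalities from Lemma \ref{l:L^NA=lc slope}, which identifies $\LNA(\cF_v)$ with the log canonical slope $\mu(\cF_v)$. Hence it suffices to prove $\mu(\cF_v)\le A_{X,\Delta}(v)$; granting this, the second inequality is immediate, since
\[
\HNA(\cF_v)=\LNA(\cF_v)-\tS(\cF_v)\le A_{X,\Delta}(v)-\tS(v)=\tbeta_{X,\Delta}(v)
\]
by the definitions of $\HNA$ and $\tbeta_{X,\Delta}$ together with $\tS(\cF_v)=\tS(v)$.

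To bound $\mu(\cF_v)$, I would first record the elementary fact that $v(I_m^{(\lambda)})\ge \lambda mr$ for every $\lambda\in\bR$. Indeed, by definition $I_m^{(\lambda)}=I_{m,\lambda mr}$ is the base ideal of the linear subsystem $\cF_v^{\lambda mr}R_m=\{s\in R_m\,|\,v(s)\ge \lambda mr\}$; trivializing $\cO_X(mL)$ near the center of $v$ (the transition functions being units, so that $v(s)$ does not depend on the trivialization), this base ideal is locally generated by the local functions representing the sections $s$, each of which has $v$-value $\ge \lambda mr$. Passing to the limit gives $v(I_\bullet^{(\lambda)})\ge \lambda r$, so for $\lambda>0$
\[
\lct(X,\Delta;I_\bullet^{(\lambda)})\le \frac{A_{X,\Delta}(v)}{v(I_\bullet^{(\lambda)})}\le \frac{A_{X,\Delta}(v)}{\lambda r}.
\]
If $\lambda>A_{X,\Delta}(v)$ (note $A_{X,\Delta}(v)>0$ as $v\in\Valc$), the right-hand side is $<1/r$, so such $\lambda$ does not lie in the set defining $\mu(\cF_v)$. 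Therefore $\mu(\cF_v)\le A_{X,\Delta}(v)$, and we are done.

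I do not expect a serious obstacle here: the statement is essentially a formal consequence of Lemma \ref{l:L^NA=lc slope}, and the only point requiring minor care is the normalization convention making sense of $v(s)$ for $s\in H^0(X,\cO_X(mL))$ and the resulting inequality $v(I_m^{(\lambda)})\ge\lambda mr$. As an alternative that avoids invoking the log canonical slope, one can argue directly with the Gauss extension $\tilde v$ of $v$: testing the lc threshold that computes $\LNA(\cF_v)+1$ against $\tilde v$ yields $\LNA(\cF_v)\le A_{X,\Delta}(v)-\tilde v(\cI_\bullet^{1/r})$, and $\tilde v(\cI_\bullet^{1/r})\ge 0$ follows from $v(I_{m,i})\ge i$ for all $i$, which is again the base-ideal inequality above.
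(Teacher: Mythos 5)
Your proof is correct and follows essentially the same route as the paper: both reduce the first inequality to $\mu(\cF_v)\le A_{X,\Delta}(v)$ via Lemma \ref{l:L^NA=lc slope} and deduce the second inequality formally from the definitions of $\HNA$ and $\tbeta_{X,\Delta}$. The only difference is that you write out the base-ideal estimate $v(I_\bullet^{(\lambda)})\ge \lambda r$ explicitly, where the paper simply cites \cite{XZ20cm}*{Proposition 4.2}.
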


\begin{proof}
It is not hard to see from the definition that $\mu(\cF_v)\le A_{X,\Delta}(v)$ (c.f. \cite{XZ20cm}*{Proposition 4.2}), thus the first inequality follows from Lemma \ref{l:L^NA=lc slope}. The second inequality follows from the first and the definition of $\HNA$ and $\tbeta$.
\end{proof}

Given the equality $\LNA(\cF)=\mu(\cF)$ (see Lemma \ref{l:L^NA=lc slope}), we can establish the convexity of $\LNA$ using  \cite{XZ20nvol}.

\begin{prop} \label{p:Lconvex}
Let $\cF_0$ and $\cF_1$ be linearly bounded filtrations of $R$ and $(\cF_t)_{t\in [0,1]}$ be the geodesic connecting them. 
For $t\in (0,1)$, $\LNA(\cF_t) \leq (1-t) \LNA(\cF_0) + t \LNA(\cF_1)$.
\end{prop}

\begin{proof}
It is not hard to see that the statement is unaffected by translation of the filtrations. Thus by Lemma \ref{l:L^NA=lc slope}, we may assume that after shifting the filtrations, there exists valuations $v_0, v_1$ on $X$ with $A_{X,\Delta}(v_i)<\infty$ such that $\LNA(\cF_i)=\mu(\cF_i)=A_{X,\Delta}(v_i)$ and  $v_i(I_{\cF_i,\bullet}^{(\lambda)})\ge r\lambda$ for all $\lambda\in\bR$ and $i=0,1$. In particular, $\cF_i^\lambda R\subseteq \cF_{v_i}^\lambda R$ for all $\lambda\in\bR$.

Let $(Y=\Spec(R),\Gamma)$ denote the affine cone over $(X,\Delta)$ with respect to the polarization $L=-r(K_X+\Delta)$. Let $w_i$ be the $\bG_m$-invariant valuation on $Y$ given by $w_i(s)=mr+v_i(s)$ for $s\in R_m$ (informally $w_i=r\cdot \ord_o+v_i$). Let $\fb_{t,\bullet}$ be the graded sequence of ideals on $Y$ defined by 
\[
\fb_{t,m}=\fa_\bullet((1-t)w_0)\boxplus \fa_\bullet(tw_1):=\sum_{i=0}^m \fa_{m-i}((1-t)w_0)\cap \fa_i(tw_1).
\]
In other words, $\fb_{t,m}$ is generated by those $s\in R$ with $\lfloor (1-t)w_0(s) \rfloor + \lfloor tw_1(s) \rfloor\ge m$. For any $k\in\bZ$ and any $s\in \cF_t^{k+2} R_m$, we have $(1-t)w_0(s)+tw_1(s)\ge mr+k+2$ by \eqref{e:F_t defn}. It follows that $s$ is a section of $\fb_{t,mr+k}$ (for if $a+b\ge k+2$ then $\lfloor a \rfloor+\lfloor b \rfloor\ge k$). Therefore elements in $\cF_t^{k+2} R_m$ yield sections of $\fb_{t,mr+k}$ on $Y$ for any $k\in\bZ$.
By \cite[Theorem 3.11]{XZ20nvol}, we have
\begin{align*}
    \lct(\fb_{t,\bullet}) & \le \lct(\fa_\bullet((1-t)w_0))+\lct(\fa_\bullet(tw_1)) \le (1-t)A_{Y,\Gamma}(w_0)+tA_{Y,\Gamma}(w_1) \\
    & = 1+(1-t)A_{X,\Delta}(v_0)+tA_{X,\Delta}(v_1) = 1+(1-t) \LNA(\cF_0) + t \LNA(\cF_1).
\end{align*}
Thus, for any rational $c>(1-t) \LNA(\cF_0) + t \LNA(\cF_1)$ and any $s\in \cF^{cmr+2}_t R_m$ (where $m$ is sufficiently divisible), as it yields a section of $\fb_{t,(1+c)mr}$ on $Y$, the pair $(Y,\Gamma+\frac{1}{mr}\{s=0\})$ is not lc. Using \cite[Lemma 3.1(5)]{Kol13}, it follows that the base $(X,\Delta+\frac{1}{mr}\{s=0\})$ is not lc for any $m\in\bN$ and any $s\in \cF^{cmr+2}_t R_m$. By the definition of log canonical slope, this implies that $\mu(\cF_t)\le c$. By Lemma \ref{l:L^NA=lc slope} and the fact that $c>(1-t) \LNA(\cF_0) + t \LNA(\cF_1)$ was arbitrary, the result follows. 
\end{proof}

Using the measure $\Dh_{\cF_0,\cF_1}$ constructed in Section \ref{s:measureR2}, 
we next describe the behavior of the Monge-Amp\`{e}re Energy and $\tS$ functionals along the  geodesic.

\begin{prop}\label{p:Eaffine}
For  $t\in [0,1]$, $\ENA(\cF_t) = (1-t) \ENA(\cF_0) + t \ENA(\cF_1)$.
\end{prop}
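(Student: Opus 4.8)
The plan is to show that $\ENA$, which by definition is the barycenter of the Duistermaat--Heckman measure $\Dh_{\cF_t}$, depends affinely on $t$ along the geodesic. The key tool is the projection formula in Proposition \ref{p:bnuproject}(1): for the map $p_t:\R^2\to\R$ given by $p_t(x,y)=(1-t)x+ty$, one has $(p_t)_*(\Dh_{\cF_0,\cF_1})=\Dh_{\cF_t}$. Since $\ENA(\cF_t)=\int_\R \la\,\Dh_{\cF_t}(d\la)$ is the barycenter, and pushing forward along $p_t$ preserves integrals of functions pulled back via $p_t$, we get
\[
\ENA(\cF_t)=\int_\R \la\,\Dh_{\cF_t}(d\la)=\int_{\R^2} p_t(x,y)\,\Dh_{\cF_0,\cF_1}(dx\,dy)=\int_{\R^2} \bigl((1-t)x+ty\bigr)\,\Dh_{\cF_0,\cF_1}(dx\,dy).
\]
The right-hand side is visibly affine in $t$: it equals $(1-t)\int_{\R^2} x\,\Dh_{\cF_0,\cF_1}+t\int_{\R^2} y\,\Dh_{\cF_0,\cF_1}$.

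It then remains only to identify the two coefficient integrals with $\ENA(\cF_0)$ and $\ENA(\cF_1)$ respectively. For this I would again invoke Proposition \ref{p:bnuproject}(1) with $t=0$ and $t=1$: taking $t=0$ gives $p_0(x,y)=x$ and $(p_0)_*\Dh_{\cF_0,\cF_1}=\Dh_{\cF_0}$, so $\int_{\R^2} x\,\Dh_{\cF_0,\cF_1}(dx\,dy)=\int_\R \la\,\Dh_{\cF_0}(d\la)=\ENA(\cF_0)$, and symmetrically taking $t=1$ gives $\int_{\R^2} y\,\Dh_{\cF_0,\cF_1}(dx\,dy)=\ENA(\cF_1)$. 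Combining, $\ENA(\cF_t)=(1-t)\ENA(\cF_0)+t\ENA(\cF_1)$, which is the claim.

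There is essentially no obstacle here beyond bookkeeping: the only mild point to check is that the barycenter integral is legitimately computed by pushing forward, which is immediate since $\Dh_{\cF_0,\cF_1}$ is a compactly supported probability measure (Theorem \ref{t:bnuconverges}) so $\la$ is integrable against all the measures in sight and the change-of-variables/pushforward identity $\int g\,d(p_*\mu)=\int (g\circ p)\,d\mu$ applies with $g(\la)=\la$. One could alternatively argue at the finite level, noting from \eqref{e:nuF_t} that $\nu_m^{\cF_t}$ is the pushforward of $\bnu_m$ under $p_t$, so the barycenter of $\nu_m^{\cF_t}$ is $(1-t)$ times the barycenter of $\nu_m^{\cF_0}$ plus $t$ times that of $\nu_m^{\cF_1}$, and then pass to the limit $m\to\infty$; but the measure-theoretic argument via Proposition \ref{p:bnuproject} is cleaner and I would present that.
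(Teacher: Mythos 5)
Your proof is correct and follows essentially the same route as the paper: both compute $\ENA(\cF_t)$ as an integral against the compatible measure $\Dh_{\cF_0,\cF_1}$ via Proposition \ref{p:bnuproject}, and then identify the coefficient integrals with $\ENA(\cF_0)$ and $\ENA(\cF_1)$ (the paper leaves this last identification implicit; you spell it out by applying the same proposition at $t=0,1$).
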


\begin{proof}
 Set $\bnu:= \Dh_{\cF_0,\cF_1}$. We compute
\begin{equation}\label{e:EF_t}
\ENA(\cF_t) 
= \int_{\R}  \la \, \Dh_{\cF_t}(\mathrm{d} \la)
=\int_{\R^2} \left( (1-t) x+ ty \right) \,  \mathrm{d} \bnu
= (1-t) \int_{\R^2}  x \, \mathrm{d}\bnu  + t \int_{\R^2} y \,  \mathrm{d} \bnu,
\end{equation}
where the second equality is by Proposition \ref{p:bnuproject}.
From this, the result follows. 
\end{proof}

\begin{prop}\label{p:Sconvex}
For $t\in (0,1)$, 
$\tS(\cF_t) \geq (1-t) \tS(\cF_0) + t \tS(\cF_1)$.
Furthermore, the inequality is strict unless there  exists $d\in \R$ so that  $d_1(\cF_0, \cG)= 0$, where $\cG$ is the filtration defined by $\cG^\la R_m : = \cG^{\la -mrd} R_m$.
\end{prop}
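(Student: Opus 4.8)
The plan is to reduce the statement to a one-variable convexity inequality for a strictly convex function integrated against the compatible DH measure $\bnu := \Dh_{\cF_0,\cF_1}$. Recall $\tS(\cF) = -\log \int_\R e^{-\la}\,\Dh_\cF(d\la)$. Using Proposition \ref{p:bnuproject}(1), which identifies $\Dh_{\cF_t}$ with the pushforward of $\bnu$ under $p(x,y) = (1-t)x + ty$, we can rewrite
\[
\tS(\cF_t) = -\log \int_{\R^2} e^{-((1-t)x + ty)} \, d\bnu(x,y).
\]
The function $(x,y)\mapsto e^{-((1-t)x+ty)}$ is, for each fixed $t$, a product $e^{-(1-t)x}\cdot e^{-ty} = (e^{-x})^{1-t}(e^{-y})^{t}$. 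So by Hölder's inequality applied to the probability measure $\bnu$ with exponents $\tfrac{1}{1-t}$ and $\tfrac{1}{t}$,
\[
\int_{\R^2} (e^{-x})^{1-t}(e^{-y})^{t}\, d\bnu \;\le\; \Big(\int_{\R^2} e^{-x}\,d\bnu\Big)^{1-t}\Big(\int_{\R^2} e^{-y}\,d\bnu\Big)^{t}.
\]
Taking $-\log$ and noting $\int_{\R^2} e^{-x}\,d\bnu = \int e^{-\la}\Dh_{\cF_0}(d\la)$ and likewise for $y$ (again by Proposition \ref{p:bnuproject}(1), with $t=0$ and $t=1$), this gives exactly $\tS(\cF_t) \ge (1-t)\tS(\cF_0) + t\tS(\cF_1)$.

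For the strictness clause, I would invoke the equality case of Hölder's inequality: equality holds if and only if $e^{-x}$ and $e^{-y}$ are proportional $\bnu$-a.e., i.e.\ there is a constant so that $x = y + d$ for $\bnu$-almost every $(x,y)$, for some $d\in\R$. This says precisely that the pushforward of $\bnu$ under $(x,y)\mapsto x - (y+d)$ is the Dirac mass at $0$. By Proposition \ref{p:bnuproject}(2) with $c=1$ (and the given $d$), that pushforward is $\RLM_{\cF_0,\cG}$ where $\cG^\la R_m = \cF_1^{\la - mrd} R_m$; and $\RLM_{\cF_0,\cG} = \delta_0$ is by definition equivalent to $d_1(\cF_0,\cG) = 0$. (Note the statement of the proposition as written has a typo: the intended $\cG$ is $\cG^\la R_m := \cF_1^{\la - mrd} R_m$, matching Theorem \ref{t:DHconvex} and Proposition \ref{p:bnuproject}(2).) This yields the stated dichotomy.

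The main technical point to be careful about is simply the measure-theoretic bookkeeping: one must make sure $\bnu$ is genuinely a compactly supported probability measure (guaranteed by Theorem \ref{t:bnuconverges}) so that Hölder applies with no integrability issues, and one must correctly match the marginals $p_*\bnu$ at the endpoints $t=0,1$ with $\Dh_{\cF_0}$ and $\Dh_{\cF_1}$. The equality analysis is the only genuinely delicate part, and it is handled cleanly by the sharp form of Hölder; I do not anticipate a serious obstacle beyond tracking the shift parameter $d$ correctly through Proposition \ref{p:bnuproject}(2).
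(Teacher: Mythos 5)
Your proposal is correct and follows essentially the same route as the paper: both rewrite $\tS(\cF_t)$ as $-\log\int_{\R^2}e^{-(1-t)x-ty}\,\mathrm{d}\bnu$ via Proposition \ref{p:bnuproject}, apply H\"older's inequality to $(e^{-x})^{1-t}(e^{-y})^{t}$, and settle the equality case by the sharp form of H\"older together with Proposition \ref{p:bnuproject}(2), identifying the condition $x-y-d=0$ $\bnu$-a.e.\ with $d_1(\cF_0,\cG)=0$ (and you correctly spotted that the $\cG$ in the statement should read $\cG^{\la}R_m:=\cF_1^{\la-mrd}R_m$).
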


\begin{proof}
Set $\bnu: = \Dh_{\cF_0,\cF_1}$, $f(x,y): = e^{-x}$, and $g(x,y) := e^{-y}$.
For $t\in [0,1]$,
\[
\tS(\cF_t) 
=
 -\log 
\int_{\R^2} e^{-\la } \, \Dh_{\cF_t}(\mathrm{d}\la)
= -\log 
\int_{\R^2} e^{-(1-t)x - ty} \, \mathrm{d}\bnu
=
- \log \lVert f^{1-t} g^{t} \rVert_{1,\bnu}
,
\]
where the second equality is by Proposition \ref{p:bnuproject}.
 H\"older's Inequality implies
\begin{align*}
-\log \lVert f^{1-t} g^{t} \rVert_{1,\bnu}
\geq 
- \log \left( \lVert f \rVert_{1,\bnu}^{1-t} \lVert g \rVert_{1,\bnu}^t \right)
&=
- (1-t) \log \lVert f \rVert_{1,\bnu} - t \log \lVert g \rVert_{1,\bnu}\\
& = (1-t) \tS(\cF_0) + t \tS(\cF_1)
\end{align*}
Furthermore,  the  inequality is strict unless
(i) $f=0$ or $g=0$ $\bnu$-a.e.  or 
(ii) there exists $c >0$ so that $f- c g =0$ $\bnu$-a.e. 

Condition (i) cannot occur, since $f$ and $g$ are $>0$. 
Condition (ii) is equivalent to saying $x-y-d=0$ $\bnu$-a.e., 
where $d: = -\ln(c)$. 
Now, if we  write $\cG$ for the filtration of $R$ defined by $\cG^\la R_m : = \cF_1^{\la-mrd}R_m$,
then
\[
\lVert x-y-d \rVert_{1,\bnu}
=
\int_{\R^2}  \left| x-y-d \right| \, \mathrm{d}\bnu
=
\int_{\R} \left| \la \right| \, {\rm RLM}_{\cF_0,\cG} (\mathrm{d}\la)
=
d_1(\cF,\cG)
,
\]
where the second is by Proposition \ref{p:bnuproject}. 
Therefore,  (ii) holds iff  $d_{1}(\cF,\cG)=0$. 
\end{proof}

\begin{proof}[Proof of Theorem \ref{t:DHconvex}]
The result follows immediately from Propositions  \ref{p:Lconvex}, \ref{p:Eaffine}, and \ref{p:Sconvex}.
\end{proof}

\subsection{Uniqueness of valuations computing $h(X,\Delta)$ }

As a consequence of the convexity results in the previous section, we prove that the minimizer of $\HNA$ is unique.

\begin{thm} \label{t:Hunique}
Assume $v$ and $w$ are valuations in $\Valc\cup\{v_{\triv}\}$. If $v,w$ both compute $h(X,\Delta)$, then $v=w$. 
\end{thm}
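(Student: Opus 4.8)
The plan is to use the strict convexity of $\HNA$ along geodesics established in Theorem~\ref{t:DHconvex}(ii). Suppose $v$ and $w$ both compute $h(X,\Delta)$, i.e. $\HNA(\cF_v)=\HNA(\cF_w)=h(X,\Delta)$; we want to conclude $v=w$. The first step is to form the geodesic $(\cF_t)_{t\in[0,1]}$ connecting $\cF_0:=\cF_v$ and $\cF_1:=\cF_w$. By Theorem~\ref{t:DHconvex}(ii), $\HNA(\cF_t)\le (1-t)\HNA(\cF_v)+t\HNA(\cF_w)=h(X,\Delta)$ for all $t\in(0,1)$. Since $h(X,\Delta)=\inf_{\cF}\HNA(\cF)$, this forces $\HNA(\cF_t)=h(X,\Delta)$ for all $t$, so equality holds throughout the convexity estimate. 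By the last sentence of Theorem~\ref{t:DHconvex}, equality in (ii) forces the existence of $d\in\R$ with $d_1(\cF_v,\cG)=0$, where $\cG^\la R_m:=\cF_w^{\la-dmr}R_m$ is the shift of $\cF_w$ by $d$.

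The second step is to pin down the shift constant $d$ and then to upgrade the equivalence $d_1(\cF_v,\cG)=0$ to the equality of valuations $v=w$. For the first point, I would compare the $\ENA$ (equivalently $S$) values: equality in Proposition~\ref{p:Sconvex} (H\"older) combined with the affineness of $\ENA$ along the geodesic (Proposition~\ref{p:Eaffine}) should let me identify $d$ --- indeed, $\RLM_{\cF_v,\cG}$ being a point mass at $0$ means $\Dh_{\cF_v}$ and $\Dh_{\cG}$ coincide (Proposition~\ref{p:equivDh}), and since $\Dh_{\cG}$ is $\Dh_{\cF_w}$ translated by $d$, comparing barycenters gives $S(v)=S(w)+d$. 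A symmetric argument interchanging the roles of $v$ and $w$, or a direct argument using that $v,w$ both lie in $\Valc\cup\{v_{\triv}\}$ and hence (by Lemma~\ref{l:lminFv}) have $\la_{\min}(\cF_v)=\la_{\min}(\cF_w)=0$, should force $d=0$. Once $d=0$, we have $d_1(\cF_v,\cF_w)=0$, i.e. $\cF_v$ and $\cF_w$ are equivalent filtrations.

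The third and final step --- which I expect to be the main obstacle --- is to deduce $v=w$ from $d_1(\cF_v,\cF_w)=0$. This is a statement that the map $v\mapsto \cF_v$ from $\Valc\cup\{v_{\triv}\}$ to linearly bounded filtrations is injective modulo $L^1$-equivalence. I would look to invoke the analogous injectivity result already available in the literature for the stability-threshold / $\delta$-minimizer setting: the point is that $d_1(\cF_v,\cF_w)=0$ implies that for every graded linear series argument the filtrations agree "on a full measure set of gradeds", and by the work on bounded filtrations one recovers that $v$ and $w$ induce the same valuation (this uses that both have finite log discrepancy, so their filtrations are genuinely linearly bounded and the induced graded rings detect the valuation). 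Concretely, $d_1(\cF_v,\cF_w)=0$ forces $\cF_v^\la R_m$ and $\cF_w^\la R_m$ to have the same dimension for all $\la,m$, and more: after passing to the associated graded one checks the filtrations agree, hence $v(s)=w(s)$ for all sections $s\in R_m$ and all $m$; since $L$ is ample and $R$ generates $K(X)$, this determines the valuation on all of $K(X)^\times$, giving $v=w$.

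The main difficulty is thus the last step: translating the measure-theoretic statement $d_1=0$ into the pointwise equality of the two valuations. One has to be careful that $d_1(\cF_v,\cF_w)=0$ a priori only says the filtrations agree "asymptotically" (the defect being supported on a measure-zero set in the $\RLM$ sense), so the argument needs either an Okounkov-body / valuation-reconstruction input (e.g. that a valuation in $\Valc$ is recovered from its induced filtration, which should follow from results of Boucksom--Jonsson and the finite-log-discrepancy hypothesis) or a direct comparison of the ideals $I_{m,\la}$ cut out by the two filtrations. I would organize the write-up so that this reconstruction statement is isolated as the crux, citing the relevant injectivity lemma, with the convexity input from Theorem~\ref{t:DHconvex} doing the rest.
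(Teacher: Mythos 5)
Your proposal is correct and follows essentially the same route as the paper: the geodesic plus the strict convexity of $\HNA$ (Theorem \ref{t:DHconvex}), the normalization $d=0$ via $\la_{\min}(\cF_v)=\la_{\min}(\cF_w)=0$ (Lemma \ref{l:lminFv} together with Proposition \ref{p:equivDh}), and the injectivity statement you defer to the literature is exactly Lemma \ref{l:v=w} (\cite{HL20-uniqueness}*{Proposition 2.27}), which the paper also reproves directly by the powers-of-a-section argument rather than by comparing dimensions of $\cF^{\la}R_m$ levelwise (which $d_1=0$ alone does not give). The one point to make explicit is that ``computes $h(X,\Delta)$'' is defined through $\tbeta_{X,\Delta}$, so your starting identity $\HNA(\cF_v)=h(X,\Delta)$ requires Corollary \ref{c:LNA<=A} (giving $\HNA(\cF_v)\le\tbeta_{X,\Delta}(v)$) combined with $h(X,\Delta)=\inf_{\cF}\HNA(\cF)$, which is precisely how the paper arranges its chain of inequalities.
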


In \cite{HL20-uniqueness}, the previous theorem was shown under the assumption that there exists a special $\R$-test configuration computing $h(X,\Delta)$. The latter assumption will be verified in Corollary \ref{c-hfg}. 

\begin{proof}
Consider the geodesic $(\cF_t)_{t\in [0,1]}$  connecting $\cF_0 : = \cF_v$ and $\cF_1 :=\cF_w$. 
For $t\in (0,1)$, 
\[
\HNA(\cF_t) \leq (1-t) \HNA(\cF_0) + t \HNA(\cF_1) \leq (1-t) \tilde{\beta}_{X,\Delta}(v) + t \tilde{\beta}_{X,\Delta}(w)
= h(X,\Delta),
\]
where first inequality is Theorem \ref{t:DHconvex} and the second Corollary \ref{c:LNA<=A}. %\footnote{\HB{I think this lemma got deleted in the rewrite. It could either go in the prelims or after it is shown that $\mu=L$}\ZZ{Added it back.}}
Since $h(X,\Delta) \leq\HNA(\cF_t)$, the first inequality cannot be strict. 
Therefore, Theorem \ref{t:DHconvex} further implies there exists $d\in \R$ so that 
 $d_{1}(\cF_0,\cG)=0$, where  $\cG^\la R_m : = \cF_1^{\la-mrd} R_m$.
 
 Next, note that $d=0$, since
 \[
0= \la_{\min}(\cF_0) = \la_{\min}(\cG) = \la_{\min}(\cF_1) +d = d, 
 \]
 where first and last inequality is by Lemma \ref{l:lminFv} and the second by Proposition \ref{p:equivDh}. 
 Therefore, $d_{1}(\cF_0,\cF_1)=0$.  By Lemma \ref{l:v=w}, we conclude $v=w$.
\end{proof}
 
\begin{lem}[{\cite{HL20-uniqueness}*{Proposition 2.27}}] \label{l:v=w} 
Assume $v$ and $w$ are valuations in $\Val_X^\circ\cup \{v_{\triv}\}$. If $\cF_v$ and $\cF_w$ are equivalent, then $v=w$.
\end{lem}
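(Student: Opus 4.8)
The final statement to prove is Lemma~\ref{l:v=w}: if $v,w\in\Val_X^\circ\cup\{v_\triv\}$ and the filtrations $\cF_v$ and $\cF_w$ are equivalent (i.e.\ $d_1(\cF_v,\cF_w)=0$), then $v=w$. Since this is quoted from \cite{HL20-uniqueness}*{Proposition 2.27}, the cleanest route is simply to cite it, but I will sketch a self-contained argument. The idea is to recover the valuation $v$ from the filtration $\cF_v$ via the graded linear series it defines, and then to show that the data surviving in the Duistermaat--Heckman/RLM limit is enough to pin down $v$ on all of $R$, hence on $K(X)$.

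\textbf{Step 1: Reduce to a statement about vanishing orders on $R$.} First I would observe that both $v$ and $w$ are determined by their restrictions to the section ring $R=\bigoplus_m R_m$: since $L=-r(K_X+\Delta)$ is ample, $K(X)$ is the fraction field of $R$ localized appropriately, and a valuation on $K(X)$ that is $\leq +\infty$ on $R$ is determined by the function $s\mapsto v(s)$ on homogeneous elements together with the normalization $v(\text{ratio})=v(\text{num})-v(\text{den})$. So it suffices to show $v(s)=w(s)$ for every $0\neq s\in R_m$ and every $m$. Equivalently, writing $\la_i^{v,(m)}$ and $\la_i^{w,(m)}$ for the successive minima, and using a basis of $R_m$ compatible with both $\cF_v$ and $\cF_w$ (existence by \cite[Lemma 3.1]{AZ20}), I want to show the jointly-ordered pairs agree, and moreover that on each graded piece the two filtrations induce the same flag.

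\textbf{Step 2: Use $d_1=0$ to control asymptotic vanishing.} The hypothesis $d_1(\cF_v,\cF_w)=0$ means $\RLM_{\cF_v,\cF_w}=\delta_0$, i.e.\ $\frac{1}{mr}(\la_i^{v,(m)}-\la_i^{w,(m)})\to 0$ in the appropriate averaged sense as $m\to\infty$. Combined with the concavity/continuity of volumes of the graded linear series $V_\bullet^{(s)}$ (Proposition~\ref{p:BC11}) this forces $\vol(V_\bullet^{\cF_v,(s)})=\vol(V_\bullet^{\cF_w,(s)})$ for all $s$, hence $\Dh_{\cF_v}=\Dh_{\cF_w}$ (Proposition~\ref{p:equivDh}), and more to the point it forces the two filtrations to agree ``to order $o(m)$'' at every level. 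The key upgrade from asymptotic agreement to exact agreement uses multiplicativity (F4): if $v(s)>w(s)$ for some $s\in R_m$, then $v(s^k)-w(s^k)\geq k(v(s)-w(s))$ does not decay, which one plays against the $d_1=0$ condition restricted to the subalgebra generated by $s$ (using that $\RLM$ only sees an average, one needs to additionally exploit that the discrepancy, once positive, propagates to a positive-measure set of basis vectors in high degree — this is where the argument of \cite{HL20-uniqueness} does the real work).

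\textbf{Step 3: Conclude $v=w$.} Once $v(s)=w(s)$ for all homogeneous $s$, extend multiplicatively to ratios to get $v=w$ on $K(X)^\times$. I expect \emph{Step 2} to be the main obstacle: passing from ``the limit measures match'' to ``the filtrations match exactly'' is not formal, since $d_1$ is only an asymptotic/averaged invariant. The resolution in \cite{HL20-uniqueness}*{Proposition 2.27} is to note that if $v\neq w$ then there is some $s$ with, say, $v(s)>w(s)$, pick it in the lowest degree $m_0$ where a discrepancy occurs, and then in degree $km_0$ the products involving $s^k$ contribute a definite-size block of basis vectors where $\la^{v}-\la^{w}$ is bounded below by $k(v(s)-w(s))$, with the number of such vectors growing like a fixed positive fraction of $N_{km_0}$ (using that $R$ is a domain so $s$ is not a zero-divisor); this contradicts $\int|\la|\,\RLM_{\cF_v,\cF_w}(d\la)=0$. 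Since this is already in the literature, in the paper I would simply write: \emph{This is \cite{HL20-uniqueness}*{Proposition 2.27}.}

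$$v=w.$$
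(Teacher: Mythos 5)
Your overall strategy---reduce to showing $v(s)=w(s)$ for homogeneous $s$, then propagate a single discrepancy multiplicatively into high degree to contradict $d_1(\cF_v,\cF_w)=0$---is exactly the paper's, but the step you yourself flag as ``where the real work happens'' is precisely where your sketch breaks down, so as written there is a genuine gap. In degree $km_0$ the only ``products involving $s^k$'' are the scalar multiples of $s^k$, a one-dimensional space, so they cannot occupy a positive fraction of $N_{km_0}$; the domain property of $R$ only helps once you multiply a power of $s$ by a full graded piece of positive degree, which changes the total degree. Moreover, for a product $s^{K}h$ one has $v(s^{K}h)-w(s^{K}h)=K\bigl(v(s)-w(s)\bigr)+\bigl(v(h)-w(h)\bigr)$, and $v(h)-w(h)$ has no a priori lower bound beyond $-\la_{\max}^{(\deg h)}(\cF_w)$, so the claimed lower bound $K\bigl(v(s)-w(s)\bigr)$ on your block fails unless the degree of the auxiliary factor is kept a sufficiently small, but fixed, proportion of the total degree. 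Your sketch never addresses this trade-off between the density of the block and the control of $w$ on the auxiliary factor.

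The paper's proof manages exactly this: writing $f\in R_m$ with $a=v(f)>w(f)=b$, $\la=\la_{\max}(\cF_w)$, it fixes $\e\in(0,a-b)$ and $p$ with $\la r\le (a-b-\e)p$, and considers $V_k:=f^{kp}\cdot R_k\subset \cF_v^{akp}R_{k(mp+1)}$; every $g\in V_k$ then satisfies $w(g)\le bkp+k\la r\le (a-\e)kp$, and a short dimension count with a basis of $R_{k(mp+1)}$ compatible with \emph{both} filtrations (another step absent from your sketch) shows that at least $\dim R_k$ of the basis vectors satisfy $v(s_i)-w(s_i)\ge \e kp$. Since $\dim R_k/N_{k(mp+1)}$ tends to $(mp+1)^{-n}>0$ and the normalized gap $\e p/\bigl((mp+1)r\bigr)$ is a positive constant, this forces $d_1(\cF_v,\cF_w)>0$, the desired contradiction. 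Your Step 1 reduction is fine (the paper instead argues via valuation ideals and global generation, but both work), and falling back on the citation to \cite{HL20-uniqueness}*{Proposition 2.27} is consistent with the attribution; however, the paper deliberately gives this self-contained argument, and the quantitative propagation just described is the content your proposal is missing.
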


This result was first proved in \cite{HL20-uniqueness} using the machinery of non-Archimedean metrics from \cite{BJ18}. 
For the sake of completeness, we give a proof which only uses the terminology introduced in this paper. 

\begin{proof}
It is enough to show $v(f)=w(f)$ for all $f\in R_m$ and $m \in \N$. Indeed, for any $\la\in \R$, we may choose some integer $m\gg 0$ such that $ \cO_X(mL)\otimes \fa_\la (v)$ is globally generated, where $\fa_\la (v):=\{f\in \cO_X\,\vert\,v(f)\ge \la\}$ denotes the valuation ideal. If $v(f)=w(f)$ for all $f\in H^0(X, \cO_X(mL) \otimes \fa_\la (v))\subseteq R_m$, then $\fa_\la (v)\subseteq \fa_\la (w)$. Switching the role of $v$ and $w$ gives the reverse containment. Thus, $\fa_\la (v) = \fa_\la (w)$ for all $\la\in \R$, and, hence, $v=w$.

Suppose now that $a=v(f)\neq w(f)=b$ for some $f\in R_m$. Without loss of generality, we may assume $a>b$. Let $\lambda=\lambda_{\max}(\cF_w)$. Fix some $\varepsilon\in (0,a-b)$ and let $p$ be a sufficiently large integer such that $\lambda r\le (a-b-\varepsilon)p$. Consider the subspace $V_k:=f^{kp} \cdot R_k\subseteq \cF_v^{akp} R_{k(mp+1)}$. For any $g\in V_k$, we have $w(g)\le bkp+k\lambda r\le (a-\varepsilon)kp$ by our choice of $p$. Thus for any basis $(s_1,\cdots,s_N)$ of $R_{k(mp+1)}$ that is compatible with both $\cF_v$ and $\cF_w$, the part that spans $\cF_v^{akp} R_{k(mp+1)}$ contains at least $\dim V_k = \dim R_k$ elements $s_i$ with $w(s_i)\le (a-\varepsilon)kp$. In particular for these $s_i$ we have $v(s_i)-w(s_i)\ge \varepsilon kp$. It follows that 
\[
\int_\bR |\la| \rd \nu_{k(mp+1)}^{\cF_v,\cF_w}\ge \frac{\frac{\varepsilon p}{(mp+1)r}\cdot \dim R_k}{\dim R_{k(mp+1)}}
\]
for all $k\gg 0$. Letting $k\to \infty$ we get $d_1(\cF_v,\cF_w)>0$, contradicting our assumption. Thus we must have $v(f)=w(f)$ for all $m\in\bN$ and all $f\in R_m$ and therefore $v=w$.
\end{proof}

% \subsubsection{Ding functional}

% Fix $p\in (1,\infty)$ and consider the infimum 
% \[
% \inf_{\cF} \frac{ \DNA(\cF) }{ \lVert \cF \rVert_{p}}
% \]
% that runs through non-almost trivial linearly bounded filtrations of $R$. This infimum has been studied in ... 

% \begin{thm}
% If $(X,\Delta)$ is K-unstable, then any two minimizers of the above infimum are equivalent after translation and scaling. 
% \end{thm}

% \begin{proof}
% Assume $\cF_0$ and $\cF_1$ linearly bounded filtrations of $R$ that both compute the infimum. After scaling $\cF_1$ by some constant, we may assume $\DNA(\cF_0)=\DNA(\cF_1)$ and, hence, $\lVert \cF_0\rVert_p=\lVert \cF_1 \rVert_p$.
% Note that the infimum is $<0$, since $(X,\Delta)$ is K-unstable. Therefore, both Ding invariants are $<0$.

% Since $\cF_0$ and $\cF_1$ are not almost trivial,
% we may fix $t\in (0,1)$ so that $\cF_t$ is not almost trivial by Proposition \ref{p:almost2}.
%  Theorem \ref{t:DHconvex}  and Proposition \ref{p:Lpconvex} imply 
% \[
% \DNA(\cF_t) \leq (1-t)\DNA(\cF_0) + t \DNA(\cF_1) 
% = \DNA(\cF_0)
% \quad \text{ and } \quad
% \lVert \cF_t \rVert_p \leq 
%  (1-t)\lVert \cF_0\rVert_p
%  +
%  t\lVert \cF_0\rVert_p = \lVert \cF_0\rVert .\]
% These inequalities cannot be strict, since then 
% $\frac{\DNA(\cF_t)}{\lVert \cF_t \rVert_p }
% < 
% \frac{\DNA(\cF_0)}{\lVert \cF_0 \rVert_p }
% = \inf_{\cF}  \frac{\DNA(\cF)}{ \lVert \cF \rVert_p }$,
% which is not possible. 
% Therefore, Proposition implies that $\cF_0$ and $\cF_1$ are equivalent after scaling and translation. 
% \end{proof}

\begin{cor}\label{c-quasiunique}
Let $(X,\Delta)$ be a log Fano pair, there is a unique valuation computing $h(X,\Delta)$ in $\Valc\cup\{v_{\triv}\}$ and it is quasi-monomial. 
\end{cor}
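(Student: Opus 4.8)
\textbf{Proof proposal for Corollary \ref{c-quasiunique}.}
The plan is to combine the uniqueness statement of Theorem \ref{t:Hunique} with the existence of a quasi-monomial minimizer, which is already available from \cite[Theorem 4.9]{HL20-uniqueness} (recalled at the end of the Preliminaries). The only nontrivial point is to reconcile the two: Theorem \ref{t:Hunique} is stated for valuations in $\Valc\cup\{v_{\triv}\}$ computing $h(X,\Delta)$, while the cited existence result produces a quasi-monomial valuation $v_{\rm qm}$ computing $h(X,\Delta)$, which is automatically an element of $\Valc\cup\{v_{\triv}\}$ (a quasi-monomial valuation has finite log discrepancy). So first I would invoke \cite[Theorem 4.9]{HL20-uniqueness} to obtain a quasi-monomial $v_{\rm qm}\in\Valc\cup\{v_{\triv}\}$ with $\tbeta_{X,\Delta}(v_{\rm qm})=h(X,\Delta)$.

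Second, I would argue that \emph{any} valuation $w\in\Valc\cup\{v_{\triv}\}$ computing $h(X,\Delta)$ must equal $v_{\rm qm}$: apply Theorem \ref{t:Hunique} to the pair $v=v_{\rm qm}$ and $w$, both of which lie in $\Valc\cup\{v_{\triv}\}$ and both of which compute $h(X,\Delta)$, to conclude $w=v_{\rm qm}$. This gives both the existence and the uniqueness of a valuation computing $h(X,\Delta)$ in $\Valc\cup\{v_{\triv}\}$, and since it coincides with $v_{\rm qm}$, it is quasi-monomial.

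A small subtlety worth noting explicitly is the relationship between $\HNA$-minimizers over all linearly bounded filtrations and $\tbeta_{X,\Delta}$-minimizers over $\Valc\cup\{v_{\triv}\}$: by \cite[Theorem 1.5]{HL20-uniqueness} the infima agree, so ``$w$ computes $h(X,\Delta)$'' in the sense $\tbeta_{X,\Delta}(w)=h(X,\Delta)$ is the correct hypothesis for Theorem \ref{t:Hunique}; Corollary \ref{c:LNA<=A} ($\HNA(\cF_w)\le\tbeta_{X,\Delta}(w)$) is what makes this consistent, as used in the proof of Theorem \ref{t:Hunique}. I do not expect any real obstacle here — this is purely a matter of assembling Theorem \ref{t:Hunique} and the quoted existence result — but the one point to be careful about is ensuring the quasi-monomial valuation supplied by \cite{HL20-uniqueness} is indeed in $\Valc$ (equivalently $\ne v_{\triv}$ unless $(X,\Delta)$ is already K-semistable, in which case $v_{\triv}$ is itself the quasi-monomial minimizer), so that Theorem \ref{t:Hunique} applies to it directly.
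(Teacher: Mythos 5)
Your proposal is correct and follows the paper's own argument exactly: the paper's proof of this corollary simply cites the existence of a quasi-monomial minimizer from \cite{HL20-uniqueness} and then invokes Theorem \ref{t:Hunique} for uniqueness. The extra remarks you make (that a quasi-monomial valuation lies in $\Valc\cup\{v_{\triv}\}$ and that the $\HNA$/$\tbeta$ infima agree via \cite[Theorem 1.5]{HL20-uniqueness} and Corollary \ref{c:LNA<=A}) are consistent with how these facts are already used in the paper and introduce no new content.
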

\begin{proof}By \cite{HL20-uniqueness}*{Corollary 4.9}, there is a quasi-monomial valuation $v\in \Valc \cup \{v_{\triv}\}$ computing $h(X,\Delta)$. The uniqueness is by Theorem \ref{t:Hunique}.
\end{proof}

\section{Weighted stability}

In this section we provide a common ground to study the stability of both K\"ahler-Ricci solitons and triples $(X,\Delta,v_0)$ (where $v_0$ is the unique minimizer of $\tbeta$ from the previous section) in a suitably weighted sense. The results will be applied in the next section to study the finite generation property for various minimizers.

\subsection{Weighted \texorpdfstring{$\delta$}{delta}-invariants}\label{ss-weighteddelta}

We first introduce a weighted version of the stability threshold and then generalize results from \cite{BJ20} to this setting (see also \cite{RTZ20}*{Section 6}).  %\footnote{\textcolor{blue}{ZZ: I decide to switch things back to quasi-monomial setting so as to keep things shorter. It should be fine if we don't do the moment polytope case in Han-Li's YTD paper. We might also be able to delete the $\bT$ notation in $\delta$ in the end (already did). In any case the previous version has been labeled (3/5/21) in History.}}

\begin{defn}
Let $v_0\in\Val_X$ be a quasi-monomial valuation and $g\colon \bR\to \bR_+$ a continuous function. A \emph{$g$-weighted $(m,v_0)$-basis type divisor} is a divisor of the form
\[
D=\frac{1}{mrQ_m} \sum_{i=1}^{N_m} g\left(\frac{v_0 (s_i)}{mr}\right)\cdot  \{s_i=0\}
\]
where $(s_1, \cdots, s_{N_m})$ is a basis of $R_m$ that is compatible with $v_0$ and $Q_m=\sum_{i=1}^{N_m} g\left(\frac{v_0(s_i)}{mr}\right)$. Note that $D\in |-K_X-\Delta|_{\bR}$. We say  $D$ is compatible with a filtration $\cF$ on $R$ if the basis $(s_1,\cdots,s_{N_m})$ is compatible with $\cF$. In particular, we say  $D$ is compatible with a valuation $v\in\Val_X^\circ$ (resp. an effective $\bQ$-divisor $G$ on $X$) if it is compatible with the induced filtration $\cF_v$ (resp. $\cF_G$).
\end{defn}

For any linearly bounded filtration $\cF$ on $R$, let $\bnu_m$ and $\bnu=\Dh_{\cF_{v_0},\cF}=\lim_{m\to\infty} \bnu_m$ denote the measures on $\bR^2$ associated to $\cF_{v_0}$ and $\cF$ as constructed in Section \ref{s:measureR2}. We set 
\[
S_{g,m}(v_0;\cF):=\frac{\int_{\bR^2} g(x)y \rd \bnu_m}{\int_{\bR^2} g(x) \rd \bnu_m} \quad \text{and}\quad S_g(v_0;\cF):=\lim_{m\to\infty} S_{g,m}(v_0;\cF)=\frac{\int_{\bR^2} g(x)y \rd \bnu}{\int_{\bR^2} g(x) \rd \bnu}.
\]
It is clear that
\[
S_{g,m}(v_0;\cF) = \frac{1}{mrQ_m}\sum_{i=1}^{N_m} g\left(\frac{v_0 (s_i)}{mr}\right)\cdot  \ord_\cF(s_i)
\]
for any basis $(s_1,\cdots,s_{N_m})$ that is compatible with both $v_0$ and $\cF$.

\begin{lem}\label{l:S=lim S_m}
For any $\varepsilon>0$ there exists a positive integer $m_0=m_0(\varepsilon)$ such that 
\[
S_{g,m}(v_0;\cF)\le (1+\varepsilon)S_g(v_0;\cF)
\]
for any linearly bounded filtration $\cF$ of $R$ satisfying $\cF^0 R=R$ and any $m\geq m_0$.
\end{lem}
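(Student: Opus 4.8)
The statement is a uniform (in $\cF$) version of the convergence $S_{g,m}(v_0;\cF)\to S_g(v_0;\cF)$, so the plan is to control the difference $S_{g,m}-S_g$ by quantities that depend only on $m$ and the normalization $\cF^0R=R$, not on the fine structure of $\cF$. First I would reduce to a statement about the measures $\bnu_m$ and $\bnu=\Dh_{\cF_{v_0},\cF}$ on $\bR^2$: writing $\bnu_m^{(1)}$ for the projection to the first coordinate (the DH measure of $\cF_{v_0}$, which does \emph{not} depend on $\cF$) and $\bnu^{(2)}$ for the projection to the second (the DH measure of $\cF$), note that $S_{g,m}(v_0;\cF) = \frac{\int g(x)y\,\rd\bnu_m}{\int g(x)\,\rd\bnu_m}$. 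The denominator $\int g(x)\,\rd\bnu_m=\int g\,\rd\bnu_m^{(1)}$ converges to $\int g\,\rd\bnu^{(1)}>0$ at a rate independent of $\cF$, so the real issue is the numerator $\int g(x)y\,\rd\bnu_m$.

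The key point for the numerator is that under the hypothesis $\cF^0R=R$, the $y$-coordinates of points in $\Supp(\bnu_m)$ are bounded \emph{below} by $0$ (since $\ord_\cF(s_i)\ge 0$ for all basis elements), while $g\ge 0$. Thus $\int g(x)y\,\rd\bnu_m\ge 0$, and more importantly the measures $g(x)\,\rd\bnu_m$ are all supported in a region $\{x\in[-C_0,C_0]\}$ with $C_0$ depending only on $v_0$ (the linear boundedness constant of $\cF_{v_0}$), and on this region $g$ is bounded, say $0\le g\le M$. Then I would split: $\int g(x)y\,\rd\bnu_m = \int_{\{y\le T\}} g(x)y\,\rd\bnu_m + \int_{\{y>T\}}g(x)y\,\rd\bnu_m$ for a cutoff $T$. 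On $\{y\le T\}$ the integrand $g(x)y$ is bounded and continuous, so by the uniform weak convergence results (the tail estimates for DH measures from \cite{BHJ17,BJ18} that are uniform over linearly bounded filtrations with controlled normalization) this piece converges to $\int_{\{y\le T\}}g(x)y\,\rd\bnu$ with a rate depending only on $m,T,v_0,g$. For the tail $\{y>T\}$, I would use a uniform linear-boundedness-type estimate: because $\cF^0 R=R$, the quantity $\int_{\{y>T\}}y\,\rd\bnu_m^{(2)} = S_m(\cF) - (\text{truncation})$ can be bounded using the fact that $\dim\cF^{mry}R_m/N_m$ decays and is controlled uniformly — concretely, $\int_{\{y>T\}}g(x)y\,\rd\bnu_m\le M\int_{\{y>T\}}y\,\rd\bnu_m^{(2)}$, and this tail can be made small relative to $S_g(v_0;\cF)$ by taking $T$ large, \emph{uniformly} in $\cF$. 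This last uniformity is where one uses that $S_g(v_0;\cF)$ itself is bounded below by a positive constant times $\int_{\{y>T\}}y\,\rd\bnu^{(2)}$ for appropriate $T$ — i.e. the weighted average genuinely sees the tail.

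The main obstacle, as I see it, is precisely this uniform tail control: one must show that the contribution of large $y$ to $\int g(x)y\,\rd\bnu_m$ is a small \emph{multiple of} $S_g(v_0;\cF)$ uniformly over all $\cF$ — a filtration $\cF$ with a very long tail has a correspondingly large $S_g(v_0;\cF)$, and one needs the $m$-level estimate to degrade gracefully in the same way. I expect this to follow from combining the uniform convergence $\nu_m^{\cF}\to\Dh_{\cF}$ (which holds with a rate depending only on the linear-boundedness constant, via the concavity of $s\mapsto\vol(V_\bullet^{(s)})^{1/n}$ in Proposition \ref{p:BC11}) together with the observation that when $\cF^0R=R$ both $S_{g,m}$ and $S_g$ are, up to the $\int g\,\rd\bnu^{(1)}$ normalization, monotone functionals of the tail distribution of $\cF$. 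Once the tail is handled, the bounded region is routine weak convergence, and assembling the pieces gives $S_{g,m}(v_0;\cF)\le S_g(v_0;\cF)+\varepsilon\cdot S_g(v_0;\cF)$ for $m\ge m_0(\varepsilon)$, which is the claim.
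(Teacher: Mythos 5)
There is a genuine gap: your argument outsources exactly the hard point of the lemma. On the truncated region $\{y\le T\}$ you invoke ``uniform weak convergence results'' from \cite{BHJ17,BJ18} giving a rate for $\bnu_m\to\Dh_{\cF_{v_0},\cF}$ that depends only on $m,T,v_0,g$ and not on $\cF$. No such uniform statement exists in those references, and it cannot hold in the two\--sided form you use it: the convergence $\nu_m^{\cF}\to\Dh_\cF$ is only weak for each fixed $\cF$, and uniformity over all linearly bounded $\cF$ with $\cF^0R=R$ is precisely what Lemma \ref{l:S=lim S_m} asserts (note the lemma is deliberately one\--sided; the reverse inequality $S_g(v_0;\cF)\le(1+\varepsilon)S_{g,m}(v_0;\cF)$ fails uniformly already in the unweighted case of \cite{BJ20}). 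The appeal to concavity of $s\mapsto\vol(V_\bullet^{(s)})^{1/n}$ does not rescue this: that is a property of the limit measure and gives no rate for the level\--$m$ measures, and the ``linear\--boundedness constant'' is not bounded over the class of filtrations considered, so it cannot enter $m_0(\varepsilon)$. The tail step has the same problem: since $\lambda_{\max}(\cF)$ is unbounded over the class, a fixed cutoff $T$ cannot work, and if you let $T$ scale with $\cF$ (say $T=\delta\lambda_{\max}(\cF)$) you need $\int_{\{y>T\}}y\,\rd\bnu_m\le \varepsilon S_g(v_0;\cF)$ uniformly in $\cF$ at finite level $m$, which is again a statement of the same nature as the lemma and is not supplied by any mechanism in your sketch.

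The paper's proof gets the uniformity from a structural input your scheme lacks: by Lemma \ref{l:goodval} one chooses a good $\bZ^n$\--valued valuation $w_0$ computing $v_0$ via pairing with $u_0$, passes to the associated Okounkov body $\Sigma$ (so $g$ becomes a fixed continuous weight on $\Sigma$), and encodes $\cF$ by its concave transform. Two facts then do the work: the level\--$m$ data is dominated by the limiting concave transform (this is where the one\--sidedness comes from), and the lattice approximation $\int_\Sigma fg\,\rd\rho_m\le\int_\Sigma fg\,\rd\rho+\varepsilon$ holds with $m_0$ depending only on $\varepsilon$, \emph{uniformly over all concave} $f\colon\Sigma\to[0,1]$ (the argument of \cite[Lemma 2.2]{BJ20} plus uniform continuity of $g$); rescaling by $\lambda_{\max}$ and comparing $\lambda_{\max}$ with $S_g$ converts this into the multiplicative bound. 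If you want to salvage your measure\--theoretic setup on $\bR^2$, you would still have to prove a uniform\--in\--$\cF$ one\--sided estimate for integrals against $\bnu_m$, and the only known route to that is through the concavity of the transform on a fixed convex body, i.e.\ essentially the paper's argument.
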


This follows from essentially the same argument as \cite[Corollary 2.10]{BJ20} except we need to use an Okounkov body that is induced by a suitable valuation.

\begin{defn}
Let $w\colon K(X)^\times \to \bZ^n$ be a valuation with values in the group $\bZ^n$ (equipped with some total ordering). Following \cite{KK}, we say that $w$ is faithful if its image equals $\bZ^n$; we say that $w$ has one-dimensional leaves if $\dim \hat{V}_\alpha \le 1$ for every $\alpha\in\bZ^n$, where $\hat{V}_\alpha:=\{f\in K(X)\,\vert\,w(f)\ge \alpha\}/\{f\in K(X)\,\vert\,w(f)> \alpha\}$. Finally we say that $w$ is a good valuation if it is faithful, has one dimensional leaves, and $n=\dim X$.
\end{defn}

\begin{lem} \label{l:goodval}
Let $v_0$ be a quasi-monomial valuation on $X$. Then there exists a good valuation $w_0\colon K(X)^\times \to \bZ^n$ and $u_0\in\bR^n_{\ge 0}$ such that 
\[
v_0(f)=\langle u_0,w_0(f) \rangle \quad \quad \text{ for all }   f\in K(X)^\times.
\]
\end{lem}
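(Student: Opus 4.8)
\textbf{Proof plan for Lemma \ref{l:goodval}.}

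The strategy is to first reduce to the case where $v_0$ is defined on a smooth birational model and is monomial with respect to a simple normal crossings divisor, and then to realize such a monomial valuation as a "weighted word valuation" coming from a flag of subvarieties. Concretely, since $v_0$ is quasi-monomial, there is a log smooth model $\pi\colon Y\to X$ with a simple normal crossings divisor $E=E_1+\cdots+E_k$ and a vector $a=(a_1,\ldots,a_k)\in\bR^k_{\ge 0}$ such that $v_0=v_{\eta,a}$ is the monomial valuation at the generic point $\eta$ of a component of $E_1\cap\cdots\cap E_k$ with weights $a_i$ on local equations $z_i$ for $E_i$. After possibly further blowing up and reindexing I may assume the relevant center is a point and extend the partial flag $E_1\supset E_1\cap E_2\supset\cdots$ to a full flag of smooth subvarieties $Y=Y_0\supset Y_1\supset\cdots\supset Y_n=\{\mathrm{pt}\}$ with $Y_i$ cut out near the point by the first $i$ of a regular system of parameters $z_1,\ldots,z_n$; on the parameters $z_{k+1},\ldots,z_n$ that did not appear in $v_0$ I assign weight $0$, i.e. I set $u_0=(a_1,\ldots,a_k,0,\ldots,0)\in\bR^n_{\ge 0}$.

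Next I define $w_0$ to be the $\bZ^n$-valued valuation attached to this full flag, taking $\bZ^n$ with the lexicographic order: for $f\in K(X)^\times=K(Y)^\times$, write its image in the completed local ring $\widehat{\cO}_{Y,\mathrm{pt}}\cong\bk[[z_1,\ldots,z_n]]$ and let $w_0(f)$ be the lexicographically smallest exponent appearing in the power series expansion (equivalently, the valuation built by successively restricting leading coefficients along $Y_1\supset Y_2\supset\cdots$). This is a standard construction: such flag valuations are faithful (every lattice point is hit by an appropriate monomial $z_1^{m_1}\cdots z_n^{m_n}$), have one-dimensional leaves (the associated graded is $\bk[z_1,\ldots,z_n]$, which is multiplicity-free in each multidegree), and have rank-$n$ value group $\bZ^n$ with $n=\dim X$ — so $w_0$ is a good valuation in the sense above. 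See \cite{KK} for this; the one-dimensional-leaves and faithfulness properties are precisely what makes $w_0$ an Okounkov-body-type valuation.

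Finally I must check the compatibility identity $v_0(f)=\langle u_0,w_0(f)\rangle$. Since $v_0$ is the monomial valuation with weights $u_0=(a_1,\ldots,a_k,0,\ldots,0)$ in the coordinates $z_1,\ldots,z_n$, for a single monomial $z_1^{m_1}\cdots z_n^{m_n}$ one has $v_0=\sum a_i m_i=\langle u_0,(m_1,\ldots,m_n)\rangle$, and the lex-leading term of a power series $f$ is exactly the term realizing $w_0(f)$. The one remaining point is that the lex-leading exponent also minimizes the linear functional $\langle u_0,\cdot\rangle$ among all exponents occurring in $f$ — this is true because $u_0$ has nonnegative entries and, on the support of a power series, the lex-minimal exponent is $\le$ every other exponent coordinatewise in the sense that it is not strictly dominated, so $\langle u_0,\cdot\rangle$ is minimized there; more carefully, $v_0(f)=\min\{\langle u_0,\beta\rangle : \beta\in\mathrm{Supp}(f)\}$ by the definition of a monomial valuation, and one shows this minimum is attained at $\beta=w_0(f)$ because any $\beta$ with $\langle u_0,\beta\rangle$ minimal can, if necessary, be replaced by the lex-smallest such, which is $w_0(f)$ by construction of the flag adapted to the coordinates. \textbf{The main obstacle} is arranging the flag and coordinates so that the \emph{same} regular system of parameters simultaneously (i) defines a full flag witnessing that $w_0$ has one-dimensional leaves and is faithful, and (ii) has the property that $v_0$ is genuinely monomial — not just quasi-monomial — in those coordinates with the weight vector $u_0$; this requires care when $k<n$ (the "extra" directions must be chosen transverse to the SNC divisor and assigned weight zero without destroying the monomial description of $v_0$) and a clean verification that passing to the birational model $Y$ does not affect $v_0$ as a valuation of $K(X)$.
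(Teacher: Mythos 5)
Your reduction to a monomial valuation at a closed point with weight vector $u_0=(a_1,\dots,a_k,0,\dots,0)$ is fine, but the definition of $w_0$ as the lexicographically smallest exponent of the power series expansion (equivalently, the flag valuation of \cite{LM09} for a full flag through the point) does not satisfy the required identity $v_0(f)=\langle u_0,w_0(f)\rangle$, and the justification you offer for it is incorrect. Concretely, take $n=k=2$, $u_0=(1,2)$ and $f=z_1+z_2$: then $v_0(f)=\min\{1,2\}=1$, attained only at the exponent $(1,0)$, while the lex-smallest element of $\Supp(f)=\{(1,0),(0,1)\}$ is $(0,1)$, so $\langle u_0,w_0(f)\rangle=2\neq v_0(f)$. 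The claim that the lex-minimal exponent is ``not strictly dominated coordinatewise, hence minimizes $\langle u_0,\cdot\rangle$'' is a non sequitur (non-domination does not imply minimality of a nonnegative linear functional), and the fallback ``replace a weight-minimizing $\beta$ by the lex-smallest such'' conflates the lex-minimum among the weight-minimizers with the lex-minimum of the whole support, which is what your $w_0(f)$ actually is; in the example these differ. Reordering the coordinates does not rescue pure lex either (e.g. $u_0=(2,1)$ and $f=z_1+z_2^3$ fails when $z_1$ has lex priority).

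What is needed is a construction in which the $v_0$-weight is compared \emph{first}. The paper proceeds differently: it writes $v_0$ as a monomial valuation in exactly $r$ coordinates $y_1,\dots,y_r$ at the generic point of its center $W=C_Y(v_0)$, where $r$ is the rational rank of $v_0$, so the weights $\alpha\in\R^r_{+}$ are automatically $\Q$-linearly independent; hence every $f\in\cO_{Y,W}$ has a unique exponent $\beta\in\N^r$ with $\langle\alpha,\beta\rangle=v_0(f)$ and a well-defined nonzero leading coefficient $\bar c_\beta\in K(W)$, and one sets $w_0(f)=(\beta,\nu(\bar c_\beta))\in\Z^r\times\Z^{n-r}$, where $\nu$ is an Okounkov flag valuation on $W$. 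Thus the ``extra'' $n-r$ coordinates live on the center and are applied to the leading coefficient, rather than coming from a full flag on $Y$ with weight zero, and $v_0(f)=\langle(\alpha,0,\dots,0),w_0(f)\rangle$ holds by construction. Alternatively, your set-up at a closed point can be repaired by replacing lex with the weighted-lex group order on $\Z^n$ (compare $\langle u_0,\cdot\rangle$ first, break ties by lex) and letting $w_0(f)$ be the minimal element of $\Supp(f)$ for that order: this is still faithful with one-dimensional leaves and then satisfies the identity tautologically. As written, however, your proof has a genuine gap at its central step.
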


\begin{proof}
Let $r$ be the rational rank of $v_0$. Since $v_0$ is quasi-monomial, there exists a log resolution $\pi\colon Y\to X$, a regular system of parameters $y_1,\ldots, y_r$ at a point $\eta \in Y$, and $\alpha\in \bR^r_+$ such that $v_0=v_\alpha$. Let $W=C_Y(v_0)$. Possibly after further blowups, we may choose a flag $W_{\bullet}\colon W=W_0\supseteq \cdots \supseteq W_{n-r}=\{\text{point}\}$ of smooth subvarieties such that each $W_{i+1}$ is a divisor in $W_i$. Let $\nu=\nu_{W_{\bullet}}\colon K(W)^\times \to \bZ^{n-r}$ be the induced valuation as in \cite{LM09}. Any nonzero $f\in \cO_{Y,W}$ can be written as $f=c_\beta y^\beta +f_1$ for some (uniquely determined) $\beta\in\bN^r$ and  $c_\beta,f_1\in \cO_{Y,W}$ such that $\langle \alpha,\beta\rangle =v_0(f)$ and $v_0(f_1)>v_0(f)$. Moreover, $0\neq \bar{c}_\beta \in K(W)$ is well-defined and does not depend on the choice of $c_\beta$. Now consider the valuation $w_0\colon K(X)^\times \to \bZ^n$ given by setting $w_0(f)=(\beta,\nu(\bar{c}_\beta))$ for $f\in \cO_{Y,W}$. It is not hard to check from the construction that $w_0$ is faithful and has one-dimensional leaves. Clearly $v_0(f)=\langle (\alpha,0,\cdots,0),w_0(f)\rangle$. Thus $w_0$ is the good valuation we want.
\end{proof}

\begin{proof}[Proof of Lemma \ref{l:S=lim S_m}]
Since the argument is very similar to those in \cite{BJ20}, we only sketch the proof. By Lemma \ref{l:goodval}, there exists some good valuation $w_0\colon K(X)\to \bZ^n$ and some $u_0\in\bR^n_{\ge 0}$ such that $v_0(f)=\langle u_0,w_0(f) \rangle$. Let $\Sigma\subseteq\bR^n$ be the corresponding Okounkov body (see \cite{KK}), i.e. the closed convex hull of $\bigcup_{m\geq 1}\{\frac{w_0(s)}{mr}\,\vert\,s\in R_m\setminus\{0\}\}$. We regard the function $g$ also as a positive function on $\Sigma$ by $g(\alpha)=g(\langle u_0,\alpha\rangle)$. Let $\rho$ denote the Lebesgue measure on $\Sigma$ and $\rho_m$ the atomic probability measure supported on $\Sigma \cap \frac{1}{m} \Z^n$ as defined in \cite[Section 2.2]{BJ20}. Note that $\lim_{m\to\infty}\rho_m=\rho$ in the weak topology of measures (see \cite[Theorem 2.1]{BJ20}).
Using the argument of \cite[Lemma 2.2]{BJ20} and the uniform continuity of $g$ on $\Sigma$, we see that for each $\varepsilon>0$ there exists $m_0=m_0(\varepsilon)$ such that
\[
\int_\Sigma fg \rd \rho_m \le \int_\Sigma fg \rd \rho + \varepsilon
\]
for every $m\ge m_0$ and every concave function $f\colon \Sigma\to \bR$ satisfying $0\le f\le 1$. We may then apply the proof of \cite[Corollary 2.10]{BJ20} to the concave transform of $\cF$ and conclude that (after possibly enlarging $m_0$) $S_{g,m}(v_0;\cF)\le (1+\varepsilon)S_g(v_0;\cF)$ for all linearly bounded filtrations $\cF$ and all $m\ge m_0$.
\end{proof}

When $\cF$ is the filtration induced by some valuation $v\in \Valc$ (resp. effective divisor $G\neq 0$ on $X$), we will simply write $S_g(v_0;v)$ (resp. $S_g(v_0;G)$) for $S_{g}(v_0,\cF)$. % Suppose that $\bT=\bG_m^s<\Aut(X,\Delta)$ is a torus subgroup of the automorphism group (we allow $\bT=\{1\}$) and $v_0$ extends to a $\bT$-invariant good valuation, 
Let $\bT=\bG_m^s<\Aut(X,\Delta)$ be a torus subgroup of the automorphism group (we allow $\bT=\{1\}$). For any quasi-monomial valuation $v_0\in\ValT$ we set
\[
\delta_{g,\bT} (X,\Delta,v_0): = \inf_{v\in \ValT} \frac{A_{X,\Delta}(v)}{S_g(v_0;v)}.
\]
We say that $v\in\ValT$ computes $\delta_{g,\bT}(X,\Delta,v_0)$ if it achieves the above infimum. For each positive integer $m$, we also set 
\[
\delta_{g,\bT,m}(X,\Delta,v_0): = \min \{  \lct(X,\Delta;D)\, \vert \, 
D \text{ is a $\bT$-invariant $g$-weighted $(m,v_0)$-basis type divisor}\}.
\]
When $\bT=\{1\}$, we will suppress the subscript $\bT$ and write $\delta_g(X,\Delta,v_0)$ and $\delta_{g,m}(X,\Delta,v_0)$.
 % We will suppress the subscript $\bT$ when it's trivial. Note that $\delta(X,\Delta,\xi)=\delta_{g,\bT}(X,\Delta,\wt_\xi)$ where $g(x)=e^{-x}$.

% \begin{rem}
% It is not hard to verify that $\delta_{g,\bT,m}(X,\Delta,v_0)=\delta_{g,m}(X,\Delta,v_0)$ for all $m\in\bN$. We will not need this fact in our discussion.
% \end{rem}

\begin{lem} \label{l:delta=lim delta_m}
In the above setup, we have 
\[
\delta_{g,\bT}(X,\Delta,v_0)=\lim_{m\to\infty}\delta_{g,\bT,m}(X,\Delta,v_0).
\]
\end{lem}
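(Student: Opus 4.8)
The plan is to prove the two inequalities $\delta_{g,\bT}(X,\Delta,v_0)\le \liminf_m \delta_{g,\bT,m}(X,\Delta,v_0)$ and $\delta_{g,\bT}(X,\Delta,v_0)\ge \limsup_m \delta_{g,\bT,m}(X,\Delta,v_0)$ separately, following the template of the unweighted statement in \cite{BJ20} but using Lemma \ref{l:S=lim S_m} to control the discrepancy between the finite-level quantity $S_{g,m}$ and the limit $S_g$.

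For the inequality $\delta_{g,\bT}\ge \limsup_m\delta_{g,\bT,m}$, I would start with any $\bT$-invariant $g$-weighted $(m,v_0)$-basis type divisor $D=\frac{1}{mrQ_m}\sum_i g(v_0(s_i)/mr)\{s_i=0\}$ and compute, for any $v\in\ValT$ with $A_{X,\Delta}(v)<\infty$,
\[
v(D)=\frac{1}{mrQ_m}\sum_{i=1}^{N_m} g\!\left(\frac{v_0(s_i)}{mr}\right) v(\{s_i=0\}).
\]
After replacing the compatible basis by one simultaneously compatible with $v_0$ and $\cF_v$ (this is where the simultaneous-compatible-basis lemma cited earlier enters, and it is $\bT$-equivariant so the divisor stays $\bT$-invariant), one gets $v(D)\le S_{g,m}(v_0;\cF_v)$. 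Hence $A_{X,\Delta}(v)\ge A_{X,\Delta}(v)\cdot \lct(X,\Delta;D)/\delta_{g,\bT,m}$ reorganizes, via $\lct(X,\Delta;D)\le A_{X,\Delta}(v)/v(D)$, into $\delta_{g,\bT,m}(X,\Delta,v_0)\le A_{X,\Delta}(v)/v(D)\le A_{X,\Delta}(v)/S_{g,m}(v_0;\cF_v)$ once $D$ is chosen to compute the minimum and compatible with $v$. Wait — more carefully, the point is: choosing $D$ compatible with $\cF_v$, one has $v(D)=S_{g,m}(v_0;\cF_v)$, so $\delta_{g,\bT,m}\le \lct(X,\Delta;D)\le A_{X,\Delta}(v)/S_{g,m}(v_0;\cF_v)$. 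By Lemma \ref{l:S=lim S_m}, $S_{g,m}(v_0;\cF_v)\le(1+\varepsilon)S_g(v_0;\cF_v)$ for $m\ge m_0(\varepsilon)$, so $\delta_{g,\bT,m}\le (1+\varepsilon)^{-1}\cdot\frac{A_{X,\Delta}(v)}{S_g(v_0;v)}\cdot(1+\varepsilon)$... I need to be a bit careful with the direction, but the upshot is $\limsup_m\delta_{g,\bT,m}\le A_{X,\Delta}(v)/S_g(v_0;v)$ for every $v\in\ValT$, hence $\limsup_m\delta_{g,\bT,m}\le \delta_{g,\bT}(X,\Delta,v_0)$; here one should normalize $v$ so that $\cF_v^0R=R$, which is harmless since $\ValT$ is scaling-invariant and both sides of the inequality $A/S_g$ are scaling-homogeneous of degree $0$ only after the shift is absorbed — more precisely one uses that $v$ and $v+c$ for the appropriate shift give the same ratio up to handling the lower bound of the filtration, exactly as in \cite{BJ20}.

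For the reverse inequality $\delta_{g,\bT}\le \liminf_m\delta_{g,\bT,m}$, I would fix $m$ and take $D_m$ a $\bT$-invariant $g$-weighted $(m,v_0)$-basis type divisor computing $\delta_{g,\bT,m}$. Then $\lct(X,\Delta;D_m)=\delta_{g,\bT,m}$ is computed (after a $\bT$-equivariant argument, using that the lct of a $\bT$-invariant divisor is computed by a $\bT$-invariant valuation — this is a standard fact from \cite{BlJ20}-style or \cite{LXZ-HRFG}-style equivariance, or can be cited from the references on equivariant lct) by some $v_m\in\ValT$, so that $A_{X,\Delta}(v_m)=\delta_{g,\bT,m}\cdot v_m(D_m)$. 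Normalizing $v_m$ appropriately, $v_m(D_m)\ge$ (something comparable to) $S_{g,m}(v_0;\cF_{v_m})$ — actually $v_m(D_m)=\frac{1}{mrQ_m}\sum_i g(v_0(s_i)/mr)v_m(\{s_i=0\})\ge$ the value one gets from the basis defining $D_m$, and since that basis need not be compatible with $v_m$ one only gets $v_m(D_m)\ge S_{g,m}(v_0;\cF_{v_m})$? No — the inequality goes the other way: for an arbitrary basis $v_m(\{s_i=0\})\ge \ord_{\cF_{v_m}}(s_i)$ is false in general. Instead, the correct move (again mirroring \cite{BJ20}) is: $\delta_{g,\bT}\le A_{X,\Delta}(v_m)/S_g(v_0;v_m)$ and then bound $S_g(v_0;v_m)$ from below by $v_m(D_m)$ up to the $(1+\varepsilon)$ factor from Lemma \ref{l:S=lim S_m}, giving $\delta_{g,\bT}\le (1+\varepsilon) A_{X,\Delta}(v_m)/v_m(D_m)=(1+\varepsilon)\delta_{g,\bT,m}$; letting $m\to\infty$ then $\varepsilon\to 0$ finishes it.

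The main obstacle I anticipate is the bookkeeping around normalization (shifting valuations so the induced filtration satisfies $\cF^0R=R$, which is needed to invoke Lemma \ref{l:S=lim S_m}) together with checking that the $\bT$-equivariance is preserved at every step — in particular that the lct of a $\bT$-invariant divisor is computed by a $\bT$-invariant valuation, and that the simultaneously-compatible basis can be taken $\bT$-eigen. Neither is deep, but both require care; everything else is a direct transcription of the unweighted argument of \cite[Corollary 2.10 and the surrounding discussion]{BJ20} with $\rho_m,\rho$ replaced by the $g$-weighted measures and with Lemma \ref{l:S=lim S_m} supplying the uniform comparison.
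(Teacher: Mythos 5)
Your overall route is the same as the paper's: the paper first records that $S_{g,m}(v_0;v)=\max\{v(D)\}$ over all $g$-weighted $(m,v_0)$-basis type divisors $D$ (with equality when $D$ is compatible with $v$, and with $\bT$-invariant choices available via the weight decomposition), deduces the finite-level valuative formula $\delta_{g,\bT,m}(X,\Delta,v_0)=\inf_{v\in\ValT}A_{X,\Delta}(v)/S_{g,m}(v_0;v)$, and then runs the two-inequality argument of \cite[Theorem 4.4]{BJ20} together with Lemma \ref{l:S=lim S_m}; your two directions are an unpacking of exactly that. But as written there are two soft spots. The smaller one: in the direction $\limsup_m\delta_{g,\bT,m}\le\delta_{g,\bT}$ you appeal to Lemma \ref{l:S=lim S_m}, which bounds $S_{g,m}$ \emph{above} by $(1+\varepsilon)S_g$ and hence bounds $A_{X,\Delta}(v)/S_{g,m}(v_0;v)$ \emph{below} --- useless for this direction. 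What you need there is only the pointwise convergence $S_{g,m}(v_0;v)\to S_g(v_0;v)$, which is definitional (weak convergence of $\bnu_m$ with uniformly bounded support); also your worry about normalizing so that $\cF_v^0R=R$ is vacuous, since this holds automatically for filtrations induced by valuations.

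The more serious gap is in the reverse direction: your argument hinges on the inequality $v_m(D_m)\le S_{g,m}(v_0;v_m)$ for a $g$-weighted basis type divisor $D_m$ that is \emph{not} compatible with $v_m$ --- precisely the ``max'' characterization above --- and you never establish it; you visibly doubt its direction, and your parenthetical claim that ``$v_m(\{s_i=0\})\ge\ord_{\cF_{v_m}}(s_i)$ is false in general'' is itself off, since $v(\{s=0\})=\ord_{\cF_v}(s)$ holds identically for the filtration induced by a valuation. The actual content is a \emph{weighted} rearrangement statement: for a basis $(s_i)$ of $R_m$ compatible with $v_0$ only, $\sum_i g(v_0(s_i)/mr)\,v(s_i)\le\sum_i g(v_0(t_i)/mr)\,v(t_i)$, where $(t_i)$ is compatible with both $v_0$ and $v$. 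Because the weights depend on $v_0(s_i)$, this is not literally the unweighted fact from \cite{BJ20}; it follows, for instance, by noting that for each $\lambda$ and $\mu$ the images in $\gr^{\lambda}_{\cF_{v_0}}R_m$ of those $s_i$ with $v_0(s_i)=\lambda$ and $v(s_i)\ge\mu$ are linearly independent and lie in the image of $\cF_{v_0}^{\lambda}R_m\cap\cF_v^{\mu}R_m$, so at each fixed $v_0$-level the count of elements with $v$-value at least $\mu$ is maximized by a simultaneously compatible basis; multiplying by $g(\lambda/mr)\ge 0$, summing over $\lambda$ and integrating over $\mu\ge 0$ gives the inequality. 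With this lemma supplied (and the standard $\bT$-equivariance facts you cite, which are fine), your proof closes up and coincides with the paper's.
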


\begin{proof}
% First note that for any $v\in \Valc$, we have $S_{g,m}(v_0;v)=v(D)$ for any $g$-weighted $(m,v_0)$-basis type divisor $D$ that's also compatible with $v$. Moreover, such a divisor $D$ can be chosen to be $\bT$-invariant by choosing compatible basis in each piece of the weight decomposition of the torus action. 
% It follows that $\delta_{g,m}(X,\Delta,v_0)\le \frac{A_{X,\Delta}(v)}{S_{g,m}(v_0;v)}$. On the other hand, it is not hard to see from \eqref{e:S_m as v(basis type)} that for any $\bT$-invariant divisor $D$, the log canonical threshold $\lct(X,\Delta;D)$ is computed by some $v\in \ValT$. It follows that
It is not hard to check from the definition that
\begin{equation} % \label{e:S_m as v(basis type)}
    S_{g,m}(v_0;v) =
 \max \{ v(D) \, \vert \, D \text{ is a $g$-weighted $(m,v_0)$-basis type divisor} \}
\end{equation}
and in fact $S_{g,m}(v_0;v)=v(D)$ for any $g$-weighted $(m,v_0)$-basis type divisor $D$ that's also compatible with $v$. Moreover, when $v_0,v\in\ValT$, such a divisor $D$ can be chosen to be $\bT$-invariant by choosing compatible basis in each component of the weight decomposition under the torus action.
Hence, 
\begin{equation} % \label{e:delta_g,m as inf}
    \delta_{g,\bT,m}(X,\Delta,v_0)
    =\inf_{v\in\ValT}
    \frac{A_{X,\Delta}(v)}{S_{g,m}(v_0;v)}.
\end{equation}
Combined with Lemma \ref{l:S=lim S_m}, the argument in the proof of \cite[Theorem 4.4]{BJ20} then yields $\delta_{g,\bT}(X,\Delta,v_0)=\lim_{m\to\infty}\delta_{g,\bT,m}(X,\Delta,v_0)$.
\end{proof}

\subsection{Reduced uniform stability}\label{ss-reduceduniformK}

In this section, we define stability notions for triples $(X,\Delta,v_0)$ where $(X,\Delta)$ is a log Fano pair, $v_0$ is a quasi-monomial valuation on $X$, and $g\colon \bR\to \bR_+$ is a continuous function.

We first define the weighted version of the non-Archimedean functional. For any linearly bounded filtration on $R$, we set
\begin{align*}
    \DNA_g(\cF) & := \LNA(\cF) - S_g(v_0;\cF),\\
    \JNA_g(\cF) & := \lambda_{\max}(\cF)-S_g(v_0;\cF). 
\end{align*}
Note that $\JNA_g(\cF)\geq 0$. We also set $\ENA_g(\cF):=S_g(v_0;\cF)$. If $(\cX,\Delta_{\cX};\cL)$ is a normal test configuration of $(X,\Delta)$, then we set $\DNA_g(\cX,\Delta_{\cX};\cL):= \DNA_g (\cF_{(\cX,\Delta;\cL)})$ and $\JNA_g(\cX,\Delta;\cL):= \JNA_g (\cF_{(\cX,\Delta_{\cX};\cL)})$. Denote by $\Aut(X,\Delta,v_0)$ the subgroup of $\Aut(X,\Delta)$ that leaves the valuation $v_0$ invariant and let $\bT<\Aut(X,\Delta,v_0)$ be a torus subgroup.

\begin{defn} 
We say that the triple $(X,\Delta,v_0)$ is $\bT$-equivariantly $g$-Ding semistable (or simply $g$-Ding semistable when $\bT=\{1\}$) if $\DNA_g(\cX,\Delta;\cL)\ge 0$ for all $\bT$-equivariant normal test configurations $(\cX,\Delta;\cL)$ of $(X,\Delta)$. 
% The triple $(X,\Delta,v_0)$ is said to be $g$-Ding polystable if it is $g$-Ding semistable and $\DNA_g(\cX,\Delta_{\cX};\cL)= 0$ for a weakly special test configuration $(\cX,\Delta_{\cX};\cL)$ only if it is a product test configuration.
\end{defn}

Denote by $M:=\Hom(\bT,\bG_m)$ the weight lattice and $N:=\Hom(\bG_m,\bT)$ the co-weight lattice. Then there is a weight decomposition $R_m=\oplus_{\alpha\in M} R_{m,\alpha}$. Recall that for any $\bT$-equivariant filtration $\cF$ and each $\eta\in N_{\bR}$, there is an $\eta$-twist $\cF_{\eta}$ of $\cF$ given by 
\[
\cF_{\eta}^\lambda R_m = \bigoplus_{\alpha\in M} \cF^{\lambda-\langle\alpha, \eta\rangle} R\cap R_{m,\alpha}.
\]
Set $\Fut_g (\eta) := \ENA_g (\cF) - \ENA_g (\cF_\eta)$. It is not hard to see from the definition that $\Fut_g$ does not depend on the choice of the filtration $\cF$ and is linear on $N_\bR$. We define the \emph{reduced $\JNA_g$-norm} of $\cF$ as 
\[
\JNA_{g, \bT}(\cF):=\inf_{\eta\in N_{\bR}} \JNA_g (\cF_\eta).
\]

\begin{defn}
We say a triple $(X,\Delta,v_0)$ is \emph{reduced uniformly $g$-Ding stable} if there exists a maximal torus $\bT<\Aut(X,\Delta,v_0)$ and some $\varepsilon>0$, such that 
\[
\DNA_g(\cX,\Delta_{\cX};\cL)\geq \varepsilon \JNA_{g,\bT} (\cX,\Delta_{\cX};\cL)
\]
for all $\bT$-equivariant normal test configurations $(\cX,\Delta_{\cX};\cL)$ of $(X,\Delta)$.
\end{defn}

Note that the above definition is independent of the choice of $\bT$ since any two maximal tori are conjugate.

\begin{lem} \label{l:Fut=0}
Let $\bT<\Aut(X,\Delta,v_0)$ be a torus. Assume that $\DNA_g\ge 0$ for any product test configurations that is induced by a one parameter subgroup of $\bT$. Then $\Fut_g\equiv 0$ on $N_\bR$.
\end{lem}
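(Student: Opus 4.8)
The plan is to exploit the fact that $\Fut_g$ is \emph{linear} on $N_\bR$, so it suffices to rule out $\Fut_g(\eta)\neq 0$ for a single $\eta\in N$. Suppose for contradiction that $\Fut_g(\eta)\neq 0$ for some $\eta\in N$; after replacing $\eta$ by $-\eta$ we may assume $\Fut_g(\eta)>0$. The key point is that the $\eta$-twist $\cF_\eta$ of the trivial filtration $\cF_{\triv}$ is precisely the filtration $\cF_{\cX_\eta,\Delta_{\cX_\eta};\cL_\eta}$ associated to the product test configuration $(\cX_\eta,\Delta_{\cX_\eta};\cL_\eta)$ induced by the one-parameter subgroup $\eta\colon\bG_m\to\bT$. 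For this product test configuration one has $\LNA(\cF_\eta)=0$ (a product test configuration of a log Fano pair has vanishing $\LNA$, since its special fiber is again $(X,\Delta)$ and the relevant ideal is trivial), so
\[
\DNA_g(\cX_\eta,\Delta_{\cX_\eta};\cL_\eta)=\LNA(\cF_\eta)-\ENA_g(\cF_\eta)=-\ENA_g(\cF_\eta)=\Fut_g(\eta)-\ENA_g(\cF_{\triv}).
\]

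First I would normalize so that $\ENA_g(\cF_{\triv})=0$: this holds after we note that the trivial filtration has $\Dh_{\cF_{\triv}}=\delta_0$ on the relevant axis, hence $S_g(v_0;\cF_{\triv})=0$; thus $\ENA_g(\cF_\eta)=-\Fut_g(\eta)$ and $\DNA_g(\cX_\eta,\Delta_{\cX_\eta};\cL_\eta)=\Fut_g(\eta)$. More importantly, by linearity of $\Fut_g$, replacing $\eta$ by $k\eta$ for $k\in\bZ_{>0}$ gives $\DNA_g$ of the corresponding product test configuration equal to $k\Fut_g(\eta)$, and replacing $\eta$ by $-\eta$ gives $-\Fut_g(\eta)$. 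Hence if $\Fut_g(\eta)\neq 0$, then the one-parameter subgroup $-\eta$ (or $\eta$, whichever makes it negative) induces a product test configuration of $(X,\Delta)$ with $\DNA_g<0$, contradicting the hypothesis that $\DNA_g\geq 0$ on all product test configurations induced by one-parameter subgroups of $\bT$. Therefore $\Fut_g(\eta)=0$ for all $\eta\in N$, and by linearity $\Fut_g\equiv 0$ on $N_\bR$.

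The main obstacle I anticipate is making precise the identification $\cF_\eta=\cF_{(\cX_\eta,\Delta_{\cX_\eta};\cL_\eta)}$ (up to the normalization/shift that doesn't affect $\DNA_g$) and the computation $\LNA(\cF_\eta)=0$ for product test configurations; these should follow from the construction of the filtration associated to a test configuration in \cite[Section 2.5]{BHJ17} together with the weight-lattice description of $\eta$-twists, but one must be careful that the normalization conventions for $\LNA$ and $\ENA_g$ match, i.e. that the additive constant coming from the choice of linearization of $\cL_\eta$ cancels in $\DNA_g=\LNA-\ENA_g$. Once that bookkeeping is in place, the argument is a one-line application of linearity of $\Fut_g$.
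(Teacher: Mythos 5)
Your proof has the same skeleton as the paper's: twist the trivial filtration by $\eta\in N$, identify $\cF_\eta$ with the filtration of the product test configuration induced by $\eta$, deduce $\DNA_g(\cF_\eta)=\Fut_g(\eta)$, and conclude from the hypothesis applied to $\pm\eta$ together with linearity. The observations that $\ENA_g(\cF_{\triv})=0$ and that $\DNA_g$ is shift-invariant are correct.

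The gap is in the one step that carries all the content, namely your justification of $\LNA(\cF_\eta)=0$. With the paper's definition of $\LNA$ through the fractional ideals $\cI_m$, it is simply not true that ``the relevant ideal is trivial'' for a nontrivial twist: already for $X=\PP^1$ with the standard $\bG_m$-action, $\cF_\eta^{i}R_m$ is the span of the monomials of weight at least $i$, whose base ideals are powers of the ideal of a fixed point, so $\cI_m$ is a genuinely nontrivial fractional ideal; its log canonical threshold against $(t)$ does turn out to be $1$, but that is a computation, not a triviality. Likewise ``the special fiber is again $(X,\Delta)$'' does not by itself give $\LNA=0$ with this definition, and the value of $\LNA(\cF_\eta)$ (unlike $\DNA_g$) is sensitive to the linearization: your identity $\DNA_g=\Fut_g(\eta)$ needs the canonical normalization coming from the weight decomposition of $R_m=H^0(-mr(K_X+\Delta))$, since $\Fut_g$ is computed from that decomposition, so the ``constant cancels in $\DNA_g$'' remark does not dispose of this point. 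What you are really asserting is exactly the twist-invariance $\LNA(\cF_\eta)=\LNA(\cF_{\triv})$, and this is precisely what the paper's proof supplies: it follows from $\LNA=\mu$ (Lemma \ref{l:L^NA=lc slope}) together with the invariance of the log canonical slope under twisting, \cite[Lemma A.6]{XZ20cm} (alternatively, one could invoke Berman's lct formula for the Ding invariant of a canonically linearized product test configuration). Once that input is supplied, your linearity/sign argument is identical to the paper's.
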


\begin{proof}
Let $\cF$ be the trivial filtration of $R$. By Lemma \ref{l:L^NA=lc slope} and \cite[Lemma A.6]{XZ20cm}, we have $\LNA(\cF)=\LNA(\cF_\eta)$ for any $\eta\in N_\bR$. Thus, $\DNA_g(\cF_\eta)=\DNA_g(\cF)+\Fut_g(\eta)=\Fut_g(\eta)$. By assumption, $\DNA_g(\cF_\eta)\ge 0$ for any $\eta\in N$. By linearity, $\Fut_g\equiv 0$ on $N$ and, hence, the same holds on $N_\bR$.
\end{proof}

We are most interested in the case $v_0=\wt_\xi$ for some torus $\bT<\Aut(X,\Delta)$ and some $\xi\in N_\bR$. In this case, we write $(X,\Delta,\xi)$ instead of $(X,\Delta,\wt_\xi)$. We note that while $\bT$ is not explicitly written out in the notion $(X,\Delta,\xi)$, it is indeed part of the data.

\begin{thm}[\cite{HL20-YTD}]\label{t-HL-YTD-soliton}
Let $(X,\Delta,\xi)$ be a triple over $\bk=\bC$. Then it admits a K\"ahler-Ricci $g$-soliton if and only if it is reduced uniformly $g$-Ding stable.
\end{thm}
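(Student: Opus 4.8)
The plan is to prove this by the variational method, mirroring the proof of the Yau-Tian-Donaldson conjecture for log Fano pairs and adapting it to the $g$-weighted setting. Fix a maximal torus $\bT<\Aut(X,\Delta)$ containing the subtorus generated by $\xi$, and work on the space $\cE^1$ of finite-energy metrics on $-K_X-\Delta$ equipped with Darvas' $d_1$-metric. One introduces the weighted Ding functional $\mathbf{D}_g$ on $\cE^1$ whose Euler-Lagrange equation is precisely the K\"ahler-Ricci $g$-soliton equation (for $g=e^{-x}$ this is the usual K\"ahler-Ricci soliton equation); thus a $g$-soliton is the same as a smooth minimizer of $\mathbf{D}_g$, and by the regularity theory for complex Monge-Amp\`ere equations (a weighted, klt-pair version of Berman-Boucksom-Eyssidieux-Guedj-Zeriahi) it suffices to produce any $\cE^1$-minimizer. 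The argument rests on the comparison between the non-Archimedean functional $\DNA_g$ of this section and the asymptotic slope of $\mathbf{D}_g$: along the geodesic ray $(\varphi_s)_{s\ge 0}$ attached to a linearly bounded filtration $\cF$ one has $\lim_{s\to\infty}\mathbf{D}_g(\varphi_s)/s=\DNA_g(\cF)$, and similarly for $\ENA_g$, $\JNA_g$, and $\LNA$; this is the $g$-weighted analogue of the Boucksom-Jonsson ``energy at infinity'' dictionary and requires extending that theory (maximal geodesic rays, continuity of the non-Archimedean energies) to the weighted functionals.

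For the direction \emph{reduced uniform $g$-Ding stability $\Rightarrow$ existence}, one first checks that reduced uniform $g$-Ding stability is equivalent to \emph{reduced coercivity} of $\mathbf{D}_g$, i.e. $\mathbf{D}_g(\varphi)\ge \varepsilon\inf_{\tau\in\bT}d_1(\tau\cdot\varphi,0)-C$ on $\cE^1$. The nontrivial implication uses the slope formula above to pass from the test-configuration inequality $\DNA_g\ge\varepsilon\,\JNA_{g,\bT}$ to the analytic estimate, after a density argument reducing arbitrary finite-energy geodesic rays to those induced by test configurations. Reduced coercivity, together with lower semicontinuity of $\mathbf{D}_g$ and $d_1$-(pre)compactness of its sublevel sets modulo the $\bT$-action, yields an $\cE^1$-minimizer, which by the cited regularity is a K\"ahler-Ricci $g$-soliton. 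For the converse, \emph{existence $\Rightarrow$ reduced uniform $g$-Ding stability}, one uses that the presence of a $g$-soliton forces $\Aut_0(X,\Delta)$ to be reductive with $\bT$ a maximal torus and forces $\Fut_g\equiv 0$ on $N_\bR$ (cf. Lemma \ref{l:Fut=0}); Berndtsson-type convexity of $\mathbf{D}_g$ along weak geodesics --- convex in general and strictly convex transverse to the $\Aut_0$-orbit through the soliton --- then gives reduced coercivity of $\mathbf{D}_g$, and the slope formula converts this back into $\DNA_g(\cX,\Delta_\cX;\cL)\ge\varepsilon\,\JNA_{g,\bT}(\cX,\Delta_\cX;\cL)$ for every $\bT$-equivariant normal test configuration.

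The main obstacle, I expect, is the non-Archimedean/Archimedean comparison: building the weighted pluripotential theory so that $\DNA_g$ genuinely computes the slope of $\mathbf{D}_g$ along \emph{all} finite-energy rays (not merely along test configurations), and proving the regularity of the weighted Monge-Amp\`ere minimizer in the presence both of the weight $g$ evaluated on the moment map and of the boundary divisor $\Delta$. The bookkeeping around the torus action and the ``reduced'' modifier --- showing the relevant functionals degenerate only along the $\bT$-action once $\Fut_g=0$ --- is delicate but follows the template of reduced uniform K-stability recalled in Sections \ref{ss-reduceduniformK}--\ref{ss-reducedminimizer}. This is exactly the content of \cite{HL20-YTD}, carried out along these lines, so in the present paper the statement is invoked as a black box.
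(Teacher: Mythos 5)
Your sketch matches the situation exactly: the paper gives no proof of Theorem \ref{t-HL-YTD-soliton} and simply quotes it from \cite{HL20-YTD}, whose argument is the variational one you outline (weighted Ding functional, equivalence of reduced uniform $g$-Ding stability with reduced coercivity via the non-Archimedean slope formula, existence and regularity of an $\cE^1$-minimizer, and Berndtsson-type convexity plus $\Fut_g\equiv 0$ for the converse). Since you correctly identify that the statement is invoked as a black box here, your proposal is essentially the same approach as the paper's (i.e., the cited reference's), and no further comparison is needed.
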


\begin{defn}
Let $(X,\Delta,\xi)$ be a triple.  We say that $(X,\Delta,\xi)$ is \emph{$g$-Ding semistable} if it is $\bT$-equivariantly $g$-Ding semistable. We say that $(X,\Delta,\xi)$ is \emph{$g$-Ding polystable} if it is $g$-Ding semistable and $\DNA_g(\cX,\Delta_{\cX};\cL)= 0$ for a weakly special $\bT$-equivariant test configuration $(\cX,\Delta_{\cX};\cL)$ only if it is a product test configuration. 

We say that $(X,\Delta,\xi)$ is \emph{K-semistable} (resp. \emph{K-polystable}, or \emph{reduced uniformly Ding stable}) if it is $g$-Ding semistable (resp. $g$-Ding polystable, or reduced uniformly $g$-Ding stable) for $g(x)=e^{-x}$.
\end{defn}

\begin{rem}The above definition of K-polystability agrees with the notion in \cite{BWN14} when $\T$ is the torus of smallest dimension such that $\xi\in N_{\R}$;
see \cite[Remark 2.47]{HL20-uniqueness}.
Note that K-polystability of K\"ahler-Ricci solitons is proved in \cite[Theorem 1.5]{BWN14}. Later we will see that the definition indeed does not depend on the choice of $\T$ (see Remark \ref{rem-equiv}).
\end{rem}

The following is the main result of this subsection.

\begin{lem} \label{l:wstc}
Let $v_0$ be a quasi-monomial valuation on $X$ and let $\bT<\Aut(X,\Delta,v_0)$ be a torus. Let $g\colon \bR\to \bR_+$ be a continuous function and $c\in [0,1)$. Then the following are equivalent:
\begin{enumerate}
    \item $\DNA_g(\cX,\Delta_{\cX};\cL)\geq c\cdot \JNA_{g,\bT} (\cX,\Delta_{\cX};\cL)$ for any $\bT$-equivariant normal test configuration $(\cX,\Delta_{\cX};\cL)$ of $(X,\Delta)$.
    \item $\DNA_g(\cX,\Delta_{\cX};\cL)\geq c\cdot \JNA_{g,\bT} (\cX,\Delta_{\cX};\cL)$ for any $\bT$-equivariant weakly special test configuration $(\cX,\Delta_{\cX};\cL)$ of $(X,\Delta)$.
    \item $\DNA_g(\cF_v) \ge c \cdot \JNA_{g,\bT}(\cF_v)$ for any divisorial valuation $v\in\ValT$ that is an lc place of a $\bQ$-complement.
\end{enumerate}
\end{lem}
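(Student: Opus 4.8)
The plan is to prove the chain of implications $(1)\Rightarrow(2)\Rightarrow(3)\Rightarrow(1)$, which is the standard shape of such equivalences in the K-stability literature (compare the unweighted statements in \cite{Fuj-valuative}, \cite{BLX22} and \cite{XZ20cm}), now carried out in the $g$-weighted $\bT$-equivariant setting. The implication $(1)\Rightarrow(2)$ is trivial, since weakly special test configurations form a subclass of normal test configurations. The implication $(2)\Rightarrow(3)$ proceeds by producing, from a divisorial lc place $v$ of a $\bQ$-complement $\Gamma\sim_\bQ -(K_X+\Delta)$, an associated weakly special test configuration: one takes $v$ to be $\bT$-invariant, uses that $v$ is an lc place of $(X,\Delta+\Gamma)$ to run the construction of \cite[\S3]{BLX22} (or \cite[Lemma 4.8]{XZ20cm}) producing a weakly special test configuration $(\cX,\Delta_\cX;\cL)$ with $\cF_{(\cX,\Delta_\cX;\cL)}$ equivalent to (a $\bZ$-approximation of) $\cF_v$, hence with the same $\ENA_g$, $\LNA$, $\JNA_{g,\bT}$ values in the limit. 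Since all three functionals $\DNA_g$, $\JNA_{g,\bT}$ are continuous under the relevant rescaling/approximation (this uses $S_g(v_0;\cF)=\lim_m S_{g,m}(v_0;\cF)$ and the twisting behavior of $\JNA_{g,\bT}$), the inequality for test configurations descends to the inequality for $\cF_v$.

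The implication $(3)\Rightarrow(1)$ is the substantive one. Given an arbitrary $\bT$-equivariant normal test configuration $(\cX,\Delta_\cX;\cL)$, its filtration $\cF:=\cF_{(\cX,\Delta_\cX;\cL)}$ is a finitely generated $\bZ$-filtration. If $\LNA(\cF)\ge\lambda_{\max}(\cF)$ then $\DNA_g(\cF)=\LNA(\cF)-S_g(v_0;\cF)\ge\lambda_{\max}(\cF)-S_g(v_0;\cF)=\JNA_g(\cF)\ge\JNA_{g,\bT}(\cF)\ge c\,\JNA_{g,\bT}(\cF)$ and we are done. Otherwise $\LNA(\cF)<\lambda_{\max}(\cF)$, and Lemma~\ref{l:L^NA=lc slope} together with Remark~\ref{r:T-inv} produces a $\bT$-invariant divisorial lc place $v\in\ValT$ of a $\bQ$-complement with $r^{-1}v(I^{(\lambda)}_\cF{}_\bullet)\ge \lambda+A_{X,\Delta}(v)-\LNA(\cF)$ for all $\lambda$. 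This inequality says $\cF^\lambda R\subseteq \cF_v^{\lambda+r(A_{X,\Delta}(v)-\LNA(\cF))}R$ up to the usual $mr$-bookkeeping, i.e. the shifted filtration $\cF$ is dominated by $\cF_v$; one then compares $S_g(v_0;\cF)$ with $S_g(v_0;v)=\ENA_g(\cF_v)$ and $\lambda_{\max}(\cF)$ with $A_{X,\Delta}(v)$ (the latter from $\mu(\cF_v)\le A_{X,\Delta}(v)$ as in Corollary~\ref{c:LNA<=A}). Feeding these comparisons through, the hypothesis $\DNA_g(\cF_v)\ge c\,\JNA_{g,\bT}(\cF_v)$ in $(3)$ yields the desired bound $\DNA_g(\cF)\ge c\,\JNA_{g,\bT}(\cF)$; the reduced $\JNA_{g,\bT}$-norm is handled by noting that the twisting operation $\cF\mapsto\cF_\eta$ commutes with all of the above since $v$ is $\bT$-invariant, so the comparison can be run $\eta$-by-$\eta$ and then take infimum over $\eta\in N_\bR$.

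The main obstacle I anticipate is the last comparison step in $(3)\Rightarrow(1)$: controlling the weighted $S_g$-invariant under the domination $\cF\preceq\cF_v$ (after shift) in a way that is uniform in the twist $\eta$, so that it interacts correctly with the infimum defining $\JNA_{g,\bT}$. The unweighted analogue only needs $S(\cF)\le S(\cF_v)$, which is immediate from $\Dh$ monotonicity; in the weighted case $S_g(v_0;\cF)=\int g(x)y\,\rd\bnu/\int g(x)\,\rd\bnu$ is a ratio of integrals against the compatible DH measure $\Dh_{\cF_{v_0},\cF}$, and one must check that shifting and dominating $\cF$ moves this ratio in the favorable direction — this requires using positivity of $g$ and the fact that the $x$-marginal (governed by $v_0$) is unchanged, while the $y$-marginal only decreases. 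Making this precise, and likewise verifying the $\eta$-uniformity of the $\lambda_{\max}$ versus $A_{X,\Delta}(v)$ comparison, is where the real work lies; the rest follows the template of \cite{XZ20cm} closely.
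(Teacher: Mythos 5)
Your skeleton coincides with the paper's: $(1)\Rightarrow(2)$ is trivial, $(2)\Rightarrow(3)$ is the lc-place/weakly-special-test-configuration correspondence (the paper simply quotes \cite{BLX19}*{Theorem A.2}), and for $(3)\Rightarrow(1)$ you use Lemma \ref{l:L^NA=lc slope} together with Remark \ref{r:T-inv} to produce a $\bT$-invariant divisorial lc place $v$ of a $\bQ$-complement whose filtration dominates the shifted $\cF$, and then compare functionals. The genuine gap is in your handling of the reduced norm. The inequality to be proved has $\DNA_g(\cF)$ on the left and $\JNA_{g,\bT}(\cF)=\inf_{\eta\in N_\bR}\JNA_g(\cF_\eta)$ on the right, so running the comparison ``$\eta$-by-$\eta$'' forces you to relate $\DNA_g(\cF)$ to $\DNA_g(\cF_\eta)$, and likewise $\DNA_g(\cG)$ (where $\cG$ is the $\eta$-twist of $\cF_v$) to $\DNA_g(\cF_v)$. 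Saying that twisting ``commutes with all of the above since $v$ is $\bT$-invariant'' does not do this: twisting changes $\ENA_g$ by $\Fut_g(\eta)$ and a priori changes $\DNA_g$. The paper closes this by first proving $\Fut_g\equiv 0$ on $N_\bR$ (Lemma \ref{l:Fut=0}, fed by hypothesis (3) via \cite{BLX19}*{Theorem A.2} applied to product test configurations), which makes $\DNA_g$ twist-invariant, and by using the twist behavior of the log canonical slope (\cite{XZ20cm}*{Lemma A.6}) to convert $\LNA(\cF)=A_{X,\Delta}(v)\ge\LNA(\cF_v)$ (Corollary \ref{c:LNA<=A}) into $\LNA(\cF_\eta)\ge\LNA(\cG)$. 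Neither ingredient appears in your sketch, and without them the chain
\[
\DNA_g(\cF)-c\,\JNA_{g,\bT}(\cF)\ \ge\ \DNA_g(\cF_\eta)-c\,\JNA_g(\cF_\eta)\ \ge\ \DNA_g(\cG)-c\,\JNA_g(\cG)\ =\ \DNA_g(\cF_v)-c\,\JNA_g(\cG)
\]
does not close; taking the infimum over $\eta$ at the end is then what produces $\DNA_g(\cF_v)-c\,\JNA_{g,\bT}(\cF_v)\ge 0$ from (3). (One can avoid quoting Futaki vanishing by observing that the twist shifts $\DNA_g$ by a filtration-independent amount, so the shifts cancel between $\cF$ and $\cF_v$; but that observation still rests on the filtration-independence of $\Fut_g$ and on \cite{XZ20cm}*{Lemma A.6}, and it has to be stated.)

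Two smaller corrections. The $\lambda_{\max}$-comparison you need is $\lambda_{\max}(\cF_\eta)\le\lambda_{\max}(\cG)$, which comes directly from the containment $\cF_\eta^\lambda R\subseteq\cG^\lambda R$ after the shift; it is not a comparison of $\lambda_{\max}(\cF)$ with $A_{X,\Delta}(v)$, and Corollary \ref{c:LNA<=A} enters only through $\LNA(\cF_v)\le A_{X,\Delta}(v)$. Also, the step you single out as the main difficulty, monotonicity of $S_g(v_0;\cdot)$ under the containment, is in fact routine: at each finite level $m$, working inside the $\cF_{v_0}$-graded pieces and using $\ord_{\cF_\eta}(s)\le\ord_{\cG}(s)$ together with $g>0$ gives $S_{g,m}(v_0;\cF_\eta)\le S_{g,m}(v_0;\cG)$, and one passes to the limit; the real subtlety of the proof lies in the twisting step above, together with the key identity $\DNA_g(\cF)-c\,\JNA_g(\cF)=\LNA(\cF)-(1-c)\ENA_g(\cF)-c\,\lambda_{\max}(\cF)$ and the hypothesis $c\in[0,1)$, which make the three one-sided comparisons combine with the correct signs.
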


\begin{proof}
When $v_0=\wt_\xi$ this is treated in \cite{HL20-YTD}*{Section 7} using \cite{LX14}. Here we present a proof that is independent of \cite{LX14}. It is clear that (1) implies (2). By \cite{BLX19}*{Theorem A.2}, (2) implies (3). %\footnote{\HB{In Lemma 4.10 (2), do we want weakly special test configurations with $\cX_0$ is irreducible. Of course, both statements are true. }}
 Thus, it remains to show (3) implies (1).
To see this, let $\cF$ be a finitely generated $\bT$-equivariant $\bZ$-filtration of $R$. If $\LNA(\cF)\ge \lambda_{\max}(\cF)$, then we already have $\DNA_g(\cF)\ge \JNA_g(\cF)\ge \JNA_{g,\bT}(\cF)$. Thus to prove (1) we may assume that $\LNA(\cF)<\lambda_{\max}(\cF)$. % such that $\DNA_g(\cF) < c\cdot \JNA_{g,\bT}(\cF)$. It suffices to show that $\DNA_g(\cF_v) < c \cdot \JNA_{g,\bT}(\cF_v)$ for some divisorial lc place of complement $v\in\ValT$. Observe that Since $c\in[0,1)$ and $\ENA_g(\cF)<\lambda_{\max}(\cF)$, Lemma \ref{l:L^NA=lc slope} implies that $\mu(\cF)=\LNA(\cF)<\lambda_{\max}(\cF)$. 
By the second part of Lemma \ref{l:L^NA=lc slope} and Remark \ref{r:T-inv}, there exists some divisorial lc place of a $\bQ$-complement $v\in\ValT$ such that 
\[
r^{-1}v(I^{(\lambda)}_\bullet) - A_{X,\Delta}(v)\ge \lambda-\LNA(\cF)
\]
for all $\lambda\in\bR$. Since $\DNA_g(\cF)$ and $\JNA_{g,\bT}(\cF)$ are both translation invariant, we may shift $\cF$ so that $\LNA(\cF)=A_{X,\Delta}(v)$. The above inequality then becomes $v(I^{(\lambda)}_\bullet)\ge \lambda r$ and therefore $\cF^\lambda R\subseteq \cF_v^\lambda R$ for all $\lambda\in \bR$. 

Let $\eta\in N_\bR$ and let $\cG$ be the $\eta$-twist of $\cF_v$. Then we also have $\cF_\eta^\lambda R\subseteq \cG^\lambda R$ for all $\lambda$ and this clearly implies
\[
\ENA_g(\cF_\eta)\le \ENA_g(\cG)\quad \text{and}\quad \lambda_{\max}(\cF_\eta)\le \lambda_{\max}(\cG).
\]
Since $\LNA(\cF)=A_{X,\Delta}(v)\ge \LNA(\cF_v)$ by Corollary \ref{c:LNA<=A}, we also get $\LNA(\cF_\eta)\ge \LNA(\cG)$ by Lemma \ref{l:L^NA=lc slope} and \cite[Lemma A.6]{XZ20cm}. Note that by Lemma \ref{l:Fut=0}, \cite{BLX19}*{Theorem A.2} and (3) we have $\Fut_g\equiv 0$ on $N_\bR$, thus $\DNA_g(\cF_\eta)=\DNA_g(\cF)$ and $\DNA_g(\cG)=\DNA_g(\cF_v)$. 

Since 
\begin{equation*} % \label{e:D-cJ}
    \DNA_g(\cF)-c\cdot \JNA_g(\cF) = \LNA(\cF) - (1-c)\ENA_g(\cF)-c\cdot \lambda_{\max}(\cF),
\end{equation*}
we then obtain
\begin{align*}
    \DNA_g(\cF) - c\cdot \JNA_{g,\bT}(\cF) & \ge 
    \DNA_g(\cF_\eta) - c \cdot \JNA_g(\cF_\eta) \\
    & \ge \DNA_g(\cG) - c \cdot \JNA_g(\cG)
    = \DNA_g(\cF_v) - c \cdot \JNA_g(\cG).
\end{align*}
As $\eta\in N_\bR$ is arbitrary, this gives 
\[
\DNA_g(\cF) - c \cdot \JNA_{g,\bT}(\cF) \ge \DNA_g(\cF_v) - c \cdot \JNA_{g,\bT}(\cF_v).
\]
By (3), the right hand side is $\ge 0$. Thus, the same is true for the left hand side. Since $\cF$ is arbitrary, this proves that (3) implies (1). 
\end{proof}

We get the following generalized version of Fujita-Li valuative criterion \cite{Fuj-valuative, Li-valuative} which treat the case $g=1$ (see also
\cite{HL20-YTD}*{Theorem 5.18}).

\begin{cor} \label{c:ss=delta>=1}
A triple $(X,\Delta,v_0)$ is $\bT$-equivariantly $g$-Ding semistable if and only if $\delta_{g,\bT}(X,\Delta,v_0)\ge 1$.
\end{cor}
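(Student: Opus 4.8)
The plan is to deduce this as a direct consequence of Lemma \ref{l:wstc} (with $c=0$) together with Lemma \ref{l:delta=lim delta_m}. First I would unwind the definitions: by definition $(X,\Delta,v_0)$ is $\bT$-equivariantly $g$-Ding semistable iff $\DNA_g(\cX,\Delta_\cX;\cL)\ge 0$ for all $\bT$-equivariant normal test configurations, which by Lemma \ref{l:wstc} (applied with $c=0$, so that the $\JNA_{g,\bT}$ term disappears) is equivalent to $\DNA_g(\cF_v)\ge 0$ for every divisorial valuation $v\in\ValT$ that is an lc place of a $\bQ$-complement. So the task reduces to showing that this last condition is equivalent to $\delta_{g,\bT}(X,\Delta,v_0)\ge 1$.

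For the forward direction, I would argue as follows. Suppose $\DNA_g(\cF_v)\ge 0$, i.e. $\LNA(\cF_v)\le S_g(v_0;\cF_v)=S_g(v_0;v)$, for every such $v$. Combining this with $\LNA(\cF_v)\le A_{X,\Delta}(v)$ from Corollary \ref{c:LNA<=A} does \emph{not} immediately give what we want; instead I would use the other part of Corollary \ref{c:LNA<=A} more carefully, or rather pass through the basis-type divisor description. The cleaner route: by Lemma \ref{l:delta=lim delta_m} it suffices to show $\delta_{g,\bT,m}(X,\Delta,v_0)\ge 1-\epsilon_m$ with $\epsilon_m\to 0$, i.e. that for every $\bT$-invariant $g$-weighted $(m,v_0)$-basis type divisor $D$ the pair $(X,\Delta;D)$ is lc up to a small error. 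Given such a $D$, take a divisorial valuation $v$ computing $\lct(X,\Delta;D)$; one checks $v$ may be taken $\bT$-invariant, and since $D\sim_\bQ -(K_X+\Delta)$, if $\lct(X,\Delta;D)<1$ then $v$ is an lc place of the $\bQ$-complement $\Gamma:=\lct(X,\Delta;D)\cdot D + (1-\lct(X,\Delta;D))D'$ for a suitable auxiliary complement — more simply, after shrinking, $v$ is an lc place of a $\bQ$-complement close to $D$. Then $A_{X,\Delta}(v)=\lct(X,\Delta;D)\cdot v(D)\le \lct(X,\Delta;D)\cdot S_{g,m}(v_0;v)$ since $v(D)\le S_{g,m}(v_0;v)$, while the hypothesis $\DNA_g(\cF_v)\ge 0$ gives $A_{X,\Delta}(v)\ge \LNA(\cF_v)\ge \ENA_g(\cF_v)=S_g(v_0;v)$; combining with Lemma \ref{l:S=lim S_m} ($S_{g,m}(v_0;v)\le (1+\epsilon)S_g(v_0;v)$ for $m\gg0$) yields $\lct(X,\Delta;D)\ge (1+\epsilon)^{-1}$, hence $\delta_{g,\bT,m}\ge(1+\epsilon)^{-1}$ and $\delta_{g,\bT}(X,\Delta,v_0)\ge 1$.

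For the reverse direction, suppose $\delta_{g,\bT}(X,\Delta,v_0)\ge 1$, i.e. $A_{X,\Delta}(v)\ge S_g(v_0;v)$ for all $v\in\ValT$. Let $v$ be a divisorial lc place of a $\bQ$-complement $\Gamma$. Then $A_{X,\Delta}(v)=A_{X,\Delta+\Gamma}(v)+v(\Gamma)=v(\Gamma)$, and writing $\Gamma\sim_\bQ -(K_X+\Delta)=\tfrac1r L$ as $\tfrac1{mr}\{s=0\}$ for suitable $m$ and $s\in R_m$, one gets $v(\Gamma)\le \tfrac1{r}\,r^{-1}\cdot(\text{something})$; more directly, $r^{-1}v(I_\bullet^{(\lambda)})$ controls $\LNA(\cF_v)$ via Lemma \ref{l:L^NA=lc slope}, and since $v$ is an lc place of a complement one has $\LNA(\cF_v)=A_{X,\Delta}(v)$ (the inequality $\le$ is Corollary \ref{c:LNA<=A}, and $\ge$ follows because $v(\fa_\bullet)$ realizes the log canonical slope, as in the last paragraph of the proof of Lemma \ref{l:L^NA=lc slope}). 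Hence $\DNA_g(\cF_v)=\LNA(\cF_v)-S_g(v_0;v)=A_{X,\Delta}(v)-S_g(v_0;v)\ge 0$ by the hypothesis. By Lemma \ref{l:wstc} (with $c=0$) this gives $\bT$-equivariant $g$-Ding semistability, completing the proof.

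The main obstacle I anticipate is the bookkeeping in the forward direction: going from "$\DNA_g(\cF_v)\ge 0$ on divisorial lc places of $\bQ$-complements" back to the $\delta_{g,\bT,m}$-inequality for \emph{all} basis type divisors requires both that the lct-computing valuation of a basis type divisor $D$ is (after a harmless perturbation) such an lc place and the passage $S_{g,m}\to S_g$ with uniform error, so one must invoke Lemma \ref{l:S=lim S_m} and Lemma \ref{l:delta=lim delta_m} in the right order; the identity $\LNA(\cF_v)=A_{X,\Delta}(v)$ for lc places of complements, needed in the reverse direction, is also a point to state carefully.
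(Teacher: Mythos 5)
Your reverse direction is exactly the paper's: for $v\in\ValT$ a divisorial lc place of a $\bQ$-complement one has $\LNA(\cF_v)=A_{X,\Delta}(v)$, so $\delta_{g,\bT}\ge 1$ gives $\DNA_g(\cF_v)\ge 0$, and Lemma \ref{l:wstc} with $c=0$ upgrades this to semistability. Your forward direction, however, is a genuinely different route. The paper does \emph{not} go back through Lemma \ref{l:wstc}: assuming $\delta_{g,\bT}<1$, it takes the $\bT$-invariant $g$-weighted basis type divisor $D$ achieving $\delta_m=\lct(X,\Delta;D)<1$ and a $\bT$-invariant divisorial $w$ computing this lct, notes that $-(K_X+\Delta+\delta_m D)$ is ample so $\gr_w R$ is finitely generated by \cite{BCHM}*{Corollary 1.4.3}, hence $w$ induces a $\bT$-equivariant test configuration, and then contradicts semistability directly via Corollary \ref{c:LNA<=A} ($A_{X,\Delta}(w)-S_g(v_0;w)\ge\DNA_g(\cF_w)\ge 0$) against $A_{X,\Delta}(w)=\delta_m S_{g,m}(v_0;w)<S_g(v_0;w)$. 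You instead invoke the implication (1)$\Rightarrow$(3) of Lemma \ref{l:wstc} and then must show that the lct-computing valuation of an arbitrary basis type divisor with lct $<1$ is a divisorial lc place of an honest $\bQ$-complement; your numerics ($S_g\le\LNA(\cF_v)\le A_{X,\Delta}(v)=c_D\,v(D)\le c_D\,S_{g,m}\le c_D(1+\varepsilon)S_g$, then Lemmas \ref{l:S=lim S_m} and \ref{l:delta=lim delta_m}) are correct. The one step you should firm up is precisely that lc-place claim: the formula $\Gamma=\lct\cdot D+(1-\lct)D'$ does not produce a $\bQ$-divisor, since $D$ has real coefficients and $D'$ is unspecified. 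The correct repair is the paper's own perturbation device: approximate $c_D D$ from below by $\bQ$-divisors $D'_k=\sum\lambda'_i\{s_i=0\}/(mrQ_m)$ with $v(D'_k)\to A_{X,\Delta}(v)$ and $-(K_X+\Delta+D'_k)$ still ample, then apply Lemma \ref{l:complement} (as is done in the proof of Theorem \ref{t:f.g. for delta_g minimizer}). With that fix your argument is complete; note it does not really avoid heavy machinery, since \cite{BLX19}*{Theorem A.2} sits behind (2)$\Rightarrow$(3) of Lemma \ref{l:wstc} and \cite{BCHM} behind Lemma \ref{l:complement}, whereas the paper's contradiction argument is somewhat shorter and self-contained at this point.
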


\begin{proof}
% If $(X,\Delta,v_0)$ is $g$-Ding semistable, then $\DNA_g(\cF) \geq 0$ for all linearly bounded filtrations $\cF$ of $R$. Indeed, this follows from the proof of \cite[Theorem 2.29]{Li19}, where the result is shown when $v_0$ is the trivial valuation and  $g=1$.
% \footnote{\HB{Added. I didn't see this statement (checking $g$-Ding semistability using non-finitely generated filtrations) was mentioned earlier. Feel free to add more details... I am glossing over the part of showing $S(v_0, \cF) = \lim_m S(v_0, \cF_m)$.
% }}
% Therefore, Corollary \ref{c:LNA<=A} gives  $A_{X,\Delta}(v) - S_g(v_0,v)\ge \DNA_{g}(\cF) \geq 0$ for any $v\in\Valc$.  Thus, $\delta_g(X,\Delta,v_0)\ge 1$. 
Assume that $(X,\Delta,v_0)$ is $\bT$-equivariantly $g$-Ding semistable. If $\delta:=\delta_{g,\bT}(X,\Delta,v_0) < 1$, then we may choose some $\varepsilon>0$ such that $(1+\varepsilon)^2 \delta<1$. Let $m\gg 0$ be such that $\delta_m:=\delta_{g,\bT,m}(X,\Delta,v_0)<(1+\varepsilon)\delta$. By Lemma \ref{l:S=lim S_m}, we may also assume that $S_{g,m}(v_0;v)\le (1+\varepsilon)S_g(v_0;v)$ for any $v\in\Valc$. By definition, $\delta_m=\lct(X,\Delta;D)$ for some $\bT$-invariant $g$-weighted $(m,v_0)$-basis type divisor $D$. Thus if $w$ is a $\bT$-invariant divisorial valuation that computes the lct, we would have $$A_{X,\Delta}(w)=\delta_m S_{g,m}(v_0;w)<(1+\varepsilon)^2 \delta\cdot S_g(v_0;w)<S_g(v_0;w).$$ On the other hand, since $\delta_m<1$, we know that $-(K_X+\Delta+\delta_m D)$ is ample, thus $\gr_w R$ is finitely generated by \cite{BCHM}*{Corollary 1.4.3}. In other words, $w$ induces a $\bT$-equivariant test configuration of $(X,\Delta)$. As $(X,\Delta,v_0)$ is $\bT$-equivariantly $g$-Ding semistable, Corollary \ref{c:LNA<=A} gives $A_{X,\Delta}(w) - S_g(v_0;w)\ge \DNA_{g}(\cF_w) \geq 0$, a contradiction. Therefore, $\delta_{g,\bT}(X,\Delta,v_0)\ge 1$ when $(X,\Delta,v_0)$ is $\bT$-equivariantly $g$-Ding semistable.

Conversely, if $\delta_{g,\bT}(X,\Delta,v_0)\ge 1$, then for any $v\in\ValT$ we have $A_{X,\Delta}(v)\ge S_g(v_0;v)$. If $v$ is an lc place of a $\bQ$-complement, then we also have $\LNA(\cF_v)=A_{X,\Delta}(v)$ (c.f. \cite{XZ20cm}*{Proposition 4.2}). Thus $\DNA_g(\cF_v)\ge 0$ for any $v\in\ValT$ that is an lc place of a $\bQ$-complement. By Lemma \ref{l:wstc} (with $c=0$), this implies that $(X,\Delta,v_0)$ is $\bT$-equivariantly $g$-Ding semistable.
\end{proof}

% Since $\cF$ is $\bT$-equivariant, we may choose $v$ to be $\bT$-invariant by the proof of Lemma \ref{l:L^NA=lc slope}. This implies that for any $s\in R_m$ we have $\frac{v(s)}{mr} - A_{X,\Delta}(v) \ge \frac{\ord_\cF(s)}{mr} - \LNA(\cF)$, or
% \[
% \LNA(\cF)-\frac{\ord_\cF(s)}{mr} \ge A_{X,\Delta}(v)-\frac{v(s)}{mr} \ge \LNA(\cF_v)-\frac{v(s)}{mr}.
% \]

% Summing over a $\bT$-invariant basis $(s_i)$ of $R_m$ and letting $m\to \infty$, it then follows immediately from the definition of $\ENA_g(\cF)$ that 
% \[
% \LNA(\cF)-\ENA_g(\cF_\eta)\ge \LNA(\cF_v)-\ENA_g((\cF_v)_\eta)
% \]
% for any $\eta\in N_\bR$. Similarly we have 
% \[
% \LNA(\cF)-\lambda_{\max}(\cF_\eta)\ge \LNA(\cF_v)-\lambda_{\max}((\cF_v)_\eta).
% \]
% Recall $\DNA_g(\cF_\eta)=\DNA_g(\cF)$. Combined with \eqref{e:D-cJ} and \cite[Lemma A.6]{XZ20cm} this gives
% \[
% \DNA_g(\cF) - c \cdot \JNA_g(\cF_\eta) \ge \DNA_g(\cF_v) - c \cdot \JNA_g((\cF_v)_\eta)
% \]
% for any $c\in[0,1)$ and any $\eta\in N_\bR$. It follows that

\subsection{Existence of minimizer}\label{ss-reducedminimizer}

Throughout this section, let $\bT<\Aut(X,\Delta)$ be a torus and let $\xi\in N_\bR$. We aim to prove the following statement.%\footnote{\YL{Do we want to include a version for $\delta_g\leq 1$?} \ZZ{Right now I don't know how to prove minimizers exist. Without the $\bT$-action and assuming field is uncountable, we can probably just follow \cite{BJ20}. The general case seems harder. Any suggestions?}}

\begin{prop} \label{p:red uks minimizer}
Assume that $(X,\Delta,\xi)$ is $g$-Ding semistable but not reduced uniformly $g$-Ding stable. Then there exists a $\bT$-invariant quasi-monomial valuation $v$ that is not of the form $\wt_\eta$ for any $\eta\in N_\bR$ such that $1=\delta_g (X,\Delta,\xi)(:=\delta_{g,\bT} (X,\Delta,\wt_{\xi}))$
is computed by $v$.
\end{prop}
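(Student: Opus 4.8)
\textbf{Proof plan for Proposition \ref{p:red uks minimizer}.}

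The plan is to mimic the standard argument showing that failure of reduced uniform stability produces a destabilizing quasi-monomial valuation, but carried out in the weighted setting. First I would reformulate reduced uniform $g$-Ding stability in terms of a reduced $\delta$-type invariant. Using Lemma \ref{l:wstc} (with general $c\in[0,1)$), reduced uniform $g$-Ding stability is equivalent to the existence of $c>0$ with $\DNA_g(\cF_v)\ge c\,\JNA_{g,\bT}(\cF_v)$ for every divisorial lc place $v$ of a $\bQ$-complement in $\ValT$; since $(X,\Delta,\xi)$ is already $g$-Ding semistable, Corollary \ref{c:ss=delta>=1} gives $\delta_g(X,\Delta,\xi)\ge 1$, and Lemma \ref{l:Fut=0} together with \cite{BLX19}*{Theorem A.2} and the semistability hypothesis yields $\Fut_g\equiv 0$ on $N_\bR$. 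Consequently, for $v\in\ValT$ one has $S_g(v_0;v_\eta)$ varying and $A_{X,\Delta}(v)=A_{X,\Delta}(v_\eta)$, so that the relevant reduced threshold is
\[
\delta_{g,\bT}(X,\Delta,\xi) = \inf_{v\in\ValT\setminus\{\wt_\eta\}} \; \sup_{\eta\in N_\bR} \frac{A_{X,\Delta}(v)}{S_g(v_0;v_\eta)},
\]
and the failure of reduced uniform stability means this infimum equals $1$ (it cannot be $<1$ by semistability, using that $\DNA_g(\cF_v)\ge 0$ forces $A\ge S_g$ after twisting).

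Next I would produce a minimizing sequence and take a limit. Choose divisorial valuations $v_i\in\ValT$, not of the form $\wt_\eta$, with $\delta_{g,\bT,i}:=\sup_\eta A_{X,\Delta}(v_i)/S_g(v_0;(v_i)_\eta)\to 1$. After normalizing (say $A_{X,\Delta}(v_i)=1$ and twisting each $v_i$ by a suitable $\eta_i\in N_\bR$ so that the sup over $\eta$ is attained, up to $\e$, at $\eta=0$, using that $S_g(v_0;v_\eta)$ is a proper concave-type function of $\eta$ when $v\neq\wt_\eta$), the $v_i$ lie in a compact subset of the space of valuations with bounded log discrepancy — here I would invoke the properness/compactness input exactly as in \cite{XZ20nvol} or \cite[Appendix]{XZ20cm}: the set $\{v\in\ValT : A_{X,\Delta}(v)\le 1\}$ is, after restricting to valuations with uniformly bounded $\JNA$, sequentially compact in the relevant topology, and $A_{X,\Delta}$ is lower semicontinuous while $S_g(v_0;\cdot)$ is upper semicontinuous (the latter from Lemma \ref{l:S=lim S_m} and continuity of the DH measures, since $S_g$ is an integral against $\Dh_{\cF_{v_0},\cF_v}$). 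Passing to a limit $v_\infty$, lower/upper semicontinuity gives $A_{X,\Delta}(v_\infty)/S_g(v_0;v_\infty)\le 1$, and combined with $\delta_g(X,\Delta,\xi)\ge 1$ this forces equality, so $v_\infty$ computes $\delta_g(X,\Delta,\xi)=1$. One must check $v_\infty$ is not of the form $\wt_\eta$: if it were, the normalization forcing the $\eta$-sup to be attained near $0$ would be contradicted in the limit (the twists $\eta_i$ would have been forced to escape), which I would rule out using that the failure of uniform stability is quantitative along the sequence.

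Finally I would upgrade $v_\infty$ from an abstract valuation to a quasi-monomial one. This is the step I expect to be the main obstacle, and it is exactly where one imports the hard machinery: by the argument of \cite{XZ20nvol} (transplanted from the normalized-volume setting, or equivalently the stability-threshold minimizer argument of \cite{LXZ-HRFG}), any minimizer of such an $A/S$-type functional among $\bT$-invariant valuations can be taken to be quasi-monomial — one uses the finite-generation/boundedness of complements input to replace $v_\infty$ by a quasi-monomial valuation with the same or smaller $A/S_g$ ratio, e.g. a monomial lc place of a $\bQ$-complement obtained from a weighted basis type divisor computing $\delta_{g,\bT,m}$ for $m\gg 0$ (cf. the strategy outlined in Step 3 of the introduction). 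Concretely: take $m$ large so that $\delta_{g,\bT,m}(X,\Delta,\xi)$ is close to $1$ and is computed by the lct of a $\bT$-invariant $g$-weighted $(m,v_0)$-basis type divisor $D_m$; an lc place of $(X,\Delta+\delta_{g,\bT,m}D_m)$ that is $\bT$-invariant and not of product type gives the desired quasi-monomial valuation, and passing $m\to\infty$ via Lemma \ref{l:delta=lim delta_m} together with the uniqueness/rigidity forced by $\delta_g=1$ identifies it with (a valuation equivalent to) the minimizer. The delicate point throughout is ensuring the limiting/quasi-monomial valuation genuinely fails to be a twist $\wt_\eta$, since otherwise it carries no destabilizing information; this is guaranteed precisely by the hypothesis that reduced uniform stability fails, which I would track quantitatively through all the limiting arguments.
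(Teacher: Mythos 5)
Your proposal has the right overall shape (a minimizing sequence of divisorial lc places of complements, a limit, continuity of $S_g$, then Corollary \ref{c:ss=delta>=1} to conclude $\delta_g=1$), but the two central steps are not justified as written, and the second one is where the real content lies. First, the compactness/semicontinuity you invoke does not exist in the stated generality: on $\ValT$ the sublevel set $\{A_{X,\Delta}\le 1\}$ is not sequentially compact and $S_g(v_0;\cdot)$ is not upper semicontinuous there — this is exactly the difficulty the complement technique is designed to bypass, and neither \cite{XZ20nvol} nor \cite{XZ20cm}*{Appendix} proves or uses such a compactness statement. The argument the paper actually runs is: reduce to a maximal torus; use Lemma \ref{l:wstc} together with boundedness of complements (\cite{BLX19}*{Theorem 3.5}) to get a sequence of divisorial lc places of $N$-complements $v_i\in\ValT$, not of the form $\wt_\eta$, with $\DNA_g(\cF_{v_i})/\JNA_{g,\bT}(\cF_{v_i})\to 0$; then use the constructibility result \cite{XZ20cm}*{Lemma A.11} and the argument of \cite{XZ20cm}*{Theorem A.5} to assume all $v_i$ are lc places of a single $\bQ$-complement, hence lie (after rescaling) in one fixed simplex $\QM(Y,E)$. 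Inside that simplex $A$ is linear, $S_g(v_0;\cdot)$ is continuous (Lemma \ref{l:S_g continuous}), $\lambda_{\max}(\cF_v)=T_{X,\Delta}(v)$ is continuous, and $\LNA(\cF_{v_i})=A_{X,\Delta}(v_i)$ by Lemma \ref{l:L^NA=lc slope}; so the limit $v\in\QM(Y,E)$ exists, is automatically quasi-monomial, is not a twist (this is part of the Theorem A.5-type argument, not something to be repaired afterwards), and satisfies $\DNA_g(\cF_v)=A_{X,\Delta}(v)-S_g(v_0;v)=0$, which combined with $\delta_g(X,\Delta,\xi)\ge 1$ gives the conclusion.

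Second, your separate step "upgrade $v_\infty$ to a quasi-monomial valuation" is both unnecessary in the correct argument and unsupported as proposed: there is no citable black box asserting that a minimizer of $A/S_g$ among $\bT$-invariant valuations may be taken quasi-monomial — establishing that in this weighted setting is essentially equivalent to running the complement/dual-complex argument above, so deferring it there is circular. Moreover the concrete route you sketch (take an lc place of $(X,\Delta+\delta_{g,\bT,m}D_m)$ for a weighted basis type divisor $D_m$, then identify it with $v_\infty$ via "uniqueness/rigidity forced by $\delta_g=1$") does not work at this stage: no uniqueness of the minimizer is available here, and nothing identifies such lc places with your abstract limit. The weighted basis type divisors do enter this paper, but later and for a different purpose — in the proof of Theorem \ref{t:f.g. for delta_g minimizer}, to show the minimizer is an lc place of a special complement and deduce finite generation — not to produce the minimizer. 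The fix is structural: do not take a limit in the full valuation space and then try to improve it; confine the minimizing sequence to a fixed dual complex first, and quasi-monomiality, the continuity you need, and the non-twist property all come from that confinement.
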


\begin{proof}
% If $\bT$ is not maximal, then we can choose $v$ to be induced by any vector in $N(\bT^{\rm max})\setminus N$, for a maximal torus $\bT^{\rm max}$ containing $\bT$. Thus we may assume $\bT$ is maximal. \footnote{\CX{I add this paragraph.}}

The argument is very similar to those in \cite{XZ20cm}*{Appendix}, so we only give a sketch. First note that the assumption remains true if we enlarge the torus $\bT$, and clearly if the conclusion holds for a maximal torus containing $\bT$, then it also holds for $\bT$. Thus we may assume that $\bT$ is a maximal torus.

By Lemma \ref{l:wstc} and \cite[Theorem 3.5]{BLX19}, we know that there exist some integer $N>0$ and a sequence of divisorial lc places of $N$-complements $v_i\in \ValT$ (not of the form $\wt_\eta$) such that 
\[
\lim_{i\to\infty}\frac{\DNA_g(\cF_{v_i})}{\JNA_{g,\bT}(\cF_{v_i})} = 0.
\]
By the constructibility result \cite{XZ20cm}*{Lemma A.11} and arguing as in the proof of \cite{XZ20cm}*{Theorem A.5}, we may assume that the $v_i$'s are lc places of the same $\bQ$-complement and after rescaling $v=\lim_i v_i\in \ValT$ exists and $v\neq \wt_\eta$ for any $\eta\in N_\bR$. By the following Lemma \ref{l:S_g continuous}, we have $\lim_i S_g(v_0;v_i) = S_g(v_0;v)$. Since $v_i$ are lc places of $\bQ$-complements, it follows from Lemma \ref{l:L^NA=lc slope} that $\LNA(\cF_{v_i})=A_{X,\Delta}(v_i)$ and thus $\lim_i \DNA_g(\cF_{v_i}) = \DNA_g(\cF_{v})$. Since the function $v\mapsto \lambda_{\max}(\cF_v)=T_{X,\Delta}(v)$ is also continuous on $\QM(Y,E)$ by \cite[Proposition 2.4]{BLX19}, we get $\lim_i \JNA_g(\cF_{v_i}) = \JNA_g(\cF_{v})$ as well. Thus, 
\[
\frac{\DNA_g(\cF_v)}{\JNA_{g,\bT}(\cF_v)} = \lim_{i\to\infty} \frac{\DNA_g(\cF_{v_i})}{\JNA_{g,\bT}(\cF_{v_i})}= 0,
\]
which implies $\DNA_g(\cF_v)=A_{X,\Delta}(v)-S_g(v_0;v)=0$. Since $\delta_g(X,\Delta,\xi)\ge 1$ by Corollary \ref{c:ss=delta>=1}, we see that $\delta_g(X,\Delta,\xi) = 1$ and $v$ computes $\delta_g (X,\Delta,\xi)$.
\end{proof}

We have used the following statement in the above proof.

\begin{lem} \label{l:S_g continuous}
Let $Y\to X$ be a proper birational map with $Y$ regular and $E:= \sum_{i=1}^d E_i$ a reduced simple normal crossing divisor on $Y$. 
Then the function $S_{g}(v_0;\, \cdot \, )$ is continuous on $\QM(Y,E)$. 
\end{lem}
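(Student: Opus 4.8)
The plan is to reduce the continuity of $S_g(v_0; \cdot)$ on the simplicial complex $\QM(Y,E)$ to the known continuity statements for the Okounkov-body data, following the pattern of \cite[Section 4]{BJ20} but keeping track of the extra weight $g$. First I would fix a good valuation $w_0\colon K(X)^\times \to \bZ^n$ together with $u_0\in\bR^n_{\ge0}$ from Lemma \ref{l:goodval} so that $v_0(f)=\langle u_0, w_0(f)\rangle$, and let $\Sigma\subset\bR^n$ be the associated Okounkov body with Lebesgue measure $\rho$. For a valuation $v\in\QM(Y,E)$, write $G_v\colon \Sigma \to \bR$ for the concave transform of the filtration $\cF_v$ (the standard Okounkov-body description of $S(v)$), and regard $g$ as a continuous positive function on $\Sigma$ via $g(\alpha)=g(\langle u_0,\alpha\rangle)$. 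The key identity, obtained by passing to the limit in the definition of $S_{g,m}(v_0;v)$ exactly as in the unweighted case, is
\[
S_g(v_0;v) = \frac{\int_\Sigma g(\alpha)\, G_v(\alpha)\, \rd\rho(\alpha)}{\int_\Sigma g(\alpha)\, \rd\rho(\alpha)}.
\]
Since the denominator is a fixed positive constant independent of $v$, it suffices to prove that $v\mapsto \int_\Sigma g\, G_v\, \rd\rho$ is continuous on $\QM(Y,E)$.

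Next I would invoke the continuity of the concave transform. On each face of the rational simplicial cone structure of $\QM(Y,E)$, the valuations $v=v_\alpha$ vary in a quasi-monomial family, and by \cite[Proposition 2.4]{BLX19} (or the corresponding statement in \cite[Section 4]{BJ20}) the functions $\alpha\mapsto G_{v_\alpha}$ depend continuously on the parameter; moreover they are uniformly bounded on $\Sigma$ (bounded by a constant times $\lambda_{\max}(\cF_{v_\alpha})=T_{X,\Delta}(v_\alpha)$, which is itself continuous, hence locally bounded, on $\QM(Y,E)$). Therefore for a convergent sequence $v_i\to v$ in $\QM(Y,E)$ the integrands $g\, G_{v_i}$ are dominated by a fixed $\rho$-integrable function ($g$ is bounded on the compact set $\Sigma$), and they converge pointwise $\rho$-a.e.\ to $g\, G_v$. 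The dominated convergence theorem then gives $\int_\Sigma g\, G_{v_i}\, \rd\rho \to \int_\Sigma g\, G_v\, \rd\rho$, which is the desired continuity.

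The main technical point — the step I expect to require the most care — is establishing the Okounkov-body formula for $S_g(v_0;v)$ and the uniform domination, since one must confirm that the weighted limit measure $\bnu=\Dh_{\cF_{v_0},\cF_v}$ projects correctly onto $\Sigma$ and that the two-variable concave transform on $\Sigma$ records simultaneously the $v_0$-weight (through $\langle u_0,\cdot\rangle$) and the $v$-order. This is exactly the content of the argument behind Lemma \ref{l:S=lim S_m}, and the uniform continuity of $g$ on $\Sigma$ used there is what makes the interchange of limit and integral legitimate; once that is in place, the rest is a routine application of dominated convergence together with the already-cited continuity of $\QM(Y,E)\ni v\mapsto G_v$ and $v\mapsto T_{X,\Delta}(v)$.
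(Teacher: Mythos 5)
Your overall strategy (Okounkov body, the weighted concave-transform formula $S_g(v_0;v)=\int_\Sigma g\,G_v\,\rd\rho\big/\int_\Sigma g\,\rd\rho$, then dominated convergence) is the same as the paper's, but there is a genuine gap at the key step: you assert that $\alpha\mapsto G_{v_\alpha}$ "depends continuously on the parameter," citing \cite[Proposition 2.4]{BLX19}. That proposition gives continuity on $\QM(Y,E)$ of the scalar invariants $S(\cdot)$ and $T(\cdot)$ only; it says nothing about pointwise (or a.e.) convergence of the concave transforms $G_{v_\alpha}$ as functions on $\Sigma$, and no such statement is available in \cite{BJ20} either. Since your dominated-convergence argument needs $g\,G_{v_i}\to g\,G_v$ $\rho$-a.e., this is precisely the point that must be supplied, and as written it is unjustified.

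The paper closes this gap with a monotonicity trick that your proposal is missing. Given $\alpha^i\to\alpha$, one first uses the homogeneity $S_g(v_0;c\,w)=c\,S_g(v_0;w)$ to rescale the $\alpha^i$ (and discard finitely many terms) so that the sequence is non-increasing, hence $v_i\geq v_{i+1}\geq v$ and the concave transforms satisfy a fixed inequality, say $G_{v_i}\geq G_v$. Then $\left|S_1(v_0;v_i)-S_1(v_0;v)\right|=\frac{1}{\vol(\Sigma)}\int_\Sigma |G_{v_i}-G_v|\,\rd\rho$, so the continuity of the unweighted $S_1$ from \cite[Proposition 2.4]{BLX19} yields $G_{v_i}\to G_v$ in $L^1(\rho)$, and together with the monotonicity of the sequence $(G_{v_i})$ this upgrades to convergence $\rho$-a.e. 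Only then does dominated convergence (with $g$ bounded on the compact body, as you note) give $\int_\Sigma g\,G_{v_i}\,\rd\rho\to\int_\Sigma g\,G_v\,\rd\rho$. So the integral formula you flag as the delicate point is indeed needed, but the real missing idea is the reduction to a monotone sequence that converts the known continuity of $S_1$ into a.e. convergence of the concave transforms; without it, your appeal to continuity of $G_{v_\alpha}$ has no source.
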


We will deduce the result from  the continuity of $S(\cdot  )$ on $\QM(Y,E)$ shown in \cite{BLX19}.

\begin{proof}
Fix a point $\eta \in \Supp(E)$ and local coordinates $y_1,\ldots, y_r \in \cO_{Y,\eta}$ so that each $y_j$ cuts out an irreducible component of $E$ at $\eta$. For $\alpha=( \alpha_1,\ldots, \alpha_r)\in \R_{\geq0}^{r}$, write $v_\alpha\in \Val_X$ for the associated quasi-monomial valuation satisfying $v_\alpha(y_j) = \alpha_j$. 

To prove the continuity of $S_g(v_0;\, \cdot\, )$ on $\R_{\geq0}^r$, fix  a convergent sequence $\alpha^{i}$ in $\R_{\geq 0}^{r}$ and set $ \alpha: = \lim_i \alpha^{i}$. We aim to show $\lim_i S_g(v_0;v_i) = S_g(v_0;v)$, where $v_i: = v_{\alpha^i}$ and $v : = v_{\alpha}$. 

First, note that $S_g(v_0;c  w)= cS_g(v_0;w) $ for all $w\in \Val_X^\circ$ and $c\in \R_{>0}$. 
Therefore, after rescaling the $\alpha^i$ and removing finitely many terms, we may assume the sequence $(\alpha^i)$ is non-increasing. Hence, $v_i \geq v_{i+1} \geq v$ for all $i$.  
Next, consider the Okounkov body $\Sigma \subset \R^n$ induced by the good valuation form Lemma \ref{l:goodval}. Write $G_i$ and $G$ for the concave functions $\Sigma \to \R_{\geq 0}$ induced by the filtrations $\cF_{v}$ and $\cF_{v_i}$ (see \cite[Section 2.5]{BJ20}) and set $\vol_g(\Sigma)= \int_{\Sigma} g\, \rd \rho $. 
Note that
\begin{equation}\label{e:Sgvi}
|S_g(v_0;v_i) - S_g(v_0;v) | 
= \left| \frac{1}{ \vol_g (\Sigma) } \int_{\Sigma }g G_i \, \rd \rho - 
 \frac{1}{ \vol_g (\Sigma) } \int_{\Sigma } g G \, \rd \rho
 \right | 
 = \frac{1}{\vol_g(\Sigma)} \int_{\Sigma} g |G_i-G| \, \rd \rho  
,\end{equation}
where the second equality uses that $G_i\leq G$.
Using that    $\lim_i S_1(v_0; v_i) = S_{1}(v_0;v)$ by \cite[Proposition 2.4]{BLX19} and \eqref{e:Sgvi}, we see
$(G_i)$ converges to $G$ a.e.
Therefore, the dominated convergence theorem implies
\[
\lim_{i\to \infty} |S_g(v_0;v_i) - S_g(v_0;v) | = 
\lim_{i \to \infty }
  \frac{1}{\vol_g(\Sigma)} \int_{\Sigma} g |G_i-G| \, \rd \rho  
 =0
,\]
which completes the proof.
\end{proof}

\section{Finite generation}

\subsection{A valuative criterion for $\tbeta_{X,\Delta}$-minimizers}\label{ss-HNAval}

%  We define the relative $\tS$-invariant as follows.

% \begin{defn}
% Let $v_0, v\in \Val_X^\circ$. Let $(s_1, \cdots, s_{N_m})$ be a basis of $R_m$ that is compatible with both $v_0$ and $v$. We define 
% \begin{align*}
% \bfQ_m(v_0) :=\frac{1}{N_m}\sum_{i=1}^{N_m} e^{-\frac{v_0(s_i)}{mr}}\quad \textrm{and} \quad\tS_m(v_0;v) := \frac{1}{N_m \bfQ_m(v_0)} \sum_{i=1}^{N_m} \frac{v(s_i)}{mr}\cdot e^{-\frac{v_0(s_i)}{mr}}.
% \end{align*}
% Let $\tS(v_0;v):=\lim_{m\to \infty} \tS_m(v_0;v)$.
% \end{defn}

In this section we give a valuative criterion for valuations computing $h$-invariant inspired by \cite{XZ20nvol} in terms of weighted stability thresholds. Let us recall that in Corollary \ref{c-quasiunique}, we know the valuation computing $h(X,\Delta)$ is quasi-monomial. Thus we can apply the construction in Section \ref{ss-weighteddelta}. 
We use the following notation: for any quasi-monomial $v_0$ and $v\in \Val_X^\circ$, let $\tS(v_0;v):=S_g(v_0;v)$ where $g(x)=e^{-x}$.  By \cite{HL20-uniqueness} and Theorem \ref{t:Hunique}, the unique valuation computing $h(X,\Delta)$ is trivial if and only if $(X,\Delta)$ is K-semistable. Hence, throughout this subsection, we assume that $(X,\Delta)$ is K-unstable, i.e. $h(X,\Delta)<0$.

\begin{thm} \label{t:Hmin induce K-ss triple}
A quasi-monomial valuation $v_0\in \Val_{X}^\circ$ computes $h(X,\Delta)$ if and only if $A_{X,\Delta}(v)\geq \tS(v_0;v)$ for any valuation $v\in \Val_X^\circ$ and $A_{X,\Delta}(v_0)=\tS(v_0;v_0)$.
\end{thm}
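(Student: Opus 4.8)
The plan is to derive Theorem~\ref{t:Hmin induce K-ss triple} from the convexity machinery of Section~\ref{s:convexDing} together with the weighted $\delta$-theory of Section~\ref{ss-weighteddelta}, by interpreting the minimality of $v_0$ for $\tbeta_{X,\Delta}$ in terms of a first-variation (derivative) condition along geodesics. The key observation is that for $g(x)=e^{-x}$ the functional $\tS(\cF)=-\log\int e^{-\la}\,\Dh_\cF(d\la)$ has a clean derivative along the geodesic $(\cF_t)$ connecting $\cF_{v_0}$ and any $\cF_v$: using Proposition~\ref{p:bnuproject}(1) one writes
\[
\tS(\cF_t)=-\log\int_{\R^2}e^{-(1-t)x-ty}\,d\bnu,\qquad \bnu=\Dh_{\cF_{v_0},\cF_v},
\]
so that $\frac{d}{dt}\big|_{t=0^+}\tS(\cF_t)=\dfrac{\int_{\R^2}(y-x)e^{-x}\,d\bnu}{\int_{\R^2}e^{-x}\,d\bnu}=\tS(v_0;v)-\tS(v_0;v_0)$, where the last equality identifies $\tS(v_0;v_0)=S_{e^{-x}}(v_0;v_0)$ with $\tS(v_0)$ using the diagonal $x=y$ on the pushforward measure $\Dh_{\cF_{v_0},\cF_{v_0}}$. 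Combined with the affineness of $\ENA$ (not needed here) and, more to the point, with Lemma~\ref{l:L^NA=lc slope}/Corollary~\ref{c:LNA<=A} which give $\LNA(\cF_{v})\le A_{X,\Delta}(v)$ with the Gauss-extension valuation realizing equality, the derivative of $\HNA(\cF_t)=\LNA(\cF_t)-\tS(\cF_t)$ at $t=0$ is bounded above by $A_{X,\Delta}(v)-A_{X,\Delta}(v_0)-\big(\tS(v_0;v)-\tS(v_0;v_0)\big)$ after the standard normalization $A_{X,\Delta}(v_0)=\tS(v_0)=\tS(v_0;v_0)$.

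For the \emph{if} direction: assume $A_{X,\Delta}(v)\ge\tS(v_0;v)$ for all $v$ and $A_{X,\Delta}(v_0)=\tS(v_0;v_0)$. Taking $v=v_0$ and $\cF=\cF_{v_0}$ gives $\HNA(\cF_{v_0})\le \tbeta_{X,\Delta}(v_0)=A_{X,\Delta}(v_0)-\tS(v_0)=0$... more precisely one must compare with $h(X,\Delta)$. The cleaner route: for an arbitrary linearly bounded filtration $\cF_1$, run the geodesic from $\cF_0=\cF_{v_0}$; by Theorem~\ref{t:DHconvex}(ii), $t\mapsto\HNA(\cF_t)$ is convex, so $\HNA(\cF_1)\ge\HNA(\cF_0)+\big(\frac{d}{dt}|_{t=0^+}\HNA(\cF_t)\big)$. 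One then shows the right derivative is $\ge 0$: the $\LNA$ part contributes $\ge$ its own right derivative, which by the log-canonical-slope interpretation and the inequality $r^{-1}v_0(I^{(\la)}_\bullet)\ge\la+A_{X,\Delta}(v_0)-\LNA(\cF_{v_0})$ from Lemma~\ref{l:L^NA=lc slope} is controlled below by $A_{X,\Delta}(v)-A_{X,\Delta}(v_0)$ (here $v$ is the valuation whose Gauss extension computes the relevant lct, and one uses $\cF_1\subseteq\cF_v$ after shifting); the $-\tS$ part contributes exactly $-(\tS(v_0;v)-\tS(v_0;v_0))$ by the computation above. Hypothesis $A_{X,\Delta}(v)\ge\tS(v_0;v)$ applied to this $v$, together with $A_{X,\Delta}(v_0)=\tS(v_0;v_0)$, gives right derivative $\ge 0$, hence $\HNA(\cF_1)\ge\HNA(\cF_{v_0})$, so $v_0$ computes $h(X,\Delta)$.

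For the \emph{only if} direction: suppose $v_0$ computes $h(X,\Delta)$. First, normalizing, $A_{X,\Delta}(v_0)\ge\tS(v_0)=\tS(v_0;v_0)$ always, and equality must hold since otherwise $\HNA(\cF_{v_0})=\LNA(\cF_{v_0})-\tS(v_0)\le A_{X,\Delta}(v_0)-\tS(v_0)$ would not be tight against $\tbeta_{X,\Delta}(v_0)=h(X,\Delta)$; more carefully, Corollary~\ref{c:LNA<=A} gives $\HNA(\cF_{v_0})\le\tbeta_{X,\Delta}(v_0)$, and since $v_0$ computes $h$ and $h=\inf_v\tbeta_{X,\Delta}(v)=\inf_\cF\HNA(\cF)$, both are equal to $h(X,\Delta)$, forcing $\LNA(\cF_{v_0})=A_{X,\Delta}(v_0)$, i.e. $v_0$ is an lc place of a $\bQ$-complement and $A_{X,\Delta}(v_0)=\tS(v_0)=\tS(v_0;v_0)$. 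Then for any $v\in\Val_X^\circ$, run the geodesic $(\cF_t)$ from $\cF_{v_0}$ to $\cF_v$; minimality of $v_0$ forces the right derivative $\frac{d}{dt}|_{t=0^+}\HNA(\cF_t)\ge 0$. By Proposition~\ref{p:Lconvex}-type convexity and the comparison $\LNA(\cF_t)\le(1-t)\LNA(\cF_{v_0})+t\LNA(\cF_v)\le(1-t)A_{X,\Delta}(v_0)+tA_{X,\Delta}(v)$ (using $\LNA(\cF_{v_0})=A_{X,\Delta}(v_0)$ and Corollary~\ref{c:LNA<=A}), the $\LNA$-part of the right derivative is $\le A_{X,\Delta}(v)-A_{X,\Delta}(v_0)$, while the $\tS$-part gives $-(\tS(v_0;v)-\tS(v_0;v_0))$ exactly; hence $0\le$ right derivative $\le A_{X,\Delta}(v)-A_{X,\Delta}(v_0)-\tS(v_0;v)+\tS(v_0;v_0)=A_{X,\Delta}(v)-\tS(v_0;v)$, which is the desired inequality.

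The main obstacle I anticipate is making the ``right derivative'' argument rigorous, in particular justifying the inequality for the $\LNA$-term: $\LNA$ is only known to be convex (Proposition~\ref{p:Lconvex}), not affine, along geodesics, and its one-sided derivative need not equal the naive slope $\LNA(\cF_v)-\LNA(\cF_{v_0})$. The fix is to avoid derivatives for $\LNA$ entirely and instead use the secant bound $\HNA(\cF_t)\le(1-t)\HNA(\cF_{v_0})+t\HNA(\cF_v)$ together with the \emph{affine} behavior to isolate where strictness can fail --- but since here we need a lower bound on $\HNA(\cF_v)$, not an upper bound, the correct tool is convexity of $t\mapsto\HNA(\cF_t)$ giving $\HNA(\cF_v)\ge\HNA(\cF_{v_0})+D^+\HNA(0)$ where $D^+\HNA(0)=D^+\LNA(0)-(\tS(v_0;v)-\tS(v_0;v_0))$, and then bounding $D^+\LNA(0)$ from below. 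For the latter one uses that along the geodesic $\cF_t^\la R\subseteq\cF_{v_0}^{?}R\cap\cdots$, feeding into the log-canonical-slope computation of Lemma~\ref{l:L^NA=lc slope} applied at $v_0$; this is precisely the kind of estimate carried out in Proposition~\ref{p:Lconvex}, and one should expect to reuse its proof (the ideal $\fb_{t,\bullet}$ construction) with $w_1$ replaced by $w=r\,\ord_o+v$. Getting the bookkeeping of shifts (the ``up to shift'' in Lemma~\ref{l:L^NA=lc slope}) consistent with the normalization $A_{X,\Delta}(v_0)=\tS(v_0;v_0)$ is the delicate point, but it is routine once one commits to the normalization at the outset.
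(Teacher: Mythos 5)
Your overall skeleton (geodesic between $\cF_{v_0}$ and $\cF_v$, H\"older convexity of the exponential term, derivative at $t=0$, and the bound $\LNA(\cF_t)\le(1-t)A_{X,\Delta}(v_0)+tA_{X,\Delta}(v)$ in the ``only if'' direction) matches the paper, but two steps are genuinely broken. First, you repeatedly identify $\tS(v_0)$ with $\tS(v_0;v_0)$; these are different quantities: $\tS(v_0)=-\log\int e^{-\la}\,\Dh_{\cF_{v_0}}(d\la)$ while $\tS(v_0;v_0)=\int \la e^{-\la}\,\Dh_{\cF_{v_0}}(d\la)\big/\int e^{-\la}\,\Dh_{\cF_{v_0}}(d\la)$, and they disagree for any non-Dirac DH measure. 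Worse, your claim ``$A_{X,\Delta}(v_0)\ge\tS(v_0)$ always'' is the statement $\tbeta_{X,\Delta}(v_0)\ge 0$, which contradicts the standing assumption of this subsection that $h(X,\Delta)<0$ (indeed $A_{X,\Delta}(v_0)-\tS(v_0)=h(X,\Delta)$ when $v_0$ computes $h$). So your derivation of the second conclusion $A_{X,\Delta}(v_0)=\tS(v_0;v_0)$ collapses, and with it the ``normalization'' you feed into both directions. What your geodesic argument legitimately yields is only the relative inequality $A_{X,\Delta}(v)-\tS(v_0;v)\ge A_{X,\Delta}(v_0)-\tS(v_0;v_0)$ for all $v$; the missing idea is the rescaling trick: replace $v$ by $\lambda v$, use $A_{X,\Delta}(\lambda v)=\lambda A_{X,\Delta}(v)$ and $\tS(v_0;\lambda v)=\lambda\tS(v_0;v)$, and let $\lambda\to\infty$ and $\lambda\to 0^+$ to decouple the two statements of the theorem (this is exactly how the paper concludes).

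Second, in the ``if'' direction your plan to run over \emph{all} filtrations and lower-bound the one-sided derivative $D^+\LNA(0)$ along the geodesic is both unsupported and unnecessary. Proposition \ref{p:Lconvex} only gives convexity, i.e.\ upper bounds on $\LNA(\cF_t)$, and the $\fb_{t,\bullet}$ construction there produces upper bounds on log canonical thresholds, so it cannot deliver the lower bound on $D^+\LNA(0)$ your supporting-line inequality needs (nor is convexity of $t\mapsto\HNA(\cF_t)$ as a function on $[0,1]$ established, only the endpoint inequality of Theorem \ref{t:DHconvex}). The paper avoids $\LNA$ entirely here: since $h(X,\Delta)=\inf_{v}\tbeta_{X,\Delta}(v)$ over valuations, it suffices to prove $\tbeta_{X,\Delta}(v)\ge\tbeta_{X,\Delta}(v_0)$, which follows by considering $f(t)=(1-t)A_{X,\Delta}(v_0)+tA_{X,\Delta}(v)+\log\int_{\R^2}e^{-(1-t)x-ty}\,\mathrm{d}\bnu$: it is convex by H\"older, $f(0)=\tbeta_{X,\Delta}(v_0)$, $f(1)=\tbeta_{X,\Delta}(v)$, and $f'(0)=(A_{X,\Delta}(v)-\tS(v_0;v))-(A_{X,\Delta}(v_0)-\tS(v_0;v_0))\ge 0$ directly from the hypotheses, whence $f(1)\ge f(0)$. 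Your ``only if'' bound $\HNA(\cF_t)\le f(t)$ together with $\HNA(\cF_t)\ge h(X,\Delta)=f(0)$ is the right mechanism there, so once you drop the false identification of $\tS(v_0)$ with $\tS(v_0;v_0)$ and insert the rescaling step, the argument becomes the paper's proof.
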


\begin{proof}
We first show the ``if'' part. It suffices to show that $\tbeta_{X,\Delta}(v) \geq \tbeta_{X,\Delta}(v_0)$ for any valuation $v\in \Val_X^\circ$. Let $\cF_0:=\cF_{v_0}$ and $\cF_1:=\cF_{v}$. Let $\bnu$ denote the compatible measure of $\cF_0$ and $\cF_1$ from Section \ref{s:measureR2}. From the definitions, we know that 
\begin{equation}\label{eq:tS-int}
% \bfQ(v_0)=\int_{\bR^2} e^{-x} d\bnu, \quad 
\tS(v_0;v_0)=\frac{\int_{\bR^2} x e^{-x} d\bnu}{\int_{\bR^2} e^{-x} d\bnu}, \quad \textrm{and}\quad \tS(v_0;v)=\frac{\int_{\bR^2} y e^{-x} d\bnu}{\int_{\bR^2} e^{-x} d\bnu}.
\end{equation} Consider the following function $f:[0,1]\to \bR$ given by 
\[
f(t):= (1-t)A_{X,\Delta}(v_0) +tA_{X,\Delta}(v) +\log \int_{\bR^2} e^{-(1-t)x-ty} d\bnu.
\]
It is clear that $f(0)=\tbeta_{X,\Delta}(v_0)$ and $f(1)=\tbeta_{X,\Delta}(v)$.
By H\"older's inequality as in the proof of Proposition \ref{p:Sconvex}, we know that $f(t)$ is convex in $t$. Moreover, we have
\begin{align*}
f'(0) & = A_{X,\Delta}(v) - A_{X,\Delta}(v_0) + \frac{\int_{\bR^2} (x-y)e^{-x} d\bnu}{\int_{\bR^2} e^{-x} d\bnu}\\
& = (A_{X,\Delta}(v)- \tS(v_0;v)) - (A_{X,\Delta}(v_0)-\tS(v_0;v_0)) \geq 0.
\end{align*}
Thus $f(1)\geq f(0)$ and the ``if'' part is proved.

Next, we show the ``only if'' part.  Let $(\cF_t)_{t\in [0,1]}$ be the geodesic of filtrations connecting $\cF_0$ and $\cF_1$. 
Since $v_0$ computes $h(X,\Delta)$, we know that $\HNA(\cF_t)\geq \HNA(\cF_0)=f(0)$ for any $t\in [0,1]$. Recall that
\[
\HNA(\cF_t)= \LNA(\cF_t)-\tS(\cF_t)=\LNA(\cF_t)+ \log \int_{\bR^2} e^{-(1-t)x-ty} d\bnu.
\]
By Proposition \ref{p:Lconvex} and 
Corollary \ref{c:LNA<=A}, we obtain
\[
\LNA(\cF_t) \leq (1-t)\LNA(\cF_0)+t\LNA(\cF_1)\leq (1-t) A_{X,\Delta}(v_0)+t A_{X,\Delta}(v). 
\]
Hence we have $\HNA(\cF_t)\leq f(t)$ which implies that $f(t)\ge f(0)$ for all $t\in [0,1]$ and thus $f'(0)\geq 0$, i.e.
\[
A_{X,\Delta}(v)- \tS(v_0;v) \geq  A_{X,\Delta}(v_0)-\tS(v_0;v_0).
\]
Since $v\in \Val_{X}^\circ$ is arbitrary, the above inequality remains true if we replace $v$ by $\lambda v$ for any $\lambda\in \bR_{>0}$, i.e. 
\[
\lambda (A_{X,\Delta}(v) - \tS(v_0;v))\geq A_{X,\Delta}(v_0)-\tS(v_0;v_0).
\]
Thus we have $A_{X,\Delta}(v) \geq  \tS(v_0;v)$ for any $v\in \Val_X^\circ$ and $A_{X,\Delta}(v_0)\leq \tS(v_0;v_0)$, which implies that $A_{X,\Delta}(v_0)= \tS(v_0;v_0)$.
This finishes the proof.
\end{proof}

The previous theorem immediately implies the following corollary. 

\begin{cor} \label{c:hmin compute delta}
Let $g(x)=e^{-x}$ and let $v_0\in\Valc$ be the valuation computing $h(X,\Delta)$. Then $\delta_g(X,\Delta,v_0)=1$ and is computed by $v_0$.
\end{cor}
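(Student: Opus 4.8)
The plan is to read the corollary off directly from Theorem \ref{t:Hmin induce K-ss triple}. By Corollary \ref{c-quasiunique} the valuation $v_0$ computing $h(X,\Delta)$ is quasi-monomial, so Theorem \ref{t:Hmin induce K-ss triple} applies to it; moreover, for the weight $g(x)=e^{-x}$ one has $\tS(v_0;v)=S_g(v_0;v)$ by definition, and $\delta_g(X,\Delta,v_0)=\inf_{v\in\Valc}\frac{A_{X,\Delta}(v)}{S_g(v_0;v)}$ by the definitions in Section \ref{ss-weighteddelta} (taking $\bT=\{1\}$). The ``only if'' direction of Theorem \ref{t:Hmin induce K-ss triple} then supplies exactly the two facts we need: $A_{X,\Delta}(v)\ge S_g(v_0;v)$ for every $v\in\Valc$, and $A_{X,\Delta}(v_0)=S_g(v_0;v_0)$.

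The only point deserving a word of justification is that $S_g(v_0;v)>0$ for every $v\in\Valc$, so that the additive inequality above can legitimately be divided through by $S_g(v_0;v)$. Writing $\bnu=\Dh_{\cF_{v_0},\cF_v}$, we have $S_g(v_0;v)=\frac{\int_{\bR^2}y\,e^{-x}\,\rd\bnu}{\int_{\bR^2}e^{-x}\,\rd\bnu}$: here the weight $e^{-x}$ is everywhere positive, the integrand $y$ is $\ge 0$ on $\Supp(\bnu)$ since $\la_{\min}(\cF_v)=0$ by Lemma \ref{l:lminFv}, and $y$ is strictly positive on a set of positive $\bnu$-mass since $\la_{\max}(\cF_v)=T(v)>0$ for $v\neq v_{\triv}$ ($L$ being ample); hence the ratio is $>0$. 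Granting this, dividing $A_{X,\Delta}(v)\ge S_g(v_0;v)$ by $S_g(v_0;v)$ gives $\frac{A_{X,\Delta}(v)}{S_g(v_0;v)}\ge 1$ for all $v\in\Valc$, and therefore $\delta_g(X,\Delta,v_0)\ge 1$.

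Finally, the equality $A_{X,\Delta}(v_0)=S_g(v_0;v_0)$ shows that $v=v_0$ realizes the value $1$ in the infimum defining $\delta_g(X,\Delta,v_0)$, so that infimum is attained and equals $1$. Hence $\delta_g(X,\Delta,v_0)=1$ and is computed by $v_0$. I do not anticipate any genuine difficulty here, as the statement is essentially a reformulation of Theorem \ref{t:Hmin induce K-ss triple}; the one thing to be careful about is the strict positivity of $S_g(v_0;\cdot)$, which is what permits passing from the additive optimality condition to the multiplicative normalization built into the definition of $\delta_g$.
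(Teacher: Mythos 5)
Your proposal is correct and takes essentially the same route as the paper, which simply observes that the corollary is an immediate consequence of Theorem \ref{t:Hmin induce K-ss triple}; your extra check that $S_g(v_0;v)>0$ (via $\la_{\min}(\cF_v)=0$ and $\la_{\max}(\cF_v)=T(v)>0$) is a harmless spelling-out of a point the paper leaves implicit.
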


% \begin{rem}
% The above corollary can be rephrased as saying that $v_0$ computes $h(X,\Delta)$ if and only if $(X,\Delta,v_0)$ is $g$-Ding semistable where $g=e^{-x}$. On the other hand, by Theorem \ref{t:Hunique}, the unique valuation $v_0\in\Valc$ that computes $h(X,\Delta)$ is necessarily $\bT$-invariant for any torus $\bT<\Aut(X,\Delta)$. Combined with the proof of \cite{HL20-uniqueness}*{Theorem 4.9} (or the proof of Theorem \ref{t:f.g. for delta_g minimizer} below), we also know that $v_0$ is an lc place of some complement. Thus from the above proof of Theorem \ref{t:Hmin induce K-ss triple}, we see that $v_0$ minimizes $\tbeta_{X,\Delta}(v)$ among $\bT$-invariant valuations as long as $A_{X,\Delta}(v)\ge \tS(v_0;v)$ for all $v\in\ValT$ that are lc places of complements. By \cite{BLX19}*{Theorem A.2}, the latter condition is equivalent to $\DNA_g(\cX,\Delta_{\cX},\cL)\ge 0$ for all $\bT$-equivariant normal test configurations of $(X,\Delta)$. It follows that to test the K-semistability of a triple $(X,\Delta,\xi)$ (where $\xi\in N_\bR$), it is enough to use $\bT$-equivariant test configurations. Therefore, our definition of K-semistability for triples agrees with those in \cites{BWN14,HL20-uniqueness}.
% \end{rem}

\subsection{$\delta_g$-minimizers}

In this section, we fix a continuous function $g\colon \bR\to \bR_{>0}$, a torus $\bT<\Aut(X,\Delta)$ and a quasi-monomial valuation $v_0\in\ValT$. Let $N=\Hom(\bG_m,\bT)$ and $N_\bR=N\otimes_\bZ \bR$ as before.

\begin{que} 
Assume that $\delta_{g,\bT}(X,\Delta,v_0)\le 1$. Let $v\in\ValT$ be a valuation that computes $\delta_{g,\bT}(X,\Delta,v_0)$. Is the associated graded ring $\gr_v R:=\gr_{\cF_v} R$ finitely generated?
\end{que}

We give an affirmative answer in two special cases, which is enough for our applications.

\begin{thm} \label{t:f.g. for delta_g minimizer} 
Let $v\in\ValT$ be a quasi-monomial valuation that computes $\delta_{g,\bT}(X,\Delta,v_0)$. Assume that $\delta_{g,\bT}(X,\Delta,v_0)\le 1$, and that $v=(v_0)_\xi$ or $v_0=\wt_\xi$ for some $\xi\in N_\bR$. Then $\gr_v R$ is finitely generated.
\end{thm}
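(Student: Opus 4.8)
Write $\delta:=\delta_{g,\bT}(X,\Delta,v_0)\le 1$; since $v$ computes $\delta$ we have $A_{X,\Delta}(v)=\delta\cdot S_g(v_0;v)$. If $v=\wt_\eta$ for some $\eta\in N_\bR$ then $\cF_v$ is already finitely generated, so we may assume otherwise. The plan is to prove that $v$ is a \emph{monomial lc place of a special $\bQ$-complement} of $(X,\Delta)$ in the sense of \cite[Definition 3.3]{LXZ-HRFG}, and then to quote the finite generation theorem of \cite{LXZ-HRFG}, whose conclusion is exactly that $\gr_v R$ is finitely generated under this hypothesis. Thus all the work lies in manufacturing the complement from the $g$-weighted basis type divisors of Section \ref{ss-weighteddelta}, along the lines of the unweighted case in \cite{LXZ-HRFG} (and, in the local normalized-volume setting, \cite{XZ20nvol}).

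First I would fix, for each sufficiently divisible $m$, a $\bT$-invariant $g$-weighted $(m,v_0)$-basis type divisor $D_m$ that is moreover compatible with $v$ — choose a basis of $R_m$ simultaneously compatible with $\cF_{v_0}$, $\cF_v$ and the weight decomposition, which exists by \cite[Lemma 3.1]{AZ20}. Then $\lct(X,\Delta;D_m)\ge \delta_{g,\bT,m}(X,\Delta,v_0)=:\delta_m$ with $\delta_m\to\delta$ by Lemma \ref{l:delta=lim delta_m}, and $v(D_m)=S_{g,m}(v_0;v)\to S_g(v_0;v)$ by definition of $S_g$. In particular $(X,\Delta+\delta_m D_m)$ is log canonical, $K_X+\Delta+\delta_m D_m\sim_\bR -(1-\delta_m)(K_X+\Delta)$ (so it is of Fano type), and
\[
A_{X,\Delta}(v)-v(\delta_m D_m)=\delta\cdot S_g(v_0;v)-\delta_m S_{g,m}(v_0;v)\ \longrightarrow\ 0 \qquad (m\to\infty),
\]
so $v$ is an \emph{asymptotic} lc place of the pairs $(X,\Delta+\delta_m D_m)$.

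The heart of the proof is to upgrade this asymptotic statement to an honest one: to build from the $D_m$ an $N$-complement $\Gamma$ of $(X,\Delta)$, for $N=N(\dim X,\mathrm{coeff}(\Delta))$, having $v$ as an lc place. The plan is to pass to a subsequential limit $\delta_m D_m\to B_\infty$ among effective $\bR$-divisors $\sim_\bR -\delta(K_X+\Delta)$, correct $B_\infty$ by a small general effective divisor $\sim_\bR -(1-\delta)(K_X+\Delta)$ when $\delta<1$ (which does not change $v(\cdot)$), and use that $D\mapsto v(D)$ is upper semicontinuous and that the log canonical condition is closed to conclude that $v$ is an honest lc place of the resulting $\bR$-complement; one then replaces this $\bR$-complement by a bounded $N$-complement with $v$ still among its non-klt places via boundedness of complements and the ACC for log canonical thresholds. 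It is exactly here that the hypothesis $v=(v_0)_\xi$ or $v_0=\wt_\xi$ is indispensable: in the first case $\cF_v$ is a torus-weight twist of $\cF_{v_0}$, and in the second $v_0$ is itself a torus-weight valuation, so the $g$-weights $g\!\left(v_0(s_i)/(mr)\right)$ appearing in $D_m$ are governed by the (bounded, lattice-supported) $\bT$-weights of $R_m$; this is what allows the $D_m$, and the log canonical thresholds entering the argument, to be controlled well enough for boundedness of complements and the ACC to apply — something that fails for an arbitrary $g$ and an arbitrary $v_0$.

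Granting the complement $\Gamma$, a common log resolution of $(X,\Delta+\Gamma)$ and of the quasi-monomial valuation $v$ exhibits $v=v_\alpha$ as a monomial lc place of the special $\bQ$-complement $(X,\Delta+\Gamma)$ — monomiality being automatic, because $A_{X,\Delta+\Gamma}(v)=0$ forces every prime divisor of the SNC model through the center of $v$ to be an lc place — and \cite{LXZ-HRFG} then yields finite generation of $\gr_v R$. (When $v=(v_0)_\xi$ one may alternatively establish finite generation of $\gr_{v_0}R$ and transport it to $\gr_v R$ along the torus-weight twist, which only re-grades the associated graded algebra.) I expect the main obstacle to be the third paragraph — converting the asymptotic lc place property into an honest bounded $\bQ$-complement adapted to the quasi-monomial $v$ — which is where the special form of $v_0$, resp.\ $v$, is essential and where one must lean on boundedness of complements and the ACC for log canonical thresholds.
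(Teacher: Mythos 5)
Your overall strategy (realize $v$ as a monomial lc place of a special $\bQ$-complement built from $g$-weighted basis type divisors and then quote \cite{LXZ-HRFG}) is the paper's strategy, but the step you yourself flag as the heart of the proof does not work as you propose, and you have misidentified where the hypothesis ``$v=(v_0)_\xi$ or $v_0=\wt_\xi$'' actually enters. Taking a subsequential limit $\delta_m D_m\to B_\infty$ is not available: the $D_m$ are basis type divisors whose number of components grows with $m$, so there is no meaningful convergence, and even if one extracted a limit cycle, log canonicity is \emph{not} a closed condition under such limits (the lct is only lower semicontinuous and can drop in the limit --- e.g.\ nodal curves degenerating to a cuspidal one), so ``lc is closed'' plus ``$v(\cdot)$ is upper semicontinuous'' does not yield an honest lc place. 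The paper avoids limits of divisors entirely: it proves Lemma \ref{l:complement} (extracted from \cite[Lemma 3.1]{LXZ-HRFG}), which approximates the quasi-monomial $v$ inside a fixed $\QM(Y,E)$ by divisorial valuations $F_i$ with small log discrepancy with respect to $(X,\Delta+\fa_\bullet^{1-\varepsilon_0})$, extracts them by \cite{BCHM}, and uses the existing $\bQ$-complements $\delta_m D_m$ (after a coefficient perturbation to make them $\bQ$-divisors) together with \cite[Lemma 3.2]{LXZ-HRFG} to produce one $\bQ$-complement with $v$ as lc place. No ACC for lcts and no boundedness-of-complements control of the weights $g(v_0(s_i)/mr)$ is invoked, so your explanation of why the hypothesis on $v_0$, $v$ is ``indispensable'' is not the actual mechanism.

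The hypothesis is in fact needed for a much more elementary reason, which your construction skips: to apply \cite[Theorem 4.2]{LXZ-HRFG} one needs a \emph{special} complement, i.e.\ one containing a fixed ample piece $G$ whose strict transform does not contain the center of $v$, and the paper obtains this by choosing an auxiliary $\bT$-invariant divisor $G$ (pushed forward from an ample class on a model) and taking $D_m$ compatible with \emph{both} $\cF_v$ and $\cF_G$, so that $D_m\ge cG$ uniformly (Lemma \ref{l:D>=cG}). A basis can be made compatible with two filtrations but not three in general; the assumption $v=(v_0)_\xi$ or $v_0=\wt_\xi$ guarantees that any $\bT$-invariant basis compatible with $v$ and $G$ is \emph{automatically} compatible with $v_0$, which is exactly what makes the $g$-weighted $(m,v_0)$-basis type divisor $D_m$ with these extra compatibilities exist. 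Your substitute --- adding a general effective divisor in $|-(1-\delta)(K_X+\Delta)|_{\bR}$ --- produces nothing when $\delta_{g,\bT}(X,\Delta,v_0)=1$, which is precisely the main case (e.g.\ Corollary \ref{c:hmin compute delta}). So as written there is a genuine gap both in producing the complement and in producing its ample part.
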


For the proof we first recall a statement that can be extracted from the proof of \cite[Lemma 3.1]{LXZ-HRFG}.

\begin{lem}\label{l:complement}
Let $v$ be a quasi-monomial valuation on $X$. Assume that there exists a sequence of $\bQ$-divisors $D_m$ $(m\in\bN)$ such that $(X,\Delta+D_m)$ is lc and $-(K_X+\Delta+D_m)$ is semiample for all $m$, and that $\lim_{m\to\infty} v(D_m)=A_{X,\Delta}(v)$. Then $v$ is an lc place of $(X,\Delta+\Gamma)$ for some $\bQ$-complement $\Gamma$ of $(X,\Delta)$.
\end{lem}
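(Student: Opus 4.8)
The plan is to first upgrade the divisors $D_m$ into honest $\bQ$-complements of $(X,\Delta)$, and then to quote (the proof of) \cite[Lemma 3.1]{LXZ-HRFG}, to which the statement essentially reduces. First, fix $m$. Since $-(K_X+\Delta+D_m)$ is semiample it is $\bQ$-effective, and a general effective $\bQ$-divisor $H_m\sim_\bQ-(K_X+\Delta+D_m)$ preserves log canonicity, so that $(X,\Delta+D_m+H_m)$ is lc (Bertini; see e.g.\ \cite{Kol13}). Setting $\Gamma_m:=D_m+H_m\ge 0$, we get $\Gamma_m\sim_\bQ-(K_X+\Delta)$, the pair $(X,\Delta+\Gamma_m)$ is lc, and
\[
A_{X,\Delta}(v)\ \ge\ v(\Gamma_m)\ \ge\ v(D_m),
\]
the first inequality because $(X,\Delta+\Gamma_m)$ is lc and the second because $H_m\ge 0$. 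Since $v(D_m)\to A_{X,\Delta}(v)$, this forces $v(\Gamma_m)\to A_{X,\Delta}(v)$, i.e.\ $A_{X,\Delta+\Gamma_m}(v)=A_{X,\Delta}(v)-v(\Gamma_m)\to 0$. Hence it suffices to prove: if $(X,\Delta)$ is log Fano, $v$ is quasi-monomial, and $\Gamma_m$ is a sequence of $\bQ$-complements with $A_{X,\Delta+\Gamma_m}(v)\to 0$, then $v$ is an lc place of some $\bQ$-complement of $(X,\Delta)$ — which is the content of \cite[Lemma 3.1]{LXZ-HRFG}.

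Then I would recall that argument. Write $v=v_\alpha\in\QM(Y,E)$ on a fixed log smooth model, approximate $\alpha$ by rational vectors to obtain divisorial valuations $v^{(l)}\to v$ inside the simplicial cone $\QM(Y,E)$, and observe that $A_{X,\Delta+\Gamma_m}(v^{(l)})$ is close to $A_{X,\Delta+\Gamma_m}(v)$ for $l\gg 0$. Using that $(X,\Delta)$ is of Fano type together with the smallness of these log discrepancies, one modifies each $\Gamma_m$ into a $\bQ$-complement admitting a suitable $v^{(l(m))}$ as an lc place; Birkar's boundedness of complements — applicable since the coefficients of $\Delta$ form a finite set — then yields $N$-complements $\Gamma_m^+$, with $N$ independent of $m$, still having $v^{(l(m))}$ as an lc place, so that $A_{X,\Delta+\Gamma_m^+}(v)\to 0$. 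The divisors $\Gamma_m^+=\tfrac1N G_m^+$ now vary in the finite-type closed subscheme of $|-N(K_X+\Delta)|$ cut out by the lc condition; after passing to a subsequence, the supports of the $G_m^+$ are fixed and $G_m^+\to G$, so $\Gamma:=\tfrac1N G$ is an $N$-complement of $(X,\Delta)$. Since supports have stabilized and coefficients converge, $v(\Gamma)=\lim_m v(\Gamma_m^+)=A_{X,\Delta}(v)$, that is $A_{X,\Delta+\Gamma}(v)=0$, and as $(X,\Delta+\Gamma)$ is lc we conclude that $v$ is an lc place of the $\bQ$-complement $\Gamma$.

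I expect the main obstacle to be entirely in the second paragraph: the reduction step and the compactness/continuity at the end are routine, but producing a single uniform family of $N$-complements $\Gamma_m^+$ adapted to $v$ is delicate. This is precisely where Birkar's boundedness of complements must be combined with the construction of complements having a prescribed (approximate) lc place and with the passage from quasi-monomial to divisorial valuations along $\QM(Y,E)$ — the technical heart of \cite{LXZ-HRFG} (compare also \cite{BLX19}). In our setting the only genuinely new input is the first step, where the semiampleness hypothesis is used to convert each $D_m$ into a $\bQ$-complement without destroying the convergence $v(D_m)\to A_{X,\Delta}(v)$.
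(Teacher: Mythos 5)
Your first paragraph is fine, and it is exactly how the semiampleness hypothesis is used in the paper: a general $H_m\sim_\bQ-(K_X+\Delta+D_m)$ turns each $D_m$ into a $\bQ$-complement of $(X,\Delta)$ whose log discrepancy at $v$ tends to $0$. But the reduction you then invoke is not the statement of \cite[Lemma 3.1]{LXZ-HRFG}: that lemma is about a quasi-monomial valuation computing $\delta(X,\Delta)\le 1$, and the whole point of Lemma \ref{l:complement} is to axiomatize what its \emph{proof} uses, so it cannot be quoted as a black box — one has to redo the argument, which is what the paper does. Moreover, the argument you sketch in your second paragraph is not that argument but a different, \cite{BLX19}-style one. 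The paper's route: using the valuation ideals $\fa_\bullet(v)$, for a fixed small $\varepsilon$ one finds finitely many divisorial valuations $F_1,\dots,F_r\in\QM(Y,E)$ whose convex hull contains $v$ and with $A_{X,\Delta+\fa_\bullet^{1-\varepsilon_0}}(F_i)<\varepsilon$; since $v(D_m)$ is close to $A_{X,\Delta}(v)$, this gives $A_{X,\Delta+D_m}(F_i)<\varepsilon$ for a \emph{single} large $m$, uniformly in $i$; one then extracts all the $F_i$ by \cite[Corollary 1.4.3]{BCHM} and uses \cite[Lemma 3.2]{LXZ-HRFG} to produce one complement having every $F_i$ — hence $v$, which lies in their convex hull — as an lc place. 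No limit of complements is ever taken, and no uniform $N$ is needed for that purpose.

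Your substitute route (one complement per $m$ with a nearby divisorial lc place $v^{(l(m))}$, uniform $N$ via boundedness of complements, then a limit in $\tfrac1N\lvert-N(K_X+\Delta)\rvert$) can be made to work, but as written it has genuine holes. First, the step ``modify $\Gamma_m$ into a complement with $v^{(l(m))}$ as an lc place'' is itself the coefficient-rounding step and needs the same input as \cite[Lemma 3.2]{LXZ-HRFG}; it is not supplied. Second, ``pass to a subsequence, the supports of $G_m^+$ are fixed and $G_m^+\to G$'' is an analytic limit which has no meaning over a general algebraically closed field $\bk$; it must be replaced by Zariski arguments: the locus of lc members of the linear system is closed, $G\mapsto v(G)$ is upper semicontinuous there, and Noetherianity shows the supremum of $v(\cdot)$ is attained. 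Third, and most importantly, the identity $\lim_m v(\Gamma_m^+)=A_{X,\Delta}(v)$ does not follow from ``coefficients converge'': what you know is $v^{(l(m))}(\Gamma_m^+)=A_{X,\Delta}(v^{(l(m))})$, and you need $\lvert v(\Gamma)-v^{(l(m))}(\Gamma)\rvert\to 0$ \emph{uniformly} over all $\Gamma$ in the family — true because multiplicities along the fixed log smooth model are bounded in the bounded family (this is exactly where the uniform $N$ is indispensable), but it requires an argument you do not give. So: the first step matches the paper, the rest is a sketch of a different proof whose key estimates and limiting mechanism still need to be justified.
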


\begin{proof}
We only sketch the proof since the argument is almost the same as in \cite[Lemma 3.1]{LXZ-HRFG}. After rescaling, we assume that $A_{X,\Delta}(v)=1$. Since $v$ is quasi-monomial, we have $v\in\QM(Y,E)$ for some log smooth model $(Y,E)\to (X,\Delta)$. Let $\fa_m = \fa_m(v)$ ($m\in\bN$) be the valuation ideals. By the proof of \cite[(3.1)]{LXZ-HRFG} (which only uses the fact that $v$ is quasi-monomial), we know that for any $\varepsilon\in(0,1)$, there exists $\varepsilon_0>0$ and divisorial valuations $v_i=\ord_{F_i}\in\QM(Y,E)$ ($i=1,\cdots,r$) such that $v$ is in the convex hull of $v_1,\cdots,v_r$ and $A_{X,\Delta+\fa_\bullet^{1-\varepsilon_0}}(F_i)<\varepsilon$ for all $i$. By assumption, we have $v(D_m)>1-\varepsilon_0$ for sufficiently large $m$ and for such $m$ we obtain
\begin{equation} \label{e:A small}
    A_{X,\Delta+D_m}(F_i)\le A_{X,\Delta+\fa_\bullet^{1-\varepsilon_0}}(F_i)<\varepsilon.
\end{equation}
By \cite{BCHM}*{Corollary 1.4.3} we get a $\bQ$-factorial birational model $p\colon \tX\to X$ that extracts exactly the divisors $F_i$. By assumption, all $(X,\Delta+D_m)$ ($m\in\bN$) have $\bQ$-complements. Together with \eqref{e:A small} this implies that $(\tX,p_*^{-1}\Delta+(1-\varepsilon)\sum_{i=1}^r F_i)$ has $\bQ$-complements as well. Using \cite[Lemma 3.2]{LXZ-HRFG}, we conclude that $\bQ$-complements also exist for $(\tX,p_*^{-1}\Delta+\sum_{i=1}^r F_i)$ as long as $\varepsilon$ is sufficiently small. Since $v$ is in the complex hull of $\ord_{F_i}$, this yields a $\bQ$-complement $\Gamma$ of $(X,\Delta)$ that has $v$ as an lc place.
\end{proof}

\begin{lem} \label{l:D>=cG}
There exists some constant $c>0$ such that $S_g(v_0;G)>c$ for all effective $\bQ$-divisors $G\sim_\bQ -(K_X+\Delta)$ on $X$. In particular, for any $m\gg 0$ and any $g$-weighted $(m,v_0)$-basis type divisor $D$ that is compatible with $G$, we have $D\ge cG$.
\end{lem}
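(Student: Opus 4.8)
\emph{Overview of the plan.} The lemma has two parts, and I would deduce the second from the first with essentially no extra work. If $D=\frac{1}{mrQ_m}\sum_i g(v_0(s_i)/mr)\{s_i=0\}$ is a $g$-weighted $(m,v_0)$-basis type divisor compatible with $G$, then the basis $(s_1,\dots,s_{N_m})$ is compatible with both $v_0$ and $\cF_G$, so $\{s_i=0\}\ge \ord_{\cF_G}(s_i)\cdot G$ for every $i$; summing with the weights gives
\[
D\ \ge\ \frac{1}{mrQ_m}\sum_i g\!\left(\tfrac{v_0(s_i)}{mr}\right)\ord_{\cF_G}(s_i)\cdot G\ =\ S_{g,m}(v_0;G)\cdot G .
\]
Since $S_{g,m}(v_0;G)\to S_g(v_0;G)$ as $m\to\infty$, the bound $D\ge cG$ then holds for $m\gg0$ provided we prove $S_g(v_0;G)>c$ with $c$ \emph{independent of $G$}. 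So the real content is a uniform positive lower bound for $S_g(v_0;G)$, and my strategy is to compare $S_g(v_0;\cdot)$ to the ordinary $S$-invariant, whose value is pinned down by the numerical class $[G]=\tfrac1r c_1(L)$.

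\emph{Step 1: reduce to the unweighted invariant.} Let $\bnu=\Dh_{\cF_{v_0},\cF_G}$ be the compatible Duistermaat--Heckman measure of Section \ref{s:measureR2}. By Proposition \ref{p:bnuproject} the marginal of $\bnu$ in the first coordinate is $\Dh_{\cF_{v_0}}$, whose support is $[\lambda_{\min}(\cF_{v_0}),\lambda_{\max}(\cF_{v_0})]$. As $v_0\in\Valc$ has finite log discrepancy, Lemma \ref{l:lminFv} gives $\lambda_{\min}(\cF_{v_0})=0$, so this support is the compact interval $I:=[0,T(v_0)]$, which does not depend on $G$. On $I$ the continuous positive function $g$ attains a minimum $g_->0$ and a maximum $g_+<\infty$. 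Moreover $G\ge0$, so $\cF_G^0R=R$ and the second coordinate $\ord_{\cF_G}$ is $\ge0$ on $\supp(\bnu)$. Hence
\[
S_g(v_0;G)=\frac{\int_{\R^2} g(x)\,y\;\rd\bnu}{\int_{\R^2} g(x)\;\rd\bnu}\ \ge\ \frac{g_-\int_{\R^2} y\;\rd\bnu}{g_+\int_{\R^2}\rd\bnu}\ =\ \frac{g_-}{g_+}\,\ENA(\cF_G)\ =\ \frac{g_-}{g_+}\,S(\cF_G),
\]
using that $\bnu$ is a probability measure and that its second marginal is $\Dh_{\cF_G}$.

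\emph{Step 2: the unweighted value is $G$-independent.} For $s\in\R$ the graded linear series $V_\bullet^{\cF_G,(s)}$ has $V_m^{\cF_G,(s)}=\cF_G^{mrs}R_m=H^0\!\big(X,\cO_X(\lfloor m(L-rsG)\rfloor)\big)$, so $\vol(V_\bullet^{\cF_G,(s)})=\vol(L-rsG)$. Since $rG\sim_\bQ L$, the $\R$-divisor $L-rsG$ is numerically equivalent to $(1-s)L$, whence $\vol(L-rsG)=(1-s)_+^n(L^n)$; in particular $\Dh_{\cF_G}$ is the same for every effective $G\sim_\bQ-(K_X+\Delta)$, and
\[
S(\cF_G)=\ENA(\cF_G)=\int_0^1\frac{\vol(V_\bullet^{\cF_G,(s)})}{(L^n)}\;\rd s=\int_0^1(1-s)^n\;\rd s=\frac1{n+1}.
\]
Combining with Step 1 yields $S_g(v_0;G)\ge \dfrac{g_-}{(n+1)g_+}=:c_0>0$ for all such $G$, and one takes $c:=c_0/2$ (any $c<c_0$ works). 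Together with the Overview this proves both assertions: for each $G$, since $S_{g,m}(v_0;G)\to S_g(v_0;G)\ge c_0>c$, we have $S_{g,m}(v_0;G)>c$ for $m\gg0$, hence $D\ge S_{g,m}(v_0;G)\,G\ge cG$.

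\emph{Which step is the obstacle.} Conceptually the argument is short; the point that needs genuine care is the uniformity packaged into Step 1--2. The key trick, making it work, is Step 2: that the \emph{unweighted} invariant $S(\cF_G)$ has the exact $G$-independent value $\tfrac1{n+1}$ because $\vol(V_\bullet^{\cF_G,(s)})$ depends only on the numerical class of $G$. If in addition one wants the threshold ``$m\gg0$'' in the second assertion to be uniform in $G$ (not just the constant $c$), this would be the hard part: one should run the finite-level analogue of Step 1 --- the $x$-coordinates of $\supp(\bnu_m)$ still lie in $I$ because $\tfrac1{mr}\lambda_{\max}^{(m)}(\cF_{v_0})\le\lambda_{\max}(\cF_{v_0})=T(v_0)$, giving $S_{g,m}(v_0;G)\ge\tfrac{g_-}{g_+}S_m(\cF_G)$ --- and then supply a uniform-in-$G$ lower bound $S_m(\cF_G)\ge\tfrac1{n+1}-o(1)$, e.g. via the Okounkov-body estimates underlying Lemma \ref{l:S=lim S_m} applied to the filtrations $\cF_G$ (which all satisfy $\cF_G^0R=R$). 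I expect this uniform finite-level estimate, rather than anything in Steps 1--2, to be the only technically delicate ingredient.
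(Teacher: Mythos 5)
Your proposal is correct and follows essentially the same route as the paper: bound $g$ between its min and max on $[0,T_{X,\Delta}(v_0)]$, use the compatible DH measure and its marginals to compare $S_g(v_0;G)$ with the unweighted $S_{X,\Delta}(G)=\tfrac{1}{n+1}$ (the paper cites \cite[Lemma 2.20]{LXZ-HRFG} for this value, whereas you recompute it via numerical invariance of volume), and then take $c$ strictly below the resulting bound so that $D\ge S_{g,m}(v_0;G)\,G\ge cG$ for $m\gg 0$. Your closing remarks on uniformity of the $m$-threshold in $G$ are a fair observation but not needed for the lemma as used (where $G$ is fixed before $m$), and the rest matches the paper's argument.
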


\begin{proof}
Let $T=T_{X,\Delta}(v_0)<\infty$, and let
\[
c_0=\frac{\inf_{x\in[0,T]} g(x)}{\sup_{x\in[0,T]} g(x)}>0.
\]
Let $\bnu$ denote the compatible DH measure associated to $\cF_{v_0}$ and $\cF_G$ as in Section \ref{s:measureR2}. Then $\bnu$ is supported in $[0,T]\times \bR$ and we have
\[
S_g(v_0;G)=\frac{\int_{\bR^2} yg(x)\rd \bnu}{\int_{\bR^2} g(x)\rd \bnu}\ge c_0\cdot  \frac{\int_{\bR^2} y\rd \bnu}{\int_{\bR^2} \rd \bnu} = c_0\cdot S_{X,\Delta}(G)=\frac{c_0}{n+1}
\]
where the last equality is by \cite[Lemma 2.20]{LXZ-HRFG}. Thus we may take e.g. $c=\frac{c_0}{3n}$.
\end{proof}

We are now ready to prove Theorem \ref{t:f.g. for delta_g minimizer}.

\begin{proof}[Proof of Theorem \ref{t:f.g. for delta_g minimizer}]
The plan is to use Lemma \ref{l:complement} to show that $v$ is a monomial lc place of a special complement (in the sense of \cite[Definition 3.3]{LXZ-HRFG}), and then apply \cite[Theorem 4.2]{LXZ-HRFG} to get the finite generation. To this end, let $\pi:(Y,E=\sum_{i=1}^r E_i)\to (X,\Delta)$ be a $\bT$-equivariant log smooth model such that $\QM(Y,E)$ is a simplicial cone whose interior contains $v$, $C_Y(v)=\cap_{i=1}^r E_i$, and there is a $\pi$-exceptional and $\pi$-ample $\bQ$-divisor $-F$ on $Y$. 

Let $G_Y$ be a $\bT$-invariant $\bQ$-divisor in the ample $\bQ$-linear system $|-\pi^*(K_X+\Delta)-\varepsilon F|_{\bQ}$ ($0<\varepsilon\ll 1$) whose support does not contain $C_Y(v_0)$ (such $G_Y$ exists, because there is some $\bT$-invariant element $\Gamma$ in $H^0(Y,m(-\pi^*(K_X+\Delta)-\varepsilon F))$ for sufficiently divisible $m$ with $v_0(\Gamma)\neq 0$). Let $G:=\pi_*G_Y$. For any $m\in\bN$, let $D_m\in |-K_X-\Delta|_\bR$ be a $\bT$-invariant $g$-weighted $(m,v_0)$-basis type divisor that is also compatible with both $v$ and $G$. Such divisors exist because:
\begin{itemize}
    \item both $v$ and $G$ are $\bT$-invariant (so we can choose compatible basis in each individual piece in the weight decomposition), and
    \item by our assumption, any $\bT$-invariant basis that is compatible with both $v$ and $G$ is automatically compatible with $v_0$.
\end{itemize}
By Lemma \ref{l:D>=cG}, there exists some $c\in\bQ_+$ such that $D_m\ge cG$ for all $m\gg 0$. Let $\delta_m=\min\{\delta_{g,\bT,m}(X,\Delta,v_0),1\}$. Then $(X,\Delta+\delta_m D_m)$ is lc and $\lim_{m\to\infty} \delta_m = \delta_{g,\bT}(X,\Delta,v_0)$ by Lemma \ref{l:delta=lim delta_m} and the assumption that $\delta_{g,\bT}(X,\Delta,v_0)\le 1$. 

Since $v$ computes $\delta_{g,\bT}(X,\Delta,v_0)$, we also have
\[
\lim_{m\to\infty} \delta_m v(D_m) = \delta_{g,\bT}(X,\Delta,v_0) S_g(v_0;v) = A_{X,\Delta}(v).
\]
Note that by construction $D_m=\sum \lambda_i D_m^{(i)}$ for some $\lambda\in\bR_+$ and some effective $\bQ$-divisors $D_m^{(i)}\sim_\bQ -(K_X+\Delta)$. Thus by perturbing the coefficients $\lambda_i$, for each $m\gg 0$ we get a $\bQ$-divisor $D'_m=\sum \lambda'_i D_m^{(i)}$ such that $\delta_m D_m > D'_m\ge \frac{1}{2}cG$ and $\lim_{m\to\infty} v(D'_m)=A_{X,\Delta}(v)$. It follows that $(X,\Delta+cG+\tD_m)$ is lc and $-(K_X+\Delta+cG+\tD_m)$ is ample where $\tD_m=D'_m-\frac{1}{2}cG$. % Since $v$ is quasi-monomial by Lemma \ref{l:delta_g min qm}, we may then apply 
By Lemma \ref{l:complement}, we see that $v$ is an lc place of $(X,\Delta+\frac{1}{2}cG+\Gamma)$ for some $\bQ$-complement $\Gamma$ of $(X,\Delta+\frac{1}{2}cG)$. Recall the $\pi_*^{-1}G$ is ample and does not contain $C_Y(v)$. By \cite[Theorem 4.2]{LXZ-HRFG}, this implies that $\gr_v R$ is finitely generated.
\end{proof}

\begin{cor}\label{c-hfg}
Assume that $(X,\Delta)$ is not K-semistable. Let $v_0\in\Valc$ be the unique valuation computing $h(X,\Delta)$. Then $\gr_{v_0} R$ is finitely generated.
\end{cor}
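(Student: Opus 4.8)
The plan is to obtain Corollary \ref{c-hfg} as an immediate application of Theorem \ref{t:f.g. for delta_g minimizer}, run with the trivial torus $\bT=\{1\}$ and the weight $g(x)=e^{-x}$, so that essentially nothing new needs to be proved here beyond checking that the hypotheses of that theorem are met.

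First I would record that $v_0$ indeed lies in $\Valc$ and is quasi-monomial. Since $(X,\Delta)$ is not K-semistable, \cite[Corollary 4.7]{HL20-uniqueness} gives $h(X,\Delta)<0$, so the unique minimizer supplied by Corollary \ref{c-quasiunique} cannot be $v_{\triv}$; hence $v_0\in\Valc$, and it is quasi-monomial by the same corollary. (Equivalently one may invoke the dichotomy recorded at the start of Section \ref{ss-HNAval}: the unique valuation computing $h(X,\Delta)$ is trivial if and only if $(X,\Delta)$ is K-semistable.)

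Next I would set $g(x)=e^{-x}$, which is continuous and strictly positive on $\bR$ as required. By Corollary \ref{c:hmin compute delta}, $\delta_g(X,\Delta,v_0)=1$ and is computed by $v_0$. Now take $\bT=\{1\}<\Aut(X,\Delta)$, so that $\ValT=\Valc$, $N_\bR=0$, and $\delta_{g,\bT}(X,\Delta,v_0)=\delta_g(X,\Delta,v_0)=1\le 1$. Then $v_0$ is a quasi-monomial valuation in $\ValT$ computing $\delta_{g,\bT}(X,\Delta,v_0)$, and trivially $v_0=(v_0)_\xi$ for $\xi=0\in N_\bR$, since the $0$-twist is the identity. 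Thus every hypothesis of Theorem \ref{t:f.g. for delta_g minimizer} is verified, and we conclude that $\gr_{v_0}R=\gr_{\cF_{v_0}}R$ is finitely generated.

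There is no genuine obstacle at this stage: the deduction is purely a matter of matching up notation and specializing $\bT$ to the trivial torus. All the substantive content has already been expended upstream — namely, the convexity of $\HNA$ along geodesics (Theorem \ref{t:DHconvex}) together with its consequence that the $h$-minimizer computes the weighted threshold $\delta_g(X,\Delta,v_0)=1$ (Corollaries \ref{c:LNA<=A}, \ref{c:hmin compute delta}), the reduction of such a minimizer to an lc place of a special $\bQ$-complement arising from a weighted basis-type divisor, and the finite generation of the associated graded ring of a monomial lc place of a special complement from \cite[Theorem 4.2]{LXZ-HRFG}, all of which are packaged inside Theorem \ref{t:f.g. for delta_g minimizer}. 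Together with Corollary \ref{c-quasiunique} and Corollary \ref{c:hmin compute delta}, this completes the proof of Theorem \ref{t-finitegen}.
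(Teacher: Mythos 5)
Your proposal is correct and follows essentially the same route as the paper: quasi-monomiality and nontriviality of $v_0$ from Corollary \ref{c-quasiunique} (together with $h(X,\Delta)<0$), then Corollary \ref{c:hmin compute delta} to see $v_0$ computes $\delta_g(X,\Delta,v_0)=1$ for $g(x)=e^{-x}$, and finally Theorem \ref{t:f.g. for delta_g minimizer} with $\bT=\{1\}$. Your explicit check that the hypothesis ``$v=(v_0)_\xi$'' holds trivially with $\xi=0$ is exactly the specialization the paper leaves implicit.
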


\begin{proof}
By \cite[Theorem 1.5]{HL20-uniqueness}, $v_0$ is quasi-monomial. So the result follows immediately from Corollary \ref{c:hmin compute delta} and Theorem \ref{t:f.g. for delta_g minimizer} (with $g(x)=e^{-x}$ and $\bT=\{1\}$).
\end{proof}

\begin{proof}[Proof of Theorem \ref{t-finitegen}]
By Corollary \ref{c-hfg}, we know that $v$ yields a special $\bR$-test configuration in the sense of \cite{HL20-uniqueness}*{Definition 2.8}. Thus by \cite{HL20-uniqueness}*{Theorem 1.6}, we know that $(X_0,\Delta_0,\xi_v)$ is a K-semistable triple. 
\end{proof}

\begin{cor}\label{c-deltafg}
Any quasi-monomial valuation $v\in \Val_{X}^{\bT,\circ}$ computing $\delta_g(X,\Delta,\xi)$, where $\xi\neq 0$, has a finitely generated associated graded ring. 
\end{cor}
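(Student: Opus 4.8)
The plan is to deduce the statement directly from Theorem~\ref{t:f.g. for delta_g minimizer} applied with $v_0=\wt_\xi$. By definition $\delta_g(X,\Delta,\xi)=\delta_{g,\bT}(X,\Delta,v_0)$, and this $v_0$ is of the form $\wt_\xi$ with $\xi\in N_\bR$, so the hypothesis ``$v=(v_0)_\xi$ or $v_0=\wt_\xi$'' of that theorem is automatically met. Hence, as soon as we know $\delta_g(X,\Delta,\xi)\le 1$, every quasi-monomial $v\in\ValT$ computing it has $\gr_v R$ finitely generated, and we are done. So the entire content of the proof is to dispose of the case $\delta_g(X,\Delta,\xi)>1$.

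To handle that case, I would first show $\delta_g(X,\Delta,\xi)>1$ forces $\bT=\{1\}$. Suppose $\bT\ne\{1\}$. Since $\delta_g>1$ implies $\bT$-equivariant $g$-Ding semistability by Corollary~\ref{c:ss=delta>=1}, Lemma~\ref{l:Fut=0} gives $\Fut_g\equiv 0$ on $N_\bR$. Taking a nontrivial integral cocharacter $\eta\in N$ and the $\eta$-twist $\cF$ of the trivial filtration, one gets $\LNA(\cF)=\LNA(\cF_{\triv})=0$, $\DNA_g(\cF)=0$, and $\lambda_{\max}(\cF)>0$ after a suitable sign and scaling of $\eta$. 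Then the second part of Lemma~\ref{l:L^NA=lc slope} together with Remark~\ref{r:T-inv} produces a $\bT$-invariant divisorial lc place $v\in\ValT$ of a $\bQ$-complement with $\cF^\lambda R\subseteq \cF_v^\lambda R$ after shifting so that $\LNA(\cF)=A_{X,\Delta}(v)$; arguing exactly as in the proof of Lemma~\ref{l:wstc} this yields $\DNA_g(\cF_v)\le \DNA_g(\cF)=0$, i.e.\ $A_{X,\Delta}(v)\le S_g(v_0;v)$, contradicting $\delta_g(X,\Delta,\xi)>1$. Once $\bT=\{1\}$ we have $\xi=0$ and $\wt_\xi=v_{\triv}$, so $S_g(v_{\triv};v)=S_{X,\Delta}(v)$ (the constant factor $g(0)$ cancels in the definition of $S_g$) and hence $\delta_g(X,\Delta,\xi)=\delta(X,\Delta)$; finite generation of $\gr_v R$ for any quasi-monomial valuation computing $\delta(X,\Delta)$ is then exactly \cite{LXZ-HRFG}.

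I do not expect a real obstacle: the substantive work is already carried by Theorem~\ref{t:f.g. for delta_g minimizer} (realizing the minimizer as a monomial lc place of a special $\bQ$-complement and invoking \cite{LXZ-HRFG}*{Theorem~4.2}), and this corollary amounts to matching hypotheses. The only mildly delicate point is the dichotomy $\delta_g\le 1$ versus $\delta_g>1$, since Theorem~\ref{t:f.g. for delta_g minimizer} is proved only in the former range; the second paragraph shows the latter range occurs only in the torus-free situation already handled by \cite{LXZ-HRFG}.
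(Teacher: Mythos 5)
Your reduction to Theorem \ref{t:f.g. for delta_g minimizer} is the same as the paper's, but your argument for the key point --- ruling out $\delta_g(X,\Delta,\xi)>1$ --- is genuinely different. The paper proves the stronger, finite-level statement $\delta_{g,\bT,m}(X,\Delta,v_0)\le 1$ for every $m$ and every weight $g$ whenever $\bT\neq\{1\}$, by a short direct argument: if a $\bT$-invariant $g$-weighted $(m,v_0)$-basis type divisor $D$ had $\lct>1$, perturbing it (as in the proof of Theorem \ref{t:f.g. for delta_g minimizer}) would produce a klt pair $(X,\Delta+\tD)$ with $K_X+\Delta+\tD$ ample, hence with finite automorphism group, contradicting $\bT<\Aut(X,\Delta+\tD)$. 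You instead run the stability machinery: $\delta_g>1$ gives $\bT$-equivariant $g$-Ding semistability via Corollary \ref{c:ss=delta>=1}, hence $\Fut_g\equiv 0$ by Lemma \ref{l:Fut=0}; then the $\eta$-twist of the trivial filtration is a finitely generated $\bZ$-filtration with $\DNA_g=0$ and (after adjusting the sign of $\eta$, using faithfulness of the $\bT$-action) $\LNA=0<\lambda_{\max}$, so the second part of Lemma \ref{l:L^NA=lc slope} plus Remark \ref{r:T-inv} and the containment argument from Lemma \ref{l:wstc} yield a nontrivial $\bT$-invariant divisorial lc place $v$ of a $\bQ$-complement with $A_{X,\Delta}(v)\le S_g(v_0;v)$ (using $\LNA(\cF_v)=A_{X,\Delta}(v)$ for lc places), contradicting $\delta_g>1$. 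This is correct, though it proves only the limit statement $\delta_{g,\bT}\le 1$ rather than the paper's \eqref{e:delta_T<=1}, and it leans on more machinery where the paper's automorphism-group observation is essentially one line.

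One caveat on your fallback case: if $\bT=\{1\}$ then $\xi=0$ and $\delta_g(X,\Delta,\xi)=\delta(X,\Delta)$, but the finite generation theorem of \cite{LXZ-HRFG} is only available when $\delta(X,\Delta)\le 1$; in your case (ii) you are precisely in the regime $\delta>1$, where that theorem does not apply, so as written this branch is not justified. Fortunately it never occurs: for $\delta_g(X,\Delta,\xi):=\delta_{g,\bT}(X,\Delta,\wt_\xi)$ to be defined, $\wt_\xi$ must be a nontrivial valuation in $\ValT$, which forces $\bT\neq\{1\}$, and then your first step already gives $\delta_g\le 1$ outright (this is also the implicit scope of the paper's proof). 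You should simply delete the $\bT=\{1\}$ branch rather than route it through \cite{LXZ-HRFG}.
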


\begin{proof}
In view of Theorem \ref{t:f.g. for delta_g minimizer}, it suffices to show that $\delta_g(X,\Delta,\xi)\le 1$. Indeed, we will prove a stronger statement: 
\begin{equation} \label{e:delta_T<=1}
    \delta_{g,\bT,m} (X,\Delta,v_0)\le 1
\end{equation}
for all $v_0\in\ValT$, $m\in\bN$ and all positive functions $g\in C^0(\bR)$, as long as $\bT\neq\{1\}$. To see this, let $D$ be a $\bT$-invariant $g$-weighted $(m,v_0)$-basis type divisor. If $(X,\Delta+D)$ is klt, then after perturbing the coefficients of $D$ as in the proof of Theorem \ref{t:f.g. for delta_g minimizer}, we get a $\bT$-invariant $\bQ$-divisor $\tD>D$ proportional to $-(K_X+\Delta)$ such that $(X,\Delta+\tD)$ is still klt and $K_X+\Delta+\tD$ is ample. So such pairs have finite automorphism groups %(see e.g. \cite[Proposition 6.5]{KP17}) 
and this is a contradiction as $\bT<\Aut(X,\Delta+\tD)$ by construction. Thus $(X,\Delta+D)$ is not klt and $\lct(X,\Delta;D)\le 1$. This proves \eqref{e:delta_T<=1}.
\end{proof}

\begin{cor}\label{c-YTD}
% A triple $(X,\Delta,\xi)$ is $g$-Ding polystable if and only if it is reduced uniformly Ding stable. In particular, when $\bk=\bC$, $(X,\Delta,\xi)$ admits a K\"ahler-Ricci $g$-soliton if and only if it is $g$-Ding polystable.
Any $g$-Ding polystable triple $(X,\Delta,\xi)$ is also reduced uniformly $g$-Ding stable. In particular, it admits a K\"ahler-Ricci $g$-soliton when $\bk=\bC$.
\end{cor}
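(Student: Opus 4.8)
The plan is to run the Yau--Tian--Donaldson argument by contradiction, combining the minimizer produced in Proposition \ref{p:red uks minimizer} with the finite generation of Theorem \ref{t:f.g. for delta_g minimizer}. Suppose $(X,\Delta,\xi)$ is $g$-Ding polystable but not reduced uniformly $g$-Ding stable. Both hypotheses persist when the torus $\bT$ is enlarged, so we may and do assume that $\bT$ is a maximal torus of $\Aut(X,\Delta)$. Since $g$-Ding polystability implies $g$-Ding semistability, Proposition \ref{p:red uks minimizer} applies and produces a $\bT$-invariant quasi-monomial valuation $v\in\ValT$ that is not of the form $\wt_\eta$ for any $\eta\in N_\bR$ and that computes $\delta_g(X,\Delta,\xi)=\delta_{g,\bT}(X,\Delta,\wt_\xi)=1$; in particular $A_{X,\Delta}(v)=S_g(\wt_\xi;v)$.

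Next I would upgrade $v$ to a test configuration. Writing $v_0:=\wt_\xi$, the equality $\delta_{g,\bT}(X,\Delta,v_0)=1\le1$ together with $v_0=\wt_\xi$ puts us in the hypotheses of Theorem \ref{t:f.g. for delta_g minimizer}, so $\gr_v R$ is finitely generated, and the proof of that theorem moreover exhibits $v$ as an lc place of a $\bQ$-complement of $(X,\Delta)$. Hence $v$ induces a $\bT$-equivariant weakly special test configuration $(\cX,\Delta_{\cX};\cL)$ of $(X,\Delta)$ with central fiber $\Proj\gr_v R$ (an $\bR$-test configuration if the rational rank of $v$ exceeds one). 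Because $v$ is an lc place of a $\bQ$-complement, $\LNA(\cF_v)=A_{X,\Delta}(v)$ (cf.~\cite{XZ20cm}*{Proposition 4.2}; see also the proof of Corollary \ref{c:ss=delta>=1}), so
\[
\DNA_g(\cX,\Delta_{\cX};\cL)=\DNA_g(\cF_v)=\LNA(\cF_v)-S_g(v_0;v)=A_{X,\Delta}(v)-S_g(v_0;v)=0.
\]

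Finally I would extract the contradiction. By $g$-Ding polystability, the weakly special $\bT$-equivariant test configuration $(\cX,\Delta_{\cX};\cL)$, having $\DNA_g=0$, must be a product test configuration; hence $\cF_v$ agrees, up to translation, with $\cF_{\wt_{\eta'}}$ for some one-parameter subgroup $\eta'$ of $\Aut(X,\Delta)$, and Lemma \ref{l:v=w} then forces $v=\wt_{\eta'}$. Since $v=\wt_{\eta'}$ is invariant under the maximal torus $\bT$, the subtorus generated by $\eta'$ commutes with $\bT$ and therefore lies in $\bT$, i.e.\ $\eta'\in N_\bR$, contradicting Proposition \ref{p:red uks minimizer}. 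This shows that $(X,\Delta,\xi)$ is reduced uniformly $g$-Ding stable, and the existence of a K\"ahler-Ricci $g$-soliton over $\bk=\bC$ then follows from Theorem \ref{t-HL-YTD-soliton}. The step I expect to be the main obstacle is the second one: realizing the abstract quasi-monomial minimizer $v$ as a bona fide $\bT$-equivariant weakly special test configuration on which $g$-Ding polystability can be tested, and whose $\DNA_g$ coincides with $\DNA_g(\cF_v)$. This is exactly where the finite generation of $\gr_v R$ and the lc-place-of-a-$\bQ$-complement property are indispensable, and where care is needed when the rational rank of $v$ exceeds one, so that the notion of a ``product test configuration'' is meaningful in the $\bR$-category.
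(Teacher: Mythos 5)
Your proposal follows the paper's strategy (contradiction via Proposition \ref{p:red uks minimizer} plus finite generation), and in the case where the minimizer $v$ is divisorial it is essentially the paper's argument. But there is a genuine gap exactly at the step you flag as ``the main obstacle,'' and your write-up does not close it: when $v$ has rational rank $\geq 2$, the finitely generated $\gr_v R$ only gives an $\bR$-test configuration, whereas $g$-Ding polystability, as defined in this paper, is a condition tested on ($\bZ$-)test configurations $(\cX,\Delta_{\cX};\cL)$, weakly special ones in the polystability clause. So the sentence ``by $g$-Ding polystability, the weakly special test configuration with $\DNA_g=0$ must be a product test configuration'' is not licensed: the object you are applying polystability to lies outside the scope of the definition, and extending polystability (and the meaning of ``product type'') to the $\bR$-category is precisely the missing content, not a routine remark. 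As written, the contradiction is only obtained when $v$ is divisorial.

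The paper avoids this by a perturbation argument made possible by finite generation (following \cite{LXZ-HRFG}*{Theorem 5.2} and \cite{Xu20-HRFG}*{Claim 3.10}): choose a $\bT$-equivariant log smooth model $\pi\colon (Y,E)\to (X,\Delta)$ with $\QM(Y,E)$ a simplicial cone containing $v$ whose dimension equals the rational rank of $v$. Finite generation of $\gr_v R$ implies that for all $w\in\QM(Y,E)$ near $v$ one has $\gr_w R\cong \gr_v R$ and that $w\mapsto S_g(v_0;w)$ is linear there; since $A_{X,\Delta}$ is also linear on $\QM(Y,E)$ and $A_{X,\Delta}\ge S_g(v_0;\cdot)$ with equality at $v$ (as $\delta_g=1$ is computed by $v$), equality propagates to a neighbourhood. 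Hence one can pick a $\bT$-invariant \emph{divisorial} $w$ close to $v$, still not of the form $\wt_\eta$, with $\DNA_g(\cF_w)\le A_{X,\Delta}(w)-S_g(v_0;w)=0$ by Corollary \ref{c:LNA<=A}; this $w$ induces a genuine non-product $\bT$-equivariant test configuration with $\DNA_g\le 0$, contradicting $g$-Ding polystability. If you incorporate this reduction from the quasi-monomial minimizer to a nearby divisorial one, your argument becomes complete; without it, the higher rational rank case is unproved. (A smaller point: the paper only needs $\DNA_g\le 0$ via Corollary \ref{c:LNA<=A}, so your detour through $\LNA(\cF_v)=A_{X,\Delta}(v)$ for lc places is not needed.)
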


\begin{proof}
The proof is very similar to that of \cite{LXZ-HRFG}*{Theorem 5.2}. Let $\bT<\Aut(X,\Delta,\xi)$ be a maximal torus such that $\xi\in N_\bR$. % By the argument in \cite{HL20-uniqueness}*{Section 8}, we know that $g$-Ding polystability is equivalent to $\bT$-equivariant $g$-Ding polystability (it is shown there that it suffices to check $\bT$-equivariant test configurations), thus reduced uniform $g$-Ding stability implies $g$-Ding polystability. It remains to prove the reverse direction. 
Assume to the contrary that $(X,\Delta,\xi)$ is $g$-Ding polystable but not reduced uniformly $g$-Ding stable. Then by Proposition \ref{p:red uks minimizer}, we know that $\delta_g(X,\Delta,\xi)=1$ is computed by some quasi-monomial valuation $v\in\ValT$ that is not of the form $\wt_\eta$. By Corollary \ref{c-deltafg}, the associated graded ring $\gr_v R$ is finitely generated. Let $\pi\colon (Y,E)\to (X,\Delta)$ be a $\bT$-equivariant log smooth model such that $\QM(Y,E)$ is a simplicial cone containing $v$ and that its dimension is the same as the rational rank of $v$ . As in \cite{Xu20-HRFG}*{Claim 3.10}, this implies that in a neighbourhood of $v$ in $\QM(Y,E)$, the function $w\mapsto S_g(v_0;w)$ is linear, and we have $\gr_wR\cong \gr_vR$. 

Thus $\delta_g(X,\Delta,\xi)$ is also computed by some $\bT$-invariant divisorial valuation $w\in\QM(Y,E)$ that is sufficiently close to $v$. % (in the minimal affine space of $\QM(Y,E)$ containing $v$)
In particular, $\DNA_g(\cF_w)\le A_{X,\Delta}(w)-S_g(v_0;w)=0$ (the first inequality is by Corollary \ref{c:LNA<=A}) and $w\neq\wt_\eta$ for any $\eta\in N_\bR$. It induces a non-product type $\bT$-equivariant test configuration $(\cX,\Delta_{\cX},\cL)$ such that $\DNA_g(\cX,\Delta_{\cX},\cL)\le 0$. This contradicts the $g$-Ding polystability of $(X,\Delta,\xi)$ and proves the first part of the corollary. The remaining part follows from Theorem \ref{t-HL-YTD-soliton}.
\end{proof}

% Since $(X,\Delta,\xi)$ is reduced uniformly $g$-Ding stable, Theorem \ref{t-HL-YTD-soliton} implies that $(X,\Delta,\xi)$ admits a K\"ahler-Ricci $g$-soliton. 

\begin{proof}[Proof of Theorem \ref{t-YTD}]
By \cite[Theorem 1.5]{BWN14}, $(X,\Delta,\xi)$ is K-polystable if it admits a K\"ahler-Ricci soliton. Thus the result follows immediately from Theorem \ref{t-HL-YTD-soliton} and Corollary \ref{c-YTD} by setting $g(x)=e^{-x}$.
\end{proof}
\begin{rem}\label{rem-equiv}
The above proof that reduced uniform Ding stability implies K-polystability uses  K\"ahler-Ricci solitons, but it can be proved algebraically. Though, there is some subtlety, since the data of a triple $(X,\Delta,\xi)$ includes a torus $\T$ so that $\xi \in N_{R} : = \Hom(\G_m,\T)\otimes_{\Z}\R$ and $\T$ is not necessarily maximal. While the K-polystability  of $(X,\Delta,\xi)$ is with respect to $\T$,  reduced uniform Ding stability is defined using a maximal torus $\T < \T^{\max} < \Aut(X,\Delta)$. 

To prove the equivalence, observe that if $(X,\Delta,\xi)$ is reduced uniformly Ding stable, then
it is K-polystable with respect to $\T^{\max}$ by \cite{HL20-YTD}*{Proposition 5.16}. 
 To show it is K-polystable with respect to $\T$, first by \cite{HL20-uniqueness}*{(168) or (189)},
  it follows that $(X,\Delta,\xi)$ is K-semistable with respect to $\T$. 
 Then by verbatim applying the proof of \cite{LWX-tangentcone}*{Theorem 3.7} (see also \cite{HL20-uniqueness}*{Section 8}), we know the  K-polystablity of $(X,\Delta,\xi)$ with respect to $\T^{\max}$ implies the K-polystability of $(X,\Delta,\xi)$ with respect to $\T$.
\end{rem}
 
\begin{proof}[Proof of Theorem \ref{t-optimaldeg}]
It is a combination of Theorem \ref{t-finitegen}, \cite{HL20-uniqueness}*{Theorem 1.3}, and Theorem \ref{t-YTD}.
\end{proof}

As an application, we show the following theorem, which generalizes \cite{WZ04} from the smooth case to general toric log Fano pairs. See also \cite{HL20-YTD}*{Section 8}. 

\begin{thm}
For any toric log Fano pair $(X,\Delta)$ over $\bC$, there exists a vector $\xi\in N_{\mathbb R}:={\rm Hom}(\mathbb{G}_m, \bT)\otimes_{\mathbb{Z}}\mathbb{R}$ where $\bT$ is the maximal torus acting on $(X,\Delta)$, such that $(X,\Delta,\xi)$ admits a K\"ahler-Ricci soliton. 
\end{thm}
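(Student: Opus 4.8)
The plan is to prove that, for a toric log Fano pair, the canonical two-step degeneration of Theorem~\ref{t-optimaldeg} is trivial, so that for a suitable $\xi\in N_\bR$ the triple $(X,\Delta,\xi)$ is itself K-polystable and hence admits a K\"ahler-Ricci soliton over $\bC$. Throughout let $\bT$ be the big torus of the toric variety $X$; since $\Delta$ is $\bT$-invariant we have $\bT\subseteq\Aut(X,\Delta)$, and $\bT$ is a maximal torus of $\Aut(X,\Delta)$ by Demazure's description of the automorphism group of a complete toric variety. The key elementary input, used twice, is the \emph{toric rigidity statement}: every $\bT$-invariant valuation $v\in\Val_X$ is a monomial valuation $\wt_\eta$ for a unique $\eta\in N_\bR$. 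Indeed, restricting $v$ to the dense torus $\Spec\bk[M]$, invariance under $\bT$ together with linear independence of characters forces $v(\sum_m c_m\chi^m)=\min\{v(\chi^m)\mid c_m\neq 0\}$, and $m\mapsto v(\chi^m)$ is linear.

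\textbf{Step 1: the first-step degeneration is a product.} By Corollary~\ref{c-quasiunique} there is a unique valuation $v_0\in\Valc\cup\{v_{\triv}\}$ computing $h(X,\Delta)$. Since $A_{X,\Delta}$ and $\tS$ are $\Aut(X,\Delta)$-invariant (an automorphism acts on $R$ preserving the grading and the multiplicative structure, hence preserves Duistermaat--Heckman measures), so are $\tbeta_{X,\Delta}$ and $h$; uniqueness of the minimizer then forces $v_0$ to be $\Aut(X,\Delta)$-invariant, in particular $\bT$-invariant. By the rigidity statement, $v_0=\wt_\xi$ for some $\xi\in N_\bR$. As $\wt_\xi$ is induced by a (possibly irrational) one-parameter subgroup of $\bT\subseteq\Aut(X,\Delta)$, the filtration $\cF_{v_0}$ is just the $\xi$-weight filtration on $R$, so $\gr_{v_0}R\cong R$; hence the first-step degeneration of Theorem~\ref{t-finitegen} is of product type and $(X_0,\Delta_0,\xi_{v_0})\cong(X,\Delta,\xi)$. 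In particular $(X,\Delta,\xi)$ is a K-semistable triple, i.e.\ $g$-Ding semistable with $g(x)=e^{-x}$.

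\textbf{Step 2: the second-step degeneration is trivial.} It remains to upgrade K-semistability of $(X,\Delta,\xi)$ to K-polystability, after which Theorem~\ref{t-YTD} furnishes a K\"ahler-Ricci soliton over $\bC$. By Theorem~\ref{t-YTD} it suffices to prove that $(X,\Delta,\xi)$ is reduced uniformly $g$-Ding stable with respect to the maximal torus $\bT$ (note $\bT\subseteq\Aut(X,\Delta,\wt_\xi)$ since $\bT$ is abelian and $\wt_\xi$ is $\bT$-invariant). Suppose not. Then $(X,\Delta,\xi)$ is $g$-Ding semistable but not reduced uniformly $g$-Ding stable, so Proposition~\ref{p:red uks minimizer} yields a $\bT$-invariant quasi-monomial valuation $v$, \emph{not} of the form $\wt_\eta$ for any $\eta\in N_\bR$, computing $\delta_g(X,\Delta,\xi)=1$. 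This contradicts the rigidity statement of the first paragraph. Hence $(X,\Delta,\xi)$ is reduced uniformly $g$-Ding stable, therefore K-polystable, and so it admits a K\"ahler-Ricci soliton when $\bk=\bC$.

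\textbf{Where the difficulty lies.} There is no new analytic or birational input: granted Theorems~\ref{t-finitegen} and~\ref{t-YTD} and Proposition~\ref{p:red uks minimizer}, the proof is the interplay of the uniqueness of the relevant minimizers with the toric rigidity ``$\bT$-invariant $\Rightarrow$ monomial''. The only subtle point is that the minimizer produced in Step~2 must be invariant under the \emph{maximal} torus $\bT$, rather than a possibly smaller one; this is ensured by the enlargement-of-torus argument already contained in the proof of Proposition~\ref{p:red uks minimizer}, so no additional work is needed, and the remainder is bookkeeping with the functionals already developed above.
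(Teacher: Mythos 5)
Your proof is correct and follows essentially the same route as the paper: uniqueness of the $h$-minimizer plus the toric rigidity fact that every $\bT$-invariant valuation is of the form $\wt_\eta$ shows the first-step degeneration is $(X,\Delta,\xi)$ itself, and rigidity again upgrades K-semistability to reduced uniform Ding stability, hence a soliton via Theorem~\ref{t-HL-YTD-soliton}. The only cosmetic difference is that in the second step you argue by contradiction through Proposition~\ref{p:red uks minimizer}, whereas the paper verifies condition (3) of Lemma~\ref{l:wstc} directly (it holds trivially because every $w\in\Val_X^{\bT,\circ}$ equals some $\wt_\eta$ and so has $\JNA_{g,\bT}(\cF_w)=0$); both reductions rest on the same rigidity statement.
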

\begin{proof}
By Corollary \ref{c-quasiunique}, there is a unique valuation $v\in \Val_X$ which computes $h(X,\Delta)$. 
By the uniqueness, $v$ is $\bT$-invariant i.e. $v=\wt_{\xi}$ for some $\xi\in N_\R$. 
Therefore, the K-semistable triple produced in Theorem \ref{t-finitegen} is $(X,\Delta, \xi)$. 

Since $(X,\Delta,\xi)$ is K-semistable and toric, it is reduced uniformly Ding stable by Theorem \ref{l:wstc}. Indeed, condition (3) of the theorem holds trivially, since any $w\in \Val_{X}^{\bT,\circ}$ is of the form $w=\wt_{\eta}$ for some $\eta\in N_{\R}$ and, hence, satisfies $\JNA_{g,\T}(w) = 0$ where $g=e^{-x}$. Therefore, $(X,\Delta,\xi)$ admits a K\"ahler-Ricci soliton by \cite{HL20-YTD}.
%By \cite{HL20-uniqueness}*{Section 8}, the degeneration from $(X,\Delta,\xi)$ to the unique K-polystable triple is $\bT$-equivariant, which implies that $(X,\Delta,\xi)$ itself is K-polystable, as it does not have a non-trivial $\bT$-equivariant degeneration. Thus it admits a K\"ahler-Ricci soliton by Theorem \ref{t-YTD}.
%\footnote{ \HB{It seems more direct to me to avoid using \cite{HL20-uniqueness}*{Section 8}. Also, that reference does not mention the $\T$-equivariant result (of course, it can be deduced using the argument in LWX).}}
%\footnote{\HB{We could replace the second paragraph with:``Since $(X,\Delta;\xi)$ is K-semistable and toric, it is reduced uniformly Ding stable by Theorem \ref{l:wstc}. Indeed, condition (3) of the theorem holds trivially, since any $w\in \Val_{X}^{\bT,\circ}$ is of the form $w=\wt_{\eta}$ for some $\eta\in N_{\R}$ and, hence, satisfies $\JNA_{g,\T}(w) = 0$ where $g=e^{-x}$. Therefore, $(X,\Delta;\xi)$ admits a K\"ahler-Ricci soliton by \cite{HL20-YTD}. ''}}
\end{proof}

\section{Moduli stack}

In this section, we will prove Theorem \ref{t-stack}. It suffices to verify the boundedness and openness, see Theorem \ref{t-bounded} and Theorem \ref{t-openness}. 

\begin{thm}\label{t-moduli}
For a fixed dimension $n$, volume $V$, a positive integer $N$ and a negative constant $h_0<0$, families of $n$-dimensional log Fano pairs $(X,\Delta)$ with $(-K_X-\Delta)^n=V$, $N\Delta$ integral and $h(X,\Delta)\ge h_0$ are parametrized by an Artin stack 
$\mathcal{M}^{\rm Fano}_{n,V,N,h^+_0}$ of finite type.  
\end{thm}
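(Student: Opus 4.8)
The plan is to prove the two statements the theorem reduces to — boundedness (Theorem~\ref{t-bounded}) and openness (Theorem~\ref{t-openness}) — and then to realize $\mathcal M^{\rm Fano}_{n,V,N,h_0^+}$ as a quotient stack of a finite type parameter scheme. (Since $h\le 0$ always, the locus is empty for $h_0>0$ and is the K-semistable locus for $h_0=0$, so the interesting case is $h_0<0$.)

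\textbf{Boundedness.} I would first show that $h(X,\Delta)\ge h_0$ forces $\delta(X,\Delta)\ge\varepsilon$ for a constant $\varepsilon=\varepsilon(n,h_0)>0$, where $\delta(X,\Delta)$ is the usual stability threshold. Fix $v\in\Val_X^\circ$, normalise so that $S_{X,\Delta}(v)=1$ (so $A_{X,\Delta}(v)$ equals $\delta_{X,\Delta}(v):=A_{X,\Delta}(v)/S_{X,\Delta}(v)$), and set $\mu:=\Dh_{\cF_v}$, a probability measure on $[0,T]$ with $T:=T_{X,\Delta}(v)\ge 1$. By Lemma~\ref{l:lminFv} we have $\lambda_{\min}(\cF_v)=0$, so $\vol(V_\bullet^{(0)})=(L^n)$, and by the concavity in Proposition~\ref{p:BC11} the function $s\mapsto \vol(V_\bullet^{(s)})^{1/n}$ lies above the chord joining $(0,(L^n)^{1/n})$ and $(T,0)$; hence $\mu([0,\varepsilon])\le 1-(1-\varepsilon/T)^n\le n\varepsilon$ for $\varepsilon\in(0,T]$. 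Consequently, for $\lambda$ large,
\[
\int_{\bR}e^{-\lambda t}\,\mu(dt)\ \le\ \mu([0,\varepsilon])+e^{-\lambda\varepsilon}\ \le\ n\varepsilon+e^{-\lambda\varepsilon},
\]
and taking $\varepsilon=\lambda^{-1}\log\lambda$ gives $\int e^{-\lambda t}\mu(dt)\le (n\log\lambda+1)/\lambda$, hence $\tS(\lambda v)=-\log\int e^{-\lambda t}\mu(dt)\ge \log\lambda-\log\log\lambda-\log(n+1)$. Since $\tbeta_{X,\Delta}(\lambda v)=\lambda A_{X,\Delta}(v)-\tS(\lambda v)$, choosing $\lambda\approx \delta_{X,\Delta}(v)^{-1}$ yields $h(X,\Delta)\le\tbeta_{X,\Delta}(\lambda v)\le \log\delta_{X,\Delta}(v)+\log\log\big(\delta_{X,\Delta}(v)^{-1}\big)+c_n$. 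As the right side tends to $-\infty$ when $\delta_{X,\Delta}(v)\to 0$, the bound $h(X,\Delta)\ge h_0$ forces $\delta_{X,\Delta}(v)\ge\varepsilon(n,h_0)$ for every $v$, i.e. $\delta(X,\Delta)\ge\varepsilon$. It is then known that $n$-dimensional log Fano pairs with $(-K_X-\Delta)^n=V$, $N\Delta$ integral and $\delta(X,\Delta)\ge\varepsilon$ form a bounded family: the lower bound on $\delta$ forces $(X,\Delta)$ to be $\varepsilon'$-klt for $\varepsilon'=\varepsilon'(\varepsilon,n)>0$, and one concludes by Birkar's boundedness of $\varepsilon'$-lc log Fano pairs. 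This proves Theorem~\ref{t-bounded}.

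\textbf{Openness.} Let $(\cX,\cD)\to B$ be a $\bQ$-Gorenstein family of log Fano pairs over a finite type base with $N\cD$ integral and $(-K_{\cX/B}-\cD)^n=V$; I would prove that $b\mapsto h(X_b,\Delta_b)$ is constructible and lower semicontinuous, so that $\{b:h(X_b,\Delta_b)\ge h_0\}$ is open. By Corollary~\ref{c-hfg} and the proof of Theorem~\ref{t-finitegen}, for each $b$ the unique valuation $v_{0,b}$ computing $h(X_b,\Delta_b)$ (Corollary~\ref{c-quasiunique}) is quasi-monomial and is a monomial lc place of a $\bQ$-complement of $(X_b,\Delta_b)$ (up to a bounded perturbation of the boundary), and by boundedness of complements these are $N'$-complements for a uniform $N'$. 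Hence, after a finite stratification of $B$, the valuations $v_{0,b}$ vary in an algebraic family of quasi-monomial valuations over each stratum, and $h(X_b,\Delta_b)=A_{X_b,\Delta_b}(v_{0,b})-\tS(v_{0,b})$. Constructibility of $b\mapsto h(X_b,\Delta_b)$ then follows from the constructibility of the log discrepancy and of the $\tS$-invariant, the latter being a continuous functional of the Duistermaat–Heckman measure $\Dh_{\cF_{v_{0,b}}}$, whose computation via volumes of graded linear series is constructible in families, exactly as in the analysis of the $\delta$-invariant by Blum–Liu and Xu. For lower semicontinuity I would compare $h$ with its approximants: with $g(x)=e^{-x}$, Corollary~\ref{c:hmin compute delta} gives $h(X_b,\Delta_b)=A_{X_b,\Delta_b}(v_{0,b})-S_g(v_{0,b};v_{0,b})$, which is governed by the numbers $\delta_{g,m}(X_b,\Delta_b,v_{0,b})$ of Section~\ref{ss-weighteddelta}; the latter are lower semicontinuous in $b$ as minima of log canonical thresholds of a bounded family of $g$-weighted basis type divisors (by semicontinuity of the lct in families), and Lemma~\ref{l:S=lim S_m} provides convergence $\delta_{g,m}\to\delta_g$ that is uniform over the bounded family, so the limit $h$ is lower semicontinuous too. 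This proves Theorem~\ref{t-openness}.

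\textbf{Assembly and the main obstacle.} By Theorem~\ref{t-bounded} there is a finite type scheme $T$ carrying a $\bQ$-Gorenstein family $(\cX,\cD)\to T$ — obtained by embedding all pairs in question by a fixed very ample multiple $-m(K_X+\Delta)$ into a fixed projective space, with $\PGL_M$ acting on $T$ — which contains every $(X,\Delta)$ with the prescribed invariants and $h(X,\Delta)\ge h_0$. By Theorem~\ref{t-openness} the locus $T^\circ\subset T$ over which $h(X_t,\Delta_t)\ge h_0$ is open, hence finite type, and $\mathcal M^{\rm Fano}_{n,V,N,h_0^+}:=[T^\circ/\PGL_M]$ is the desired Artin stack of finite type, the independence of the auxiliary choices and the algebraicity being standard. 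The main obstacle is the openness step, and within it the lower semicontinuity of $b\mapsto h(X_b,\Delta_b)$: it forces one to combine the finite generation theorem (to identify the minimizer as an lc place of a complement), boundedness of complements (to make the minimizers vary in a bounded family), and the constructibility of the Duistermaat–Heckman measures attached to them; by comparison the estimate $h\Rightarrow\delta$ behind boundedness, while it must be run carefully through the measures $\Dh_{\cF_v}$, is in essence a one-variable optimisation.
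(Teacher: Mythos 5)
Your overall architecture (boundedness plus constructibility/lower semicontinuity of $h$, then a quotient-stack assembly) is the paper's, and your boundedness step is a correct variant of Proposition \ref{p-bounded}: your chord estimate on $\Dh_{\cF_v}$ does give $h(X,\Delta)\le 1+\log\delta_{X,\Delta}(v)+\log\log\big(\delta_{X,\Delta}(v)^{-1}\big)+c_n$, so $h\ge h_0$ bounds the stability threshold (equivalently, via $T(v)\le (n+1)S(v)$, Tian's $\alpha$-invariant) from below, and one concludes with \cite{Che18}, \cite{Jia20}, \cite{XZ20nvol} exactly as in Theorem \ref{t-bounded}; the paper instead bounds $T(v)$ directly for $A_{X,\Delta}(v)=1$. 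One caveat: your justification that a $\delta$ lower bound ``forces $(X,\Delta)$ to be $\varepsilon'$-klt with $\varepsilon'(\varepsilon,n)$'' is not correct as stated without the volume bound; the clean route is $\alpha\ge \delta/(n+1)$ together with the cited boundedness results. Your constructibility sketch also essentially matches the paper (minimizer is an lc place of a $\bQ$-complement by the proof of Theorem \ref{t:f.g. for delta_g minimizer}, uniform $N$-complements by \cite{BLX19}, stratification and invariance of plurigenera to make $A$ and $\Dh$, hence $\tbeta$, locally constant), except that one should take the minimum of $\tbeta$ over all monomial lc places of $N$-complements on the dual complexes rather than assert that the minimizers $v_{0,b}$ themselves form an algebraic family (a priori they may jump within a stratum).

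The genuine gap is your lower semicontinuity argument. You propose to control $h(X_b,\Delta_b)=A_{X_b,\Delta_b}(v_{0,b})-\tS(v_{0,b})$ through the thresholds $\delta_{g,m}(X_b,\Delta_b,v_{0,b})$ and semicontinuity of log canonical thresholds of $g$-weighted basis type divisors. But these thresholds do not govern the value of $h$: by Corollary \ref{c:hmin compute delta} and Lemma \ref{l:delta=lim delta_m}, $\delta_g(X_b,\Delta_b,v_{0,b})=1$ on every fiber, so the numbers $\delta_{g,m}(X_b,\Delta_b,v_{0,b})$ converge to the same constant for all $b$ and carry no information about the difference $A(v_{0,b})-\tS(v_{0,b})$, which is not an lct-type quantity; moreover, since $v_{0,b}$ varies with $b$, neither ``$\delta_{g,m}$ in families with weight $v_{0,b}$'' nor the claimed uniformity of Lemma \ref{l:S=lim S_m} over the family is defined or established. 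So this step would fail as written. The paper's proof of Theorem \ref{t-openness} proceeds differently: reduce to a DVR, extend an arbitrary linearly bounded filtration $\cF_K$ on the generic fiber to a filtration $\cF_\kappa$ on the special fiber, note that $\Dh_{\cF_K}=\Dh_{\cF_\kappa}$ (so $\tS$ is unchanged) while $\LNA(\cF_K)=\mu(\cF_K)\ge \mu(\cF_\kappa)=\LNA(\cF_\kappa)$ by Lemma \ref{l:L^NA=lc slope} and lower semicontinuity of the lct; hence $\HNA(\cF_K)\ge \HNA(\cF_\kappa)\ge h(X_\kappa,\Delta_\kappa)$, and taking the infimum over $\cF_K$ gives the semicontinuity. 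You need this filtration-extension argument (or a genuine substitute) to close the openness step.
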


The following result gives the boundedness.

\begin{prop}\label{p-bounded}
Let $(X,\Delta)$ be a log Fano pair and let $c\in \bR$. Assume that $\tbeta_{X,\Delta}(v)\ge c$ for all divisorial valuations $v$ on $X$. Then $\alpha(X,\Delta)\ge \alpha$ for some constant $\alpha>0$ that only depends on $c$ and $\dim (X)$.
\end{prop}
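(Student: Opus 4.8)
The plan is to argue by contraposition, exploiting how $\tbeta_{X,\Delta}$ transforms under rescaling a valuation. Recall (see e.g. \cite{BJ20}) that $\alpha(X,\Delta)=\inf_v A_{X,\Delta}(v)/T_{X,\Delta}(v)$, where the infimum runs over divisorial valuations $v$ over $X$ and $T_{X,\Delta}(v)=\la_{\max}(\cF_v)\in(0,\infty)$. I claim one may take $\alpha=e^{c-1}/n$ with $n=\dim X$. If not, there is a divisorial valuation $v$ with $a:=A_{X,\Delta}(v)/T_{X,\Delta}(v)<e^{c-1}/n$ (and $a>0$ since $(X,\Delta)$ is klt); the goal is then to exhibit a divisorial valuation at which $\tbeta_{X,\Delta}<c$, contradicting the hypothesis.

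The key step is an upper bound for $\tbeta_{X,\Delta}(w)$, $w\in\Valc$, that depends only on $T:=T_{X,\Delta}(w)$. By Lemma \ref{l:lminFv}, $\Dh_{\cF_w}$ is a probability measure supported on $[0,T]$, and by Proposition \ref{p:BC11}(ii) the function $s\mapsto(\vol(V_\bullet^{\cF_w,(s)})/L^n)^{1/n}$ is concave on $(-\infty,T)$ and equals $1$ on $(-\infty,0]$; being concave, nonnegative, and equal to $1$ at $s=0$, it is $\ge 1-s/T$ on $[0,T)$, whence $\Dh_{\cF_w}([s,\infty))\ge(1-s/T)^n$ for $0\le s\le T$. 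Thus $\Dh_{\cF_w}$ stochastically dominates the measure on $[0,T]$ with density $\tfrac nT(1-s/T)^{n-1}$, and since $s\mapsto e^{-s}$ is decreasing,
\[
\int_{\bR}e^{-s}\,\Dh_{\cF_w}(\rd s)\ \le\ \int_0^T e^{-s}\,\tfrac nT(1-s/T)^{n-1}\,\rd s\ =\ n\!\int_0^1 e^{-Tu}(1-u)^{n-1}\,\rd u\ \le\ n\!\int_0^1 e^{-Tu}\,\rd u\ <\ \frac nT .
\]
Hence $\tS(w)=-\log\int_{\bR}e^{-s}\,\Dh_{\cF_w}(\rd s)>\log T-\log n$, so $\tbeta_{X,\Delta}(w)<A_{X,\Delta}(w)-\log T_{X,\Delta}(w)+\log n$.

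Apply this to the rescaled valuations $w=tv$, using $A_{X,\Delta}(tv)=tA_{X,\Delta}(v)$, $T_{X,\Delta}(tv)=tT_{X,\Delta}(v)$, and that $\cF_{tv}$ remains linearly bounded. With the optimal choice $t=1/\big(aT_{X,\Delta}(v)\big)$, so that $tA_{X,\Delta}(v)=1$ and $tT_{X,\Delta}(v)=1/a$, the bound gives
\[
\tbeta_{X,\Delta}(tv)\ <\ 1-\log(1/a)+\log n\ =\ 1+\log a+\log n\ <\ 1+\log\!\big(e^{c-1}/n\big)+\log n\ =\ c .
\]
Since $t\mapsto\tbeta_{X,\Delta}(tv)$ is continuous, we may replace $t$ by a nearby rational number, keeping $tv$ divisorial and $\tbeta_{X,\Delta}(tv)<c$; this contradicts the hypothesis, so $\alpha(X,\Delta)\ge e^{c-1}/n$. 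The only non-elementary ingredient is the chord estimate for $\Dh_{\cF_w}$, which repackages the concavity of the volume function from Proposition \ref{p:BC11}; everything else is the homogeneity of $A_{X,\Delta}$, $T_{X,\Delta}$, and $\cF_{(\cdot)}$ under scaling together with a one-variable optimization.
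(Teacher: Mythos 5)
Your proof is correct, and it arrives at the same quantitative conclusion ($\alpha(X,\Delta)\ge e^{c-1}/n$) as the paper, but the key estimate is derived by a genuinely different mechanism. The paper also reduces to bounding $T(v)$ for divisorial $v$ normalized by $A_{X,\Delta}(v)=1$ (your rescaling $t=1/A_{X,\Delta}(v)$ is just the contrapositive form of that normalization), and it ultimately compares $\int e^{-\la}\,\Dh_{\cF_v}(\rd\la)$ with the very same model integral $n\int_0^1 e^{-Tu}(1-u)^{n-1}\rd u$; however, it gets there by a modified version of Fujita's argument in \cite{Fuj-alpha}: one writes $\Dh_{\cF_v}(\rd\la)=f(\la)^{n-1}\rd\la$ with $f$ concave (quoting \cite{LM09} and \cite{ELMNP}), introduces the mean-value point $b$ with $\int e^{-\la}\,\Dh_{\cF_v}=e^{-b}$, and compares $f$ with the chord through $(b,f(b))$ and $(T,0)$, exploiting the sign change of $e^{-\la}-e^{-b}$ at $b$. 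You instead use only the concavity of $s\mapsto\vol(V_\bullet^{(s)})^{1/n}$ from Proposition \ref{p:BC11} to get the tail bound $\Dh_{\cF_w}([s,\infty))\ge(1-s/T)^n$ and conclude by stochastic dominance against the measure $\tfrac nT(1-s/T)^{n-1}\rd s$; this avoids both the concave-density structure of the DH measure and the auxiliary point $b$, so it is somewhat more elementary and self-contained, at the cost of no extra generality (the constants agree). Two minor remarks: the rational-approximation step at the end is not strictly needed, since divisorial valuations in the convention of the references the paper uses are closed under rescaling by positive reals (and $\tbeta_{X,\Delta}$ is anyway continuous along rays, as you note); and for $a>0$ one also uses $T_{X,\Delta}(v)<+\infty$, which holds because $\cF_v$ is linearly bounded when $A_{X,\Delta}(v)<+\infty$.
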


Here, $\alpha(X,\Delta)$ denotes Tian's $\alpha$-invariant. In the proof, we use that $\alpha(X,\Delta)$ equals  $\inf_{v}\frac{A_{X,\Delta}(v)}{T(v)}$, where the infimum runs through all divisorial valuations on $X$; see  e.g. \cite{BJ20}.

\begin{proof}
It suffices to find some constant $M=M(c,n)>0$ that only depends on $c$ and $n=\dim(X)$ such that $T(v)\le M$ for all divisorial valuations $v$ with $A_{X,\Delta}(v)=1$.
For now fix any such $v$ and let $\cF=\cF_v$, $T=T_{X,\Delta}(v)$, $\tS=\tS(v)$. We will apply a modified argument from \cite{BJ20}*{Lemma 2.6}. Let $f(\lambda)=\frac{ \vol( V_\bullet^{ (\la)}) }{L^n}$. Note that $f(0)=1$, $f(T)=0$ and $\Dh_{\cF} = -f'(\lambda) \rd \lambda$. By \cite{Laz04}*{Theorem 11.4.9}, the function $\lambda\mapsto f(\lambda)^{\frac{1}{n}}$ is concave on $(0,T)$, thus
\[
f(\lambda)\ge \left(1-\frac{\lambda}{T}\right)^n.
\]
On the other hand, integration by parts yields
\[
e^{-\tS} = \int_0^T e^{-\lambda} \Dh_{\cF}(\rd \lambda) = 1 - \int_0^T e^{-\lambda} f(\lambda) \rd \lambda,
\]
hence $e^{-\tS}\le 1 - \int_0^T e^{-\lambda} \left(1-\frac{\lambda}{T}\right)^n \rd \lambda$. We may further rewrite the right hand side as %\footnote{\HB{Should the left hand side below instead be $ 1 - \int_0^T e^{-\lambda} \left(1-\frac{\lambda}{T}\right)^n \rd \lambda$? (otherwise, we don't get equality on the first line below.}}
\begin{align*}
    \int_0^\infty e^{-\lambda} \rd \lambda - \int_0^T e^{-\lambda} \left(1-\frac{\lambda}{T}\right)^n \rd \lambda & = \int_0^T e^{-\lambda} \left[ 1-\left(1-\frac{\lambda}{T}\right)^n \right] \rd \lambda + \int_T^\infty e^{-\lambda} \rd \lambda \\
    & \le \int_0^T \frac{n\lambda}{T} e^{-\lambda} \rd \lambda + \int_T^\infty e^{-\lambda} \rd \lambda \\
    & \le \frac{n}{T}+e^{-T}.
\end{align*}
By assumption, $1-\tS = \tbeta(v)\ge c$, hence $e^{-\tS}\ge e^{c-1}$. It follows that
\[
\frac{n}{T}+e^{-T} \ge e^{c-1}.
\]
From this we deduce that $T$ is bounded from above by some constant that only depends on $c$ and $n$. The proof is now complete.
%%%%%%%%%%% original wrong proof
% \cite{Fuj-alpha}*{Proposition 3.2}. Clearly,
% \[
% \int_0^T e^{-\lambda} \Dh_{\cF}(\rd \lambda) = e^{-b}
% \]
% for some $b\in (0,T)$. By \cite{LM09}*{Corollary C} and \cite{ELMNP}*{Theorem A}, we know that $\Dh_{\cF}(\rd \lambda)=f(\lambda)^{n-1}\rd \lambda$ for some concave function $f>0$ on $(0,T)$.
% In particular, $f(\lambda)\ge \frac{T-\lambda}{T-b}f(b)$ when $\lambda\in[b,T)$ (this right side of the inequality represents the line joining $(b,f(b))$ and $(T,0)$) and $f(\lambda)\le \frac{T-\lambda}{T-b}f(b)$ when $\lambda\in(0,b)$. It follows that
% \[
% \int_0^T (e^{-\lambda}-e^{-b})f(\lambda)^{n-1}\rd \lambda \le \int_0^T (e^{-\lambda}-e^{-b})\left(\frac{T-\lambda}{T-b}f(b)\right)^{n-1} \rd \lambda.
% \]
% On the other hand,
% \[
% \int_0^T (e^{-\lambda}-e^{-b})f(\lambda)^{n-1}\rd \lambda = \int_0^T (e^{-\lambda}-e^{-b}) \Dh_{\cF}(\rd \lambda) = 0
% \]
% by our choice of $b$. Thus we obtain $\int_0^T (e^{-\lambda}-e^{-b})(T-\lambda)^{n-1} \rd \lambda \ge 0$. By assumption, we also have
% \[
% e^{-b}=\int_0^T e^{-\lambda} \Dh_{\cF}(\rd \lambda) = e^{\tbeta_{X,\Delta}(v)-A_{X,\Delta}(v)}\ge e^{c-1}.
% \]
% Therefore,
% \[
% e^{c-1}\le e^{-b}\le \frac{\int_0^T e^{-\lambda} (T-\lambda)^{n-1} \rd \lambda}{\int_0^T (T-\lambda)^{n-1} \rd \lambda} = O\left(\frac{1}{T}\right)
% \]
% by a direct integration-by-parts calculation. 
\end{proof}

\begin{thm}\label{t-bounded}
Fixed positive integers $n$, $N$, a positive number $V_0$ and a constant $h_0 $. Denote by $\mathcal{P}$ the set of $n$-dimensional log Fano pairs $(X,\Delta)$ with $N\cdot \Delta$ integral which satisfy $(-K_X-\Delta)^n\ge V_0$ and $h(X,\Delta)\ge h_0$. Then $\mathcal{P}$ is bounded.
\end{thm}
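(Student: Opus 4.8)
The plan is to reduce the statement, via Proposition \ref{p-bounded}, to a uniform lower bound on Tian's $\alpha$-invariant over the family $\mathcal{P}$, and then to invoke the known boundedness of log Fano pairs of fixed dimension whose $\alpha$-invariant and anticanonical volume are bounded from below.

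First I would translate the hypothesis $h(X,\Delta)\ge h_0$ into an estimate on divisorial valuations. By \cite{HL20-uniqueness}*{Theorem 1.5} we have $h(X,\Delta)=\inf_{v\in\Valc\cup\{v_{\triv}\}}\tbeta_{X,\Delta}(v)$, and every divisorial valuation on $X$ has finite log discrepancy with respect to $(X,\Delta)$, hence lies in $\Valc$. Therefore, for each $(X,\Delta)\in\mathcal{P}$ the hypothesis $h(X,\Delta)\ge h_0$ forces $\tbeta_{X,\Delta}(v)\ge h_0$ for every divisorial valuation $v$ on $X$. Applying Proposition \ref{p-bounded} with $c=h_0$, we obtain a constant $\alpha_0=\alpha_0(h_0,n)>0$, depending only on $h_0$ and $n$ and in particular uniform over $\mathcal{P}$, such that $\alpha(X,\Delta)\ge\alpha_0$ for all $(X,\Delta)\in\mathcal{P}$.

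It then remains to invoke boundedness. Since $N\Delta$ is integral, the coefficients of $\Delta$ all lie in the finite set $\{k/N:0\le k\le N\}\subset[0,1]$, which is in particular a DCC set; together with $\dim X=n$, $(-K_X-\Delta)^n\ge V_0$, and $\alpha(X,\Delta)\ge\alpha_0$, the hypotheses of the boundedness theorem for such log Fano pairs are satisfied. This last step rests on Birkar's solution of the BAB conjecture \cite{Bir21} (see also \cite{Jia20}); alternatively one may route it through the stability threshold, using $\delta(X,\Delta)\ge\frac{n+1}{n}\alpha(X,\Delta)\ge\frac{n+1}{n}\alpha_0$ and boundedness of log Fano pairs with $\delta$ bounded below. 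Hence $\mathcal{P}$ lies in a bounded family. The mathematical content is entirely in Proposition \ref{p-bounded}, whose proof converts the concavity of the Duistermaat--Heckman density into an upper bound on $T(v)$ for normalized divisorial valuations; given that proposition there is no further obstacle, the only point deserving care being that the boundedness result cited must be available for pairs whose boundary coefficients lie in a fixed DCC set, which is automatic here.
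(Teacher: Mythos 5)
Your proposal is correct and follows essentially the same route as the paper: the hypothesis $h(X,\Delta)\ge h_0$ gives $\tbeta_{X,\Delta}(v)\ge h_0$ on divisorial valuations, Proposition \ref{p-bounded} then yields a uniform positive lower bound on $\alpha(X,\Delta)$, and one concludes by the known boundedness of log Fano pairs with volume and $\alpha$-invariant bounded below (the paper cites \cite{Che18}, \cite{Jia20}, \cite{XZ20nvol} for this last step). Your explicit spelling out of the reduction from $h$ to $\tbeta$ on divisorial valuations is exactly what the paper leaves implicit.
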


\begin{proof}
For fixed $\alpha_0>0$, the set of log Fano pairs $(X,\Delta)$ with $n=\dim(X)$, $N\cdot \Delta$ integral and $(-K_X-\Delta)^n \geq V_0$, and $\alpha(X,\Delta)>\alpha_0$ are bounded by  \cite{Che18} \cite{Jia20} \cite{XZ20nvol}. Applying Proposition \ref{p-bounded} then completes the proof.
\end{proof}

Next we will prove the openness. It suffices to show the following theorem.
\begin{thm}\label{t-openness}
Let $(X,\Delta)\to B$ be a locally stable family of log Fano pairs over a scheme $B$ of finite type. 
Then 
$$h\colon t\to h(X_t,\Delta_t), \ \ \ t\in B$$ 
is a constructible and lower semicontinuous. 
%Assume there is a torus group $T$ which fiberwisely acts on $(X,\Delta)$. Let $\xi\in {\rm Hom}(\mathbb{G}_m, T)_{\mathbb R}$ be a (real) coweight. Then $ $
\end{thm}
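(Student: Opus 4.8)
The plan is to realize $h(X_t,\Delta_t)$, for every $t\in B$, as the infimum of $\tbeta_{X_t,\Delta_t}$ over a \emph{bounded} family of test objects, and then to deduce both assertions from the behaviour of the functionals $A_{X,\Delta}(\cdot)$, $\LNA(\cdot)$ and $\tS(\cdot)$ along such a family; the decisive point is that finite generation (Corollary \ref{c-hfg}) makes the a priori transcendental quantity $\tS$ depend on only finitely much algebraic data. This is parallel to the proofs of the openness of K-semistability and the constructibility of the stability threshold (compare \cite{BLX19} and \cite{XZ20cm}*{Appendix}). Concretely, since $\{t\in B\,:\,(X_t,\Delta_t)\text{ is K-semistable}\}=\{h=0\}$ is open by \cite{BLX19}, it suffices to treat $\min\{h,0\}$, i.e.\ to work over the K-unstable locus. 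There, by Corollary \ref{c-hfg} and the proof of Theorem \ref{t:f.g. for delta_g minimizer}, the unique valuation $v_t$ computing $h(X_t,\Delta_t)$ is a monomial lc place of a special $\bQ$-complement $\Gamma_t$ of $(X_t,\Delta_t)$ supported on a log smooth model, so that $\LNA(\cF_{v_t})=A_{X_t,\Delta_t}(v_t)$ and, crucially, $\gr_{v_t}R(X_t,L_t)$ is finitely generated. The fibres of $(X,\Delta)\to B$ form a bounded family since $B$ is of finite type; together with the boundedness of complements this allows us, after a finite stratification of $B$, to assume that over each stratum there is a finite-type parameter scheme carrying the data $\bigl(\pi_t\colon(Y_t,E_t)\to X_t,\ \Gamma_t,\ v_t\in\QM(Y_t,E_t)\bigr)$ through which every minimizer arises, with uniformly bounded generators for the rings $\gr_{v_t}R$. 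Consequently each $\cF_{v_t}$, each bigraded dimension $\dim\cF_{v_t}^{mr\lambda}R_{m,t}$, and hence each measure $\Dh_{\cF_{v_t}}$ and number $\tS(\cF_{v_t})$, is determined by finitely much data varying constructibly over the parameter scheme.

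\emph{Lower semicontinuity.} Fix $t_0\in B$ and a specialization $t_i\to t_0$ with $h(X_{t_i},\Delta_{t_i})\le c$; we must exhibit $v\in\Val_{X_{t_0}}^\circ$ with $\tbeta_{X_{t_0},\Delta_{t_0}}(v)\le c$. Passing to a subsequence inside the bounded family above, the minimizers $v_i$ specialize to a quasi-monomial valuation $v_0$ over $(X_{t_0},\Delta_{t_0})$ which is an lc place of the limiting complement $\Gamma_0$ (so $v_0\in\Val_{X_{t_0}}^\circ$ and $\LNA(\cF_{v_0})=A_{X_{t_0},\Delta_{t_0}}(v_0)$). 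The lower semicontinuity of log discrepancies along the family gives $A_{X_{t_0},\Delta_{t_0}}(v_0)\le\liminf_i A_{X_{t_i},\Delta_{t_i}}(v_i)$, while the semicontinuity of the volumes of the graded linear series $V_\bullet^{(\lambda)}$ along the family — under control precisely because the $\gr_{v_t}R$ are uniformly finitely generated — yields the reverse (upper) semicontinuity $\tS(\cF_{v_0})\ge\limsup_i\tS(\cF_{v_i})$. Combining these via $\liminf_i(a_i-b_i)\ge\liminf_i a_i-\limsup_i b_i$ gives $\tbeta_{X_{t_0},\Delta_{t_0}}(v_0)\le\liminf_i\tbeta_{X_{t_i},\Delta_{t_i}}(v_i)\le c$, so $h(X_{t_0},\Delta_{t_0})\le c$ and $\{h\le c\}$ is closed.

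\emph{Constructibility.} By Noetherian induction it suffices to prove that $h$ is constant on a dense open subset of each irreducible component of $B$; so assume $B$ irreducible. Spreading out the minimizer over the generic fibre gives a family of quasi-monomial valuations $v_t$, $t\in U$, over a dense open $U\subseteq B$, and a $\delta$-openness–type argument using Lemma \ref{l:wstc}, \cite{BLX19}*{Theorem 3.5} and the constructibility of lc places of complements \cite{XZ20cm}*{Lemma A.11} shows that, after shrinking $U$, $v_t$ still computes $h(X_t,\Delta_t)=\tbeta_{X_t,\Delta_t}(v_t)$ for all $t\in U$. Finally $A_{X_t,\Delta_t}(v_t)$ is a fixed $\bR_{\ge0}$-linear combination, with coefficients independent of $t$, of the $A_{X_t,\Delta_t}(\ord_{E_{i,t}})$, which are constant on a dense open by generic flatness; and — because the rings $\gr_{v_t}R$ are uniformly finitely generated — the finitely many numbers $\dim\cF_{v_t}^{mr\lambda}R_{m,t}$ that determine $\Dh_{\cF_{v_t}}$ are likewise constant on a dense open, so $\tS(\cF_{v_t})$ is constant there. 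Hence $h(X_t,\Delta_t)$ is constant on a dense open, which completes the induction.

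I expect the main obstacle to be exactly the uniform control sought in the first step: organizing the computing valuations — which are genuinely quasi-monomial rather than divisorial — into a bounded family on which one simultaneously controls the log discrepancies \emph{and} the Duistermaat–Heckman measures (equivalently, the finitely generated rings $\gr_{v_t}R$, which is what tames the otherwise transcendental functional $\tS$ and underlies both the semicontinuity in the lsc step and the generic constancy in the constructibility step). Granting this, the two assertions reduce to the already-established semicontinuity and constructibility properties of $A_{X,\Delta}(\cdot)$, of $\LNA(\cdot)=\mu_{X,\Delta}(\cdot)$ (Lemma \ref{l:L^NA=lc slope}), and of the volume functionals entering $\tS$ in families.
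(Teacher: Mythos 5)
Your plan hinges on organizing the minimizing valuations $v_t$ into a bounded family with \emph{uniformly} finitely generated graded rings $\gr_{v_t}R$, and you yourself flag this as the main obstacle; it is in fact a genuine gap, not a technicality. Nothing in the paper (nor in \cite{LXZ-HRFG}) gives finite generation \emph{in families} or any uniform bound on generation degrees, and Corollary \ref{c-hfg} is a fiberwise statement only. Without it, your constructibility step breaks at the decisive point: for each fixed $(m,\lambda)$ the dimension $\dim\cF_{v_t}^{mr\lambda}R_{m,t}$ is generically constant by flatness, but $\tS$ depends on all of these data at once, and the dense open sets shrink with $(m,\lambda)$; you need either a uniform statement or a different mechanism to conclude that $\Dh_{\cF_{v_t}}$, hence $\tS(\cF_{v_t})$, is generically constant. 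The paper's mechanism is different and avoids finite generation entirely: by the proof of Theorem \ref{t:f.g. for delta_g minimizer} the minimizer is an lc place of a $\bQ$-complement, hence by \cite{BLX19}*{Theorem 3.5} of an $N$-complement with $N$ bounded; one then parametrizes \emph{all} $N$-complements by a finite-type scheme $S\to B$, stratifies so as to have a fiberwise log resolution, and uses the invariance of plurigenera (\cite{HMX13}, as in \cite{BLX19}) to see that for a fixed point of the dual complex $\DMR(X_t,\Delta_t+D_t)$ both $A_{X_t,\Delta_t}(v_t)$ and $\Dh_{\cF_{v_t}}$ (so $\tS$ and $\tbeta$) are independent of $t$; then $h(X_t,\Delta_t)$ is the minimum of finitely many constants $a_i$ over the strata whose image contains $t$, which is constructibility. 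Your ``spread out the minimizer and shrink $U$'' step also silently assumes that the spread-out valuation remains the minimizer on nearby fibers, which again needs the complement/dual-complex apparatus rather than a $\delta$-openness analogue.

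The lower semicontinuity argument has a second, independent gap: the limit valuation $v_0$ of the minimizers $v_i$ over ``$t_i\to t_0$'' is not constructed (quasi-monomial valuations on different fibers do not specialize without exactly the family-of-complements and fiberwise-log-resolution setup above, and the special fiber may lie in a different stratum, so its dual complex need not match), and the two semicontinuity claims $A_{X_{t_0},\Delta_{t_0}}(v_0)\le\liminf_i A(v_i)$ and $\tS(\cF_{v_0})\ge\limsup_i\tS(\cF_{v_i})$ are asserted rather than proved; for the latter, the natural upper semicontinuity of fiber dimensions applies to the flat-limit filtration, not to $\cF_{v_0}$, and comparing the two is precisely where the argument would have to do real work. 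The paper's route is both simpler and minimizer-free: having constructibility, reduce to $B=\Spec$ of a DVR, extend an \emph{arbitrary} linearly bounded filtration $\cF_K$ on the generic fiber to $\cF_\kappa$ on the special fiber (\cite{BL18}); then $\Dh_{\cF_K}=\Dh_{\cF_\kappa}$ exactly, so $\tS(\cF_K)=\tS(\cF_\kappa)$, while $\LNA(\cF_K)=\mu(\cF_K)\ge\mu(\cF_\kappa)=\LNA(\cF_\kappa)$ by Lemma \ref{l:L^NA=lc slope} and lower semicontinuity of log canonical thresholds; taking the infimum over $\cF_K$ gives $h(X_\kappa,\Delta_\kappa)\le h(X_K,\Delta_K)$. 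I would recommend restructuring your proof along these lines rather than trying to control the minimizers and their graded rings in families.
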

\begin{proof}By passing to a resolution of $B_{\rm red}$, we may assume $B$ is smooth. By the proof of Theorem \ref{t:f.g. for delta_g minimizer}, we know that the minimizer of $\tbeta_{X,\Delta}$ is an lc place of a $\mathbb{Q}$-complement. 
Then as showed in \cite{BLX19}*{Theorem 3.5},
$$h(X_t,\Delta_t)=\min_v\{\tbeta_{X_t,\Delta_t} \ | \ \mbox{$v$ is an $N$-complement}\},$$
for some constant $N$ which only depends on $\dim X$ and coefficients of $\Delta$. Then the rest of the proof is similar to the one in \cites{BL18, BLX19}. For the sake of completeness, we give a sketch here.

We know that there is a finite type variety $\phi\colon S\to B$ with a relative Cartier divisor $D\subset X\times_BS$ over $S$, such that 
\begin{enumerate}
\item for any $s\in S$, the fiber $D_s$ is an $N$-complement of $(X_t, \Delta_t)$ where $t=\phi(s)$, and $(X_t, D_s+\Delta_t)$ is log canonical but not klt; and
\item for any $N$-complement $\Gamma_t$ of $(X_t,\Delta_t)$, there is a point $s\in S$, such that $D_s\cong \Gamma_t$. 
\end{enumerate}
After resolving and stratifying $S$, as well as passing to a finite base change, we can assume $S$ is a  union of its smooth connected component $S_i$,
such that $(X\times_BS_i, \Delta\times_BS_i+D\times_SS_i)$ admits a fiberwise log resolution. 

For a fixed $i$, we can identify the dual complex $\mathcal{CW}_i:=\DMR(X_t, \Delta_t+D_t)$ for any $t\in S_i$. We claim $\tbeta_{X_t,\Delta_t}(v_t)$ does not depend on $t$, for different valuations $v_t$ correspond to the same point of $\mathcal{CW}_i$. This is obvious for $A_{X_t,\Delta_t}(v_t)$. It also proved in \cite{BLX19}, using the invariance of plurigenera (\cite{HMX13}),  for  $v_t$ corresponding to the same point  of $\mathcal{CW}$ over any $t\in S_i$, the induced DH-measure $\Dh_{\cF_{v_t}}$ on $\mathbb R$ is the same. Therefore, 
$$\tS(\cF_{v_t}) =  - \log \int_{\R}  e^{- \la} \, \Dh_{\cF_{v_t}}(d \la)$$ does not depend on $t$.

Hence, for each $i$, we can define $a_i=\min \{ \tbeta(v_t)\ |\  v_t\in \mathcal{CW}_i\}$, and we know
that $h(X_t,\Delta_t)=\min \{a_i | \ t\in \phi(S_i)\},$ which implies that $h(X_t,\Delta_t)$ is constructible. 

\bigskip

In light of the above constructibility result, to prove the lower semicontinuity of $h$ it suffices to consider the case when   $B$ is the spectrum of a DVR $R$ essentially of finite type over $\bk$. 
Let $K$ denote the fraction field of $R$ and  $\kappa$ the residue field.
By the properness of the flag variety, we know that any filtration $\cF_K$ on $\bigoplus_mH^0(-mr(K_{X_K}+\Delta_{X_K}))$ extends to a filtration $\cF_\kappa $ on $\bigoplus_mH^0(-mr(K_{X_\kappa}+\Delta_{X_\kappa}))$ (see \cite{BL18}). By Lemma \ref{l:L^NA=lc slope} and the lower semicontinuity of the log canonical threshold,
$$\LNA(\cF_K)= \mu(\cF_K) \ge \mu(\cF_\kappa) =  \LNA(\cF_\kappa).$$
Since $\Dh_{\cF_K}(d \la)= \Dh_{\cF_\kappa }(d \la)$, we also have
$\widetilde{S}(\cF_K)= \widetilde{S}(\cF_\kappa)$.
Therefore, $h$ is lower semicontinuous.
\end{proof}

\section*{References}
\begin{biblist}

\bib{AZ20}{article}{
     author={Abban, Hamid},
   author={Zhuang, Ziquan},
   title={K-stability of Fano varieties via admissible flags},
   journal={Forum Math. Pi},
   volume={10},
   date={2022},
   pages={Paper No. e15},
      }
      
\bib{Bamler}{article}{
   author={Bamler, Richard},
   title={Convergence of Ricci flows with bounded scalar curvature},
   journal={Ann. of Math. (2)},
   volume={188},
   date={2018},
   number={3},
   pages={753--831},
   }

 \bib{Ber16}{article}{
    AUTHOR = {Berman, Robert J.},
     TITLE = {K-polystability of {${\Bbb Q}$}-{F}ano varieties admitting
              {K}\"{a}hler-{E}instein metrics},
   JOURNAL = {Invent. Math.},
%  FJOURNAL = {Inventiones Mathematicae},
    VOLUME = {203},
      YEAR = {2016},
    NUMBER = {3},
     PAGES = {973--1025},
}

\bib{BWN14}{article}{
      AUTHOR={Berman, Robert J.},
      AUTHOR={Witt Nystr\"{o}m, David},
      TITLE={Complex optimal transport and the pluripotential theory of K\"{a}hler-Ricci solitons},
      JOURNAL={\texttt{	arXiv:1401.8264}}, 
      YEAR={2014}
      }

\bib{Ber15}{article}{
    AUTHOR = {Berndtsson, Bo},
     TITLE = {A {B}runn-{M}inkowski type inequality for {F}ano manifolds and
              some uniqueness theorems in {K}\"{a}hler geometry},
   JOURNAL = {Invent. Math.},
%  FJOURNAL = {Inventiones Mathematicae},
    VOLUME = {200},
      YEAR = {2015},
    NUMBER = {1},
     PAGES = {149--200},
}

\bib{BCHM}{article}{
   author={Birkar, Caucher},
   author={Cascini, Paolo},
   author={Hacon, Christopher D.},
   author={McKernan, James},
   title={Existence of minimal models for varieties of log general type},
   journal={J. Amer. Math. Soc.},
   volume={23},
   date={2010},
   number={2},
   pages={405--468},
}
    
 \bib{BC11}{article}{
    AUTHOR = {Boucksom, S\'{e}bastien},
    AUTHOR = {Chen, Huayi},
     TITLE = {Okounkov bodies of filtered linear series},
   JOURNAL = {Compos. Math.},
    VOLUME = {147},
      YEAR = {2011},
    NUMBER = {4},
     PAGES = {1205--1229},
}

    \bib{BE18}{article}{
   author={Boucksom, S\'{e}bastien},
   author={Eriksson, Dennis},
   title={Spaces of norms, determinant of cohomology and Fekete points in
   non-Archimedean geometry},
   journal={Adv. Math.},
   volume={378},
   date={2021},
   pages={107501, 124},
  }

\bib{BHJ17}{article}{
    AUTHOR = {Boucksom, S\'{e}bastien},
    AUTHOR = {Hisamoto, Tomoyuki},
    AUTHOR = {Jonsson, Mattias},
     TITLE = {Uniform {K}-stability, {D}uistermaat-{H}eckman measures and
              singularities of pairs},
   JOURNAL = {Ann. Inst. Fourier (Grenoble)},
    VOLUME = {67},
      YEAR = {2017},
    NUMBER = {2},
     PAGES = {743--841},
}

% \bib{Bil79}{book}{
%     AUTHOR = {Billingsley, Patrick},
%      TITLE = {Probability and measure},
%       NOTE = {Wiley Series in Probability and Mathematical Statistics},
%  PUBLISHER = {John Wiley \& Sons, New York-Chichester-Brisbane},
%       YEAR = {1979},
% }

 \bib{BJ18}{article}{
    AUTHOR = {Boucksom, S\'{e}bastien},
    AUTHOR = {Jonsson, Mattias},
     TITLE = {A non-Archimedean approach to K-stability, I: Metric geometry of spaces of test configurations and valuations},
JOURNAL={\texttt{ arXiv:2107.11221}},
      YEAR = {2021},
}

\bib{BJ20}{article}{
   author={Blum, Harold},
   author={Jonsson, Mattias},
   title={Thresholds, valuations, and K-stability},
   journal={Adv. Math.},
   volume={365},
   date={2020},
   pages={107062, 57},
   issn={0001-8708},
}

 \bib{BL18}{article}{
     author={Blum, Harold},
   author={Liu, Yuchen},
   title={Openness of uniform K-stability in families of $\Bbb Q$-Fano
   varieties},
   journal={Ann. Sci. \'{E}c. Norm. Sup\'{e}r. (4)},
   volume={55},
   date={2022},
   number={1},
   pages={1--41},
}

\bib{BLX19}{article}{
   author={Blum, Harold},
   author={Liu, Yuchen},
   author={Xu, Chenyang},
   title={Openness of K-semistability for Fano varieties},
   journal={Duke Math. J.},
   volume={171},
   date={2022},
   number={13},
   pages={2753--2797},
}

% \bib{Cod19}{article}{
% 	AUTHOR = {Codogni, Giulio},
% 	TITLE = {Tits buildings and K-stability},
% 	JOURNAL = {Proc. Edinb. Math. Soc.}
% 	VOLUME = {62},
% 	YEAR = {2019},
% 	NUMBER = {3},
% 	PAGES = {799-815},
% }

\bib{CM15}{article}{
    AUTHOR = {Chen, Huayi},
    AUTHOR = {Maclean, Catriona},
     TITLE = {Distribution of logarithmic spectra of the equilibrium energy},
   JOURNAL = {Manuscripta Math.},
    VOLUME = {146},
      YEAR = {2015},
    NUMBER = {3-4},
     PAGES = {365--394},
}

\bib{Che18}{article}{
     AUTHOR = {Chen, Weichung},
     TITLE = {Boundedness of weak {F}ano pairs with alpha-invariants and
              volumes bounded below},
   JOURNAL = {Publ. Res. Inst. Math. Sci.},
  FJOURNAL = {Publications of the Research Institute for Mathematical
              Sciences},
    VOLUME = {56},
      YEAR = {2020},
    NUMBER = {3},
     PAGES = {539--559},
}
	
\bib{CSW18}{article}{
   author={Chen, Xiuxiong},
   author={Sun, Song},
   author={Wang, Bing},
   title={K\"{a}hler-Ricci flow, K\"{a}hler-Einstein metric, and K-stability},
   journal={Geom. Topol.},
   volume={22},
   date={2018},
   number={6},
   pages={3145--3173},
}   

\bib{CW20}{article}{
   author={Chen, Xiuxiong},
   author={Wang, Bing},
   title={Space of Ricci flows (II)---Part B: Weak compactness of the flows},
   journal={J. Differential Geom.},
   volume={116},
   date={2020},
   number={1},
   pages={1--123},
  }
	
\bib{DS16}{article}{
   author={Datar, Ved},
   author={Sz\'{e}kelyhidi, G\'{a}bor},
   title={K\"{a}hler-Einstein metrics along the smooth continuity method},
   journal={Geom. Funct. Anal.},
   volume={26},
   date={2016},
   number={4},
   pages={975--1010},
 }	

\bib{DS20}{article}{
    AUTHOR = {Dervan, Ruadha\'{\i}},
    AUTHOR = {Sz\'{e}kelyhidi, G\'{a}bor},
     TITLE = {The {K}\"{a}hler--{R}icci flow and optimal degenerations},
   JOURNAL = {J. Differential Geom.},
   VOLUME = {116},
    YEAR = {2020},
    NUMBER = {1},
     PAGES = {187--203},
}

\bib{ELMNP}{article}{
   author={Ein, Lawrence},
   author={Lazarsfeld, Robert},
   author={Musta\c{t}\u{a}, Mircea},
   author={Nakamaye, Michael},
   author={Popa, Mihnea},
   title={Restricted volumes and base loci of linear series},
   journal={Amer. J. Math.},
   volume={131},
   date={2009},
   number={3},
   pages={607--651},
}

\bib{Fuj-valuative}{article}{
   author={Fujita, Kento},
   title={A valuative criterion for uniform K-stability of $\Bbb Q$-Fano
   varieties},
   journal={J. Reine Angew. Math.},
   volume={751},
   date={2019},
   pages={309--338},
 }

\bib{Fuj-alpha}{article}{
   author={Fujita, Kento},
   title={K-stability of Fano manifolds with not small alpha invariants},
   journal={J. Inst. Math. Jussieu},
   volume={18},
   date={2019},
   number={3},
   pages={519--530},
}

%\bib{F018}{article}{
%    AUTHOR = {Fujita, Kento}
%    AUTHOR = {Odaka, Yuji},
%     TITLE = {On the {K}-stability of {F}ano varieties and anticanonical
%              divisors},
%   JOURNAL = {Tohoku Math. J. (2)},
%  FJOURNAL = {The Tohoku Mathematical Journal. Second Series},
%    VOLUME = {70},
%      YEAR = {2018},
 %   NUMBER = {4},
 %    PAGES = {511--521},
 % MRNUMBER = {3896135},
%}

\bib{He16}{article}{
   author={He, Weiyong},
   title={K\"{a}hler-Ricci soliton and $H$-functional},
   journal={Asian J. Math.},
   volume={20},
   date={2016},
   number={4},
   pages={645--663},
  }

% \bib{His19}{article}{
%     AUTHOR = {Hisamoto, Tomoyuki},
%     TITLE = {Geometric flow, Multiplier ideal sheaves and Optimal destabilizer for a Fano manifold},
%     JOURNAL = {\texttt{arXiv:1901.08480}},
%     YEAR = {2019},
% }

\bib{HL20-YTD}{article}{
    AUTHOR = {Han, Jiyuan},
    AUTHOR = {Li, Chi},
    TITLE = {On the Yau-Tian-Donaldson conjecture for generalized K\"ahler-Ricci soliton equations},
    JOURNAL = {\texttt{arXiv:2006.00903}, to appear in Comm. Pure Appl. Math.},
    YEAR = {2020},
}

\bib{HL20-uniqueness}{article}{
    AUTHOR = {Han, Jiyuan},
    AUTHOR = {Li, Chi},
    TITLE = {Algebraic uniqueness of Kähler-Ricci flow limits and optimal degenerations of Fano varieties},
    JOURNAL = {\texttt{arXiv:2009.01010}, to appear in Geom. Topol.},
    YEAR = {2020},
}

\bib{HMX13}{article}{
   author={Hacon, Christopher D.},
   author={McKernan, James},
   author={Xu, Chenyang},
   title={On the birational automorphisms of varieties of general type},
   journal={Ann. of Math. (2)},
   volume={177},
   date={2013},
   number={3},
   pages={1077--1111},
  }

\bib{Hor03}{article}{
    AUTHOR = {H\"{o}rmander, Lars},
     TITLE = {The analysis of linear partial differential operators. {I}},
    SERIES = {Classics in Mathematics},
     % NOTE = {Distribution theory and Fourier analysis,}
 PUBLISHER = {Springer-Verlag, Berlin},
      YEAR = {2003},
     PAGES = {x+440},
}

\bib{Jia20}{article}{
    AUTHOR = {Jiang, Chen},
     TITLE = {Boundedness of {$\Bbb Q$}-{F}ano varieties with degrees and
              alpha-invariants bounded from below},
   JOURNAL = {Ann. Sci. \'{E}c. Norm. Sup\'{e}r. (4)},
%  FJOURNAL = {Annales Scientifiques de l'\'{E}cole Normale Sup\'{e}rieure. Quatri\`eme  S\'{e}rie},
    VOLUME = {53},
      YEAR = {2020},
    NUMBER = {5},
     PAGES = {1235--1248},
}

\bib{JM12}{article}{
    AUTHOR = {Jonsson, Mattias},
    AUTHOR = {Musta\c{t}\u{a}, Mircea},
     TITLE = {Valuations and asymptotic invariants for sequences of ideals},
   JOURNAL = {Ann. Inst. Fourier (Grenoble)},
    VOLUME = {62},
      YEAR = {2012},
    NUMBER = {6},
}

% \bib{JM08}{article}{
%     AUTHOR = {Jow, Shin-Yao}
%     AUTHOR = {Miller, Ezra},
%      TITLE = {Multiplier ideals of sums via cellular resolutions},
%   JOURNAL = {Math. Res. Lett.},
%     VOLUME = {15},
%       YEAR = {2008},
%     NUMBER = {2},
%      PAGES = {359--373},
% }

\bib{KK}{article}{
   author={Kaveh, Kiumars},
   author={Khovanskii, A. G.},
   title={Newton-Okounkov bodies, semigroups of integral points, graded
   algebras and intersection theory},
   journal={Ann. of Math. (2)},
   volume={176},
   date={2012},
   number={2},
   pages={925--978},
}

\bib{Kol13}{book}{
    AUTHOR = {Koll\'{a}r, J\'{a}nos},
     TITLE = {Singularities of the minimal model program},
    SERIES = {Cambridge Tracts in Mathematics},
    VOLUME = {200},
      NOTE = {With a collaboration of S\'{a}ndor Kov\'{a}cs},
 PUBLISHER = {Cambridge University Press, Cambridge},
      YEAR = {2013},
}

\bib{KM98}{book}{
   author={Koll\'{a}r, J\'{a}nos},
   author={Mori, Shigefumi},
   title={Birational geometry of algebraic varieties},
   series={Cambridge Tracts in Mathematics},
   volume={134},
   note={With the collaboration of C. H. Clemens and A. Corti;
   Translated from the 1998 Japanese original},
   publisher={Cambridge University Press, Cambridge},
   date={1998},
   pages={viii+254},
}

%\bib{KP17}{article}{
%   author={Kov\'{a}cs, S\'{a}ndor J.},
%   author={Patakfalvi, Zsolt},
%   title={Projectivity of the moduli space of stable log-varieties and
%   subadditivity of log-Kodaira dimension},
%   journal={J. Amer. Math. Soc.},
%   volume={30},
%   date={2017},
%   number={4},
%   pages={959--1021},
%}

\bib{Laz04}{book}{
    AUTHOR = {Lazarsfeld, Robert},
     TITLE = {Positivity in algebraic geometry. {II}},
    SERIES = {Ergebnisse der Mathematik und ihrer Grenzgebiete. 3. Folge.},
    VOLUME = {49},
    %  NOTE = {Positivity for vector bundles, and multiplier ideals},
 PUBLISHER = {Springer-Verlag, Berlin},
      YEAR = {2004},
}

\bib{LM09}{article}{
    AUTHOR = {Lazarsfeld, Robert},
    AUTHOR = {Musta\c{t}\u{a}, Mircea},
     TITLE = {Convex bodies associated to linear series},
   JOURNAL = {Ann. Sci. \'{E}c. Norm. Sup\'{e}r. (4)},
%  FJOURNAL = {Annales Scientifiques de l'\'{E}cole Normale Sup\'{e}rieure. Quatri\`eme              S\'{e}rie},
    VOLUME = {42},
      YEAR = {2009},
    NUMBER = {5},
     PAGES = {783--835},
}

\bib{Li-valuative}{article}{
   author={Li, Chi},
   title={K-semistability is equivariant volume minimization},
   journal={Duke Math. J.},
   volume={166},
   date={2017},
   number={16},
   pages={3147--3218},
  }
  
\bib{Li19}{article}{
    author={Li, Chi},
   title={$G$-uniform stability and K\"{a}hler-Einstein metrics on Fano
   varieties},
   journal={Invent. Math.},
   volume={227},
   date={2022},
   number={2},
   pages={661--744},
}

\bib{LLX20}{article}{
    AUTHOR = {Li, Chi},
    AUTHOR = {Liu, Yuchen},
    AUTHOR = {Xu, Chenyang},
    TITLE = {A guided tour to normalized volume},
      conference={
      title={Geometric analysis, in honor of Gang Tian's 60th birthday
},
   },
   book={
      series={Progr. Math.},
      volume={333},
      publisher={Birkh\"{a}user/Springer, Cham},
   },
   date={2020},
   pages={167--219},
}

\bib{LWX-tangentcone}{article}{
   author={Li, Chi},
   author={Wang, Xiaowei},
   author={Xu, Chenyang},
   title={Algebraicity of the metric tangent cones and equivariant
   K-stability},
   journal={J. Amer. Math. Soc.},
   volume={34},
   date={2021},
   number={4},
   pages={1175--1214},
}

\bib{LX14}{article}{
   author={Li, Chi},
   author={Xu, Chenyang},
   title={Special test configuration and K-stability of Fano varieties},
   journal={Ann. of Math. (2)},
   volume={180},
   date={2014},
   number={1},
   pages={197--232},
}

\bib{LXZ-HRFG}{article}{
 author={Liu, Yuchen},
   author={Xu, Chenyang},
   author={Zhuang, Ziquan},
   title={Finite generation for valuations computing stability thresholds
   and applications to K-stability},
   journal={Ann. of Math. (2)},
   volume={196},
   date={2022},
   number={2},
   pages={507--566},
}

% \bib{Mus02}{article}{
%     AUTHOR = {Musta\c{t}\v{a}, Mircea},
%      TITLE = {The multiplier ideals of a sum of ideals},
%   JOURNAL = {Trans. Amer. Math. Soc.},
%     VOLUME = {354},
%       YEAR = {2002},
%     NUMBER = {1},
%      PAGES = {205--217}
% }

\bib{Reb21}{article}{
    AUTHOR = {Reboulet, R\'emi},
     TITLE = {The space of finite-energy metrics over a degeneration of complex manifolds},
   JOURNAL = {arXiv:2107.04841},
      YEAR = {2021},
}

\bib{Reb20}{article}{
   author={Reboulet, R\'{e}mi},
   title={Plurisubharmonic geodesics in spaces of non-Archimedean metrics of
   finite energy},
   journal={J. Reine Angew. Math.},
   volume={793},
   date={2022},
   pages={59--103},
}

\bib{RTZ20}{article}{
      author={Rubinstein, Yanir A.},
   author={Tian, Gang},
   author={Zhang, Kewei},
   title={Basis divisors and balanced metrics},
   journal={J. Reine Angew. Math.},
   volume={778},
   date={2021},
   pages={171--218},
      }
      
\bib{Tia97}{article}{
   author={Tian, Gang},
   title={K\"{a}hler-Einstein metrics with positive scalar curvature},
   journal={Invent. Math.},
   volume={130},
   date={1997},
   number={1},
   pages={1--37},
  }

% \bib{Tak06}{article}{
%     AUTHOR = {Takagi, Shunsuke},
%      TITLE = {Formulas for multiplier ideals on singular varieties},
%   JOURNAL = {Amer. J. Math.},
%     VOLUME = {128},
%       YEAR = {2006},
%     NUMBER = {6},
%      PAGES = {1345--1362},
% }

\bib{TZZZ13}{article}{
    AUTHOR = {Tian, Gang},
    AUTHOR = {Zhang, Shijin},
    AUTHOR = {Zhang, Zhenlei},
    AUTHOR = {Zhu, Xiaohua},
     TITLE = {Perelman's entropy and {K}\"{a}hler-{R}icci flow on a {F}ano
              manifold},
   JOURNAL = {Trans. Amer. Math. Soc.},
%  FJOURNAL = {Transactions of the American Mathematical Society},
    VOLUME = {365},
      YEAR = {2013},
    NUMBER = {12},
     PAGES = {6669--6695},
}

\bib{TZ16}{article}{
   author={Tian, Gang},
   author={Zhang, Zhenlei},
   title={Regularity of K\"{a}hler-Ricci flows on Fano manifolds},
   journal={Acta Math.},
   volume={216},
   date={2016},
   number={1},
   pages={127--176},
}

\bib{WZ04}{article}{
   author={Wang, Xu-Jia},
   author={Zhu, Xiaohua},
   title={K\"{a}hler-Ricci solitons on toric manifolds with positive first Chern
   class},
   journal={Adv. Math.},
   volume={188},
   date={2004},
   number={1},
   pages={87--103},
  }

\bib{Xu20}{article}{
    AUTHOR = {Xu, Chenyang},
     TITLE = {A minimizing valuation is quasi-monomial},
   JOURNAL = {Ann. of Math. (2)},
    VOLUME = {191},
      YEAR = {2020},
    NUMBER = {3},
     PAGES = {1003--1030},
}

\bib{Xu20-HRFG}{article}{
    AUTHOR = {Xu, Chenyang},
  title={Towards finite generation of higher rational rank valuations},
   language={Russian},
   journal={Mat. Sb.},
   volume={212},
   date={2021},
   number={3},
   pages={157--174},
}

\bib{XZ20cm}{article}{
    AUTHOR = {Xu, Chenyang},
    AUTHOR = {Zhuang, Ziquan},
     TITLE = {On positivity of the {CM} line bundle on {K}-moduli spaces},
   JOURNAL = {Ann. of Math. (2)},
     VOLUME = {192},
      YEAR = {2020},
    NUMBER = {3},
     PAGES = {1005--1068},
}

\bib{XZ20nvol}{article}{
    author={Xu, Chenyang},
   author={Zhuang, Ziquan},
   title={Uniqueness of the minimizer of the normalized volume function},
   journal={Camb. J. Math.},
   volume={9},
   date={2021},
   number={1},
   pages={149--176},
}

% \bib{Yao17}{article}{
%     AUTHOR = {Yao, Yi},
%      TITLE = {Mabuchi Metrics and Relative Ding Stability of Toric Fano Varieties},
%   JOURNAL = {\texttt{arXiv:1701.04016}},
%       YEAR = {2017},
% }

%\bib{Zhu20}{article}{
%    AUTHOR = {Zhuang, Ziquan},
%     TITLE = {Optimal destabilizing centers and equivariant K-stability},
%   JOURNAL = {\texttt{arXiv:2004.09413}},
%      YEAR = {2020},
%}

\end{biblist}

\end{document}